\allowdisplaybreaks \numberwithin{equation}{section}
\numberwithin{equation}{section}
\newtheorem{theorem}{Theorem}[section]
\newtheorem{proposition}[theorem]{Proposition}
\newtheorem{lemma}[theorem]{Lemma}
\newtheorem*{Yudovich's Theorem}{Yudovich's Theorem}
\newtheorem*{lemmaA}{Lemma A}
\theoremstyle{definition}
\newtheorem{definition}[theorem]{Definition}
\theoremstyle{remark}
\newtheorem{remark}[theorem]{Remark}
\begin{document}

\title
[Stability of two-dimensional steady Euler flows]{Stability of two-dimensional steady Euler flows with concentrated vorticity}

 \author{Guodong Wang}

\address{Institute for Advanced Study in Mathematics, Harbin Institute of Technology, Harbin 150001, P.R. China}
\email{wangguodong@hit.edu.cn}

%\thanks{This work is partially supported by }

\begin{abstract}

In this paper, we study the stability of two-dimensional steady Euler flows  with sharply concentrated vorticity in a  bounded domain. These flows are  obtained as maximizers of the kinetic energy on some isovortical surface, under the constraint that the vorticity
is compactly supported in a finite number of disjoint regions of small diameter.
We prove the nonlinear stability of  these flows when the vorticity is sufficiently concentrated in one small region, or in two small regions  with opposite signs.
 The proof is achieved by showing that these  flows   constitute a compact and isolated set of local maximizers of the kinetic energy on the isovortical surface. The separation property of the stream function plays a crucial role in validating   isolatedness.
 \end{abstract}

\maketitle

\section{Introduction and main results}

\subsection{Two-dimensional Euler equation}
Let $D\subset \mathbb R^2$ be a smooth bounded domain.
Consider the following Euler system governing the motion of an incompressible inviscid  fluid of unit density in $D$
\begin{equation}\label{1-1}
\begin{cases}
\partial_t\mathbf{v}+(\mathbf{v}\cdot\nabla)\mathbf{v}=-\nabla P,&x=(x^1,x^2)\in D,\,\, t>0,\\
\nabla\cdot\mathbf{v}=0,
 \end{cases}
\end{equation}
where $\mathbf{v}=(v^1,v^2)$ is the velocity field and $P$ is the pressure. The scalar vorticity of the fluid is defined as
\[\omega=\partial_{x^1}v^2-\partial_{x^2}v^1.\]
From the momentum equation (i.e., the first equation of \eqref{1-1}), we can easily get the following evolution equation for $\omega$
\begin{equation}\label{dxx}
\partial_t\omega+\mathbf v\cdot\nabla \omega=0.
\end{equation}
Under suitable assumptions $\mathbf v$ can be recovered from $\omega$ via the Biot-Savart law.
For example, if $D$ is additionally simply-connected and the following impermeability boundary condition holds
\begin{equation}\label{dxx0}
\mathbf v\cdot\mathbf n=0,\quad x\in\partial D,
\end{equation}
where $\mathbf n$ is the outward unit normal to $\partial D,$ then the Biot-Savart law can be expressed as
\begin{equation}\label{dxx1}
\mathbf v=\nabla^\perp \mathcal G\omega.
\end{equation}
Here $\mathcal G$ is the Green operator corresponding to  $-\Delta$ in $D$ with zero boundary condition, or equivalently,
\begin{equation}\label{gpo}
\begin{cases}
-\Delta \mathcal G\omega=\omega,&x\in D,\\
\mathcal G\omega=0, &x\in\partial D,
 \end{cases}
\end{equation}
and $\nabla^\perp\mathcal G\omega$ is the clockwise rotation through $\pi/2$ of $\nabla\mathcal G\omega,$ that is,
$$\nabla^\perp\mathcal G\omega=(\partial_{x^2}\mathcal G\omega,-\partial_{x^1}\mathcal G\omega).$$
The function $\mathcal G\omega$ is called \emph{the stream function} related to $\omega$.
If $D$ is not simply-connected, or $\mathbf v$ is not tangential to the boundary, then the Biot-Savart law has a more complicated form than \eqref{dxx1}.  For simplicity, \emph{throughout this paper we assume that $D$ is simply-connected and \eqref{dxx0} holds}. Hence the evolution equation of $\omega$ is
\begin{equation}\label{dxx6}
\partial_t\omega+\nabla^\perp\mathcal G\omega\cdot\nabla \omega=0,
\end{equation}
which is usually called the vorticity equation. In the rest of this paper we   focus our main attention on \eqref{dxx6}.

In the literature, there are a lot of existence results on the initial-value problem of  \eqref{dxx6} with initial vorticity in various function spaces, including
those by H\"older \cite{Ho} and Wolibner \cite{Wo} in H\"older spaces, Yudovich \cite{Y} in $L^\infty,$ DiPerna-Majda \cite{DM} in $L^p$ with $ 1<p<+\infty$ (see also the independent work Giga-Miyakawa-Osada \cite{Giga}), and Delort \cite{De} in the space of nonnegative Radon measures in $H^{-1}.$   In this paper we only work in Yudovich's setting, that is, the vorticity belongs to $L^\infty$. The reasons are threefold. First, it contains enough solutions that are physically interesting, such as flows with discontinuous vorticity. Second, uniqueness holds in this case, which makes many statements concise. Third, two-dimensional Euler flows with bounded vorticity possess some good conservative properties that are crucial  in proving stability.

Before stating Yudovich's result, we list some notations that will be used throughout this paper.
\begin{itemize}
\item $\mathcal L:$ the two-dimensional Lebesgue measure;
\item $\mathcal R_f$: the rearrangement class of some function $f\in L^1_{\rm loc}(D)$, that is,
 \begin{equation*}
 \mathcal{R}_f=\left\{ g \in L^1_{\rm loc}(D)\mid  \mathcal L(\{x\in D\mid g(x)>s\})= \mathcal L(\{x\in D\mid f(x)>s\}) ,\,\,\forall \,s\in \mathbb R\right\};
 \end{equation*}
\item ${\rm supp}(f)$: the essential support of some measurable function $f$ (see  \S 1.5, \cite{LL} for the precise definition);
\item $L_c^\infty(\mathbb R^2)$: the set of essentially bounded functions in $\mathbb R^2$ with compact support;
\item sgn$(a)$: the sign of some real number $a$, that is,
\begin{equation*}
{\rm sgn}(a)=
\begin{cases}
-1,& \mbox{if }a<0,\\
0,&\mbox{if } a=0,\\
1,&\mbox{if } a>0;
\end{cases}
\end{equation*}
\item $\mathbf 1_A$: characteristic function of some set $A$;
\item $\mathbf 0$: the origin in $\mathbb R^2;$
\item $B_r(x):$ the disk with radius $r$ and center $x$;
\item $f_+(f_-)$: the positive (negative) part of $f,$ that is, $f_+=\max\{f,0\}$ ($f_-=\max\{-f,0\}$);
\item $G$: the Green function of $-\Delta$ in $D$ with zero boundary condition;
\item $h$: the regular part of the Green function $G$, that is,
\[h(x, y)=-\frac{1}{2\pi}\ln|x-y|-G(x,y),\quad  x, y\in D;\]
\item $H$: the Robin function of the domain $D$, defined by
\begin{equation}\label{ropz89}
H(x)=h(x,x),\quad x\in D.
\end{equation}
\end{itemize}

Yudovich's result can be stated as follows.
 \begin{Yudovich's Theorem}\label{A}
 Let $\omega_0\in L^\infty(D)$. Then there exists a unique weak solution $\omega \in L^\infty(D\times(0,+\infty))$ to the vorticity equation \eqref{dxx6} in the following sense
\begin{equation}\label{tweak}
  \int_D\omega_0(x)\phi(x,0)dx+\int_0^{+\infty}\int_D\omega\partial_t\phi+\omega\nabla\phi\cdot \nabla^\perp \mathcal G\omega dxdt=0,\,\,\forall\,\phi\in C_c^{\infty}(D\times\mathbb R).
  \end{equation}
 Moreover, such a solution $\omega$ satisfies
\begin{itemize}
\item[(i)] $\omega(\cdot,t)\in \mathcal{R}_{\omega_0}$ for any $t\geq 0$;
\item[(ii)] $\omega\in C([0,+\infty);L^p(D))$ for any $p\in[1,+\infty);$
\item[(iii)]  $E(\omega(\cdot,t))=E(\omega_0)$ for any $t\in[0,+\infty),$
where
\begin{equation}\label{tweak2}E(\omega(\cdot,t))=\frac{1}{2}\int_D|\mathbf v|^2 dx=\frac{1}{2}\int_D\omega\mathcal G\omega dx
\end{equation}
 is the kinetic energy of the fluid.
\end{itemize}

 \end{Yudovich's Theorem}

 See Majda-Bertozzi \cite{MB} or Marchioro-Pulvirenti \cite{MP4} for the detailed proof of Yudovich's Theorem.

 \begin{remark}
In Yudovich's Theorem, for fixed $t\geq0$, since $\omega(\cdot,t )\in L^{\infty}(D)$, we can deduce from standard elliptic estimates that $\mathcal{G}\omega(\cdot,t)\in C^{1}(\bar{D})$, therefore the integrals in \eqref{tweak} and \eqref{tweak2} make sense.
\end{remark}

 \begin{remark}
In Yudovich's Theorem, by \eqref{tweak} and (ii)  it is easy to check that
\[\lim_{t\to0^+}\|\omega(\cdot,t)-\omega_0\|_{L^p(D)}=0\]
for any $p\in[1,+\infty).$
\end{remark}

By Yudovich's Theorem, an ideal flow evolves on some ``isovortical" surface with the kinetic energy unchanged. Here by an isovortical surface we mean a set of all instantaneous incompressible flows whose vorticities constitute the rearrangement class of a given measurable function. As we will see in Section 3, this fact plays an important role in studying the stability of two-dimensional steady Euler flows.

\subsection{Steady solution and stability}
A steady solution to the vorticity equation is a solution that does not depend on the time variable. Hence a steady solution $\omega$ satisfies
\begin{align}\label{sv1}
  \nabla^\perp \mathcal{G}\omega\cdot\nabla\omega=0,\,\,x\in D.
\end{align}
For $\omega$ only in $ L^\infty(D)$, \eqref{sv1} should be interpreted in the following weak sense.
\begin{definition}\label{wsv1}
Let $\omega\in L^{\infty}(D)$. We call  $\omega$ a steady weak solution to the vorticity equation if it satisfies
\begin{align}\label{int11}
  \int_D\omega\nabla^\perp\mathcal{G}\omega \cdot\nabla \zeta dx=0,\quad \forall \,\zeta\in C_c^\infty(D).
\end{align}
\end{definition}
It is easy to check that Definition \ref{wsv1} is consistent with \eqref{tweak}.

Without any restriction there are infinitely many steady weak solutions. For instance, any $\omega\in C^1(\bar D)$ satisfying
\[\omega=f(\mathcal G\omega) \,\,\,{\rm in }\,\,D,\]
where $f\in C^1(\mathbb R)$, must satisfy \eqref{int11}. A more general criterion has been proved in \cite{CW3}.

\begin{lemmaA}[\cite{CW3}, Theorem 1.2]\label{lem200}
Let $k$ be a positive integer.
Suppose $\omega\in L^{\infty}(D)$ satisfies
\begin{equation}\label{cdan}
\omega=\sum_{i=1}^k\omega_i, \,\,\min_{1\leq i< j\leq k}\{{\rm dist(supp(}\omega_i),{\rm supp(}\omega_j))\}>0,\,\,\omega_i=f_i(\mathcal G\omega) \text{  \rm   a.e. in } {\rm supp(}\omega_i)_\delta
\end{equation}
for some $\delta>0$, where  supp$(\omega_i)_\delta$ is the $\delta$-neighborhood of  supp$(\omega_i)$ in the Euclidean norm, that is,
 \[{\rm supp(}\omega_i)_\delta=\{x\in D\mid {\rm dist(}x,{\rm supp(}\omega_i))<\delta\},\]
 and each $f_i:\mathbb R\to\mathbb R$ is either monotone or locally Lipschitz continuous.
Then $\omega$ is a steady weak solution to the  vorticity equation.
\end{lemmaA}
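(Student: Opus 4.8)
The plan is to test the weak formulation \eqref{int11} against an arbitrary $\zeta\in C_c^\infty(D)$ by splitting the integral according to the decomposition $\omega=\sum_{i=1}^k\omega_i$ and showing each piece vanishes. Since the supports of the $\omega_i$ are pairwise separated by a positive distance, I would first fix a small parameter $0<\eta<\min\{\delta,\tfrac12\min_{i\ne j}{\rm dist}({\rm supp}(\omega_i),{\rm supp}(\omega_j))\}$ and choose cutoff functions $\chi_i\in C_c^\infty(D)$ with $\chi_i\equiv 1$ on ${\rm supp}(\omega_i)$ and ${\rm supp}(\chi_i)\subset {\rm supp}(\omega_i)_\eta$, so that the sets ${\rm supp}(\chi_i)$ are still pairwise disjoint and each is contained in the region where $\omega_i=f_i(\mathcal G\omega)$. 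Because $\omega_i$ is supported in $\{\chi_i=1\}$, we have $\omega_i\nabla^\perp\mathcal G\omega\cdot\nabla\zeta = \omega_i\nabla^\perp\mathcal G\omega\cdot\nabla(\chi_i\zeta)$ pointwise a.e., so it suffices to prove
\[
\int_D \omega_i\,\nabla^\perp\mathcal G\omega\cdot\nabla(\chi_i\zeta)\,dx=0
\]
for each $i$, i.e. we may assume the test function $\zeta_i:=\chi_i\zeta$ is supported in a neighborhood on which $\omega_i=f_i(\mathcal G\omega)$ holds a.e.

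The core of the argument is then a single-bubble statement: if $g=f(\psi)$ a.e. on an open set $U$ with $\psi=\mathcal G\omega\in C^1(\bar D)$ (hence $\psi\in W^{1,\infty}_{\rm loc}$) and $f$ is either monotone or locally Lipschitz, then $\int g\,\nabla^\perp\psi\cdot\nabla\varphi\,dx=0$ for all $\varphi\in C_c^\infty(U)$. The idea is that $g\nabla^\perp\psi = f(\psi)\nabla^\perp\psi$ is, at least formally, a perpendicular gradient: $f(\psi)\nabla^\perp\psi=\nabla^\perp F(\psi)$ where $F'=f$, and $\int \nabla^\perp F(\psi)\cdot\nabla\varphi\,dx = -\int F(\psi)\,\nabla^\perp\cdot\nabla\varphi\,dx=0$ since ${\rm curl}\,\nabla\varphi=0$. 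To make this rigorous when $f$ is only locally Lipschitz, I would note $F$ is $C^1$ with $F'=f$ a.e., and $F(\psi)\in W^{1,\infty}_{\rm loc}$ with $\nabla F(\psi)=f(\psi)\nabla\psi$ by the chain rule for Lipschitz compositions (Stampacchia); then $\int \nabla^\perp F(\psi)\cdot\nabla\varphi=0$ holds because for $W^{1,1}$ functions the pairing $\int \nabla^\perp u\cdot\nabla\varphi\,dx$ vanishes (integrate by parts / mollify). When $f$ is merely monotone it need not be Lipschitz, but a monotone function is the a.e.-derivative of a convex (hence locally Lipschitz) $F$; here the subtlety is that $f$ may be discontinuous, so $f(\psi)$ is only defined a.e., and one must check that $f(\psi)\nabla\psi$ still equals $\nabla F(\psi)$ a.e. This follows from the coarea formula together with the fact that the "bad" set where $f$ jumps is countable, so $\{\psi\in (\text{jump set})\}$ has measure zero once one discards the critical set $\{\nabla\psi=0\}$ (on which $f(\psi)\nabla\psi=0=\nabla F(\psi)$ a.e. by another standard fact about Sobolev functions).

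The main obstacle is precisely this last point — handling a monotone but discontinuous $f$ — and the clean way around it is an approximation argument: write $f$ as a monotone limit of continuous monotone $f^{(n)}$ (e.g. by mollification after reflecting, or by replacing jumps with steep continuous ramps) with $f^{(n)}\to f$ a.e. and locally uniformly bounded; for each $f^{(n)}$ the identity $\int f^{(n)}(\psi)\nabla^\perp\psi\cdot\nabla\varphi\,dx=0$ holds by the $C^0$ (indeed Lipschitz, after one more mollification) case; then pass to the limit using $\psi\in C^1(\bar D)$, $\nabla^\perp\psi\cdot\nabla\varphi\in L^1$ with fixed compact support, $f^{(n)}(\psi)$ bounded, and dominated convergence, noting $f^{(n)}(\psi(x))\to f(\psi(x))$ for a.e. $x$ since the set of $x$ with $\psi(x)$ in the (countable) discontinuity set of $f$ has measure zero after removing $\{\nabla\psi=0\}$ — and on $\{\nabla\psi=0\}$ the integrand is zero anyway. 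Summing over $i$ and recombining $\sum_i\zeta_i=\zeta$ on $\bigcup_i{\rm supp}(\omega_i)$, which carries all of $\omega$, yields \eqref{int11} and completes the proof.
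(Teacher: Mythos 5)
The paper itself contains no proof of Lemma~A: it is quoted directly from \cite{CW3} (Theorem 1.2 there), so there is no in-paper argument to measure yours against, and your proposal should be judged as a self-contained proof. As such it is correct and follows the natural route: the positive separation of the supports lets you decouple the weak formulation into one integral per component, each tested against $\chi_i\zeta$ supported where $\omega_i=f_i(\mathcal G\omega)$; there the integrand becomes $\nabla^\perp F_i(\mathcal G\omega)\cdot\nabla(\chi_i\zeta)$ with $F_i'=f_i$, and any pairing $\int\nabla^\perp u\cdot\nabla\varphi\,dx$ with $u\in W^{1,1}_{\rm loc}$ and $\varphi\in C_c^\infty$ vanishes. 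Your handling of the only delicate case --- monotone, possibly discontinuous $f_i$ --- is the right one: the jump set is countable, each non-critical level set of the $C^{1}$ function $\mathcal G\omega$ is Lebesgue-null, and on $\{\nabla\mathcal G\omega=0\}$ both $f_i(\mathcal G\omega)\nabla\mathcal G\omega$ and $\nabla F_i(\mathcal G\omega)$ vanish a.e., so either the Lipschitz chain rule for the convex primitive or the approximation by continuous monotone functions closes the argument. Two cosmetic points you should tidy up, neither of which is a real gap: to get $\nabla(\chi_i\zeta)=\nabla\zeta$ a.e.\ on $\{\omega_i\neq0\}$ you should either take $\chi_i\equiv1$ on a neighbourhood of ${\rm supp}(\omega_i)$ or note that $0\le\chi_i\le1$ forces $\nabla\chi_i=0$ on $\{\chi_i=1\}$; and since the lemma does not assume ${\rm supp}(\omega_i)$ is compactly contained in $D$, the cutoff should be taken in $C^\infty(D)$ rather than $C_c^\infty(D)$ --- compact support of the test function is inherited from $\zeta$ alone.
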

Lemma A will be used in the proof of Theorem \ref{thmex} in Section 6.

Given a steady solution, an interesting and important problem to study is its stability. In this paper, we only consider stability of Lyapunov type, also called nonlinear stability.
To make it general, we give the definition of stability for a set of steady weak solutions.
\begin{definition}\label{sdef}
Let $\mathcal M \subset L^\infty(D)$ be a nonempty set of steady weak solutions to the vorticity equation, $\|\cdot\|$ be a norm on $L^\infty(D),$ $\mathcal P\subset L^\infty(D)$ be nonempty. If for any $\varepsilon>0,$ there exists $\delta>0$, such that for any $\omega_0\in\mathcal P$ satisfying
\[\inf_{w\in\mathcal M}\|w-\omega_0\|<\delta,\]
it holds that
\[\inf_{w\in\mathcal M}\|w-\omega(\cdot, t)\|<\varepsilon,\quad\forall\,t\geq 0,\]
where $\omega$ is the weak solution to the vorticity equation with initial vorticity $\omega_0,$ then $\mathcal M$ is said to be stable in the norm $\|\cdot\|$ with respect to initial perturbations in $\mathcal P.$
\end{definition}
Sometimes the stability in Definition \ref{sdef} is also called  orbital stability. When
 $\mathcal M$ contains only one element $\bar\omega$ and is stable, we say $\bar\omega$ is stable.

Commonly used norms include the $L^p$ norm of the vorticity $\|\omega\|_{L^p(D)}$, $1\leq p<+\infty$, and the energy norm
\[\|\omega\|_E=\left(\int_D|\nabla \mathcal G\omega|^2dx\right)^{1/2}.\]

\subsection{Steady Euler flows with concentrated vorticity}
In many natural phenomena such as tornados, the vorticity is sharply concentrated in a finite number of small regions and  vanishes elsewhere.  Mathematically, the vorticity should be assumed to possess the following form
\begin{equation}\label{cv}
\omega^\varepsilon=\sum_{i=1}^k\omega^{\varepsilon}_i,\quad
{\rm supp}(\omega^{\varepsilon}_i)\subset B_{o(1)}( x_i),\quad\int_D\omega^{\varepsilon}_i dx=\kappa_i+o(1),\quad i=1,\cdot\cdot\cdot,k,
\end{equation}
where $\varepsilon$ is a small positive parameter, $k$ is a positive integer, $x_1,\cdot\cdot\cdot,x_k$ are $k$ different points in $D$, $\kappa_1,\cdot\cdot\cdot,\kappa_k$ are $k$ nonzero real numbers, and $o(1)\to0$ as $\varepsilon \to0$.
The study for such kind of flows is rather difficult in both theoretical and numerical analysis because of the presence of strong singularity.
To simplify the problem, a related finite dimensional model, called the point vortex model, has been introduced. See  \cite{L} for example. In the point vortex model, each $\omega^{\varepsilon}_i$ is replaced by a Dirac measure located at $x_i$, and the evolution of  each $x_i$ is described by the following system of ordinary differential equations
\begin{equation}\label{pvml}
\kappa_i\frac{dx_i}{dt}=-\nabla^\perp_{x_i} W(x_1,\cdot\cdot\cdot,x_k),\quad i=1,\cdot\cdot\cdot,k,
\end{equation}
where $W$ is the Kirchhoff-Routh function related to $\kappa_1,\cdot\cdot\cdot,\kappa_k$, this is,
\begin{equation}\label{krf}
W(x_1,\cdot\cdot\cdot,x_k)=-\sum_{1\leq m<n\leq k}\kappa_m\kappa_nG(x_m, x_n)+\frac{1}{2}\sum_{n=1}^k\kappa_n^2h( x_n,x_n),\,\,( x_1,\cdot\cdot\cdot,x_k)\in\mathbb D^k,
\end{equation}
where
\[\mathbb D^k=\underbrace{D\times\cdot\cdot\cdot\times D}_{k\text { times}}\setminus\left\{(x_1,\cdot\cdot\cdot, x_k)\mid x_m\in D, x_m= x_n \text{ for some }m\neq n\right\}.\]
The point vortex model is only an approximate model, and rigorous analysis on its connection with the vorticity equation with concentrated vorticity is  an interesting research topic. We refer the interested readers to  \cite{DI, M,MP1,MP2,MP3,MP4,MPa,T3} for some deep results in this respect.

In this paper, we are mainly concerned with \emph{steady  flows} with concentrated vorticity.
From the point vortex model,
the locations of concentrated blobs of vorticity should constitute a critical point of $W$. This has been proved rigorously in  \cite{WZ} by using the anti-symmetry of the Biot-Savart kernel.
The inverse problem is: given a critical point $(x_1,\cdot\cdot\cdot,x_k)$ of $W$,  can we construct a family of steady Euler flows such that the vorticities ``shrink" to these $k$ points?  This problem has been studied by many authors in the past several decades and  lots of solutions of the form \eqref{cv} have been obtained via various methods.
See \cite{CLW,CPY,CWZ, EM0, EM, SV,T} and the references therein.

Roughly speaking, these existence results can be classified into two different types.
For the first type,  the rearrangement of the vorticity is prescribed, and the vorticity has the form  \eqref{cdan}; however, the  profile function $f_i$ is unknown. See \cite{EM0, EM} for example. We will discuss this type of solutions in detail below. For the second type, the vorticity has the form \eqref{cdan} with each $f_i$ prescribed, but the rearrangement of the vorticity is unknown. See \cite{CLW, CPY2, CWZ,SV} for example. Of course, for some special kind of solutions such as vortex patches, both  the rearrangement of the vorticity and the profile functions are known. See \cite{CPY, T}.
Our stability result (i.e.,, Theorem \ref{el2k2} below) is about solutions of the first type; however,  we will show in the last section that our method can also be used to handle the stability of the second type of flows under certain circumstance.

Our first result is to about the existence of a large class of concentrated steady vortex flows of the first type.  It is achieved by studying the following variational problem.

Let $k$ be a positive integer, $\vec\kappa=(\kappa_1,\cdot\cdot\cdot,\kappa_k),$ where each $\kappa_i$ is a nonzero real number. Let $(\bar x_1,\cdot\cdot\cdot,\bar x_k)\in\mathbb D^k$ be an isolated local minimum point of the Kirchhoff-Routh function $W$ related to $\vec\kappa$, that is,
\begin{equation*}
{ W}(x_1,\cdot\cdot\cdot,x_k)=- \sum_{1\leq i<j\leq k}\kappa_i\kappa_jG(x_i,x_j)+ \frac{1}{2}\sum_{i=1}^k\kappa_i^2h(x_i,x_i).
\end{equation*}
Choose $\bar r>0$ sufficiently small such that
\begin{equation}\label{br1}
\overline{B_{\bar r}(\bar x_i)}\subset D,\quad\forall\,1\leq i\leq k,
\end{equation}
\begin{equation}\label{br2}
\overline{B_{\bar r}(\bar x_i)}\cap \overline{B_{\bar r}(\bar x_j)}=\varnothing,\quad\forall\,1\leq i< j\leq k,
\end{equation}
\begin{equation}\label{br3}
 (\bar x_1,\cdot\cdot\cdot,\bar x_k) \mbox{ is the unique minimum point of $W$ in } \overline{B_{\bar r}(\bar x_1)}\times\cdot\cdot\cdot\times \overline{B_{\bar r}(\bar x_k)},
\end{equation}
where $\overline{B_{\bar r}(\bar x_i)}$ is the closure of ${B_{\bar r}(\bar x_i)}$ in the Euclidean topology.

Define
\begin{equation}\label{jjk}
\Xi=\{\xi\in L_c^\infty(\mathbb R^2)\mid \xi \geq 0 \mbox{ a.e.},\,\,  \xi \mbox{ is radially symmetric and nonincreasing} \}.
\end{equation}
Let $M$ be a fixed positive number, $\Pi^1,\cdot\cdot\cdot,\Pi^k:(0,\bar r)\to\Xi$ be $k$ maps such that for each $i\in\{1,\cdot\cdot\cdot,k\}$,
 \begin{equation}\label{h13}
 {\mbox{supp($\Pi^i_\varepsilon$)}}\subset \overline{B_\varepsilon(\mathbf 0)},
\quad \forall\, \varepsilon\in(0,\bar r),
\end{equation}
 \begin{equation}\label{hh13}
 \lim_{\varepsilon\to0}\int_{\mathbb R^2}\Pi^i_\varepsilon (x)dx=|\kappa_i|,
\end{equation}
 \begin{equation}\label{hhhh13}
  \|\Pi^i_\varepsilon\|_{L^\infty(\mathbb R^2)}\leq M\varepsilon^{-2}.
 \end{equation}
 Here and henceforth, we denote $\Pi^i_\varepsilon=\Pi^i(\varepsilon)$ for simplicity. For example,  we can choose
\[\Pi^i_\varepsilon(x)=\frac{1}{\varepsilon^2}\varrho_i\left(\frac{x}{\varepsilon}\right),\quad i=1,\cdot\cdot\cdot,k,\]
where $\varrho_1,\cdot\cdot\cdot,\varrho_k\in \Xi$ are $k$ fixed functions such that
\[\mbox{supp}(\varrho_i)\subset \overline{B_1(\mathbf 0)},\quad \|\varrho_i\|_{L^1(\mathbb R^2)}=|\kappa_i|,\quad \|\varrho_i\|_{L^\infty(\mathbb R^2)}\leq M,\quad \forall\,i=1,\cdot\cdot\cdot,k.\]

Define
\begin{equation}\label{kvar}
\mathcal K_{\varepsilon}=\left\{w=\sum_{i=1}^k w_i\,\,\bigg|\,\, w_i \mbox{ is a rearrangement of  sgn}(\kappa_i)\Pi^i_\varepsilon, \,\, {\rm supp}(w_i)\subset \overline{B_{\bar r}(\bar x_i)}\right\}.
\end{equation}
Denote $\mathcal M_{\varepsilon}$ the set of maximizers of $E$ over $\mathcal K_{\varepsilon}$.

Our first result can be stated as follows.
\begin{theorem}[Existence]\label{thmex}
For any $\varepsilon\in(0,\bar r)$,  $\mathcal M_{\varepsilon}$ is not empty, and the following assertions hold.
\begin{itemize}
\item[(i)] There exists some $\varepsilon_1>0$, such that for any $\varepsilon\in(0,\varepsilon_1)$,  $ \mathcal M_{\varepsilon}$ is a set of  steady solutions to the  vorticity  equation  in the sense of Definition \ref{wsv1}.
\item [(ii)] For each $i\in\{1,\cdot\cdot\cdot,k\}$, $\mbox{ supp}(\omega\mathbf 1_{B_{\bar r}(\bar x_i)})$ ``shrinks" to $\bar x_i$  uniformly as $\varepsilon\to0$. More precisely,
for any $\delta\in(0,\bar r),$ there exists some $\varepsilon_2>0,$  such that for any $\varepsilon\in(0,\varepsilon_2)$, it holds that
\[\mbox{\rm supp}(\omega\mathbf 1_{B_{\bar r}(\bar x_i)})\subset \overline{B_{\delta}(\bar x_i)},\quad\forall\,\omega \in\mathcal M_{\varepsilon}.\]
\end{itemize}
\end{theorem}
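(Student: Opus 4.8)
The plan is to obtain the maximizers by the direct method together with Burton's theory of rearrangements, and then to localize them by matching a lower bound for the energy coming from an explicit competitor against an upper bound obtained from the Riesz rearrangement inequality and the bathtub principle. By \eqref{hhhh13} every $w\in\mathcal K_\varepsilon$ satisfies $\|w\|_{L^\infty(D)}\le M\varepsilon^{-2}$, so $\mathcal K_\varepsilon$ is bounded in $L^2(D)$; since $\mathcal G\colon L^2(D)\to L^2(D)$ is compact and nonnegative, $E$ is convex and weakly sequentially continuous on bounded sets, hence $\sup_{\mathcal K_\varepsilon}E$ is attained on the weakly closed convex hull of $\mathcal K_\varepsilon$ at an extreme point, and by Burton's theory the extreme points of that hull lie in $\mathcal K_\varepsilon$; thus $\mathcal M_\varepsilon\ne\varnothing$. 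For $\omega=\sum_i\omega_i\in\mathcal M_\varepsilon$ put $\psi=\mathcal G\omega$. Replacing a single $\omega_i$ by an arbitrary rearrangement $\tilde\omega_i$ of $\mathrm{sgn}(\kappa_i)\Pi^i_\varepsilon$ supported in $\overline{B_{\bar r}(\bar x_i)}$ and using
\[ E\big(\omega+(\tilde\omega_i-\omega_i)\big)-E(\omega)=\int_D(\tilde\omega_i-\omega_i)\,\psi\,dx+\tfrac12\int_D(\tilde\omega_i-\omega_i)\,\mathcal G(\tilde\omega_i-\omega_i)\,dx\le 0 \]
together with the nonnegativity of $\mathcal G$, one finds that $\omega_i$ maximizes $w\mapsto\int_D w\,\psi\,dx$ over that rearrangement class. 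By Burton's variational principle there exist (for $\kappa_i>0$, with obvious sign changes otherwise) a nondecreasing $\phi_i$ and a number $\mu_i$ such that $\omega_i=\phi_i(\psi)$ a.e.\ on $\{\psi>\mu_i\}\cap B_{\bar r}(\bar x_i)$ with $\phi_i>0$ there, and $\omega_i=0$ a.e.\ on $\{\psi<\mu_i\}$; in particular $\{\psi>\mu_i\}\cap B_{\bar r}(\bar x_i)\subset\{\omega_i\ne0\}\subset\{\psi\ge\mu_i\}$ up to null sets.

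For the lower bound we test with $w^\ast=\sum_i\mathrm{sgn}(\kappa_i)\,\Pi^i_\varepsilon(\cdot-\bar x_i)\in\mathcal K_\varepsilon$: using $G(x,y)=-\tfrac1{2\pi}\ln|x-y|-h(x,y)$ with $h$ smooth, \eqref{h13} and \eqref{br2},
\[ E(\omega)\ge E(w^\ast)=-\tfrac1{4\pi}\sum_i\iint\Pi^i_\varepsilon(x)\Pi^i_\varepsilon(y)\ln|x-y|\,dx\,dy-W_\varepsilon(\bar x_1,\dots,\bar x_k)+o(1), \]
where $W_\varepsilon$ is the Kirchhoff--Routh function \eqref{krf} with each $|\kappa_i|$ replaced by $m_{i,\varepsilon}:=\|\Pi^i_\varepsilon\|_{L^1(\mathbb R^2)}$, and $m_{i,\varepsilon}\to|\kappa_i|$ by \eqref{hh13}. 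For the upper bound, the cross terms $\int_D\omega_i\mathcal G\omega_j\,dx$ with $i\ne j$ are $O(1)$ by \eqref{br2}, and the Riesz rearrangement inequality applied to the nonnegative radially decreasing kernel $\ln\frac{2\bar r}{|x-y|}$ (all supports having diameter $\le 2\bar r$) gives, since $\Pi^i_\varepsilon$ is its own symmetric decreasing rearrangement,
\[ -\iint\omega_i(x)\omega_i(y)\ln|x-y|\,dx\,dy\le-\iint\Pi^i_\varepsilon(x)\Pi^i_\varepsilon(y)\ln|x-y|\,dx\,dy. \]
Combining, each self-energy satisfies $\tfrac12\int_D\omega_i\mathcal G\omega_i\,dx=\tfrac{m_{i,\varepsilon}^2}{4\pi}\ln\tfrac1\varepsilon+O(1)$, i.e.\ is nearly maximal. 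A bathtub-principle computation then shows $\int_{\mathbb R^2}|\omega_i(y)|\ln\frac{\varepsilon}{|x-y|}\,dy\le C$ for all $x$ and that the double integral of this kernel against $|\omega_i|\otimes|\omega_i|$ is bounded below; choosing $z_i^\varepsilon$ to maximize $x\mapsto\int|\omega_i(y)|\ln\frac{\varepsilon}{|x-y|}\,dy$ yields the bulk concentration estimate $\int_{D\setminus B_\lambda(z_i^\varepsilon)}|\omega_i|\,dx\le C/\ln(\lambda/\varepsilon)$ for every $\lambda\in(\varepsilon,2\bar r)$, with all constants uniform over $\mathcal M_\varepsilon$.

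The localization then proceeds in three moves. (a) $\mu_i\to+\infty$ as $\varepsilon\to0$, uniformly over $\mathcal M_\varepsilon$: if $\mu_i\le C_0$ along a sequence, the bulk estimate forces $\psi>C_0$ on the annulus $B_{2\lambda}(z_i^\varepsilon)\setminus B_\lambda(z_i^\varepsilon)$ for a fixed small $\lambda$ and all small $\varepsilon$, so $|\{\psi>C_0\}|$ stays bounded below; but $\{\psi>C_0\}\subset\{\psi>\mu_i\}\subset\{\omega_i\ne0\}$ has measure $\le\pi\varepsilon^2\to0$, a contradiction. (b) The same kernel estimates give $\psi\le K_\lambda$ outside $B_{2\lambda}(z_i^\varepsilon)$ with $K_\lambda$ independent of $\varepsilon$; once $\mu_i>K_\lambda$ this forces $\{\omega_i\ne0\}\subset\{\psi\ge\mu_i\}\subset B_{2\lambda}(z_i^\varepsilon)$, hence $\mathrm{diam}\,\mathrm{supp}(\omega_i^\varepsilon)\to0$ and $\mathrm{supp}(\omega_i^\varepsilon)$ is compactly contained in $B_{\bar r}(\bar x_i)$. (c) With the supports now shrinking to points $\bar z_i^\varepsilon\in\mathrm{supp}(\omega_i^\varepsilon)$, the cross and $h$-terms in $E(\omega)$ can be evaluated, giving $E(\omega)=-\tfrac1{4\pi}\sum_i\iint\Pi^i_\varepsilon\Pi^i_\varepsilon\ln|x-y|\,dx\,dy-W_\varepsilon(\bar z_1^\varepsilon,\dots,\bar z_k^\varepsilon)+o(1)$, and comparison with the lower bound yields $W_\varepsilon(\bar z_1^\varepsilon,\dots,\bar z_k^\varepsilon)\le W_\varepsilon(\bar x_1,\dots,\bar x_k)+o(1)$. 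Since $W_\varepsilon\to W$ uniformly on $\prod_i\overline{B_{\bar r}(\bar x_i)}$ and $(\bar x_1,\dots,\bar x_k)$ is the unique minimizer of $W$ there by \eqref{br3}, we get $\bar z_i^\varepsilon\to\bar x_i$; together with (b) this is assertion (ii). Finally, for $\varepsilon$ small $\psi$ is bounded near $\partial B_{\bar r}(\bar x_i)$ by a constant $<\mu_i$, so $\mathrm{supp}(\omega_i)_\delta\subset B_{\bar r}(\bar x_i)$ for some $\delta>0$ with the sets $\mathrm{supp}(\omega_j)_\delta$ pairwise disjoint; extending $\phi_i$ by $0$ below the threshold to a monotone $f_i$ gives $\omega_i=f_i(\psi)$ a.e.\ on $\mathrm{supp}(\omega_i)_\delta$ (the level set $\{\psi=\mu_i\}$ contributes nothing, since there $\omega=-\Delta\psi=0$ a.e.), and Lemma A yields that $\omega$ is a steady weak solution, which is assertion (i).

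The main obstacle is the passage (a)--(c): converting the \emph{global} energy comparison into \emph{pointwise} control of the supports. The delicate point is the bootstrap showing $\mu_i\to\infty$ and then $\mathrm{supp}(\omega_i)\subset B_{2\lambda}(z_i^\varepsilon)$ — one must turn the near-optimality of the logarithmic self-energy (through the Riesz inequality and the bathtub principle) into quantitative mass concentration and feed this back through the Biot--Savart law and the Euler--Lagrange structure of the first step. The remaining ingredients — the direct method, Burton's theorem, and the Green-function expansions — are routine.
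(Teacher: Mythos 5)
Your proposal is correct and reaches both assertions, and the overall framework is the same as the paper's (direct method plus Burton's rearrangement theory for existence and the Euler--Lagrange profile $\omega_i=\phi_i(\mathcal G\omega)$ with threshold $\mu_i$; energy comparison against the radial competitor; Riesz rearrangement inequality; passage to the Kirchhoff--Routh function and uniqueness of its minimizer). The genuine difference is in how you localize the support. The paper follows Turkington's scheme: it introduces $T_\omega=2E(\omega)-\sum_i\mu_{\omega,i}|\kappa_{\varepsilon,i}|$, bounds $T_\omega$ from above by a Sobolev-embedding argument exploiting $\mathcal L(A_{\omega,i})\le\pi\varepsilon^2$ (Lemma \ref{ubd}), combines this with the energy lower bound and a pointwise upper bound on $\mathcal G\omega$ to pin $\mu_{\omega,i}$ at $-\frac{\ln\varepsilon}{2\pi}|\kappa_{\varepsilon,i}|+O(1)$, and then converts the inequality $\mathrm{sgn}(\kappa_i)\mathcal G\omega(x)\ge\mu_{\omega,i}$, valid at \emph{every} point of the vortex core, into the sharp bound $\mathrm{diam}(A_{\omega,i})\le R_0\varepsilon$. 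You instead extract near-maximality of each logarithmic self-energy from the Riesz inequality, deduce by a bathtub argument that all but a $C/\ln(\lambda/\varepsilon)$ fraction of the mass sits in $B_\lambda(z_i^\varepsilon)$ around one well-chosen center, and then bootstrap through the level-set structure ($\mu_i\to+\infty$ because $\{\mathrm{sgn}(\kappa_i)\mathcal G\omega>\mu_i\}\cap B_{\bar r}(\bar x_i)$ has measure at most $\pi\varepsilon^2$, followed by the exterior bound on the stream function) to capture the full support. This buys a softer argument that avoids the $T_\omega$ estimate entirely, at the price of a weaker conclusion ($\mathrm{diam}\,\mathrm{supp}(\omega_i)\to0$ rather than $O(\varepsilon)$), which is still sufficient for Theorem \ref{thmex} as stated and for the convergence \eqref{discc} used in step (c). Two points deserve care in a write-up: the bound $\mathrm{sgn}(\kappa_i)\mathcal G\omega\le K_\lambda$ in step (b) only holds on $\overline{B_{\bar r}(\bar x_i)}\setminus B_{2\lambda}(z_i^\varepsilon)$ (not globally), and its proof needs the tail mass $C/\ln(\lambda/\varepsilon)$ together with the $L^\infty$ bound $M\varepsilon^{-2}$ to tame the possible logarithmic singularity of the far-mass contribution; and your extreme-point route to existence relies on Burton's identification of $\mathcal K_\varepsilon$ with the extreme points of its weak closure, which is not among the quoted lemmas --- the first-variation argument you give next, applied to the weak limit of a maximizing sequence together with Lemma \ref{yyds}, is the self-contained way to get both existence and the profile, and is what the paper does.
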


The proof of Theorem \ref{thmex}  will be provided  in Section 6.

\subsection{Stability result}

Now we are ready to state our second result concerning the stability of $\mathcal M_\varepsilon$.

\begin{theorem}[Stability]\label{el2k2}
Fix $1<p<+\infty.$
Suppose $k=1$ or $k=2$ with $\kappa_1\kappa_2<0$. Then there exists some $\varepsilon_3>0,$ such that for any $\varepsilon\in(0,\varepsilon_3)$,  $\mathcal M_{\varepsilon}$  is  stable in the $L^p$ norm of the vorticity with initial perturbations in $L^\infty(D)$.
More precisely, for any $\varepsilon\in(0,\varepsilon_3)$ and $\epsilon>0$, there exists some $\delta>0,$ such that
 for any $\omega_0\in L^\infty(D)$,
\[ \inf_{w\in\mathcal M_\varepsilon}\|w-\omega_0\|_{L^p(D)}<\delta\Longrightarrow \inf_{w\in\mathcal M_\varepsilon}\|w-\omega(\cdot, t)\|_{L^p(D)}<\epsilon\,\,\,\forall\,t\geq 0,\]
where $\omega(x,t)$ is the unique weak solution to the vorticity equation with initial vorticity $\omega_0.$

\end{theorem}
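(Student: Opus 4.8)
The plan is to prove stability by the classical energy-Casimir / variational argument adapted to the isovortical setting, exploiting the three conservation laws in Yudovich's Theorem (preservation of the rearrangement class, continuity in $L^p$, and conservation of the kinetic energy $E$). The key structural input we need, which is the content of the companion existence result and its refinements, is that $\mathcal M_\varepsilon$ is a \emph{compact} set of \emph{strict local maximizers} of $E$ on the isovortical surface
\[
\mathcal K_\varepsilon^* := \bigl\{\,w=\textstyle\sum_{i=1}^k w_i \;\big|\; w_i \in \mathcal R_{{\rm sgn}(\kappa_i)\Pi^i_\varepsilon},\ \ {\rm supp}(w_i)\subset \overline{B_{\bar r}(\bar x_i)} \,\bigr\},
\]
or rather on the full rearrangement class $\mathcal R_{\omega_0}$ that the dynamics actually preserves. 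So the first step is to upgrade Theorem \ref{thmex} to the statement that, for $\varepsilon$ small, every element of $\mathcal M_\varepsilon$ is an isolated (in $L^p$, modulo the obvious symmetries) local maximizer of $E$ over $\mathcal K_\varepsilon$, with a uniform ``gap'': there exist $\rho_0>0$ and a strictly increasing function $\eta$ with $\eta(0)=0$ such that for all $\varepsilon\in(0,\varepsilon_3)$ and all $w\in\mathcal K_\varepsilon$,
\[
\operatorname{dist}_{L^p}(w,\mathcal M_\varepsilon)\in (0,\rho_0] \ \Longrightarrow\ \max_{\mathcal M_\varepsilon} E - E(w) \ \geq\ \eta\bigl(\operatorname{dist}_{L^p}(w,\mathcal M_\varepsilon)\bigr).
\]
This is where the hypothesis $k=1$, or $k=2$ with $\kappa_1\kappa_2<0$, enters: the separation property of the stream function (each blob sits in a nodal region where $\mathcal G w$ is, after an affine shift, single-signed and strictly monotone in $|x-\bar x_i|$) forces any near-maximizer to be the unique rearrangement with the correct super-level set geometry, up to translation of the blob inside $B_{\bar r}(\bar x_i)$ — and the variational characterization of $\mathcal M_\varepsilon$ then pins the blob at $\bar x_i$ because $(\bar x_1,\dots,\bar x_k)$ is the \emph{unique} minimizer of $W$ in the relevant product of balls, by \eqref{br3}. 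For $k\geq 3$, or same-sign pairs, one loses either the strict monotonicity or the isolatedness of the Kirchhoff--Routh critical point, so the argument breaks.

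With the isolated-local-maximizer-with-gap property in hand, the second step is the standard contradiction argument. Suppose $\mathcal M_\varepsilon$ is not stable: then there exist $\epsilon_0>0$, initial data $\omega_0^{(n)}\in L^\infty(D)$ with $\operatorname{dist}_{L^p}(\omega_0^{(n)},\mathcal M_\varepsilon)\to 0$, and times $t_n>0$ with $\operatorname{dist}_{L^p}(\omega^{(n)}(\cdot,t_n),\mathcal M_\varepsilon)\geq \epsilon_0$. Shrinking $\epsilon_0$ below $\rho_0$ and using continuity of $t\mapsto \omega^{(n)}(\cdot,t)$ in $L^p$ (item (ii) of Yudovich's Theorem), we may choose the first such time, so that in fact $\operatorname{dist}_{L^p}(\omega^{(n)}(\cdot,t_n),\mathcal M_\varepsilon)= \epsilon_0$ and the trajectory stays within the $\rho_0$-neighborhood up to $t_n$. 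Now I would use the conservation laws: $\omega^{(n)}(\cdot,t_n)\in\mathcal R_{\omega_0^{(n)}}$, and since $\omega_0^{(n)}\to\mathcal M_\varepsilon$ in $L^p$ one shows $\mathcal R_{\omega_0^{(n)}}$ is ``asymptotically'' the rearrangement class $\mathcal R_{\bar w_n}$ of a nearby element $\bar w_n\in\mathcal M_\varepsilon$; more precisely one constructs, for each $n$, a comparison element $\tilde w_n\in\mathcal K_\varepsilon$ with $\|\tilde w_n-\omega^{(n)}(\cdot,t_n)\|_{L^p}\to 0$ (project the rearrangement class and absorb the small error — this uses that a small $L^p$-perturbation of a bounded function concentrated in the $B_{\bar r}(\bar x_i)$'s can be corrected back into $\mathcal K_\varepsilon$ with small cost) and with $\operatorname{dist}_{L^p}(\tilde w_n,\mathcal M_\varepsilon)\to \epsilon_0>0$. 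By energy conservation, $E(\omega^{(n)}(\cdot,t_n))=E(\omega_0^{(n)})\to \max_{\mathcal M_\varepsilon}E$ (the latter because $\omega_0^{(n)}\to\mathcal M_\varepsilon$ in $L^p$, on which $E$ is continuous), hence $E(\tilde w_n)\to \max_{\mathcal M_\varepsilon}E$. But $\operatorname{dist}_{L^p}(\tilde w_n,\mathcal M_\varepsilon)=\epsilon_0+o(1)\in(0,\rho_0]$, so the gap estimate gives $\max_{\mathcal M_\varepsilon}E-E(\tilde w_n)\geq \eta(\epsilon_0/2)>0$ for large $n$, a contradiction.

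The main obstacle is the first step: establishing the quantitative gap, i.e.\ that $\mathcal M_\varepsilon$ is not merely a set of local maximizers but is \emph{uniformly isolated from below} among competitors in $\mathcal K_\varepsilon$, with a modulus $\eta$ independent of $\varepsilon$ (or at least controllable as $\varepsilon\to0$). This requires two sub-ingredients. First, a \emph{compactness} statement for $\mathcal M_\varepsilon$: any maximizing-type sequence must, after the blobs shrink (Theorem \ref{thmex}(ii)), have the blob centers converging to a minimizer of $W$, hence to $(\bar x_1,\dots,\bar x_k)$ by \eqref{br3}; combined with the prescribed rearrangement this gives precompactness of $\mathcal M_\varepsilon$ in $L^p$. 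Second, and harder, the \emph{strict} local maximality with a gap: one must show that if $w\in\mathcal K_\varepsilon$ has energy close to the max, then $w$ is close to $\mathcal M_\varepsilon$, \emph{with a rate}. Here I would expand $E(w)$ around a maximizer using the concavity of $E$ in the ``patch'' directions (the second variation of $E$ along rearrangements is controlled by $\int (\mathcal G\,\cdot\,)$ which, near the concentrated profile, behaves like the self-interaction plus a Kirchhoff--Routh term), and split the deviation into a ``shape'' part (handled by the Burton-type estimate that a function maximizing a linear functional $\int w\,\psi$ over a rearrangement class with $\psi$ strictly monotone is rigidly determined — this is exactly where the separation property of the stream function is indispensable) and a ``location'' part (handled by the nondegeneracy of $W$ at its isolated minimum $(\bar x_1,\dots,\bar x_k)$, giving at least $W(x)-W(\bar x)\gtrsim \eta(|x-\bar x|)$ for some modulus). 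Matching these two contributions, and checking all constants are uniform for small $\varepsilon$, is the technical heart of the proof; the case restriction $k\leq 2$ (with opposite signs when $k=2$) is precisely what guarantees the stream function separation and the strictness, so no extra hypothesis is needed beyond what is already assumed.
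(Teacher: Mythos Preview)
Your overall architecture --- prove compactness and isolatedness of $\mathcal M_\varepsilon$, then run a contradiction argument using the conserved quantities --- matches the paper. But two of your technical implementations diverge from the paper in ways that create real gaps.

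\textbf{Isolatedness: you ask for more than you need, and propose to prove it the hard way.} The paper does \emph{not} establish a quantitative gap $\eta(\operatorname{dist})$, and does not need one. It proves the qualitative statement: there is $\delta>0$ such that any $w\in(\mathcal M_\varepsilon)_\delta\cap\mathcal R_\varepsilon$ with $E(w)\ge M_\varepsilon:=\sup_{\mathcal K_\varepsilon}E$ must already lie in $\mathcal M_\varepsilon$. Here $\mathcal R_\varepsilon$ is the \emph{full} rearrangement class on $D$ (no support constraint), which is what the dynamics preserves. The argument is short: if $w_n\to\eta\in\mathcal M_\varepsilon$ in $L^p$ with $E(w_n)\ge M_\varepsilon$, then $\mathcal Gw_n\to\mathcal G\eta$ in $L^\infty$, so the separation inequality $\sup_{D\setminus B_{\bar r}}\mathcal Gw_n<\inf_{B_{r_\varepsilon}}\mathcal Gw_n$ persists. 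Pick $u_n\in\mathcal R_\varepsilon$ maximizing the \emph{linear} functional $\int u\,\mathcal Gw_n\,dx$; by the separation inequality and an elementary rearrangement lemma, $\operatorname{supp}(u_n)\subset\overline{B_{\bar r}(\bar x)}$, so $u_n\in\mathcal K_\varepsilon$ and $E(u_n)\le M_\varepsilon$. Now the identity
\[
E(u_n)-E(w_n)=\tfrac12\int(u_n-w_n)\mathcal G(u_n-w_n)\,dx+\int(u_n-w_n)\mathcal Gw_n\,dx\ge 0,
\]
with equality iff $u_n=w_n$, forces $w_n=u_n\in\mathcal M_\varepsilon$. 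No second-variation expansion, no shape/location decomposition, and crucially no nondegeneracy of $W$ --- only that $(\bar x_1,\dots,\bar x_k)$ is an isolated minimum, which is all that was assumed.

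\textbf{The stability step: your ``projection'' is the real gap.} You propose to correct $\omega^{(n)}(\cdot,t_n)$ back into $\mathcal K_\varepsilon$ with small $L^p$ cost. But the Euler flow does not respect the support constraint $\operatorname{supp}\subset\overline{B_{\bar r}(\bar x_i)}$, and $L^p$-closeness to $\mathcal M_\varepsilon$ gives no control on where the mass sits. The paper avoids this entirely via Burton's \emph{follower} trick: solve the \emph{linear} transport $\partial_t w^n+\nabla^\perp\mathcal G\omega^n\cdot\nabla w^n=0$ with initial datum $w_0\in\mathcal M_\varepsilon$. Then $w^n_{t_n}\in\mathcal R_{w_0}=\mathcal R_\varepsilon$ \emph{exactly}, and since $w^n-\omega^n$ is also transported, $\|w^n_{t_n}-\omega^n_{t_n}\|_{L^p}=\|w_0-\omega_0^n\|_{L^p}\to 0$. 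This produces the comparison sequence you want without any projection. Combined with the (qualitative) isolatedness and a compactness lemma for near-maximizing sequences in $(\mathcal M_\varepsilon)_{\tau/2}\cap\bar{\mathcal R}_\varepsilon$, the contradiction closes.

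So: keep your skeleton, but (i) work on the unconstrained rearrangement class $\mathcal R_\varepsilon$, (ii) replace the gap estimate by the linear-maximizer/energy-identity argument for isolatedness, and (iii) replace the projection by the linear transport follower.
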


If we choose $\Pi^i$ as follows:
\[\Pi^i_\varepsilon=\frac{\kappa_i}{\pi\varepsilon^2}\mathbf 1_{B_{\varepsilon}(\mathbf 0)},\quad i=1,\cdot\cdot\cdot,k,\,\,\varepsilon\in(0,\bar r),\]
then $\mathcal M_\varepsilon$ is a set of steady vortex patches, and the corresponding stability results have been obtained in \cite{CW2} for $k=1$ and \cite{CW1} for $k=2, \kappa_1\kappa_2<0$. However, the methods in \cite{CW1,CW2} rely heavily on the facts that the boundary of a concentrated steady vortex patch is $C^1$ and the stream function is nondegenerate on it.
For general $\Pi_\varepsilon^i$, there  is  little hope to apply the methods in \cite{CW1,CW2} to obtain stability.

To prove Theorem \ref{el2k2}, we introduce some new ideas. First, we prove a general stability criterion, i.e., Theorem \ref{thm551} in Section 3, showing that an isolated and compact set of local maximizers of the kinetic on some isovortical surface must be stable. This generalizes Burton's stability criterion  in \cite{B5} for a single isolated local maximizer. From this criterion, we need only to verify the compactness and isolatedness of $\mathcal M_{\varepsilon}$. Compactness is easy, and can be obtained in view of the variational nature of $\mathcal M_{\varepsilon}$.  The trouble is isolatedness.  Our second innovation is that we develop a new technique to prove the  isolatedness of  $\mathcal M_{\varepsilon}$.  We
discover that the stream function has a very good property, which we call the separation property, when the vorticity is sufficiently  concentrated. With the separation property, together with the symmetry and positivity of the Green operator, we can employ a lemma (i.e., Lemma \ref{lem202} in Section 2) from measure theory to obtain isolatedness.

It is worth mentioning that our method can also be used to handle
 the stability of some other concentrated vortex flows. An example is given in Section 7.

Before ending this section, we recall some related works on the stability of two-dimensional steady Euler flows and make some comparisons.
The study for the stability of steady Euler flows in two dimensions has a long history, dating back to the works of Kelvin \cite{K} and Love \cite{Lo}. The first systematic and general method of proving nonlinear stability was established by Arnol'd in 1960s. Arnol'd in \cite{A1,A2} proposed  the well-known energy-Casimir (EC) functional method and used it to prove what are now usually called Arnol'd's first and second stability theorems. In 1990s,  Wolansky-Ghil \cite{WG0,WG} developed the EC functional method by introducing the idea of supporting functionals and obtained some extensions of Arnol'd's second stability theorem. See also \cite{LZ2, W2}.  Although Arnol'd's stability theorems and their extensions are powerful tools in  studying the nonlinear stability of steady Euler flows, they have strong limitations as well. First, they can only deal with flows with vorticity satisfying
\[\omega=f(\mathcal G\omega)\,\,\mbox{a.e. in }D,\]
where the profile function $f$ is given. However, in Section 6 we will see that the profile functions related to the flows in Theorem \ref{el2k2} are unknown. Second, they usually require $f$ to be $C^1$, which excludes the possibility of application to many interesting cases such as vortex patches. Third,
these criteria have strong requirements on the energy of solutions. For example, Arnol'd's second stability requires the operator $-\Delta-f'(\mathcal G\omega)$ in $L^2(D)$ to be positive, which does not hold or is hard to verify in most cases.  For these reasons, it is almost impossible to prove Theorem \ref{el2k2} by simply applying these general stability criteria.

Except for Arnol'd's stability theorems and their extensions, there are also some stability results about special steady Euler flows in the literature.
See \cite{CW1,CW2, Ta,WP} about steady vortex patches, \cite{BG}  about steady flows related to solutions of the mean field equation, and \cite{W1} about steady flows related to least energy solutions of the Lane-Emden equation.  However, the methods used therein are more or less based on some particularities of the  flows,  and  seems not helpful with our problem.

A remaining open question is the stability of steady Euler flows with concentrated vorticity distributed in more than two small regions. In this case, there are at least two blobs of vorticity with the same sign, and consequently the separation property for the stream function can not  possibly hold.

This paper is organized as follows. In Section 2, we present several useful lemmas that will be frequently used in subsequent sections. In Section 3, we prove a general stability criterion. In Sections 4 and 5, we give the proof of Theorem \ref{el2k2} for $k=1$ and $k=2$ respectively. In Section 6, we give the proof of Theorem \ref{thmex} for completeness. In Section 7, we  briefly  discuss  the stability of concentrated vortex flows with prescribed profile functions.

 \section{Preliminaries}

The aim of this section is to present several useful lemmas for later use.
To make the statement concise, we only give the version that is enough for our purpose.

\begin{lemma}\label{lem201}
Let $1\leq p< +\infty,$ $\Omega\subset\mathbb R^2$ be a Lebesgue measurable set such that $\mathcal L(\Omega)<+\infty$, $f\in L^p(\Omega),$ and $\bar {\mathcal R}_f$ be the weak closure of $\mathcal R_f$ in $L^p(\Omega).$ Then $\bar {\mathcal R}_f$ convex.
\end{lemma}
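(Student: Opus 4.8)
The plan is to identify $\bar{\mathcal R}_f$ with a set whose convexity is evident from its very definition. Consider
\[
C=\Big\{\,g\in L^p(\Omega)\ \big|\ \int_\Omega (g-s)_+\,dx\le\int_\Omega (f-s)_+\,dx\ \text{ for all }s\in\mathbb R,\ \ \int_\Omega g\,dx=\int_\Omega f\,dx\,\Big\}.
\]
Since $t\mapsto(t-s)_+$ is convex and $1$-Lipschitz, each functional $g\mapsto\int_\Omega(g-s)_+\,dx$ is convex and norm-continuous on $L^p(\Omega)$ (here $\mathcal L(\Omega)<+\infty$ enters through H\"older's inequality), hence weakly lower semicontinuous; and $g\mapsto\int_\Omega g\,dx$ is weakly continuous because $\mathbf 1_\Omega\in L^{p'}(\Omega)$. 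Therefore $C$ is convex and weakly closed. On the other hand, the layer-cake formula gives $\int_\Omega(g-s)_+\,dx=\int_0^{\infty}\mathcal L(\{g>s+\tau\})\,d\tau$, so every $g\in\mathcal R_f$ satisfies the defining relations of $C$ with equality (and $\int_\Omega g\,dx=\int_\Omega f\,dx$ since $g$ and $f$ have the same distribution function); hence $\mathcal R_f\subseteq C$, and consequently $\bar{\mathcal R}_f\subseteq C$. This reduces the lemma to the reverse inclusion $C\subseteq\bar{\mathcal R}_f$.

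I would establish $C\subseteq\bar{\mathcal R}_f$ by a sequence of reductions. First, as $(\Omega,\mathcal L)$ is a non-atomic finite measure space, a measure isomorphism lets us assume $\Omega=(0,L)$ with $L=\mathcal L(\Omega)$. Second, one reduces to the case that $f$ is a step function: approximating the decreasing rearrangement $f^*$ in $L^p(0,L)$ by decreasing step functions and transporting through Ryff's representation (every $L^1$ function on a non-atomic finite measure space equals its decreasing rearrangement composed with a measure-preserving map) shows that the classes $\mathcal R_{f_k}$ and the corresponding sets $C_k$ approximate $\mathcal R_f$ and $C$ in $L^p$; since $\bar{\mathcal R}_f$ is the weak closure of a norm-bounded set, this passes to the limit. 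Third, for $f$ a step function with values $a_1>\dots>a_N$ on sets of prescribed measures and for $g\in C$ (automatically bounded, since $f$ is), one builds $w_n\in\mathcal R_f$ with $w_n\rightharpoonup g$ by partitioning $(0,L)$ into finer and finer intervals and, on each interval, redistributing the finitely many values $a_i$ in proportions making the mean of $w_n$ there close to the mean of $g$; the crucial point is that these local proportions can be chosen globally so as to use up exactly the prescribed total amount of each $a_i$, and this feasibility is exactly what the majorization inequalities defining $C$ encode --- it is the continuum version of the fact that a doubly stochastic matrix is a convex combination of permutation matrices. One may test the weak convergence only against characteristic functions, because $\mathcal R_f$ is bounded in $L^p$.

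I expect this last step --- equivalently, the bare inclusion $C\subseteq\bar{\mathcal R}_f$ --- to be the main obstacle, since it is the only point requiring genuine measure-theoretic combinatorics (a Hall-type matching argument, or reduction to finite-dimensional majorization). If one prefers to avoid writing down the explicit characterization, the same difficulty can be isolated differently: $\bar{\mathcal R}_f$ is norm-closed, so it is enough to prove it is midpoint-convex; by the joint weak continuity of $(g_0,g_1)\mapsto\frac12(g_0+g_1)$, the weak closedness of $\bar{\mathcal R}_f$, and the weak density of $\mathcal R_f$ in it, this reduces to showing $\frac12(g_0+g_1)\in\bar{\mathcal R}_f$ whenever $g_0,g_1\in\mathcal R_f$, which one proves by the analogous construction --- split each cell of a fine partition of $\Omega$ into two halves of equal measure and transplant measure-preserving copies of $g_0$ and $g_1$ onto the two families of halves, arranged globally to reproduce the distribution function of $f$. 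For $p=1$ it is worth noting in addition that $\mathcal R_f$ is uniformly integrable, all its members sharing the distribution function of $f$, so by Dunford--Pettis $\bar{\mathcal R}_f$ is in fact weakly compact, which keeps the weak topology on it well behaved.
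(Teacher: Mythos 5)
The paper does not actually prove this lemma; it simply cites Burton [B1, Theorem~6], so any self-contained argument is by definition a different route. Your strategy is the standard one from that literature: identify $\bar{\mathcal R}_f$ with the majorization set $C$. The easy half is done correctly and completely: each functional $g\mapsto\int_\Omega(g-s)_+\,dx$ is convex and norm-continuous, hence weakly lower semicontinuous by Mazur, the layer-cake identity shows $\mathcal R_f\subset C$, and therefore $\bar{\mathcal R}_f\subset C$ with $C$ convex and weakly closed. But for the hard inclusion $C\subset\bar{\mathcal R}_f$ you only give a plan (reduction to step functions via Ryff's theorem, then a continuum Birkhoff/Hall-type feasibility argument). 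That feasibility step is not a routine verification --- it is essentially the entire content of the Ryff--Day--Burton theorem being cited --- so as written the primary route has a genuine gap exactly where you predicted it would.

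The alternative you sketch in your last paragraph is, however, the better proof of the lemma as stated, and it is essentially complete: since $\bar{\mathcal R}_f$ is weakly closed (hence norm closed) it suffices to prove midpoint convexity, and by metrizability of the weak topology on bounded sets (for $p>1$; on the weakly compact $\bar{\mathcal R}_f$ for $p=1$) one reduces to $g_0,g_1\in\mathcal R_f$. For these, take nested partitions $\{A_j^n\}_j$ of $\Omega$ generating the $\sigma$-algebra, and on each cell $A_j^n$ let $w_n$ have distribution $\tfrac12\mu_0^j+\tfrac12\mu_1^j$, where $\mu_i^j$ is the push-forward of $\mathcal L$ under $g_i|_{A_j^n}$ (realizable by non-atomicity, i.e.\ by the paper's Lemma~2.3). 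The point you should make explicit is that here no global matching is needed at all: the total distribution of $w_n$ is $\sum_j\tfrac12(\mu_0^j+\mu_1^j)=\tfrac12\mu_f+\tfrac12\mu_f=\mu_f$ automatically, so $w_n\in\mathcal R_f$, while $\int_{A_j^n}w_n=\int_{A_j^n}\tfrac12(g_0+g_1)$ exactly, which together with the uniform $L^p$ bound gives $w_n\rightharpoonup\tfrac12(g_0+g_1)$. The Hall-type difficulty you isolate only arises if one insists on the full characterization $C=\bar{\mathcal R}_f$, which is strictly more than the lemma requires. So: route one as written is incomplete at its crux; route two is correct and, once the automatic bookkeeping is observed, yields a short self-contained proof.
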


\begin{proof}
See Theorem 6 in \cite{B1}.
\end{proof}

\begin{lemma}\label{lem2011}
Let $V,W\subset\mathbb R^2$ be two Lebesgue measurable sets such that $\mathcal L(V)= \mathcal L(W)<+\infty$. Then for any $w\in L^1(W),$ there exists some $v\in L^1(V)$ such that
\[\mathcal L(\{x\in W\mid w(x)>s\})=\mathcal L(\{x\in V\mid v(x)>s\}),\quad\forall \,s\in\mathbb R.\]
\end{lemma}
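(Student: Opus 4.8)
The plan is to transport the question to a one-dimensional model. Set $a:=\mathcal L(V)=\mathcal L(W)$; if $a=0$ then $V$ is Lebesgue-null and $v\equiv 0$ works, so assume $a>0$. I will build an explicit measure-preserving map from $V$ onto the interval $[0,a)$ and then take $v$ to be the composition of this map with the decreasing rearrangement of $w$.

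First I would prove that for any Lebesgue measurable $\Omega\subset\mathbb R^2$ with $\mathcal L(\Omega)=a$ there is a Borel map $\sigma_\Omega:\Omega\to[0,a)$ which is \emph{measure preserving}, meaning $\mathcal L(\sigma_\Omega^{-1}(E))=\mathcal L_1(E)$ for every Borel $E\subset[0,a)$ (here $\mathcal L_1$ denotes one-dimensional Lebesgue measure). The map is $\sigma_\Omega(x):=\varphi_\Omega(x^1)$ with $\varphi_\Omega(t):=\mathcal L(\{x\in\Omega\mid x^1<t\})$. By Fubini's theorem $\varphi_\Omega(t)=\int_{-\infty}^{t}\ell_\Omega(s)\,ds$, where $\ell_\Omega(s)$ is the one-dimensional length of the vertical slice $\{x^2\in\mathbb R\mid (s,x^2)\in\Omega\}$ and $\int_{\mathbb R}\ell_\Omega=a$; hence $\varphi_\Omega$ is continuous, nondecreasing, and $\varphi_\Omega'=\ell_\Omega$ a.e. (and it attains the value $a$ at most on a Lebesgue-null subset of $\Omega$, which we discard). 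The measure-preserving property then reduces to the elementary statement that the push-forward of the measure $\ell_\Omega(s)\,ds$ under $\varphi_\Omega$ equals $\mathcal L_1$ on $[0,a)$: this is checked on open intervals via $\int_{\varphi_\Omega^{-1}((c,d))}\ell_\Omega\,ds=d-c$ (using continuity of $\varphi_\Omega$) and then propagated to all Borel sets by uniqueness of measures. This bookkeeping is the only point of the argument that needs a little care, since $\varphi_\Omega$ is in general neither strictly monotone nor smooth, so a naive change of variables is unavailable.

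Second, let $w^\star:[0,a)\to\mathbb R$ be the decreasing rearrangement of $w$, i.e.\ the generalized inverse $w^\star(t):=\inf\{s\in\mathbb R\mid\mathcal L(\{x\in W\mid w(x)>s\})\le t\}$ of the distribution function of $w$. It is classical (see, e.g., \cite{LL}) that $w^\star$ is nonincreasing and equimeasurable with $w$,
\[\mathcal L_1(\{t\in[0,a)\mid w^\star(t)>s\})=\mathcal L(\{x\in W\mid w(x)>s\})\qquad\text{for all }s\in\mathbb R,\]
and that $w^\star\in L^1([0,a))$ since $w\in L^1(W)$. I then set $v:=w^\star\circ\sigma_V$. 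The measure-preserving property of $\sigma_V$ yields $\int_V|v|\,d\mathcal L=\int_0^a|w^\star|\,dt<\infty$, so $v\in L^1(V)$, and for every $s\in\mathbb R$
\[\mathcal L(\{x\in V\mid v(x)>s\})=\mathcal L\big(\sigma_V^{-1}(\{w^\star>s\})\big)=\mathcal L_1(\{t\in[0,a)\mid w^\star(t)>s\})=\mathcal L(\{x\in W\mid w(x)>s\}),\]
which is exactly the claimed identity. As an alternative one could skip the explicit constructions and simply invoke the classical isomorphism theorem for non-atomic standard measure spaces to obtain a measure-preserving bijection $T:V\to W$, and take $v:=w\circ T$; the route sketched above is chosen only to keep the lemma self-contained. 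I do not anticipate any genuine obstacle beyond the verification of the measure-preserving property in the first step.
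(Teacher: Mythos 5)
Your proof is correct. The paper offers no argument for this lemma at all --- it is a one-line citation to Lemma 4(ii) of Burton \cite{B1} --- and your construction (push $V$ onto the interval $[0,a)$ by the measure-preserving map $x\mapsto \varphi_V(x^1)$ with $\varphi_V(t)=\mathcal L(\{y\in V\mid y^1<t\})$, then compose with the decreasing rearrangement $w^\star$ of $w$) is essentially the standard argument underlying Burton's lemma, so you have made the step self-contained rather than taken a genuinely different route. The two points that require care are exactly the ones you identify, and both go through: (a) the pushforward of $\ell_V(s)\,ds$ under $\varphi_V$ equals $\mathcal L_1$ on $[0,a)$ because, by continuity and monotonicity, $\varphi_V^{-1}((c,d))$ is an interval $(\alpha,\beta)$ with $\varphi_V(\alpha)=c$ and $\varphi_V(\beta)=d$, whence $\int_\alpha^\beta \ell_V = d-c$; the flat stretches of $\varphi_V$ carry no $\ell_V$-mass, so non-injectivity is harmless, and uniqueness of finite measures agreeing on a generating $\pi$-system finishes it; (b) since $w$ is signed, equimeasurability of $|w^\star|$ with $|w|$ (hence $w^\star\in L^1([0,a))$) follows from the identity $\mathcal L_1(\{w^\star>s\})=\mathcal L(\{w>s\})$ together with left limits of the distribution function and the layer-cake formula. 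One cosmetic remark: $w^\star(0)$ may equal $+\infty$ when $w$ is essentially unbounded above, but $\sigma_V^{-1}(\{0\})$ is Lebesgue-null, so $v=w^\star\circ\sigma_V$ is still finite a.e.\ and lies in $L^1(V)$ as claimed.
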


\begin{proof}
This lemma is an easy consequence of Lemma 4, (ii) in \cite{B1}.
\end{proof}

\begin{lemma}\label{lem202}
Let $1\leq p\leq +\infty,$ $\Omega\subset\mathbb R^2$ be a Lebesgue measurable set such that $\mathcal L(\Omega)<+\infty$. Let $f\in L^p(\Omega), g\in L^{p'}(\Omega)$ be fixed, where $p'$ is the H\"older conjugate exponent of $p$.  Then  there exists $\tilde f\in \mathcal R_f$, such that
\[\int_\Omega \tilde f gdx=\sup_{u\in \mathcal R_f, v\in \mathcal R_{g}}\int_\Omega uvdx.\]
\end{lemma}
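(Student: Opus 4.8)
The plan is to reduce the double maximization to a single explicit construction. Write $m=\mathcal L(\Omega)$ and let $f^\ast,g^\ast\colon(0,m)\to\mathbb R$ denote the decreasing rearrangements of $f$ and $g$ onto the interval $(0,m)$. The Hardy--Littlewood inequality gives, for every $u\in\mathcal R_f$ and $v\in\mathcal R_g$,
\[
\int_\Omega uv\,dx\le\int_0^m u^\ast(s)v^\ast(s)\,ds=\int_0^m f^\ast(s)g^\ast(s)\,ds,
\]
since $u^\ast=f^\ast$ and $v^\ast=g^\ast$ whenever $u\in\mathcal R_f$ and $v\in\mathcal R_g$. Hence the supremum on the right-hand side of the statement is bounded above by $\int_0^m f^\ast g^\ast$. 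Consequently it suffices to produce a single $\tilde f\in\mathcal R_f$ for which
\[
\int_\Omega\tilde f\,g\,dx=\int_0^m f^\ast(s)g^\ast(s)\,ds,
\]
because then, taking the admissible competitor $v=g\in\mathcal R_g$, the supremum is also attained from below, and all the inequalities collapse to equalities. The essential point is exactly that this extremal value must be achieved against the \emph{fixed} function $g$, rearranging $f$ alone.

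To construct $\tilde f$ I would sort $\Omega$ according to the size of $g$. Concretely, I would build a measurable map $R\colon\Omega\to(0,m)$, defined almost everywhere, that pushes $\mathcal L|_\Omega$ forward to Lebesgue measure on $(0,m)$ and carries $g$ onto its decreasing rearrangement, i.e. $g=g^\ast\circ R$ a.e. On the ``strictly decreasing'' part of $g$ this is forced by $R(x)=\mathcal L(\{y\in\Omega\mid g(y)>g(x)\})$; on each level set $\{g=c\}$ of positive measure one distributes the values of $R$ over the corresponding subinterval of length $\mathcal L(\{g=c\})$. Setting $\tilde f=f^\ast\circ R$, the fact that $R$ is measure-preserving shows at once that $\tilde f$ has the same distribution function as $f^\ast$, hence as $f$, so $\tilde f\in\mathcal R_f$; and the change-of-variables formula for the measure-preserving map $R$ gives
\[
\int_\Omega\tilde f\,g\,dx=\int_\Omega(f^\ast\circ R)(g^\ast\circ R)\,dx=\int_0^m f^\ast(s)g^\ast(s)\,ds,
\]
which is precisely what the reduction requires. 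Because $R$ is measure-preserving, this argument is insensitive to $p$ and covers the full range $1\le p\le+\infty$ uniformly.

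The main obstacle is the rigorous construction of the ranking map $R$, and in particular the tie-breaking on the level sets of $g$ carrying positive measure. There are at most countably many values $c$ with $\mathcal L(\{g=c\})>0$, and for each such $c$ one needs a measure-preserving bijection, modulo null sets, from $\{g=c\}$ onto an interval of length $\mathcal L(\{g=c\})$; this exists because any non-atomic finite measure subspace of $\mathbb R^2$ is measure-isomorphic to an interval. Assembling these pieces with the explicit formula on the complementary part yields $R$ with $g=g^\ast\circ R$ a.e. An alternative route would invoke Burton's existence of a maximizer of the linear functional $u\mapsto\int_\Omega ug\,dx$ over $\mathcal R_f$ from \cite{B1} (using weak compactness of $\bar{\mathcal R}_f$, Lemma \ref{lem201}, and the fact that the extreme points of $\bar{\mathcal R}_f$ lie in $\mathcal R_f$), but one would then still have to identify the resulting one-sided maximum with $\int_0^m f^\ast g^\ast$ and treat the endpoints $p=1$ and $p=+\infty$ separately; the direct sorting construction above sidesteps both complications.
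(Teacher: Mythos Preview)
The paper does not supply an independent argument here; its entire proof is the citation ``See Theorem~1 and Theorem~4 in \cite{B1}.'' Burton's Theorem~1 gives the existence of a maximizer $\tilde f\in\mathcal R_f$ of the linear functional $u\mapsto\int_\Omega ug\,dx$, and his Theorem~4 identifies the value of the double supremum with $\int_0^m f^\ast g^\ast$, so combining the two yields the lemma. Your direct route---Hardy--Littlewood for the upper bound, then an explicit ranking map $R$ with $g=g^\ast\circ R$ and $\tilde f:=f^\ast\circ R$ for the matching lower bound---is correct and is essentially the classical proof underlying those two theorems (the existence of $R$ is Ryff's theorem for nonatomic finite measure spaces). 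Two small points worth recording: the Hardy--Littlewood step must be invoked in its signed form, which follows from the nonnegative case by translating $u,v$ upward by a large constant and using that $\int u=\int u^\ast$; and your tie-breaking argument on level sets indeed works because Lebesgue measure on $\Omega\subset\mathbb R^2$ is nonatomic. Compared with the route you sketch at the end via weak compactness and extreme points of $\bar{\mathcal R}_f$, your construction has the advantage of handling all $1\le p\le+\infty$ uniformly and making the attained value explicit, at the cost of importing the measure-isomorphism machinery behind Ryff's theorem.
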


\begin{proof}
See Theorem 1 and Theorem 4 in \cite{B1}.
\end{proof}

\begin{lemma}\label{yyds}
Let  $p\in[1,+\infty)$ be fixed, $\Omega\subset\mathbb R^2$ be a smooth bounded domain, $f_0\in L^p(\Omega)$ be nonnegative, and $g\in H^2(\Omega)$. Suppose $\tilde f\in\bar{\mathcal R}_{f_0}$ satisfies
\begin{itemize}
\item[(i)]$\int_\Omega \tilde fg dx\geq \int_\Omega fg dx$ for all $f\in\bar{\mathcal R}_{f_0};$
\item[(ii)] $-\Delta g\geq \tilde f$ a.e. $x\in\Omega$.
\end{itemize}
Then $\tilde f\in\mathcal R_{f_0}$. Moreover, there exists some nondecreasing function $\phi:\mathbb R\to[-\infty,+\infty]$ such that $\tilde f(x)=\phi(g(x))$ a.e. $x\in\Omega.$

\end{lemma}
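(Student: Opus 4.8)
The plan is to combine Burton's theory of rearrangements with a bathtub-type argument to show that a function which maximizes a linear functional over the weak closure of a rearrangement class, and which additionally satisfies the sign condition (ii), must in fact be a genuine element of the rearrangement class and be expressible as a monotone function of $g$. The starting point is the variational inequality (i). By Lemma \ref{lem202} (applied with $\Omega$, $p$, $f_0$ in place of $f$ and $g$ the given function, noting $g\in H^2(\Omega)\subset L^{p'}(\Omega)$ since $\Omega$ is a bounded planar domain), the supremum of $\int_\Omega uv\,dx$ over $u\in\mathcal R_{f_0}$, $v\in\mathcal R_g$ is attained at some $\hat f\in\mathcal R_{f_0}$; and by a standard rearrangement inequality this common value equals $\sup_{u\in\mathcal R_{f_0}}\int_\Omega u g\,dx$, which by density and continuity of the linear functional equals $\sup_{u\in\bar{\mathcal R}_{f_0}}\int_\Omega u g\,dx$. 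Hence $\int_\Omega \tilde f g\,dx=\int_\Omega \hat f g\,dx$ with $\hat f\in\mathcal R_{f_0}$, so $\tilde f$ also attains the maximum of the linear functional $u\mapsto\int_\Omega ug\,dx$ over $\bar{\mathcal R}_{f_0}$.

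Next I would show $\tilde f=\phi\circ g$ for some nondecreasing $\phi$. The key structural fact is that a maximizer of a linear functional $\int_\Omega u g\,dx$ over $\bar{\mathcal R}_{f_0}$ must be ``stacked according to the level sets of $g$'': on the set $\{g>t\}$ the maximizer should take the largest available values. Concretely, using the convexity of $\bar{\mathcal R}_{f_0}$ (Lemma \ref{lem201}) and a first-variation/exchange argument — if on a positive-measure set where $g$ is larger $\tilde f$ took a smaller value than on a set where $g$ is smaller, swapping (via Lemma \ref{lem2011}) would strictly increase the functional — one deduces that $\tilde f$ and $g$ are ``similarly ordered'': for a.e. $x,y$, $g(x)>g(y)$ implies $\tilde f(x)\ge \tilde f(y)$. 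This is exactly the statement that there is a nondecreasing $\phi:\mathbb R\to[-\infty,+\infty]$ with $\tilde f=\phi(g)$ a.e.; the only delicate point is the treatment of the level sets $\{g=t\}$ of positive measure, but on such a set $-\Delta g=0$ a.e., so by (ii) $\tilde f\le 0$ there, which pins down the value of $\phi(t)$ consistently and causes no obstruction. The fact that $\tilde f\in\mathcal R_{f_0}$ (not merely in the weak closure) then follows: a function of the form $\phi(g)$ with $\phi$ nondecreasing has level sets $\{\tilde f>s\}=\{g>\phi^{-1}(s)\}$ (up to the ambiguity on plateaus), and matching measures with $\hat f\in\mathcal R_{f_0}$ forces $\tilde f$ to have the same distribution function as $f_0$; alternatively, one invokes the known principle (Burton) that an extreme point of $\bar{\mathcal R}_{f_0}$ lies in $\mathcal R_{f_0}$, together with the fact that linear functionals attain their maxima at extreme points.

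I expect the main obstacle to be the careful use of hypothesis (ii), $-\Delta g\ge\tilde f$ a.e., which at first sight seems unused in the ordering argument above. Its role is precisely to control the behavior on the critical set of $g$ and, more importantly, to rule out pathologies in the identification $\tilde f=\phi(g)$ when $g$ has flat pieces or when $\phi$ would otherwise be forced to take infinite values on a set of positive measure; it guarantees $\tilde f$ is finite a.e.\ and genuinely a rearrangement rather than a ``degenerate'' weak-closure element with mass escaping to a level set. Making the exchange/first-variation argument rigorous in the weak closure — where one cannot freely perturb within $\mathcal R_{f_0}$ but only within its convex weak closure — is the technical heart: the clean way is to test the maximality of $\tilde f$ against competitors of the form $\tilde f+\eta(\mathbf 1_A-\mathbf 1_B)$ built from Lemma \ref{lem2011} (equimeasurable swaps lie in $\mathcal R_{f_0}\subset\bar{\mathcal R}_{f_0}$), let the swap region shrink, and extract the pointwise ordering from the resulting family of inequalities. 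Once the ordering is in hand, everything else is bookkeeping with distribution functions.
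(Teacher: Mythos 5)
The paper offers no proof of this lemma; it simply cites Lemma~2.15 of Burton \cite{B2}, so you are in effect reconstructing Burton's argument. Your skeleton is the right one: an exchange argument gives that $\tilde f$ and $g$ are similarly ordered, and hypothesis (ii) is used exactly where you say it is, namely on positive-measure level sets of $g$, where $\Delta g=0$ a.e.\ (since $g\in H^2$) forces $\tilde f\le 0$, hence $\tilde f=0$ by the nonnegativity inherited from $f_0$ under weak limits. But two steps, as written, do not hold up. First, the parenthetical ``equimeasurable swaps lie in $\mathcal R_{f_0}\subset\bar{\mathcal R}_{f_0}$'' is wrong: swapping $\tilde f$ on two sets of equal measure produces a rearrangement of $\tilde f$, not of $f_0$, and $\tilde f\in\mathcal R_{f_0}$ is precisely what is to be proved. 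The competitor is admissible for a different reason: $\bar{\mathcal R}_{f_0}$ is rearrangement-invariant (Burton characterizes the weak closure purely in terms of the decreasing rearrangement, via majorization), so $\mathcal R_{\tilde f}\subset\bar{\mathcal R}_{f_0}$. This is fixable, but it is exactly the point where one must check the perturbation stays in the admissible class, and your justification is circular.

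Second, and more seriously, neither of your two routes to $\tilde f\in\mathcal R_{f_0}$ is complete. The extreme-point route is a non sequitur: a linear functional attains its maximum at \emph{some} extreme point, but the set of maximizers is a face of $\bar{\mathcal R}_{f_0}$, and the given $\tilde f$ need not be extreme. Indeed, when $g$ has a plateau $\{g=t\}$ of positive measure, this face contains averaged maximizers lying genuinely outside $\mathcal R_{f_0}$ (take $f_0=\mathbf 1_A$ and spread the leftover mass uniformly over the plateau); these are exactly the competitors that (ii) must exclude, so an argument that never reuses (ii) at this stage cannot succeed. The distribution-matching route is the correct one but is not ``bookkeeping'': one has
$\int_\Omega\tilde f g\,dx\le\int_0^{\mathcal L(\Omega)}\tilde f^\Delta g^\Delta\,ds\le\int_0^{\mathcal L(\Omega)}f_0^\Delta g^\Delta\,ds$,
the second inequality by the majorization $\int_0^s\tilde f^\Delta\le\int_0^s f_0^\Delta$ and Abel summation against the decreasing $g^\Delta$; equality throughout forces $\int_0^s\tilde f^\Delta=\int_0^s f_0^\Delta$ only off the intervals on which $g^\Delta$ is constant, and it is precisely the vanishing of $\tilde f$ on the flat sets (from (ii)) that eliminates the remaining freedom on those intervals and yields $\tilde f^\Delta=f_0^\Delta$. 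Until (ii) is threaded through this final equality-case analysis, the conclusion $\tilde f\in\mathcal R_{f_0}$ has not been established.
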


\begin{proof}
See Lemma 2.15 in \cite{B2}.
\end{proof}

 \begin{lemma}\label{etans}
Let  $\Omega\subset\mathbb R^2$ be a smooth bounded domain, $\omega\in L^\infty((0,+\infty)\times \Omega)$, $\zeta_0\in L^\infty( \Omega)$. Then there exists a unique $\zeta\in L^\infty((0,+\infty)\times  \Omega)$ satisfying the following conditions:
\begin{itemize}
\item[(i)] $\zeta$ solves $\partial_t\zeta +\nabla^\perp \mathcal G\omega\cdot\nabla\zeta=0$ in the sense of distributions, i.e.,
\begin{equation}\label{ydmf}
\int_0^\infty\int_{ \Omega} \zeta\partial_t\phi+\zeta\nabla^\perp\mathcal G\omega\cdot\nabla\phi dxdt=0,\quad \forall\,\phi\in C_c^\infty((0,+\infty)\times  \Omega);
\end{equation}
\item [(ii)]$\zeta\in C([0,+\infty); L^p( \Omega))$  for any $p\in[1,+\infty)$;\item[(iii)] $\zeta(0,\cdot)=\zeta_0;$
\item[(iv)] $\zeta(t,\cdot)\in\mathcal R_{\zeta_0}$ for any $t\in[0,+\infty)$.
\end{itemize}
\end{lemma}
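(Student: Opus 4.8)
The plan is to reduce the lemma to the classical theory of the linear transport equation driven by a log-Lipschitz, divergence-free, boundary-tangent velocity field --- which is exactly the transport equation appearing inside the proof of Yudovich's Theorem. Writing $\mathbf u(t,\cdot):=\nabla^\perp\mathcal G\omega(t,\cdot)$, I would first record the regularity of the drift. Since $-\Delta\mathcal G\omega(t,\cdot)=\omega(t,\cdot)\in L^\infty(\Omega)$, elliptic estimates give $\mathcal G\omega(t,\cdot)\in W^{2,q}(\Omega)$ for every $q<+\infty$ with norm controlled by $\|\omega\|_{L^\infty((0,+\infty)\times\Omega)}$; the sharp form of these estimates yields, for a.e. $t$, the log-Lipschitz bound $|\mathbf u(t,x)-\mathbf u(t,y)|\leq C\|\omega\|_{L^\infty}\,\varphi(|x-y|)$, where $\varphi(r)=r(1+|\ln r|)$ for small $r$. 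Moreover $\nabla\cdot\mathbf u=0$ in $\Omega$, and since $\mathcal G\omega(t,\cdot)$ vanishes on $\partial\Omega$ its gradient is normal there, so $\mathbf u(t,\cdot)$ is tangent to $\partial\Omega$. Finally $(t,x)\mapsto\mathbf u(t,x)$ is bounded, measurable in $t$, and continuous in $x$.

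Next comes existence. For each $s\geq0$ and $x\in\bar\Omega$ I would consider the characteristic equation $\dot X(t)=\mathbf u(t,X(t))$, $X(s)=x$: Carath\'eodory's theorem gives an absolutely continuous solution, the Osgood condition $\int_0^1 dr/\varphi(r)=+\infty$ forces uniqueness, and tangency of $\mathbf u$ to $\partial\Omega$ keeps the trajectory in $\bar\Omega$ for all $t\geq0$. This produces a two-parameter flow $\Phi_{t,s}:\bar\Omega\to\bar\Omega$, a homeomorphism with $\Phi_{s,t}=\Phi_{t,s}^{-1}$, which preserves $\mathcal L$ because $\nabla\cdot\mathbf u=0$. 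I then set $\zeta(t,x):=\zeta_0(\Phi_{0,t}(x))$. Property (iii) is immediate; property (iv) follows from measure preservation of $\Phi_{0,t}$ (which also gives the uniform identity $\|\zeta(t,\cdot)\|_{L^p(\Omega)}=\|\zeta_0\|_{L^p(\Omega)}$ for all $t$); property (ii) is proved first for $\zeta_0\in C(\bar\Omega)$, where it reduces to joint continuity of $(t,x)\mapsto\Phi_{0,t}(x)$ (a Gronwall-with-$\varphi$ estimate), and then extended to general $\zeta_0\in L^\infty(\Omega)\subset L^p(\Omega)$ by density of $C(\bar\Omega)$ together with the uniform $L^p$-isometry; property (i) is the standard check that a function which is constant along characteristics is a distributional solution of $\partial_t\zeta+\nabla^\perp\mathcal G\omega\cdot\nabla\zeta=0$, done first for $\zeta_0\in C^1(\bar\Omega)$ via the chain rule along characteristics and then passed to the limit.

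For uniqueness, let $\zeta^1,\zeta^2$ satisfy (i)--(iv), and put $\eta=\zeta^1-\zeta^2$, a distributional solution in $C([0,+\infty);L^p(\Omega))$ with $\eta(0,\cdot)=0$. Mollifying in $x$ and using a commutator/renormalization argument (valid because $\mathbf u\in L^\infty_t W^{1,q}_x$ and $\nabla\cdot\mathbf u=0$) shows that $\eta$ is a renormalized solution, so $|\eta|^p$ solves the same transport equation; integrating over $\Omega$ and using $\nabla\cdot\mathbf u=0$ with the boundary tangency gives $\frac{d}{dt}\int_\Omega|\eta(t,\cdot)|^p\,dx=0$, whence $\eta\equiv0$. (Equivalently, one verifies directly that any such distributional solution must equal $\zeta_0\circ\Phi_{0,t}$ by testing against test functions pulled back along the characteristic flow.) Since this is word-for-word the linear-transport core used in proving Yudovich's Theorem, I would present the argument compactly, citing \cite{MB} and \cite{MP4} for the portions that are verbatim the same.

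The only genuine obstacle is technical rather than conceptual: the datum $\omega$ is merely $L^\infty$ --- not continuous --- in time, so the characteristic ODE must be handled in the Carath\'eodory/absolutely-continuous framework and the continuity statement (ii) requires the small density argument indicated above; none of this is deep, and the log-Lipschitz regularity of $\mathbf u$ (hence the Osgood uniqueness of characteristics and the measure-preserving flow) is what makes everything go through.
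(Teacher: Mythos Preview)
The paper does not give a proof of this lemma at all: it simply cites Lemmas 11 and 12 of Burton \cite{B5}. Your proposal is a correct and essentially self-contained reconstruction of the standard argument---log-Lipschitz regularity of $\nabla^\perp\mathcal G\omega$ from Calder\'on--Zygmund estimates, Osgood-unique Carath\'eodory characteristics yielding a measure-preserving flow, transport of $\zeta_0$ along this flow, and DiPerna--Lions renormalization (or the dual-flow argument) for uniqueness---which is precisely the content encapsulated in Burton's lemmas and in the Yudovich-theory references \cite{MB,MP4} you mention. So you are doing more, not less, than the paper; the substance is identical, and the only difference is that you unpack what the citation hides. Your remarks about the Carath\'eodory framework for merely $L^\infty$-in-time drift and the density argument for item (ii) address exactly the right technical points.
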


\begin{proof}
See Lemma 11 and Lemma 12 in \cite{B5}.
\end{proof}

The following two rearrangement inequalities can be found in Lieb and Loss's book \cite{LL}.
\begin{lemma}\label{rri1}
Let $f,g$ be nonnegative Lebesgue measurable functions on $\mathbb R^2$, and $f^*,g^*$ be their symmetric-decreasing rearrangements. Then
\[\int_{\mathbb R^2}fgdx\leq \int_{\mathbb R^2}f^*g^*dx.\]
\end{lemma}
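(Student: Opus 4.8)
The plan is to prove this classical Hardy--Littlewood inequality by the ``layer-cake'' (bathtub) representation together with Tonelli's theorem. First I would write, for a.e.\ $x\in\mathbb R^2$,
\[
f(x)=\int_0^\infty\mathbf 1_{\{f>s\}}(x)\,ds,\qquad g(x)=\int_0^\infty\mathbf 1_{\{g>t\}}(x)\,dt,
\]
so that $f(x)g(x)=\int_0^\infty\!\!\int_0^\infty\mathbf 1_{\{f>s\}}(x)\,\mathbf 1_{\{g>t\}}(x)\,ds\,dt$. Integrating over $x\in\mathbb R^2$ and invoking Tonelli's theorem (all integrands are nonnegative, so this is legitimate even if some of the integrals are $+\infty$) gives
\[
\int_{\mathbb R^2}fg\,dx=\int_0^\infty\!\!\int_0^\infty\mathcal L\bigl(\{f>s\}\cap\{g>t\}\bigr)\,ds\,dt,
\]
and the same identity holds with $f^*$ and $g^*$ in place of $f$ and $g$.

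The core of the argument is then a level-by-level comparison. By definition of the symmetric-decreasing rearrangement, for each $s>0$ the superlevel set $\{f^*>s\}$ is an open disk centered at $\mathbf 0$ with $\mathcal L(\{f^*>s\})=\mathcal L(\{f>s\})$, and similarly $\{g^*>t\}$ is an open disk centered at $\mathbf 0$ with $\mathcal L(\{g^*>t\})=\mathcal L(\{g>t\})$. Since any two disks centered at the origin are nested, the one of smaller measure is contained in the other, whence
\[
\mathcal L\bigl(\{f^*>s\}\cap\{g^*>t\}\bigr)=\min\bigl\{\mathcal L(\{f>s\}),\,\mathcal L(\{g>t\})\bigr\}\ \ge\ \mathcal L\bigl(\{f>s\}\cap\{g>t\}\bigr),
\]
the last inequality being trivial since $A\cap B\subset A$ and $A\cap B\subset B$ for arbitrary sets. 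Integrating this pointwise inequality in $(s,t)$ over $(0,\infty)^2$ and substituting into the two layer-cake identities yields $\int_{\mathbb R^2}fg\,dx\le\int_{\mathbb R^2}f^*g^*\,dx$.

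Since Lemma \ref{rri1} is the classical Hardy--Littlewood rearrangement inequality, quoted here from \cite{LL}, there is no serious obstacle; the only points deserving a word of care are the Borel measurability of $(s,t)\mapsto\mathcal L(\{f>s\}\cap\{g>t\})$ (immediate from its monotonicity in each variable) and the fact that the chain of equalities and inequalities above is valid whether or not the integrals are finite. An equally acceptable option is to cite \cite[\S 3.4]{LL} directly, which is the route the paper takes.
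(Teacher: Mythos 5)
Your proof is correct: it is the standard layer-cake argument for the Hardy--Littlewood inequality, which is precisely the proof in \S 3.4 of \cite{LL} that the paper simply cites. The key step---that concentric disks are nested, so $\mathcal L(\{f^*>s\}\cap\{g^*>t\})=\min\{\mathcal L(\{f>s\}),\mathcal L(\{g>t\})\}\ge\mathcal L(\{f>s\}\cap\{g>t\})$---is handled correctly, and your remarks on Tonelli and measurability cover the only delicate points.
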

\begin{proof}
See \S 3.4 in \cite{LL}.
\end{proof}

\begin{lemma}\label{rri2}
Let $f,g,h$ be nonnegative Lebesgue measurable functions on $\mathbb R^2$, and $f^*,g^*,h^*$ be their symmetric-decreasing rearrangements. Then
\[\int_{\mathbb R^2}\int_{\mathbb R^2}f(x)g(x-y)h(y)dxdy\leq \int_{\mathbb R^2}\int_{\mathbb R^2}f^*(x)g^*(x-y)h^*(y)dxdy.\]
\end{lemma}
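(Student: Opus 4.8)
The inequality is the classical Riesz rearrangement inequality (a special case of the Brascamp--Lieb--Luttinger inequality), and I would follow the standard route, in three steps. The first step is to reduce to characteristic functions by the layer-cake formula. Writing $f(x)=\int_0^\infty\mathbf 1_{\{f>s\}}(x)\,ds$ and similarly for $g$ and $h$, and using that the symmetric-decreasing rearrangement commutes with truncation, so that $\left(\mathbf 1_{\{f>s\}}\right)^{*}=\mathbf 1_{\{f^{*}>s\}}=\mathbf 1_{\{f>s\}^{*}}$, one substitutes these expansions into both sides of the desired inequality and applies Tonelli's theorem. This reduces the claim to proving
\[
\mathcal I(A,B,C):=\int_{\mathbb R^2}\!\!\int_{\mathbb R^2}\mathbf 1_A(x)\,\mathbf 1_B(x-y)\,\mathbf 1_C(y)\,dx\,dy\ \le\ \mathcal I(A^{*},B^{*},C^{*})
\]
for all Lebesgue measurable $A,B,C\subset\mathbb R^2$, where $A^{*}$ denotes the open disk centered at $\mathbf 0$ with $\mathcal L(A^{*})=\mathcal L(A)$ (if $\mathcal L(A)=+\infty$ then $A^{*}=\mathbb R^2$, and both sides may be $+\infty$, so there is no loss in assuming all three sets have finite measure). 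I note in passing that the two-function inequality of Lemma~\ref{rri1} follows from the same reduction together with the trivial bound $\mathcal L(A\cap B)\le\min\{\mathcal L(A),\mathcal L(B)\}=\mathcal L(A^{*}\cap B^{*})$; the three-set version is genuinely harder because it is no longer a pure measure comparison.

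The second step passes from dimension two to dimension one by Steiner symmetrization. Fix a unit vector $e$, write points as $x=(t,z)$ with $t\in\mathbb R e$ and $z\in e^{\perp}$, and for a set $A$ let $A_z=\{t:(t,z)\in A\}$ be its one-dimensional slices; let $S_eA$ be the Steiner symmetrization of $A$ in direction $e$, whose slice $(S_eA)_z$ is the symmetric interval of length $\mathcal L^1(A_z)$, so that $\mathcal L(S_eA)=\mathcal L(A)$. Since $x-y=(t_1-t_2,z_1-z_2)$, Tonelli's theorem in the perpendicular variables gives
\[
\mathcal I(A,B,C)=\int_{e^{\perp}}\int_{e^{\perp}}\mathcal I_1\big(A_{z_1},\,B_{z_1-z_2},\,C_{z_2}\big)\,dz_1\,dz_2,
\]
where $\mathcal I_1$ is the one-dimensional analogue of the functional. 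Applying the one-dimensional Riesz inequality $\mathcal I_1(P,Q,R)\le\mathcal I_1(P^{*},Q^{*},R^{*})$ slice by slice, and using that $(P_z)^{*}=(S_eP)_z$ for $P\in\{A,B,C\}$, yields $\mathcal I(A,B,C)\le\mathcal I(S_eA,S_eB,S_eC)$. Iterating this along a suitable sequence of directions $e_1,e_2,\dots$ produces sets $A_m,B_m,C_m$ of the same fixed finite measures with $\mathcal I(A,B,C)\le\mathcal I(A_m,B_m,C_m)$ for every $m$, while, by the classical convergence theorem for iterated Steiner symmetrizations, $\mathbf 1_{A_m}\to\mathbf 1_{A^{*}}$ in $L^1(\mathbb R^2)$ and likewise for $B$ and $C$. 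Because $\mathcal I(A,B,C)=\int_{\mathbb R^2}(\mathbf 1_A*\mathbf 1_{-B})\,\mathbf 1_C\,dx$ is continuous in its three arguments with respect to $L^1$-convergence of indicators of sets of a fixed finite measure, letting $m\to\infty$ gives $\mathcal I(A,B,C)\le\mathcal I(A^{*},B^{*},C^{*})$; undoing the layer-cake reduction then yields the lemma.

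What is left is the one-dimensional inequality $\mathcal I_1(P,Q,R)\le\mathcal I_1(P^{*},Q^{*},R^{*})$ for $P,Q,R\subset\mathbb R$ of finite measure, with $P^{*}$ the symmetric interval of the same length, and this is the main obstacle; the rest is bookkeeping with Tonelli's theorem. Here I would argue by approximation: every measurable set of finite measure is approximated in measure by a finite union of bounded open intervals, and both $\mathcal I_1$ and the one-dimensional rearrangement are continuous under such approximation, so it suffices to treat finite unions of intervals. For those one argues by induction on the total number of component intervals: one translates the components of $P$, $Q$ and $R$ toward the origin along paths that never decrease $\mathcal I_1$ until two components of one of the sets collide and merge, strictly lowering the count, the base case of a single interval each being a direct computation. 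This one-dimensional step and the convergence of iterated Steiner symmetrizations are the two delicate ingredients, and both are classical; a complete treatment along exactly these lines can be found in \S\,3.7 of \cite{LL}.
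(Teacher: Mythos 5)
Your proposal is correct: it is the classical proof of the Riesz rearrangement inequality (layer-cake reduction to indicator functions, Steiner symmetrization to reduce to one dimension, the one-dimensional sliding argument, and convergence of iterated symmetrizations), which is exactly the argument in \S 3.7 of the reference the paper cites; the paper itself offers no proof beyond that citation. The two ingredients you defer (the one-dimensional case and the convergence of iterated Steiner symmetrizations) are indeed the only nontrivial points, and both are treated in the cited source, so nothing is missing.
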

\begin{proof}
See \S 3.7 in \cite{LL}.
\end{proof}

\section{A general stability criterion}

Throughout this section let $p\in(1,+\infty)$ be fixed, and $\mathcal R$ be the rearrangement class of some fixed function  $f_0\in L^\infty(D)$. Denote $\bar {\mathcal R}$ the weak closure of $\mathcal R$ in $L^p(D)$. For any nonempty set $A\subset L^p(D),$ denote $A_\delta$ the $\delta$-neighborhood of $A$ in $L^p(D)$, that is,
\[A_\delta=\left\{w\in L^p(D)\mid  {\rm dist}(w,A)<\delta\right\},\]
where \[{\rm dist}(w,A)=\inf_{v\in A}\|w-v\|_{L^p(D)}.\]
Obviously $A_\delta$ is an open set in $L^p(D)$.

The purpose of this section is to prove the following stability criterion.
\begin{theorem}\label{thm551}
 Suppose $\mathcal M\subset \mathcal R$ is nonempty and satisfies
\begin{itemize}
\item [(i)] $\mathcal M$ is compact in $L^p(D)$;
\item[(ii)]$\mathcal M$ is an isolated set of local maximizers of $E$ over $\mathcal R,$ that is, there exists some $\delta>0$ such that
\begin{equation}\label{dct}
\mathcal M=\{w\in \mathcal M_\delta\cap \mathcal R\mid E(w)=E_0\}, \mbox{ where }E_0=\sup_{w\in \mathcal M_\delta\cap \mathcal R}E(w).
\end{equation}
\end{itemize}
Then any $\omega\in\mathcal M $ is a  steady solution  to the  vorticity equation in the sense of Definition \ref{wsv1}, and $ \mathcal M $ is stable in the $L^p$ norm of the vorticity with respect to initial perturbations in $L^\infty(D)$.
\end{theorem}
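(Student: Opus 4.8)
The plan is to split the conclusion into two parts: (a) every $\omega\in\mathcal M$ is a steady weak solution, and (b) $\mathcal M$ is stable. For part (a), I would argue variationally. Since $\mathcal M$ is a set of local maximizers of $E$ over $\mathcal R$, for each $\omega\in\mathcal M$ and any admissible variation within $\mathcal R$ the first-order condition must hold. The standard device (cf.\ Burton \cite{B1,B5}) is to note that $E$ is the quadratic form associated with the symmetric positive operator $\mathcal G$, so its Gateaux derivative at $\omega$ is $\mathcal G\omega$; maximizing a weakly continuous convex functional over $\mathcal R$ forces $\omega$ to be a maximizer also over the convex weak closure $\bar{\mathcal R}$ (here one uses Lemma \ref{lem201} and the fact, recorded in Lemma \ref{lem202}, that the supremum of $\int \omega\,\mathcal G\omega$ over pairs of rearrangements is attained), and then $\omega$ must be an increasing function of $\mathcal G\omega$ a.e., by Lemma \ref{yyds} applied with $g=\mathcal G\omega$ (note $-\Delta\mathcal G\omega=\omega$, so hypothesis (ii) of that lemma holds with equality). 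Once $\omega=\phi(\mathcal G\omega)$ a.e.\ with $\phi$ nondecreasing, the weak steady equation \eqref{int11} follows from the chain rule / the identity $\nabla^\perp\mathcal G\omega\cdot\nabla\omega=0$ in the distributional sense, which is routine once one regularizes $\phi$ by monotone approximation.

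For part (b), the stability argument, I would run the classical energy–isovortical-surface scheme adapted to a set of maximizers rather than a single one. Suppose, for contradiction, that stability fails: then there exist $\epsilon_0>0$, a sequence of initial data $\omega_0^n\in L^\infty(D)$ with $\mathrm{dist}(\omega_0^n,\mathcal M)\to 0$, and times $t_n\geq 0$ such that the solutions $\omega^n(\cdot,t)$ first reach $\mathrm{dist}(\omega^n(\cdot,t_n),\mathcal M)=\epsilon_0$ while staying $\leq\epsilon_0$ before $t_n$; by shrinking $\epsilon_0$ we may assume $\epsilon_0<\delta$, with $\delta$ the isolation radius from \eqref{dct}. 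The key point is that $\mathcal M_{\epsilon_0}\cap\mathcal R$ is (weakly sequentially) precompact in $L^p$ because $\mathcal R$ is $L^\infty$-bounded, and $E$ restricted to this set attains its supremum $E_0$ exactly on $\mathcal M$, with a quantitative gap: because $\mathcal M$ is compact and isolated, there is a modulus $\eta(\cdot)$ with $\eta(s)>0$ for $s>0$ such that $w\in\mathcal M_\delta\cap\mathcal R$ and $\mathrm{dist}(w,\mathcal M)\geq s$ imply $E(w)\leq E_0-\eta(s)$. (If no such $\eta$ existed one could extract a sequence in $\mathcal M_\delta\cap\mathcal R$ bounded away from $\mathcal M$ with $E\to E_0$; its weak limit lies in $\bar{\mathcal R}$, and a compactness/concentration argument forces it into $\mathcal R$, hence into $\mathcal M$ by \eqref{dct}, contradicting the separation — I will need the strong-convergence upgrade here.) Now use the two conservation laws from Yudovich's Theorem: $\omega^n(\cdot,t)\in\mathcal R_{\omega_0^n}$ and $E(\omega^n(\cdot,t))=E(\omega_0^n)$. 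Since $\omega_0^n\to\mathcal M$ in $L^p$ and $\omega_0^n$ is $L^\infty$-bounded, $E(\omega_0^n)\to E_0$. Replacing $\omega^n$ by a nearby element of $\mathcal R$ (using Lemma \ref{lem2011} to transport the rearrangement class, controlling the error in $L^p$), we may treat $\omega^n(\cdot,t_n)$ as essentially lying in $\mathcal M_{\epsilon_0}\cap\mathcal R\subset\mathcal M_\delta\cap\mathcal R$ with $\mathrm{dist}\geq\epsilon_0-o(1)$, so $E(\omega^n(\cdot,t_n))\leq E_0-\eta(\epsilon_0)+o(1)$. But $E(\omega^n(\cdot,t_n))=E(\omega_0^n)\to E_0$, a contradiction for $n$ large.

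Two technical points deserve care and are where I expect the real work. First, the passage from weak to strong convergence: a weakly convergent sequence in $\mathcal R$ with $E$ converging to $E_0$ need not converge strongly in general, but here one exploits that $\mathcal G:L^p\to W^{2,p}$ is compact, so $\mathcal G\omega^n\to\mathcal G\omega$ strongly; combined with $E(\omega^n)=\frac12\int\omega^n\mathcal G\omega^n\to E_0=\frac12\int\omega\,\mathcal G\omega$ and the boundedness in $L^\infty$, one upgrades to strong $L^p$ convergence of $\omega^n$ itself, and then the limit lies in $\mathcal R$ (not just $\bar{\mathcal R}$) and is a maximizer, hence in $\mathcal M$. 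This is the standard "compactness of the operator saves the day" step and is the main obstacle to make fully rigorous. Second, bookkeeping the fact that $\omega^n(\cdot,t_n)$ need not lie exactly in $\mathcal R$ (its rearrangement class is $\mathcal R_{\omega_0^n}$, which only converges to $\mathcal R$): one fixes this by a transport lemma (Lemma \ref{lem2011}) producing $\tilde\omega^n\in\mathcal R$ with $\|\tilde\omega^n-\omega^n(\cdot,t_n)\|_{L^p(D)}\to 0$, which suffices since both the distance-to-$\mathcal M$ functional and $E$ are continuous in $L^p$ on $L^\infty$-bounded sets. Assembling these, the contradiction closes and $\mathcal M$ is stable in the $L^p$ norm with respect to $L^\infty$ perturbations. $\qed$
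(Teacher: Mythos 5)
The overall architecture of your stability argument (argue by contradiction, use conservation of energy and of the rearrangement class, and let isolatedness force an energy gap) is the same as the paper's, but two steps as written do not go through. The first gap is the replacement of $\omega^n(\cdot,t_n)$ by a nearby element of $\mathcal R$. You invoke Lemma \ref{lem2011}, but that lemma only asserts the \emph{existence} of some rearrangement on a set of equal measure; it gives no control on $\|\tilde\omega^n-\omega^n(\cdot,t_n)\|_{L^p(D)}$. Knowing that $\omega_0^n$ is $L^p$-close to $\mathcal M\subset\mathcal R$ does not, by that lemma, imply that an arbitrary element of $\mathcal R_{\omega_0^n}$ --- in particular $\omega^n(\cdot,t_n)$, which the flow may have rearranged arbitrarily --- is $L^p$-close to $\mathcal R$. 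This is exactly where the paper inserts its key device: for each $n$ it solves the \emph{linear} transport equation $\partial_t w^n+\nabla^\perp\mathcal G\omega^n\cdot\nabla w^n=0$ with initial datum $w_0\in\mathcal M$ (Lemma \ref{etans}); since $\omega^n$ and $w^n$ are carried by the same measure-preserving flow, $w^n_{t_n}\in\mathcal R$ while $w^n_{t_n}-\omega^n_{t_n}\in\mathcal R_{w_0-\omega_0^n}$, so $\|w^n_{t_n}-\omega^n_{t_n}\|_{L^p(D)}=\|w_0-\omega_0^n\|_{L^p(D)}\to0$. Without this companion flow (or a genuine quantitative nonexpansivity statement for rearrangement classes, which Lemma \ref{lem2011} is not), your argument does not close.

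The second gap is the strong-convergence upgrade. You attribute it to compactness of $\mathcal G$ together with convergence of $E$; but $E$ is weakly sequentially continuous on $L^p$-bounded sets precisely \emph{because} $\mathcal G$ is compact, so $E(\omega^n)\to E(\eta)$ is automatic under weak convergence and carries no extra information. The actual mechanism (Lemmas \ref{lem559}--\ref{lem561}) is: (a) show the weak limit $\eta$ lies in $\mathcal R$, using convexity of $\bar{\mathcal R}$ (Lemma \ref{lem201}), the first-order condition $\int_D v\,\mathcal G\eta\,dx\leq\int_D\eta\,\mathcal G\eta\,dx$ for all $v\in\bar{\mathcal R}$, and Lemma \ref{yyds} --- and even to write this first-order condition one must know that $E_0$ remains the supremum over a neighborhood of $\mathcal M$ intersected with $\bar{\mathcal R}$, which is what the weakly-open-set construction (Lemmas \ref{lem552}--\ref{lem558}) and the compactness of $\mathcal M$ provide; then (b) all elements of $\mathcal R$ have the same $L^p$ norm, so weak convergence plus norm convergence plus uniform convexity of $L^p$ yields strong convergence. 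Your sketch skips (a) ("a compactness/concentration argument forces it into $\mathcal R$") and misidentifies the mechanism in (b). Finally, for the steady-solution part your route via $\omega=\phi(\mathcal G\omega)$ with merely monotone $\phi$ is not routine (it is essentially Lemma A, and Lemma \ref{yyds} as stated requires a nonnegative $f_0$); the paper instead composes $\omega$ with the area-preserving flow generated by $\nabla^\perp\zeta$ and differentiates $E$ at $t=0$, which is both simpler and sign-insensitive.
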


\begin{remark}
The  assumption (ii) in Theorem \ref{thm551} holds if and only the following two items are satisfied
\begin{itemize}
\item[(1)]$E$ is a constant  on $\mathcal M;$
\item[(2)] for any $v\in(\mathcal M)_\delta\cap\mathcal R$ satisfying $E(v)\geq E|_{\mathcal M}$, it holds that $v\in \mathcal M.$
\end{itemize}
\end{remark}

\begin{remark}
If $\mathcal M$ is a singleton, then Theorem \ref{thm551} is exactly Theorem 5 in \cite{B5}.
\end{remark}

\begin{remark}\label{lrem}
Whether the compactness assumption (i) in Theorem \ref{thm551} can be removed is unknown. There are two special cases in which compactness holds automatically:
\begin{itemize}
\item[(1)] $\mathcal M$ contains only a finite number of elements;
\item [(2)] $E_0=\sup_{w\in \mathcal R}E(w)$.
\end{itemize}
The proof for the first case is clear. For the second case, we give a brief explanation  as follows. First, from the weak sequential continuity of $E$ in $L^p(D)$, we have
 \begin{equation}
E_0=\sup_{w\in \mathcal R}E(w)=\sup_{w\in \bar{\mathcal R}}E(w).
\end{equation}
Fix a sequence $\{w_n\}_{n=1}^{+\infty}\subset \mathcal M$. Then there exist a subsequence $\{w_{n_j}\}_{j=1}^{+\infty}$ and some $\eta\in\bar{\mathcal R}$ such that $w_{n_j}$ converges to $\eta$ weakly in $L^p(D)$ as $j\to+\infty$. The weakly sequential continuity of $E$ in $L^p(D)$ yields  $E(\eta)=E_0$. Since $\bar{\mathcal R}$ is convex by Lemma \ref{lem201},  for any $v\in\bar{\mathcal R}$ and $s\in[0,1]$ we have $sv+(1-s)\eta\in \bar{\mathcal R}.$ Hence
\[\frac{d}{ds}E(sv+(1-s)\eta)\bigg|_{s=0^+}\leq 0,\]
which gives
\begin{equation}\label{sbd}
\int_Dv\mathcal G\eta dx\leq \int_D\eta \mathcal G \eta dx.
\end{equation}
Since in \eqref{sbd}  the choice of $v$ in $\bar{\mathcal R}$ is arbitrary, Lemma \ref{yyds} yields $\eta\in\mathcal R.$ By the uniform convexity of $L^p(D)$, we see that $w_{n_j}$ converges to $\eta$ strongly in $L^p(D)$ as $j\to+\infty$, and consequently $\eta\in\mathcal M_\delta\cap\mathcal R.$ Taking into account \eqref{dct} we get $\eta\in\mathcal M.$

\end{remark}

To prove Theorem \ref{thm551}, we need several auxiliary lemmas.

\begin{lemma}\label{lem552}
The strong and weak topologies of $L^p(D)$ restricted on $\mathcal R$ are the same.
\end{lemma}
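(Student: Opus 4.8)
The plan is to show that the identity map from $(\mathcal R, \text{weak})$ to $(\mathcal R, \text{strong})$ is continuous, since the reverse continuity is trivial. Because $\mathcal R = \mathcal R_{f_0}$ with $f_0 \in L^\infty(D)$ and $\mathcal L(D) < +\infty$, every element of $\mathcal R$ has the same $L^q(D)$ norm for every $q \in [1,+\infty]$; in particular $\mathcal R$ is a bounded subset of $L^p(D)$, and since $L^p(D)$ is reflexive for $1 < p < +\infty$, bounded sequences have weakly convergent subsequences. So it suffices to work with sequences: I would take $w_n \in \mathcal R$ with $w_n \rightharpoonup w$ weakly in $L^p(D)$, where necessarily $w \in \bar{\mathcal R}$, and aim to show $w \in \mathcal R$ and $w_n \to w$ strongly.

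The key observation is that all elements of $\mathcal R$ share the same $L^p$ norm, say $\|f_0\|_{L^p(D)}$, because they are all rearrangements of $f_0$ (equimeasurability preserves all $L^q$ norms). Now I would invoke two standard facts. First, weak lower semicontinuity of the norm gives $\|w\|_{L^p(D)} \le \liminf_n \|w_n\|_{L^p(D)} = \|f_0\|_{L^p(D)}$. Second, I need the reverse inequality $\|w\|_{L^p(D)} \ge \|f_0\|_{L^p(D)}$, which would force $\|w_n\|_{L^p(D)} \to \|w\|_{L^p(D)}$; combined with $w_n \rightharpoonup w$ and the uniform convexity (hence the Radon--Riesz / Kadec--Klee property) of $L^p(D)$ for $1 < p < +\infty$, this yields $w_n \to w$ strongly. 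To get $\|w\|_{L^p(D)} \ge \|f_0\|_{L^p(D)}$, the cleanest route is to use that $\bar{\mathcal R}$ is the weak closure of $\mathcal R$ and that all members of $\mathcal R$ — hence, after a limiting argument, of $\bar{\mathcal R}$ in the appropriate sense — have $L^p$ norm at most $\|f_0\|_{L^p(D)}$; but since $f_0 \in \mathcal R \subset \bar{\mathcal R}$ and one can exhibit $w$ itself inside $\bar{\mathcal R}$, the equality $\|w\|_{L^p(D)} = \|f_0\|_{L^p(D)}$ follows once we know $w \in \mathcal R$. Thus the real task reduces to proving $w \in \mathcal R$.

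For that I would appeal to the rearrangement machinery already cited: by Lemma \ref{lem202} applied with $g$ chosen appropriately (for instance $g = \operatorname{sgn}$ of a strictly monotone function of a smooth exhausting function on $D$, or more simply by testing against all functions in $\bar{\mathcal R}$ as in Remark \ref{lrem}), one produces a variational characterization of membership in $\mathcal R$; alternatively, and more directly, one uses that for a weakly convergent sequence of rearrangements of a fixed $L^\infty$ function, the distribution functions pass to the limit so that $w$ is equimeasurable with $f_0$. Concretely I would show: for each $s$, the map $u \mapsto \mathcal L(\{u > s\})$ behaves well enough under weak limits of rearrangements — using that $\mathbf 1_{\{u_n > s\}}$ are bounded in $L^\infty$ and that one can test against characteristic functions of level sets — to conclude $\mathcal L(\{w > s\}) = \mathcal L(\{f_0 > s\})$ for all $s$, i.e. $w \in \mathcal R$. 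This is the step I expect to require the most care, because equimeasurability is not a weakly continuous property in general; the point is that it becomes tractable on $\mathcal R$ itself because of the $L^\infty$ bound on $f_0$ and the finiteness of $\mathcal L(D)$, which is exactly why the lemma is stated for $\mathcal R$ and not for an arbitrary bounded set. Once $w \in \mathcal R$ is established, strong convergence follows from the norm-convergence-plus-uniform-convexity argument above, completing the proof that weak convergence in $\mathcal R$ implies strong convergence; the converse is immediate, so the two topologies coincide on $\mathcal R$.
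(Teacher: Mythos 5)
Your argument contains a genuine gap, and it stems from misreading what the lemma asks. The statement concerns the \emph{subspace} topologies on $\mathcal R$: to show a strongly closed $F\subset\mathcal R$ is weakly closed \emph{in $\mathcal R$}, one only needs to handle sequences $\{w_n\}\subset F$ whose weak limit $\eta$ already lies in $\mathcal R$ (after noting that the weak topology on the bounded set $\mathcal R$ is metrizable, so sequential arguments suffice). Once $\eta\in\mathcal R$ is given, the proof is immediate: every element of $\mathcal R$ has the same $L^p$ norm, so $\|w_n\|_{L^p(D)}\to\|\eta\|_{L^p(D)}$ trivially, and uniform convexity (Radon--Riesz) upgrades weak to strong convergence. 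This is exactly the paper's proof, and it is the part of your argument that is correct.

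The step you flag as delicate --- showing that an arbitrary weak limit $w$ of a sequence in $\mathcal R$ satisfies $w\in\mathcal R$ --- is not merely delicate; it is false, and no amount of care with the $L^\infty$ bound on $f_0$ or the finiteness of $\mathcal L(D)$ will rescue it. Take $f_0=\mathbf 1_A$ with $\mathcal L(A)=\mathcal L(D)/2$; the rearrangements $\mathbf 1_{B_n}$ with $B_n$ a union of ever finer strips of total measure $\mathcal L(D)/2$ converge weakly to $\tfrac12\mathbf 1_D$, which is not equimeasurable with $f_0$. Indeed the paper's Lemma \ref{lem201} says $\bar{\mathcal R}$ is convex, whereas $\mathcal R$ is not, so $\bar{\mathcal R}\supsetneq\mathcal R$ in general; the whole architecture of Section 3 (Lemmas \ref{lem559}--\ref{lem561}, where membership of a weak limit in $\mathcal R$ has to be earned via the variational characterization of Lemma \ref{yyds}) exists precisely because weak limits of rearrangements leave $\mathcal R$. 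Your proof attempt therefore sets out to prove a false intermediate claim; the fix is simply to drop it and observe that the subspace-topology formulation hands you $\eta\in\mathcal R$ for free.
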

\begin{proof}
It suffices to show that any strongly closed set $F\subset \mathcal R$ must be weakly closed (both in the sense of subspace topology of $\mathcal R$). Since $\mathcal R$ is bounded in $L^p(D)$ and $p\in(1,+\infty), $ the weak topology restricted on $\mathcal R$ is metrizable. Hence it suffices to prove that $F$ is weakly sequentially closed on $\mathcal R$. Suppose  $\{w_n\}_{n=1}^{+\infty}\subset F$  and $w_n$ converges weakly to some $\eta\in\mathcal R.$ Then $\|\eta\|_{L^p(D)}=\lim_{n\to+\infty}\|w_n\|_{L^p(D)}$. By the fact that $L^p(D)$ is uniformly convex, we immediately see that  $w_n$ converges strongly to  $\eta,$ which yields $\eta\in F$.
\end{proof}

\begin{remark}
From the proof of Lemma \ref{lem552}, it is easy to check that for any $\mathcal N\subset \mathcal R$ the following three items are equivalent:
\begin{itemize}
\item[(1)] $\mathcal N$ is compact;
\item[(2)]  $\mathcal N$ is weakly closed;
\item[(3)] $\mathcal N$ is weakly sequentially closed.
\end{itemize}
\end{remark}

\begin{lemma}\label{lem553}
There exists a weakly open set $U$ such that $U\cap\mathcal R=\mathcal M_\delta\cap\mathcal R.$
\end{lemma}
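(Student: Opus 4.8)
The plan is to deduce the statement almost directly from Lemma~\ref{lem552}. First I would observe that $\mathcal M_\delta$ is an \emph{open} subset of $L^p(D)$ in the strong (norm) topology: it is the $\delta$-neighborhood of the set $\mathcal M$, hence $\mathcal M_\delta=\bigcup_{v\in\mathcal M}B_\delta(v)$ (equivalently, it is the preimage of $(-\infty,\delta)$ under the $1$-Lipschitz function $w\mapsto\mathrm{dist}(w,\mathcal M)$), and in either description it is manifestly norm-open. Consequently $\mathcal M_\delta\cap\mathcal R$ is open in the relative strong topology of $\mathcal R$, i.e. it is a strongly open subset of the metric space $\mathcal R$.

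Next I would invoke Lemma~\ref{lem552}, which asserts that the strong and weak topologies of $L^p(D)$ induce the same subspace topology on $\mathcal R$. Therefore $\mathcal M_\delta\cap\mathcal R$ is also open in the relative \emph{weak} topology of $\mathcal R$. By the very definition of the subspace topology, a subset of $\mathcal R$ is relatively weakly open precisely when it equals the intersection of $\mathcal R$ with some set that is weakly open in $L^p(D)$. Hence there exists a weakly open $U\subset L^p(D)$ with $U\cap\mathcal R=\mathcal M_\delta\cap\mathcal R$, which is exactly the assertion.

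I do not expect a serious obstacle here; the lemma is a soft topological consequence of Lemma~\ref{lem552}, whose proof in turn rests on the uniform convexity of $L^p(D)$ for $p\in(1,+\infty)$ and the metrizability of the weak topology on the bounded set $\mathcal R$. It is worth recording what the argument is \emph{not}: one cannot simply take $U=\bigcup_{w\in\mathcal M}U_w$, where $U_w$ is a weak neighborhood of $w$ with $U_w\cap\mathcal R\subset\mathcal M_\delta\cap\mathcal R$ (such $U_w$ do exist by Lemma~\ref{lem552}), since this choice yields only $U\cap\mathcal R\subset\mathcal M_\delta\cap\mathcal R$ and may fail to capture points of $\mathcal M_\delta\cap\mathcal R$ lying near $\partial\mathcal M_\delta$. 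One genuinely needs that the entire set $\mathcal M_\delta\cap\mathcal R$ is relatively weakly open, which is precisely what Lemma~\ref{lem552} provides; in particular the compactness hypothesis (i) on $\mathcal M$ in Theorem~\ref{thm551} plays no role in this step.
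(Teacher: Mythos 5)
Your argument is correct and is essentially identical to the paper's proof: note that $\mathcal M_\delta\cap\mathcal R$ is relatively strongly open in $\mathcal R$, invoke Lemma~\ref{lem552} to conclude it is relatively weakly open, and then unwind the definition of the subspace topology to produce $U$. The additional cautionary remarks about what would not work are fine but not needed.
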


\begin{proof}

Observe that $\mathcal M_\delta\cap \mathcal R$ is strongly open on $\mathcal R$ (in the subspace topology). By Lemma \ref{lem552},  $\mathcal M_\delta\cap \mathcal R$ is also  weakly open on $\mathcal R$. Hence the existence of the desired $U$ follows immediately.
\end{proof}

\begin{lemma}\label{lem558}
There exists $\tau\in(0,\delta)$ such that $\mathcal M_\tau\subset U.$
\end{lemma}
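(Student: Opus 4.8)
The claim is that the isolating neighborhood $\mathcal M_\delta$ contains a \emph{uniform} tube $\mathcal M_\tau$ around $\mathcal M$ that still sits inside the weakly open set $U$ produced by Lemma \ref{lem553}. The plan is to argue by contradiction using the compactness of $\mathcal M$ in $L^p(D)$. Suppose no such $\tau$ exists. Then for every $n\geq 1$ there is a point $w_n\in\mathcal M_{1/n}\setminus U$; by definition of $\mathcal M_{1/n}$ we may pick $v_n\in\mathcal M$ with $\|w_n-v_n\|_{L^p(D)}<1/n$. By compactness of $\mathcal M$, after passing to a subsequence we have $v_{n_j}\to v_\ast$ strongly in $L^p(D)$ for some $v_\ast\in\mathcal M$, and hence $w_{n_j}\to v_\ast$ strongly as well, since $\|w_{n_j}-v_\ast\|_{L^p(D)}\leq \|w_{n_j}-v_{n_j}\|_{L^p(D)}+\|v_{n_j}-v_\ast\|_{L^p(D)}\to 0$.

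Now I would extract the contradiction from the fact that $U$ is a weakly \emph{open} set containing $\mathcal M\subset\mathcal R$. The subtlety is that $w_n\notin U$, and $U$ is only weakly open, so I need the sequence $\{w_{n_j}\}$ to be weakly accessible to $U$ in a way that forces eventual membership. Two routes are available. The cleanest is to note that $w_{n_j}\to v_\ast$ strongly in $L^p(D)$ implies $w_{n_j}\to v_\ast$ weakly in $L^p(D)$; since $v_\ast\in\mathcal M\subset U$ and $U$ is weakly open, every weak neighborhood of $v_\ast$ — in particular $U$ itself — must contain $w_{n_j}$ for all large $j$. Wait: this requires $U$ to be weakly open in $L^p(D)$, not merely relatively weakly open on $\mathcal R$; re-reading Lemma \ref{lem553}, $U$ \emph{is} asserted to be a weakly open set (of $L^p(D)$) with $U\cap\mathcal R=\mathcal M_\delta\cap\mathcal R$, so this is exactly what we have. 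Then $w_{n_j}\in U$ for $j$ large contradicts $w_{n_j}\notin U$, completing the proof.

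I expect the only genuine obstacle to be keeping the topologies straight: one must use that $U$ is weakly open \emph{in $L^p(D)$} (which Lemma \ref{lem553} guarantees) so that strong — hence weak — convergence of $w_{n_j}$ to a point of $U$ forces $w_{n_j}\in U$ eventually. No rearrangement machinery, no convexity, and no properties of $E$ are needed here; the compactness hypothesis (i) of Theorem \ref{thm551} is doing all the work, which is presumably why it is isolated as a separate short lemma. I would write the argument in three or four lines along the lines above, phrasing the contradiction as: for each $n$ choose $w_n\in\mathcal M_{1/n}\setminus U$ and $v_n\in\mathcal M$ with $\|w_n-v_n\|_{L^p(D)}<1/n$; pass to a subsequence with $v_{n_j}\to v_\ast\in\mathcal M$; deduce $w_{n_j}\to v_\ast$ strongly, hence weakly; since $v_\ast\in U$ and $U$ is weakly open, $w_{n_j}\in U$ for large $j$ — a contradiction.
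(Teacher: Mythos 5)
Your proposal is correct and follows essentially the same argument as the paper: contradiction, extraction of a strongly convergent subsequence via compactness of $\mathcal M$, and the observation that $U$ is open for a topology compatible with this convergence. The only cosmetic difference is that you use weak openness of $U$ directly (strong convergence implies weak convergence, so $w_{n_j}\in U$ eventually), while the paper notes that $U$ is also strongly open, hence $U^c$ is strongly closed and the limit $v_\ast$ would lie in $U^c$, contradicting $v_\ast\in\mathcal M\subset U$; both routes are equally valid.
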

\begin{proof}
Suppose by contradiction that there exists a sequence $\{w_n\}_{n=1}^{+\infty}\subset U^c$ such that
\[{\rm dist}(w_n,\mathcal M)<\frac{1}{n}.\]
Here $U^c$ is the complement of $U$.
Since $U$ is weakly open, $U$ must be strongly open, which means $U^c$ is strongly closed.
Choose a sequence  $\{v_n\}_{n=1}^{+\infty}\subset \mathcal M$ such that
\begin{equation}\label{sscf}
\|w_n-v_n\|_{L^p(D)}<\frac{1}{n}.
\end{equation}
Since $\mathcal M$ is compact, there exist a subsequence $\{v_{n_j}\}_{j=1}^{+\infty}$  and some $v\in\mathcal M$ such that
\[\lim_{j\to+\infty}\|v_{n_j}-v\|_{L^p(D)}=0,\]
which together with \eqref{sscf} yields
 \[\lim_{j\to+\infty}\|w_{n_j}-v\|_{L^p(D)}=0.\] Taking into account the fact that $U^c$ is strongly closed, we get $v\in U^c$. This contradicts the fact that $v\in\mathcal M\subset U.$

\end{proof}

\begin{lemma}\label{lem559}
Let $\tau$ be determined by Lemma \ref{lem558}, then
\begin{equation}\label{678}
E_0=\sup_{w\in\mathcal M_\tau\cap\bar{\mathcal R}}E(w).
\end{equation}
\end{lemma}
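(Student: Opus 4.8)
The plan is to establish \eqref{678} by proving the two inequalities separately. For the inequality $E_0\le\sup_{w\in\mathcal M_\tau\cap\bar{\mathcal R}}E(w)$ I would simply observe that $\mathcal M\subset\mathcal R\subset\bar{\mathcal R}$, and that $\mathcal M\subset\mathcal M_\tau$ since every element of $\mathcal M$ has distance $0<\tau$ to $\mathcal M$; hence $\varnothing\neq\mathcal M\subset\mathcal M_\tau\cap\bar{\mathcal R}$. By the definition of $\mathcal M$ in \eqref{dct}, $E\equiv E_0$ on $\mathcal M$, so the supremum on the right is at least $E_0$. This is the trivial half.

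For the reverse inequality I would show that $E(w)\le E_0$ for every $w\in\mathcal M_\tau\cap\bar{\mathcal R}$. Fix such a $w$. By Lemma \ref{lem558} we have $w\in\mathcal M_\tau\subset U$, where $U$ is the weakly open set provided by Lemma \ref{lem553}. Since $f_0\in L^\infty(D)$ and $D$ is bounded, $\mathcal R$ is a bounded subset of $L^p(D)$, and because $1<p<+\infty$ the weak topology on a closed ball of $L^p(D)$ containing $\mathcal R$ is metrizable (this is already used in the proof of Lemma \ref{lem552}); consequently $\bar{\mathcal R}$ coincides with the weak sequential closure of $\mathcal R$, so there is a sequence $\{w_n\}_{n=1}^{+\infty}\subset\mathcal R$ with $w_n\rightharpoonup w$ weakly in $L^p(D)$. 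As $U$ is a weak neighbourhood of $w$, we get $w_n\in U$ for all large $n$, hence $w_n\in U\cap\mathcal R=\mathcal M_\delta\cap\mathcal R$ by Lemma \ref{lem553}, and therefore $E(w_n)\le E_0$ by the definition of $E_0$ in \eqref{dct}. Finally, the weak sequential continuity of $E$ on $L^p(D)$ (a consequence of the compactness of $\mathcal G$, as already invoked in Remark \ref{lrem}) yields $E(w)=\lim_{n\to+\infty}E(w_n)\le E_0$. Taking the supremum over $w\in\mathcal M_\tau\cap\bar{\mathcal R}$ gives $\sup_{w\in\mathcal M_\tau\cap\bar{\mathcal R}}E(w)\le E_0$, and combined with the first half this proves \eqref{678}.

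I do not expect a genuine obstacle. The only point requiring care is the passage from ``$w$ lies in the weak closure of $\mathcal R$'' to ``$w$ is a weak limit of a sequence $w_n\in\mathcal R$ which eventually lies in $U$''; this is precisely where Lemma \ref{lem558} (the inclusion $\mathcal M_\tau\subset U$, with $U$ weakly open) is indispensable, and it explains why the smaller neighbourhood $\mathcal M_\tau$, rather than $\mathcal M_\delta$, is the correct one to appear in the statement: weak limits of $\mathcal R$-sequences entering $\mathcal M_\tau$ need not themselves have $L^p$-distance less than $\delta$ to $\mathcal M$, so one cannot argue with the norm alone. Everything else (boundedness and weak metrizability of $\mathcal R$, weak continuity of $E$) is standard and has already been used earlier in the paper.
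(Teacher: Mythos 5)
Your proof is correct and follows essentially the same route as the paper: the trivial inclusion $\mathcal M\subset\mathcal M_\tau\cap\bar{\mathcal R}$ for one inequality, and for the other the extraction of a sequence $\{w_n\}\subset\mathcal R$ converging weakly to $w$, the observation that $w\in\mathcal M_\tau\subset U$ with $U$ weakly open forces $w_n\in U\cap\mathcal R\subset\mathcal M_\delta\cap\mathcal R$ eventually, and the weak sequential continuity of $E$. Your added remark that metrizability of the weak topology on bounded sets of $L^p(D)$ is what lets one pass from the weak closure to a weakly convergent sequence is a correct refinement of a point the paper's proof leaves implicit.
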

\begin{proof}
First by \eqref{dct} we have $E(w)=E_0$ for any $w\in\mathcal M,$ thus
\[E_0\leq \sup_{w\in \mathcal M_\tau\cap\bar{\mathcal R}}E(w). \]
So it suffices to prove the inverse inequality, i.e.,
\begin{equation}\label{566}
E(w)\leq E_0,\,\,\forall\,w\in \mathcal M_\tau\cap\bar{\mathcal R}.
\end{equation}
Fix $w\in \mathcal M_\tau\cap\bar{\mathcal R}.$ Since $\bar {\mathcal R}$ is the weak closure of $\mathcal R$, there exists a sequence $\{w_n\}_{n=1}^{+\infty}\subset \mathcal R$ such that $w_n$ converges weakly to $w$ as $n\to+\infty.$ On the other hand, since $w\in\mathcal M_\tau\subset U$ (by Lemma \ref{lem558}) and $U$ is weakly open, we obtain $w_n\in U,\,\,\forall\,n\geq N_0$ for some sufficiently large $N_0.$ Therefore
\[w_n\in U\cap\mathcal R\subset \mathcal M_\delta\cap \mathcal R,\,\,\forall\,n\geq N_0.\]
Taking into account the definition of $E_0$ (see \eqref{dct}), we have $E(w_n)\leq E_0$ for all $n\geq N_0.$ Hence
\[E(w)=\lim_{n\to+\infty}E(w_n)\leq E_0.\]

\end{proof}

\begin{lemma}\label{lem560}
Let $\tau$ be determined by Lemma \ref{lem558}, then
\begin{equation}\label{679}
\mathcal M=\{w\in\mathcal M_\tau\cap\bar{\mathcal R}\mid E(w)=E_0\}.
\end{equation}
\end{lemma}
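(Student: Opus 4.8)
The statement \eqref{679} is an equality of sets, and I would establish the two inclusions separately. The inclusion $\mathcal M\subset\{w\in\mathcal M_\tau\cap\bar{\mathcal R}\mid E(w)=E_0\}$ is immediate: every $w\in\mathcal M$ lies in $\mathcal R\subset\bar{\mathcal R}$, lies in $\mathcal M_\tau$ because $\tau>0$, and satisfies $E(w)=E_0$ by \eqref{dct}.

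The content lies in the reverse inclusion. Fix $w\in\mathcal M_\tau\cap\bar{\mathcal R}$ with $E(w)=E_0$; by Lemma \ref{lem559} this says exactly that $w$ maximizes $E$ over $\mathcal M_\tau\cap\bar{\mathcal R}$. For an arbitrary $v\in\bar{\mathcal R}$, the convexity of $\bar{\mathcal R}$ (Lemma \ref{lem201}) gives $w_s:=(1-s)w+sv\in\bar{\mathcal R}$ for $s\in[0,1]$, while the openness of $\mathcal M_\tau$ in $L^p(D)$ together with $\|w_s-w\|_{L^p(D)}=s\|v-w\|_{L^p(D)}\to0$ gives $w_s\in\mathcal M_\tau$ for all sufficiently small $s\geq0$; hence $E(w_s)\leq E_0=E(w)$ for such $s$. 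Using the symmetry of $\mathcal G$, $E(w_s)$ is the quadratic polynomial $E(w)+s\int_D(v-w)\,\mathcal G w\,dx+\frac{s^2}{2}\int_D(v-w)\,\mathcal G(v-w)\,dx$ in $s$; dividing $E(w_s)-E(w)\le0$ by $s>0$ and letting $s\to0^+$ yields
\[\int_D v\,\mathcal G w\,dx\leq\int_D w\,\mathcal G w\,dx\qquad\text{for every }v\in\bar{\mathcal R}.\]
I then apply Lemma \ref{yyds} with $\Omega=D$, $\tilde f=w$, and $g=\mathcal G w$: since $\bar{\mathcal R}$ is a bounded subset of $L^\infty(D)$ (it is the weak closure in $L^p(D)$ of a set contained in the ball of radius $\|f_0\|_{L^\infty(D)}$), standard elliptic estimates put $g$ in $H^2(D)$; hypothesis (i) of that lemma is the displayed inequality; and hypothesis (ii) holds with equality since $-\Delta g=w=\tilde f$ a.e.\ in $D$. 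Lemma \ref{yyds} then gives $w\in\mathcal R$, so $w\in\mathcal M_\tau\cap\mathcal R\subset\mathcal M_\delta\cap\mathcal R$ because $\tau<\delta$; combined with $E(w)=E_0$, \eqref{dct} forces $w\in\mathcal M$.

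The step to get right is the invocation of Lemma \ref{yyds}: it requires the competitor stream function to lie in $H^2(D)$, which is why I first record that weak limits of rearrangements of the bounded datum $f_0$ remain essentially bounded, and it requires the one-sided constraint $-\Delta g\ge\tilde f$, which is harmless here because it is met as an identity. The remaining ingredients — convexity of $\bar{\mathcal R}$, openness of $\mathcal M_\tau$, and the explicit quadratic form of $E$ — are routine, so I do not expect any genuine obstacle beyond bookkeeping.
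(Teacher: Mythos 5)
Your proposal is correct and follows essentially the same route as the paper: reduce to showing $w\in\mathcal R$, use convexity of $\bar{\mathcal R}$ (Lemma \ref{lem201}) and the fact that $\mathcal M_\tau$ is open to form admissible competitors $sv+(1-s)w$, differentiate $E$ at $s=0^+$ using Lemma \ref{lem559} to get $\int_D v\,\mathcal Gw\,dx\le\int_D w\,\mathcal Gw\,dx$ for all $v\in\bar{\mathcal R}$, and then invoke Lemma \ref{yyds} to conclude $w\in\mathcal R$, hence $w\in\mathcal M$ by \eqref{dct}. Your extra care in checking the hypotheses of Lemma \ref{yyds} ($g=\mathcal Gw\in H^2(D)$ and $-\Delta g=w$ a.e.) is a welcome addition but does not change the argument.
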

\begin{proof}
It suffices to prove
\[\{w\in\mathcal M_\tau\cap\bar{\mathcal R}\mid E(w)=E_0\}\subset \mathcal M.\]
To this end, fix $w\in\mathcal M_\tau\cap\bar{\mathcal R}$ such that $E(w)=E_0$. By \eqref{dct}, we need only to show that $w\in\mathcal R.$
The argument is similar to that in Remark \ref{lrem}. Since $\bar{\mathcal R}$ is a convex set by Lemma \ref{lem201}, we have
\[sv+(1-s)w\in\bar{\mathcal R},\quad\forall\, v\in\bar{\mathcal R},\,\,s\in[0,1].\]
 On the other hand, it is  clear that
\[sv+(1-s)w\in \mathcal M_\tau,\]
 and thus
 \[sv+(1-s)w\in\mathcal M_\tau\cap\bar{\mathcal R},\] provided that $s\geq 0$ is sufficiently small. In view of Lemma \ref{lem559},    we obtain
\[\frac{d}{ds}E(sv+(1-s)w)\bigg|_{s=0^+}\leq 0,\]
which yields
\begin{equation}\label{yds}
\int_Dv\mathcal Gw dx\leq \int_Dw\mathcal G wdx.
\end{equation}
Since  \eqref{yds}  holds for any  $v\in\bar{\mathcal R}$, we can apply Lemma \ref{yyds} to obtain $w\in\mathcal R.$ This finishes the proof.

\end{proof}

\begin{lemma}\label{lem561}
Let $\tau$ be determined by Lemma \ref{lem558}. Then for any $\{w_n\}_{n=1}^{+\infty}\subset \mathcal M_{\frac{\tau}{2}}\cap \bar{\mathcal R}$ satisfying $\lim_{n\to+\infty}E(w_n)=E_{0},$  there exist a
subsequence $\{w_{n_j}\}_{j=1}^{+\infty}$ and some $\eta\in\mathcal M$ such that $w_{n_j}$ converges to $\eta$ strongly in $L^p(D)$ as $j\to+\infty.$
\end{lemma}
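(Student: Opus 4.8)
The plan is to extract a weakly convergent subsequence, to identify its limit as a point of $\mathcal M$ using the local maximality built into the definition of $\tau$ (Lemmas \ref{lem558}--\ref{lem560}), and then to bootstrap weak convergence to strong convergence via the uniform convexity of $L^p(D)$. First I would note that, since $f_0\in L^\infty(D)$ and $D$ is bounded, every element of $\mathcal R$ has $L^p(D)$-norm equal to $\|f_0\|_{L^p(D)}$, so $\mathcal R\subset B$, where $B=\{u\in L^p(D)\mid \|u\|_{L^p(D)}\le\|f_0\|_{L^p(D)}\}$; as $B$ is convex and strongly closed, hence weakly closed, the weak closure $\bar{\mathcal R}$ is contained in $B$ as well. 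In particular $\{w_n\}$ is bounded in $L^p(D)$, and since $p\in(1,+\infty)$ makes $L^p(D)$ reflexive, after passing to a subsequence (not relabelled) we may assume $w_n\rightharpoonup\eta$ weakly in $L^p(D)$ for some $\eta$, and $\eta\in\bar{\mathcal R}$ because $\bar{\mathcal R}$ is weakly closed. By the weak sequential continuity of $E$ on $L^p(D)$ — the same property used in Remark \ref{lrem}, a consequence of the compactness of $\mathcal G:L^p(D)\to L^{p'}(D)$ — we obtain $E(\eta)=\lim_{n\to+\infty}E(w_n)=E_0$.

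Next I would show that $\eta\in\mathcal M_\tau\cap\bar{\mathcal R}$. For each $n$, since $w_n\in\mathcal M_{\tau/2}$ there is $v_n\in\mathcal M$ with $\|w_n-v_n\|_{L^p(D)}<\tau/2$; by the compactness of $\mathcal M$ (assumption (i) of Theorem \ref{thm551}) we may pass to a further subsequence so that $v_n\to v$ strongly in $L^p(D)$ for some $v\in\mathcal M$. Then $w_n-v_n\rightharpoonup\eta-v$ weakly in $L^p(D)$, so by the weak lower semicontinuity of the norm
\[\|\eta-v\|_{L^p(D)}\le\liminf_{n\to+\infty}\|w_n-v_n\|_{L^p(D)}\le\frac{\tau}{2}<\tau,\]
whence ${\rm dist}(\eta,\mathcal M)<\tau$, i.e. $\eta\in\mathcal M_\tau$. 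Since also $\eta\in\bar{\mathcal R}$ and $E(\eta)=E_0$, Lemma \ref{lem560} yields $\eta\in\mathcal M$; in particular $\eta\in\mathcal R$ (recall $\mathcal M\subset\mathcal R$), so $\|\eta\|_{L^p(D)}=\|f_0\|_{L^p(D)}$.

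Finally I would upgrade the convergence. Since each $w_n\in\bar{\mathcal R}\subset B$ we have $\|w_n\|_{L^p(D)}\le\|f_0\|_{L^p(D)}=\|\eta\|_{L^p(D)}$, while weak lower semicontinuity of the norm gives $\|\eta\|_{L^p(D)}\le\liminf_{n\to+\infty}\|w_n\|_{L^p(D)}$; hence $\lim_{n\to+\infty}\|w_n\|_{L^p(D)}=\|\eta\|_{L^p(D)}$. A weakly convergent sequence in a uniformly convex Banach space whose norms converge to the norm of the weak limit converges strongly, and $L^p(D)$ with $p\in(1,+\infty)$ is uniformly convex, so $w_n\to\eta$ strongly in $L^p(D)$ with $\eta\in\mathcal M$, which is the assertion. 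I expect the crucial step to be locating $\eta$ inside $\mathcal M_\tau$: this is precisely where the compactness of $\mathcal M$ and the slack between $\tau/2$ and $\tau$ are used, after which Lemma \ref{lem560} and the norm-squeeze above close the argument routinely.
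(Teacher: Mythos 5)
Your proof is correct and follows essentially the same route as the paper's: extract a weak limit $\eta\in\bar{\mathcal R}$, use weak continuity of $E$ and the compactness of $\mathcal M$ to place $\eta$ in $\mathcal M_\tau\cap\bar{\mathcal R}$ with $E(\eta)=E_0$, invoke Lemma \ref{lem560} to get $\eta\in\mathcal M$, and then upgrade to strong convergence by the norm-squeeze and uniform convexity. The only difference is cosmetic: you justify the bound $\|w_n\|_{L^p(D)}\le\|\eta\|_{L^p(D)}$ explicitly via the weakly closed ball containing $\bar{\mathcal R}$, which the paper asserts without comment.
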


\begin{proof}

Since $\bar{\mathcal R}$ is weakly sequentially closed in $L^p(D)$, there exist a subsequence $\{w_{n_j}\}_{j=1}^{+\infty}$ and some $\eta\in\bar{\mathcal R}$ such that $w_{n_j}$ converges to $\eta$ weakly in $L^p(D)$ as $j\to+\infty.$ To finish the proof, it suffices to show that $\eta\in\mathcal M$. In fact, if $\eta\in\mathcal M\subset\mathcal R,$ then  $\|w_{n_{j}}\|_{L^{p}(D)}\leq \|\eta\|_{L^{p}(D)}$ for any $j$. In combination with the  following  fact (due to the weak lower semicontinuity of the $L^{p}$ norm):
\[\|\eta\|_{L^{p}(D)}\leq \liminf_{j\to+\infty}\|w_{n_{j}}\|_{L^{p}(D)},\]
we obtain
\[\lim_{j\to+\infty}\|w_{n_{j}}\|_{L^{p}(D)}=\|\eta\|_{L^{p}(D)}.\]
Therefore  by uniform convexity  $w_{n_{j}}$ converges strongly to $\eta$ in $L^{p}(D)$ as $j\to+\infty$ as required.

Now we prove $\eta\in\mathcal M$. First it is clear from the weak sequential continuity of $E$ in $L^p(D)$ that
\begin{equation}\label{clrr}
E(\eta)=E_0.
\end{equation}
Second, we can show that $\eta\in\mathcal M_\tau\cap\bar{\mathcal R}.$ In fact, since $\{w_{n_j}\}_{j=1}^{+\infty}\subset\mathcal M_{\frac{\tau}{2}}$, there exists a sequence $\{v_j\}_{j=1}^{+\infty}\subset\mathcal M$ such that
\[\|w_{n_j}-v_j\|_{L^p(D)}<\frac{\tau}{2}.\]
Since $\mathcal M$ is compact, there exist a subsequence $\{v_{j_k}\}_{k=1}^{+\infty}$ and some $v\in\mathcal M$ such that $v_{j_k}$ converges to $v$ strongly in $L^p(D) $ as $k\to+\infty.$ Hence
\[\|\eta-v\|_{L^p(D)}\leq \liminf_{k\to+\infty}\|w_{n_{j_k}}-v_{j_k}\|_{L^p(D)}\leq \frac{\tau}{2},\]
which implies $\eta\in\mathcal M_{\tau}.$ Therefore
 \begin{equation}\label{hhg}
\eta\in\mathcal M_\tau\cap\bar{\mathcal R}.
\end{equation}

Now \eqref{clrr}, \eqref{hhg} and Lemma \ref{lem560} together yield $\eta\in \mathcal M$.
\end{proof}

Now we are ready to prove Theorem \ref{thm551}.

\begin{proof}[Proof of Theorem \ref{thm551}]
First we show that any $\omega\in\mathcal M$ is a steady weak solution to the vorticity equation. For any $\zeta\in C_c^\infty(D)$,  define a family of smooth area-preserving transformations $\Phi_t: D\to D$  as follows
\[\begin{cases}
\frac{d}{dt}\Phi_{t}(x)=\nabla^\perp \zeta(x),&t\in\mathbb R,\\
\Phi_0(x)=x,&x\in D.
\end{cases}
\]
It is clear that $\omega(\Phi_{-t}(\cdot))\in \mathcal R_{\omega}$ for all $t\in\mathbb R.$ Taking into account (ii) in Theorem \ref{thm551}, we see  that $E(\omega(\Phi_{-t}(\cdot)))$ attains its local maximum value at $t=0$, which implies
\begin{equation}\label{gogoa}
\frac{d}{dt}E(\omega(\Phi_{-t}(\cdot))\bigg|_{t=0}=0.
\end{equation}
By a simple calculation we get from \eqref{gogoa} that
\[\int_D\omega\nabla^\perp\mathcal G\omega\cdot\nabla\zeta dx=0,\]
which means that $\omega$ is a steady   solution to the vorticity equation in the sense of Definition \ref{wsv1}.

Now we prove the stability of $\mathcal M$. Suppose by contradiction that $\mathcal M$ is not stable in the $L^p$ norm of the vorticity with initial perturbations in $L^\infty(D)$. Then there exist some $\epsilon_0>0$, a sequence $\{\omega_0^n\}_{n=1}^{+\infty}\subset L^\infty(D)$, and a sequence $\{t_n\}_{n=1}^{+\infty}\subset \mathbb R_+$, such that
\begin{equation}\label{cbb1}
{\rm dist}(\omega^n_0,\mathcal M)<\frac{1}{n},
\end{equation}
and
\[{\rm dist}(\omega^n_{t_n},\mathcal M)\geq\epsilon_0,\]
where $\omega^n_{t_n}$ is the unique weak solution to the vorticity equation at  time $t_n$ with initial vorticity $\omega^n_0.$ Without loss of generality, we assume that
\begin{equation}\label{cbb2}
0<\epsilon_0<\frac{\tau}{3},
\end{equation}
where $\tau$ is the positive number determined in Lemma \ref{lem558}. Moreover, since any weak solution to the vorticity is continuous with respect to the time variable in $L^p(D)$ (see Yudovich's Theorem in Section 1), without loss of generality we can assume that
\begin{equation}\label{cbb3}
{\rm dist}(\omega^n_{t_n},\mathcal M)=\epsilon_0 \,\,\mbox{ for each }n.
\end{equation}
From \eqref{cbb2} and \eqref{cbb3} we immediately get
\begin{equation}\label{cbb100}
\omega^n_{t_n}\in\mathcal M_{\frac{\tau}{3}}\,\,\mbox{ for each }n.
\end{equation}

By \eqref{cbb1}, we can choose a sequence $\{w_n\}_{n=1}^{+\infty}\subset\mathcal M$ such that
\begin{equation}\label{cbb4}
\|\omega^n_0-w_n\|_{L^p(D)}<\frac{1}{n}.
\end{equation}
Since $\mathcal M$ is compact in $L^p(D)$, there exist a subsequence, still denoted by $\{w_n\}_{n=1}^{+\infty},$ and some $w_0\in\mathcal M$, such that $w_n$ converges strongly to $w_0$ as $n\to+\infty.$ Combining \eqref{cbb4} we get
\begin{equation}\label{cbb5}
\lim_{n\to+\infty}\|\omega^n_0-w_0\|_{L^p(D)}=0.
\end{equation}
Taking into account energy conservation we get
\begin{equation}\label{cbb6}
\lim_{n\to+\infty}E(\omega^n_{t_n})=\lim_{n\to+\infty}E(\omega^n_{0})=E(w_0)=E_0.
\end{equation}

Now for each fixed $n$ let $w^n$ be the weak solution to the following linear transport equation with initial value $w_0$ (in the sense of Lemma \ref{etans})
\[\partial w^n+\nabla^\perp\mathcal G\omega^n\cdot\nabla w^n=0.\]
Then for each $n$ it holds that
\begin{equation}\label{cbb103}
w^n_{t_n}\in\mathcal R_{w_0}=\mathcal R,
\end{equation}
\begin{equation}\label{cbb104}
w^n_{t_n}-\omega^n_{t_n}\in\mathcal R_{w_0-\omega^n_0}.
\end{equation}
By \eqref{cbb104} we get
\begin{equation}\label{cbb7}
\|w^n_{t_n}-\omega^n_{t_n}\|_{L^p(D)}=\|w_0-\omega^n_0\|_{L^p(D)}\to 0 \mbox{ as }n\to+\infty.
\end{equation}
This together with \eqref{cbb100} yields
\begin{equation}\label{cbb101}
w^n_{t_n}\in\mathcal M_{\frac{\tau}{2}} \,\,\mbox{ if $n$ is sufficiently large}.
\end{equation}
From \eqref{cbb103} and \eqref{cbb101} we obtain
\begin{equation}\label{cbb109}
w^n_{t_n}\in\mathcal R\cap \mathcal M_{\frac{\tau}{2}} \,\,\mbox{ if $n$ is sufficiently large}.
\end{equation}
On the other hand, by \eqref{cbb6} and \eqref{cbb7} we have
\begin{equation}\label{cbb120}
\lim_{n\to+\infty}E(w^n_{t_n})=E_0.
\end{equation}
To summarize, we have obtained a sequence $\{w^n_{t_n}\}_{n=1}^{+\infty}$ satisfying \eqref{cbb109} and \eqref{cbb120}.  By Lemma \ref{lem561}, there exist a subsequence, still denoted by $\{w^n_{t_n}\}_{n=1}^{+\infty}$, and some $\eta\in\mathcal M$ such that $w^n_{t_n}$ converges strongly to $\eta$ in $L^p(D)$ as $n\to+\infty$. Taking into account \eqref{cbb7} we deduce  that $\omega^n_{t_n}$ converges strongly to $\eta$  in $L^p(D)$ as $n\to+\infty$, which implies
\begin{equation}\label{cbb122}
\lim_{n\to+\infty}{\rm dist}(\omega^n_{t_n},\mathcal M)=0.
\end{equation}
This is an obvious contradiction to \eqref{cbb3}. The proof is finished.

\end{proof}

\section{Proof of Theorem \ref{el2k2}: $k=1$ }
In this section we give the proof of Theorem \ref{el2k2} for $k=1$.

Throughout this section, let $1<p<+\infty,  M>0, \kappa>0$ be fixed, and $\bar x\in D$ be an isolated local minimum point of the Robin function $H$ (defined by \eqref{ropz89}).
Fix a small positive number $\bar r$ such that
\begin{itemize}
\item[(i)] $\overline{B_{\bar r}(\bar x)}\subset D$;
\item[(ii)] $\bar x$ is the unique minimum point of $H$ in $\overline{B_{\bar r}(\bar x)}$.
\end{itemize}

Let $\Xi$ be defined by \eqref{jjk}, and  $\Pi:(0,\bar r)\to\Xi$ be a map satisfying
 \begin{equation}\label{h134}
 \begin{split}
 & {\mbox{supp($\Pi_\varepsilon$)}}\subset \overline{B_\varepsilon(\mathbf 0)}
\,\,\mbox{ for any } \varepsilon\in(0,\bar r),\\
& \lim_{\varepsilon\to0}\int_{\mathbb R^2}\Pi_\varepsilon(x)dx=\kappa,\\
& \|\Pi_\varepsilon\|_{L^\infty(\mathbb R^2)}\leq M\varepsilon^{-2}.
\end{split}
\end{equation}
Below for convenience we denote
\begin{equation}\label{del1}
\kappa_\varepsilon=\int_{\mathbb R^2}\Pi_\varepsilon(x)dx.
\end{equation}
Define
 \[\mathcal R_{\varepsilon}=\left\{w\in L^\infty(D) \mid w \mbox{ is a rearrangement of } \Pi_\varepsilon\right\}.\]
The $\mathcal K_{\varepsilon}$ defined by \eqref{kvar} becomes
\[\mathcal K_{\varepsilon}=\left\{w\in \mathcal R_{\varepsilon}\mid {\rm supp}(w)\subset \overline{B_{\bar r}(\bar x)}\right\}.\]
Denote $\mathcal M_{\varepsilon}$ the set of maximizers of $E$ over $\mathcal K_{\varepsilon}.$

Now Theorem \ref{thmex} can be restated as follows.

 \begin{theorem}\label{thmex1}
 For any $\varepsilon\in(0,\bar r)$,  $\mathcal M_{\varepsilon}$ is nonempty,  and the following assertions hold.
\begin{itemize}
\item[(i)] There exists some $\varepsilon_1>0$, such that for any $\varepsilon\in(0,\varepsilon_1)$,  $ \mathcal M_{\varepsilon}$ is a set of  steady solutions to the  vorticity  equation.
\item [(ii)]  For any $\omega\in\mathcal M_\varepsilon,$ $\mbox{ supp}(\omega)$ ``shrinks" to $\bar x$  uniformly as $\varepsilon\to0$. More precisely,
for any $\delta\in(0,\bar r),$ there exists some $\varepsilon_2>0,$  such that for any $\varepsilon\in(0,\varepsilon_2)$, it holds that
\[\mbox{\rm supp}(\omega)\subset \overline{B_{\delta}(\bar x)},\quad\forall\,\omega \in\mathcal M_{\varepsilon}.\]
\end{itemize}
 \end{theorem}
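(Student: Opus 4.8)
The plan is to prove Theorem~\ref{thmex1} (equivalently Theorem~\ref{thmex}) in three stages: existence of maximizers, the ``shrinking'' asymptotics, and the steady-state property. For \textbf{existence}, I would fix $\varepsilon\in(0,\bar r)$ and take a maximizing sequence $\{w_n\}\subset\mathcal K_\varepsilon$ for $E$. Since $\mathcal K_\varepsilon$ is a subset of the rearrangement class $\mathcal R_\varepsilon$ with $L^\infty$ bound $M\varepsilon^{-2}$ and support in the fixed compact set $\overline{B_{\bar r}(\bar x)}$, it is bounded in every $L^q$; passing to a subsequence, $w_n\rightharpoonup w_*$ weakly in $L^p(D)$. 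Weak sequential continuity of $E$ on $L^p$ (via compactness of $\mathcal G$) gives $E(w_*)=\sup_{\mathcal K_\varepsilon}E$. It remains to check $w_*\in\mathcal K_\varepsilon$: the support constraint passes to the weak limit, and to show $w_*\in\mathcal R_\varepsilon$ one argues, exactly as in Remark~\ref{lrem} and Lemma~\ref{lem560}, that a maximizer of $E$ over the convex weak closure $\bar{\mathcal K}_\varepsilon$ must satisfy $\int_D v\,\mathcal G w_* \le \int_D w_*\,\mathcal G w_*$ for all $v\in\bar{\mathcal R}_\varepsilon$ (localized to $\overline{B_{\bar r}(\bar x)}$), and then invoke Lemma~\ref{yyds}, whose hypothesis $-\Delta\mathcal G w_*=w_*\ge 0\ge\dots$ is automatic since $w_*\ge 0$; this forces $w_*\in\mathcal R_\varepsilon$, hence $\mathcal M_\varepsilon\neq\varnothing$.

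For the \textbf{shrinking estimate} (ii), the strategy is the standard energy-expansion argument for concentrated vorticity. Fix $\omega\in\mathcal M_\varepsilon$ and write $\mathcal G\omega = \frac{1}{2\pi}\int_D \ln\frac{1}{|x-y|}\omega(y)\,dy - \int_D h(x,y)\omega(y)\,dy$, so that $2E(\omega)=\int\!\!\int \big(\tfrac{1}{2\pi}\ln\tfrac{1}{|x-y|} - h(x,y)\big)\omega(x)\omega(y)\,dxdy$. Using Lemma~\ref{rri2} (Riesz rearrangement) one gets an upper bound on the singular self-interaction term in terms of the rearrangement $\Pi_\varepsilon$ alone, namely $\tfrac{1}{2\pi}\int\!\!\int\ln\tfrac{1}{|x-y|}\omega(x)\omega(y) \le \tfrac{1}{2\pi}\int\!\!\int\ln\tfrac{1}{|x-y|}\Pi_\varepsilon^*(x)\Pi_\varepsilon^*(y) = \tfrac{\kappa_\varepsilon^2}{2\pi}\ln\tfrac1\varepsilon + O(1)$; meanwhile a competitor supported in $\overline{B_\varepsilon(\bar x)}$ (a translate of $\Pi_\varepsilon$) gives the matching lower bound $2E(\omega)\ge \tfrac{\kappa_\varepsilon^2}{2\pi}\ln\tfrac1\varepsilon - \kappa_\varepsilon^2 H(\bar x) + o(1)$. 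Comparing, the regular part must satisfy $\int\!\!\int h(x,y)\omega(x)\omega(y)\,dxdy \le \kappa_\varepsilon^2 H(\bar x)+o(1)$, i.e. the ``center of mass'' drives $H$ down to its minimum; combined with the fact that $\operatorname{diam}\operatorname{supp}(\omega\,\mathbf 1_{B_{\bar r}(\bar x)})\to 0$ (which follows by the same comparison: any spreading of the support strictly decreases the singular term by more than $O(1)$, contradicting maximality), and the assumption that $\bar x$ is the \emph{unique} minimizer of $H$ in $\overline{B_{\bar r}(\bar x)}$, one concludes $\operatorname{supp}(\omega)\subset\overline{B_\delta(\bar x)}$ for $\varepsilon$ small, uniformly over $\mathcal M_\varepsilon$. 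This is the step I expect to be the \textbf{main obstacle}: making the diameter bound genuinely uniform over all maximizers (not just for a single one along a subsequence) requires a careful quantitative version of the energy expansion, with all error terms controlled solely by $M$, $\kappa$, and the geometry of $D$ — this is where the precise constraints \eqref{h134} on $\Pi_\varepsilon$ are essential.

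For the \textbf{steady-state property} (i), once (ii) guarantees that for $\varepsilon<\varepsilon_1$ every $\omega\in\mathcal M_\varepsilon$ has $\operatorname{supp}(\omega)\subset\overline{B_{\bar r/2}(\bar x)}\subset\subset B_{\bar r}(\bar x)$, the support constraint in $\mathcal K_\varepsilon$ becomes locally inactive: for any $\zeta\in C_c^\infty(D)$, the area-preserving flow $\Phi_t$ generated by $\nabla^\perp\zeta$ maps $\operatorname{supp}(\omega)$ into $\overline{B_{\bar r}(\bar x)}$ for $|t|$ small, so $\omega\circ\Phi_{-t}\in\mathcal K_\varepsilon$ and hence $t\mapsto E(\omega\circ\Phi_{-t})$ has a local max at $t=0$. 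Differentiating gives $\int_D\omega\,\nabla^\perp\mathcal G\omega\cdot\nabla\zeta\,dx=0$, i.e. $\omega$ is a steady weak solution in the sense of Definition~\ref{wsv1}; this is exactly the computation carried out in the proof of Theorem~\ref{thm551}. (Alternatively, one can note that Lemma~\ref{yyds} already delivers a monotone profile function $\omega=\phi(\mathcal G\omega)$ on $\operatorname{supp}(\omega)$ and apply Lemma~A, but the variational argument is cleaner here.) Finally, the case of general $k$ in Theorem~\ref{thmex} is handled by running the same three stages componentwise on each $\overline{B_{\bar r}(\bar x_i)}$, with the Kirchhoff--Routh function $W$ playing the role that $H$ plays for $k=1$ in the energy expansion.
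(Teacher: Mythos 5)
Your existence argument and your treatment of the steady-state property are essentially the paper's: weak limit of a maximizing sequence, convexity of the weak closure plus Lemma \ref{yyds} to pull the limit back into $\mathcal K_\varepsilon$, and then (once the support is strictly interior) either the area-preserving-flow variation or Lemma A together with the monotone profile $\omega=\phi(\mathcal G\omega)$ that Lemma \ref{yyds} already supplies. Both routes for (i) are fine. The location argument at the end of (ii) --- Riesz rearrangement to cancel the singular self-interaction, then comparison of the regular parts to force $W(\hat x)\le W(\bar x)$ and hence $\hat x=\bar x$ by uniqueness of the minimum --- is also exactly Proposition 6.12 of the paper.

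The genuine gap is the diameter bound that you invoke before that comparison, and your proposed mechanism for it does not work. You claim that $\operatorname{diam}\operatorname{supp}(\omega)\to 0$ ``follows by the same comparison: any spreading of the support strictly decreases the singular term by more than $O(1)$.'' That is false for small stray mass: if a set of measure $\varepsilon^{100}$ carrying mass $m_\varepsilon=o(1)$ sits at distance $\bar r/2$ from $\bar x$, its contribution to every term in the energy expansion is $O(m_\varepsilon|\ln\varepsilon|)$, which can be made $o(1)$, so leading-order maximality of $E$ cannot exclude it; yet such mass violates $\operatorname{supp}(\omega)\subset\overline{B_\delta(\bar x)}$. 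The paper's actual mechanism (Proposition \ref{proop2} and Lemmas 6.4--6.11) is pointwise, not energetic: Lemma \ref{yyds} gives the level-set identity $\{\omega>0\}=\{\mathcal G\omega>\mu_{\omega}\}$ for a Lagrange multiplier $\mu_\omega$; an upper bound on the excess $T_\omega=\int\omega(\mathcal G\omega-\mu_\omega)$ is proved by a Sobolev-embedding/Poincar\'e argument exploiting $\mathcal L(\{\omega>0\})\le\pi\varepsilon^2$ (Lemma \ref{ubd}); combining this with the energy lower bound yields the two-sided estimate $\mu_\omega=-\tfrac{\kappa_\varepsilon}{2\pi}\ln\varepsilon+O(1)$; and then the inequality $\mathcal G\omega(x)\ge\mu_\omega$, valid at \emph{every} point $x$ of the vortex core, forces at least $\tfrac23$ of the total vorticity into $B_{R\varepsilon}(x)$ for a fixed $R$, whence $\operatorname{diam}\{\omega>0\}\le 2R\varepsilon$ uniformly over $\mathcal M_\varepsilon$. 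You correctly flag this step as the main obstacle, but the quantitative input needed is this chain of multiplier estimates, not a sharper energy expansion; without it, both the uniform shrinking in (ii) and the hypothesis you need to activate Lemma A in (i) remain unproved.
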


 By Theorem \ref{thm551}, in order to prove the stability of $\mathcal M_{\varepsilon},$ it suffices to show that $\mathcal M_{\varepsilon}$ is compact and isolated in the sense of Theorem \ref{thm551}.

First we prove compactness, which relies on Lemma \ref{yyds}.
\begin{proposition}[Compactness]\label{comlem}
For any $\varepsilon\in(0,\bar r)$, $\mathcal M_{\varepsilon}$ is compact in $L^p(D)$.
\end{proposition}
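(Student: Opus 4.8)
The plan is to show that $\mathcal M_\varepsilon$ is a weakly sequentially closed subset of the rearrangement class $\mathcal R_\varepsilon$, which by the uniform convexity argument (as in Lemma \ref{lem552} and Lemma \ref{lem561}) upgrades to strong compactness in $L^p(D)$. Since $\mathcal L(\overline{B_{\bar r}(\bar x)})<+\infty$ and $\Pi_\varepsilon\in L^\infty\subset L^p$, everything takes place in a bounded subset of $L^p(D)$ and the weak topology there is metrizable, so it suffices to work with sequences.

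First I would recall why $\mathcal M_\varepsilon$ is nonempty: $\mathcal K_\varepsilon$ is contained in the weak closure $\bar{\mathcal R}_\varepsilon$ restricted to functions supported in $\overline{B_{\bar r}(\bar x)}$; this set is weakly compact (bounded, weakly closed by the uniform-convexity argument together with the closed support constraint), and $E$ is weakly sequentially continuous on $L^p(D)$ because $\mathcal G:L^p(D)\to W^{2,p}(D)\hookrightarrow L^{p'}(D)$ is compact. Hence $E$ attains its supremum over $\mathcal K_\varepsilon$. The subtlety is that a priori the maximizer is only known to lie in $\bar{\mathcal R}_\varepsilon$, not in $\mathcal R_\varepsilon$ itself; this is exactly where Lemma \ref{yyds} enters. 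Let $c_\varepsilon=\sup_{\mathcal K_\varepsilon}E$. Take a maximizing sequence $\{w_n\}\subset\mathcal K_\varepsilon$; pass to a subsequence converging weakly to some $\eta$ with $\operatorname{supp}(\eta)\subset\overline{B_{\bar r}(\bar x)}$ (the support constraint passes to the weak limit since it is a weakly closed condition), and $E(\eta)=c_\varepsilon$ by weak continuity. For any $v$ in the weak closure of the set of rearrangements of $\Pi_\varepsilon$ supported in $\overline{B_{\bar r}(\bar x)}$, convexity of that closure (Lemma \ref{lem201}) lets us form $sv+(1-s)\eta$ and differentiate $E$ at $s=0^+$ to get $\int_{\overline{B_{\bar r}(\bar x)}} v\,\mathcal G\eta\,dx\le\int_{\overline{B_{\bar r}(\bar x)}}\eta\,\mathcal G\eta\,dx$. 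Now apply Lemma \ref{yyds} on $\Omega=B_{\bar r}(\bar x)$ with $f_0=\Pi_\varepsilon$, $g=\mathcal G\eta$: condition (i) is the inequality just derived, and condition (ii), $-\Delta\mathcal G\eta\ge\eta$, holds because $-\Delta\mathcal G\eta=\eta\ge0$. Hence $\eta\in\mathcal R_\varepsilon$, so $\eta\in\mathcal K_\varepsilon$ and $\eta\in\mathcal M_\varepsilon$; moreover weak convergence together with $\|w_n\|_{L^p}=\|\eta\|_{L^p}$ and uniform convexity gives $w_n\to\eta$ strongly.

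With nonemptiness settled by this mechanism, compactness follows by rerunning the same argument for an arbitrary sequence in $\mathcal M_\varepsilon$ rather than just a maximizing sequence: any $\{w_n\}\subset\mathcal M_\varepsilon$ satisfies $E(w_n)=c_\varepsilon$, so after extracting a weakly convergent subsequence $w_{n_j}\rightharpoonup\eta$ we again get $E(\eta)=c_\varepsilon$, $\operatorname{supp}(\eta)\subset\overline{B_{\bar r}(\bar x)}$, the variational inequality, and hence $\eta\in\mathcal R_\varepsilon$ by Lemma \ref{yyds}; thus $\eta\in\mathcal M_\varepsilon$ and, by uniform convexity, $w_{n_j}\to\eta$ strongly in $L^p(D)$. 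This is precisely weak sequential closedness plus the strong/weak topology coincidence on $\mathcal R_\varepsilon$, so $\mathcal M_\varepsilon$ is sequentially compact, hence compact, in $L^p(D)$.

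The one genuine obstacle is verifying the hypotheses of Lemma \ref{yyds} cleanly — in particular making sure the supporting inequality $\int v\,\mathcal G\eta\le\int\eta\,\mathcal G\eta$ is established against \emph{all} of $\bar{\mathcal R}_{\Pi_\varepsilon}$ on $B_{\bar r}(\bar x)$ (not merely against $\mathcal K_\varepsilon$), which requires the convexity of the weak closure from Lemma \ref{lem201} and the one-sided differentiability of $s\mapsto E(sv+(1-s)\eta)$, an elementary computation since $E$ is a bounded quadratic form. Everything else is routine: the compact embedding giving weak continuity of $E$, the closedness of the support constraint under weak limits, and the uniform convexity of $L^p(D)$ for $1<p<+\infty$ upgrading weak to strong convergence once norms converge.
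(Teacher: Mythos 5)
Your argument is correct and follows essentially the same route as the paper's proof: extract a weakly convergent subsequence, use weak sequential continuity of $E$ and convexity of the weak closure (Lemma \ref{lem201}) to derive the variational inequality $\int_{B_{\bar r}(\bar x)} v\,\mathcal G\eta\,dx\le\int_{B_{\bar r}(\bar x)}\eta\,\mathcal G\eta\,dx$ for all $v$ in the weak closure, invoke Lemma \ref{yyds} to conclude $\eta\in\mathcal K_\varepsilon$, and upgrade to strong convergence by uniform convexity. The preliminary discussion of nonemptiness is not needed for this proposition (it belongs to the existence theorem) but does no harm.
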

\begin{proof}
Fix $\varepsilon\in(0,\bar r)$. Let  $\{\omega_n\}_{n=1}^{+\infty}$ be an arbitrary sequence in $\mathcal M_{\varepsilon}.$ We will show that it contains a subsequence that converges to some element of $\mathcal M_{\varepsilon}$ in $L^p(D)$.

Obviously $\{\omega_n\}_{n=1}^{+\infty}$ is bounded in $L^p(D)$, thus there exist a subsequence, still denoted by $\{\omega_n\}_{n=1}^{+\infty}$, and some $\eta\in\bar{\mathcal K}_{\varepsilon}$, such that $\omega_n$ converges weakly to $\eta$ in $L^p(D)$ as $n\to+\infty.$ Here $\bar{\mathcal K}_{\varepsilon}$ denotes the weak closure of ${\mathcal K}_{\varepsilon}$ in $L^p(D)$, which is also equal to the weak closure of ${\mathcal K}_{\varepsilon}$ in $L^p(B_{\bar r}(\bar x))$.
It is clear that
\begin{equation}\label{op1}
E(\eta)=\lim_{n\to+\infty}E(\omega_n)=\sup_{w\in {\mathcal K}_{\varepsilon}}E(w)=\sup_{w\in \bar{\mathcal K}_{\varepsilon}}E(w).
\end{equation}
To finish the proof, it is sufficient to show that $\eta\in\mathcal K_{\varepsilon}$ (obviously this implies $\eta\in\mathcal M_{\varepsilon}$ and $\omega_n$ converges strongly to $\eta$ as $n\to+\infty$).
Since $\bar{\mathcal K}_{\varepsilon}$ is convex by Lemma \ref{lem201},
 for any $v\in\bar{\mathcal K}_{\varepsilon}$ and $s\in[0,1]$ we have $sv+(1-s)\eta\in\bar{\mathcal K}_{\varepsilon}$. Hence
 \[\frac{d}{ds}E(sv+(1-s)\eta)\bigg|_{s=0^+}\leq 0,\]
which yields
\[\int_{B_{\bar r}(\bar x)}v\mathcal G\eta dx\leq\int_{B_{\bar r}(\bar x)}\eta\mathcal G\eta dx. \]
Since $v\in\bar{\mathcal K}_{\varepsilon}$ is arbitrary, we can take $\Omega=B_{\bar r}(\bar x)$ and $\mathcal R_{f_0}=\mathcal K_{\varepsilon}$ in Lemma \ref{yyds} to get $\eta\in \mathcal K_{\varepsilon}.$

\end{proof}

Now we turn to the proof of isolatedness, which is a little complicated and  only holds when $\varepsilon$ is sufficiently small. To begin with, we prove the following separation property for the stream function.

\begin{lemma}[Separation property]\label{sepalem}
 Define
\[r_{\varepsilon}=\inf\{r>0\mid \mbox{for any }\omega\in\mathcal M_{\varepsilon}, \,\,\omega=0\mbox{ a.e. in } D\setminus B_{r}(\bar x)\}\]
Then there exists $\varepsilon_3>0$,   such that  for any $\varepsilon\in(0,\varepsilon_3)$, it holds  that
\begin{equation}\label{sepa1}
\sup_{D\setminus B_{\bar r}(\bar x)}\mathcal G\omega<\inf_{{B_{r_\varepsilon}(\bar x)}}\mathcal G\omega,\quad\forall\,\omega\in \mathcal M_{\varepsilon}.
\end{equation}

\end{lemma}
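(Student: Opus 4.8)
The plan is to establish the separation estimate \eqref{sepa1} by obtaining matching asymptotics for both sides as $\varepsilon\to0$. The core facts I would use are: (a) by Theorem \ref{thmex1}(ii), $\mathrm{supp}(\omega)\subset\overline{B_\delta(\bar x)}$ for every $\omega\in\mathcal M_\varepsilon$ once $\varepsilon$ is small, so in particular $r_\varepsilon\to0$; and (b) the decomposition $\mathcal G\omega(x)=\int_D G(x,y)\omega(y)\,dy$, where $G(x,y)=-\frac1{2\pi}\ln|x-y|-h(x,y)$ and $h,H$ are smooth on compact subsets of $D$. The quantitative input for (a) is the energy lower bound: a competitor supported in a fixed small ball around $\bar x$, obtained by translating $\Pi_\varepsilon$, gives $E(\omega)\ge \frac{\kappa_\varepsilon^2}{4\pi}\ln\frac1\varepsilon - C$ for $\omega\in\mathcal M_\varepsilon$, and since the self-interaction energy of a blob of mass $\kappa_\varepsilon$ and diameter $d$ is at most $\frac{\kappa_\varepsilon^2}{4\pi}\ln\frac1d + C$ (using $\|\Pi_\varepsilon\|_\infty\le M\varepsilon^{-2}$ and a rearrangement bound via Lemma \ref{rri2}), one gets $\mathrm{diam}(\mathrm{supp}(\omega))\le \varepsilon^{1+o(1)}$, hence a quantitative form of $r_\varepsilon\to0$. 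Actually for the present lemma only $r_\varepsilon\to0$ is needed, which already follows from Theorem \ref{thmex1}(ii).

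Next I would estimate the right-hand side from below. For $x\in B_{r_\varepsilon}(\bar x)$,
\[
\mathcal G\omega(x)=\int_{B_{r_\varepsilon}(\bar x)}\Big(-\frac1{2\pi}\ln|x-y|-h(x,y)\Big)\omega(y)\,dy
\;\ge\; -\frac1{2\pi}\Big(\sup_{y}\ln|x-y|\Big)\kappa_\varepsilon - \big(\sup h\big)\kappa_\varepsilon .
\]
Here $|x-y|\le 2r_\varepsilon$ on the support, so $-\ln|x-y|\ge -\ln(2r_\varepsilon)=\ln\frac1{2r_\varepsilon}$, giving
\[
\inf_{B_{r_\varepsilon}(\bar x)}\mathcal G\omega \;\ge\; \frac{\kappa_\varepsilon}{2\pi}\ln\frac1{2r_\varepsilon} - C_1,
\]
with $C_1$ depending only on $D$ and on $\sup_{x,y\in\overline{B_{\bar r}(\bar x)}}|h(x,y)|$ and $\sup_\varepsilon\kappa_\varepsilon$. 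For the left-hand side, if $x\in D\setminus B_{\bar r}(\bar x)$ then $|x-y|\ge \bar r - r_\varepsilon\ge \bar r/2$ for $y\in\mathrm{supp}(\omega)$ once $\varepsilon$ is small, so $G(x,y)=-\frac1{2\pi}\ln|x-y|-h(x,y)$ is bounded above by a constant $C_2=C_2(D,\bar r)$, whence
\[
\sup_{D\setminus B_{\bar r}(\bar x)}\mathcal G\omega \;\le\; C_2\,\kappa_\varepsilon \;\le\; C_3 .
\]
Combining the two displays, \eqref{sepa1} holds as soon as $\frac{\kappa_\varepsilon}{2\pi}\ln\frac1{2r_\varepsilon} - C_1 > C_3$, i.e. $\ln\frac1{r_\varepsilon} > 2\pi\,\kappa_\varepsilon^{-1}(C_1+C_3+\frac{\kappa_\varepsilon}{2\pi}\ln2)$. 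Since $\kappa_\varepsilon\to\kappa>0$ and $r_\varepsilon\to0$ (the latter from Theorem \ref{thmex1}(ii)), the left side tends to $+\infty$ while the right side stays bounded, so there is $\varepsilon_3>0$ such that the inequality holds for all $\varepsilon\in(0,\varepsilon_3)$; this is the claimed $\varepsilon_3$.

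The step I expect to require the most care is not the inequality chain above but making the constants genuinely uniform over $\omega\in\mathcal M_\varepsilon$ — in particular the definition of $r_\varepsilon$ as an infimum over the whole set $\mathcal M_\varepsilon$ must be reconciled with the bound $\mathrm{supp}(\omega)\subset\overline{B_\delta(\bar x)}$, which is uniform in $\omega$ by Theorem \ref{thmex1}(ii) but whose rate in $\delta$ versus $\varepsilon$ is not stated there. One must either (i) be content with $r_\varepsilon\to0$ (which is all that is used, since then $\ln\frac1{r_\varepsilon}\to+\infty$ suffices), or (ii) upgrade it to $r_\varepsilon\le \varepsilon^{1-o(1)}$ via the energy-concentration argument sketched above, should a quantitative rate be needed elsewhere. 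I would write the proof using only option (i): fix $\bar r$, note $\kappa_\varepsilon\in[\kappa/2,2\kappa]$ for small $\varepsilon$, extract the two bounds $\inf_{B_{r_\varepsilon}(\bar x)}\mathcal G\omega\ge \frac{\kappa}{4\pi}\ln\frac1{r_\varepsilon}-C$ and $\sup_{D\setminus B_{\bar r}(\bar x)}\mathcal G\omega\le C$, and conclude by letting $\varepsilon$ be small enough that $\frac{\kappa}{4\pi}\ln\frac1{r_\varepsilon}-C>C$.
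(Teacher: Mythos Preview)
Your proposal is correct and follows essentially the same approach as the paper: both decompose $G(x,y)=-\frac{1}{2\pi}\ln|x-y|-h(x,y)$, bound the logarithmic term using $|x-y|\le 2r_\varepsilon$ on $B_{r_\varepsilon}(\bar x)\times B_{r_\varepsilon}(\bar x)$ and $|x-y|\ge \bar r-r_\varepsilon$ for $x\in D\setminus B_{\bar r}(\bar x)$, and conclude from $r_\varepsilon\to0$ (which the paper, like you, extracts from Theorem~\ref{thmex1}(ii)). Your preliminary discussion of quantitative concentration rates via energy comparison is superfluous for this lemma, as you yourself correctly note.
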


\begin{proof}

Obviously supp$(\omega)\subset\overline{B_{r_\varepsilon}(\bar x)}$ for any $\omega\in\mathcal M_\varepsilon$. This implies $r_{\varepsilon}>0$ for any $\varepsilon\in(0,\bar r)$. Moreover, by (i) in Theorem \ref{thmex1} we see that $r_{\varepsilon}\to 0$ as $\varepsilon\to 0$.

Fix $\omega\in\mathcal M_{\varepsilon}$.  For any $x\in D\setminus B_{\bar r}(\bar x),$ we estimate $\mathcal G\omega(x)$ as follows
\begin{equation}\label{xsw1}
\begin{split}
\mathcal G\omega(x)&=-\frac{1}{2\pi}\int_D\ln|x-y|\omega(y)dy-\int_Dh(x,y)\omega(y)dy \\
&=-\frac{1}{2\pi}\int_{B_{r_{\varepsilon}}(\bar x)}\ln|x-y|\omega(y)dy-\int_Dh(x,y)\omega(y)dy\\
&\leq -\frac{1}{2\pi}\ln(\bar r-r_{\varepsilon})\int_{D}\omega(y)dy+ \|h\|_{L^\infty(B_{\bar r}(\bar x)\times B_{\bar r}(\bar x))}\int_D\omega(y)dy\\
&=-\frac{\kappa_\varepsilon}{2\pi}\ln(\bar r-r_{\varepsilon})+ \|h\|_{L^\infty(B_{\bar r}(\bar x)\times B_{\bar r}(\bar x))}\kappa_\varepsilon\\
&\to-\frac{\kappa}{2\pi}\ln\bar r+\|h\|_{L^\infty(B_{\bar r}(\bar x)\times B_{\bar r}(\bar x))}\kappa
\end{split}
\end{equation}
as $\varepsilon\to0.$ Here we used the fact that  $\omega=0$ a.e. in $ D\setminus B_{r_{\varepsilon}}(\bar x)$. This can be easily verified from the definition of $r_{\varepsilon}$.

On the other hand, for any  $x\in  {B_{ r_{\varepsilon}}(\bar x)},$ we have
\begin{equation}\label{xsw2}
\begin{split}
\mathcal G\omega(x)&=-\frac{1}{2\pi}\int_D\ln|x-y|\omega(y)dy-\int_Dh(x,y)\omega(y)dy\\
&=-\frac{1}{2\pi}\int_{B_{r_{\varepsilon}}(\bar x)}\ln|x-y|\omega(y)dy-\int_Dh(x,y)\omega(y)dy\\
&\geq -\frac{1}{2\pi}\ln(2r_{\varepsilon})\int_{D}\omega(y)dy- \|h\|_{L^\infty(B_{\bar r}(\bar x)\times B_{\bar r}(\bar x))}\int_D\omega(y)dy\\
&= -\frac{\kappa_\varepsilon}{2\pi}\ln(2r_{\varepsilon})- \|h\|_{L^\infty(B_{\bar r}(\bar x)\times B_{\bar r}(\bar x))}\kappa_\varepsilon\\
&\to+\infty
\end{split}
\end{equation}
as $\varepsilon\to0.$

Combining \eqref{xsw1} and \eqref{xsw2}, we deduce that
\begin{equation*}
\sup_{D\setminus B_{\bar r}(\bar x)}\mathcal G\omega<\inf_{{B_{r_\varepsilon}(\bar x)}}\mathcal G\omega,\quad\forall\,\omega\in \mathcal M_{\varepsilon}
\end{equation*}
if $\varepsilon$ is sufficiently small.

\end{proof}

The following lemma will also be used in obtaining isolatedness.
\begin{lemma}\label{fjin}
Let $U\subset \mathbb R^2$ be a bounded domain, $B_1,B_2$ be open balls such that $B_1\subset\subset B_2\subset\subset U$, and $\psi\in C(\bar U)$ satisfies
\begin{equation}\label{fjin1}
\inf_{B_1}\psi>\sup_{U\setminus B_2}\psi.
\end{equation}
Let $\mathcal R$ be the rearrangement class of some nonnegative function $f_0\in L^1(U)$ satisfying
\begin{equation}\label{fjin4}
\mathcal L(\{x\in U\mid f_0(x)>0\})\leq \mathcal L(B_1).
\end{equation}
Suppose $u\in\mathcal R$ satisfies
\begin{equation}\label{fjin44}
\int_Uu\psi dx\geq \int_Uw\psi dx\quad\forall \,w\in\mathcal R,
\end{equation}
then supp$(u)\subset \bar B_2.$
\end{lemma}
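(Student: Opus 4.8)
The plan is to use a rearrangement-argument: if $u$ puts any mass outside $\bar B_2$, then one can transport that mass into $B_1$ (where $\psi$ is strictly larger) to produce a competitor in $\mathcal R$ with strictly larger pairing against $\psi$, contradicting the maximality \eqref{fjin44}. First I would record the key measure-theoretic input: since $u\in\mathcal R$, the superlevel set $\{x\in U\mid u(x)>0\}$ has the same Lebesgue measure as $\{x\in U\mid f_0(x)>0\}$, which by \eqref{fjin4} is at most $\mathcal L(B_1)$; this is precisely what guarantees that there is ``enough room'' inside $B_1$ to absorb all of the support of $u$.

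The main step is the following exchange construction. Suppose for contradiction that $\mathcal L(\{x\in U\setminus\bar B_2\mid u(x)>0\})=:m>0$. Since $\mathcal L(\{u>0\})\le\mathcal L(B_1)$, the set $B_1\setminus\{x\in B_1\mid u(x)>0\}$ has measure at least $m$ (indeed at least $\mathcal L(B_1)-\mathcal L(\{u>0\})+\mathcal L(\{u>0\}\cap(U\setminus\bar B_2))\ge m$). Pick a measurable set $A_1\subset\{x\in U\setminus\bar B_2\mid u(x)>0\}$ and a measurable set $A_2\subset B_1$ with $u=0$ a.e.\ on $A_2$ and $\mathcal L(A_1)=\mathcal L(A_2)$, and such that the restriction $u|_{A_1}$ can be rearranged onto $A_2$: by Lemma \ref{lem2011} there is a function $v_2\in L^1(A_2)$ equimeasurable with $u|_{A_1}$. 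Define
\[
\tilde u = u\,\mathbf 1_{U\setminus(A_1\cup A_2)} + v_2\,\mathbf 1_{A_2}.
\]
Then $\tilde u\in\mathcal R$, because we have only moved a measurable chunk of the graph of $u$ to a region where $u$ previously vanished (the distribution function is unchanged). Using \eqref{fjin1} and the fact that $v_2\ge 0$ with $\int_{A_2}v_2=\int_{A_1}u>0$,
\[
\int_U\tilde u\psi\,dx-\int_Uu\psi\,dx=\int_{A_2}v_2\psi\,dx-\int_{A_1}u\psi\,dx\ge\Big(\inf_{B_1}\psi-\sup_{U\setminus B_2}\psi\Big)\int_{A_1}u\,dx>0,
\]
contradicting \eqref{fjin44}. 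Hence $m=0$, i.e.\ $u=0$ a.e.\ on $U\setminus\bar B_2$, which is exactly ${\rm supp}(u)\subset\bar B_2$.

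The main obstacle I anticipate is purely bookkeeping: making the exchange rigorous requires choosing $A_1,A_2$ so that the moved portion of $u$ really can be realized as an equimeasurable function on $A_2$ without changing the global distribution function of $u$. This is where Lemma \ref{lem2011} (existence of equimeasurable rearrangements between sets of equal finite measure) does the work, but one must be careful that $A_2$ is disjoint from the original support of $u$ so that the distribution function of $\tilde u$ coincides with that of $u$; the measure count in \eqref{fjin4} is exactly what makes such an $A_2\subset B_1$ available. A cleaner alternative, which I would probably adopt in the write-up, is to invoke Lemma \ref{lem202} directly: let $\tilde u\in\mathcal R$ attain $\sup_{w\in\mathcal R}\int_U w\psi\,dx$; by a standard bathtub/layer-cake argument $\tilde u$ is supported on a superlevel set of $\psi$, which by \eqref{fjin1} and \eqref{fjin4} must lie inside $\bar B_2$; since $u$ also attains this supremum by \eqref{fjin44}, $u$ must share the same support structure, giving ${\rm supp}(u)\subset\bar B_2$.
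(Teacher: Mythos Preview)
Your main exchange argument is correct and is essentially identical to the paper's proof: the paper also assumes $u$ has positive mass on $U\setminus B_2$, performs the same measure count to find room in $B_1\cap\{u=0\}$, invokes Lemma~\ref{lem2011} to transplant $u|_{W}$ onto that set, and derives the same strict inequality contradicting \eqref{fjin44}.

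One caution about the alternative you say you would ``probably adopt in the write-up'': it has a gap as stated. Lemma~\ref{lem202} produces \emph{some} maximizer $\tilde u\in\mathcal R$, and a bathtub/monotone-rearrangement argument may show ${\rm supp}(\tilde u)\subset\bar B_2$; but \eqref{fjin44} only tells you that $u$ is \emph{also} a maximizer, and maximizers over a rearrangement class need not be unique nor share supports. The assertion ``$u$ must share the same support structure'' is exactly what is at issue and is not justified by the mere equality of the values $\int_U u\psi=\int_U\tilde u\psi$. To close this, you would have to argue directly that \emph{every} maximizer is supported in $\bar B_2$, and the cleanest way to do that is precisely the exchange construction you already gave. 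So stick with your first argument.
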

\begin{proof}
Suppose by contradiction that  supp$(u)\not\subset \bar B_2.$ Define \[W:=\{x\in U\setminus B_2\mid u(x)>0\}.\]
Then it is obvious that
 \[\mathcal L(W)>0,\quad\int_Wu dx>0.\]
 Now we claim that
 \begin{equation}\label{2710}
 \mathcal L(\{x\in B_1\mid  u(x)=0\})\geq \mathcal L(W).
 \end{equation}
In fact,
\begin{align*}
&\mathcal L(\{x\in B_1\mid  u(x)=0\})\\
=&\mathcal L(B_1)-\mathcal L(\{x\in B_1\mid u(x)>0\})\\
\geq &\mathcal L(B_1)-\mathcal L(\{x\in B_2\mid u(x)>0\}) \\
=&\mathcal L(B_1)-\mathcal L(\{x\in U\mid u(x)>0\})+\mathcal L(\{x\in U\setminus B_2\mid u(x)>0\})\\
=&\mathcal L(B_1)-\mathcal L(\{x\in U\mid f_0(x)>0\})+\mathcal L(W)\\
\geq &\mathcal L(W).
\end{align*}
Note that in the last inequality we used \eqref{fjin4}.
By \eqref{2710}, there exists some Lebesgue measurable set $V\subset \{x\in B_1\mid  u(x)=0\}$ such that
\[\mathcal L(V)=\mathcal L(W).\]
Applying Lemma \ref{lem2011}, we can choose some Lebesgue measurable function $v$ defined on $V$, such that
$v$  is a rearrangement of $u\mathbf 1_W$. Extend $v$ such that $v(x)=0$ a.e. $x\in U\setminus V.$ Define
\[\hat u=u-u\mathbf 1_W+v.\]
Then it is clear that $\hat u\in\mathcal R.$ Now we calculate as follows
\begin{align*}
\int_Uu\psi dx-\int_U \hat u\psi dx&=\int_U(u\mathbf 1_W-v)\psi dx\\
&=\int_Wu\psi dx-\int_Vv\psi dx\\
&\leq \sup_{U\setminus B_2}\psi\int_Wu dx-\inf_{B_1}\psi\int_Vv dx\\
&=\left( \sup_{U\setminus B_2}\psi-\inf_{B_1}\psi\right)\int_Wu dx\\
&<0,
\end{align*}
which is a contradiction to \eqref{fjin44}.

\end{proof}

Now we are ready to prove the  isolatedness of $\mathcal M_\varepsilon$  when $\varepsilon$ is sufficiently small.

\begin{proposition}[Isolatedness]\label{isolated1}
Let $\varepsilon_3>0$ be determined by Lemma \ref{sepalem}. Then for any $\varepsilon\in(0,\varepsilon_3),$ there exists some $\delta>0$ such that
\begin{equation}\label{iso1}
\mathcal M_{\varepsilon}=\left\{w\in (\mathcal M_{\varepsilon})_\delta\cap \mathcal R_{\varepsilon}\,\,\bigg|\,\, E(w)=\sup_{w\in (\mathcal M_{\varepsilon})_\delta\cap \mathcal R_{\varepsilon}}E(w)\right\},
\end{equation}
where $(\mathcal M_{\varepsilon})_\delta$ is the $\delta$-neighborhood of $\mathcal M_{\varepsilon}$ in $L^p(D)$.
\end{proposition}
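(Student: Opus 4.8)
The plan is to show that $\mathcal M_\varepsilon$ is isolated among maximizers by combining the separation property (Lemma \ref{sepalem}) with the support-localization Lemma \ref{fjin} and the optimality characterization furnished by Lemma \ref{yyds}. Fix $\varepsilon\in(0,\varepsilon_3)$. First I would argue by contradiction: suppose no $\delta>0$ works, so there is a sequence $w_n\in(\mathcal M_\varepsilon)_{1/n}\cap\mathcal R_\varepsilon$ with $E(w_n)\ge \sup_{(\mathcal M_\varepsilon)_{1/n}\cap\mathcal R_\varepsilon}E - \text{(something forcing } E(w_n)\to E_\varepsilon)$, yet $w_n\notin\mathcal M_\varepsilon$; here $E_\varepsilon=\sup_{\mathcal K_\varepsilon}E$ is the maximizing value. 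Actually the cleaner route: since $(\mathcal M_\varepsilon)_\delta\cap\mathcal R_\varepsilon\supset\mathcal K_\varepsilon$ need not hold, I first note that each $w_n\in\mathcal R_\varepsilon$ has $E(w_n)\ge E_\varepsilon$ (because $\mathcal M_\varepsilon\subset(\mathcal M_\varepsilon)_\delta\cap\mathcal R_\varepsilon$ realizes $E_\varepsilon$, so the sup over the $\delta$-neighborhood is $\ge E_\varepsilon$, and $w_n$ achieves that sup up to $1/n$), hence $E(w_n)\to E_\varepsilon$ while $\operatorname{dist}(w_n,\mathcal M_\varepsilon)\to 0$ but $w_n\notin\mathcal M_\varepsilon$.

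Next I would extract the structure of such a $w_n$. Pick $\omega_n\in\mathcal M_\varepsilon$ with $\|w_n-\omega_n\|_{L^p(D)}<2/n$. By Lemma \ref{sepalem} applied to $\omega_n$, the stream function $\psi_n:=\mathcal G\omega_n$ satisfies $\inf_{B_{r_\varepsilon}(\bar x)}\psi_n>\sup_{D\setminus B_{\bar r}(\bar x)}\psi_n$, so with $U=D$, $B_1=B_{r_\varepsilon}(\bar x)$, $B_2=B_{\bar r}(\bar x)$, and $f_0=\Pi_\varepsilon$ (whose positivity set has measure $\le \mathcal L(B_\varepsilon(\mathbf 0))\le\mathcal L(B_{r_\varepsilon}(\bar x))$ once $\varepsilon$ is small enough that $\varepsilon\le r_\varepsilon$ — which follows since $\operatorname{supp}(\omega_n)\subset\overline{B_{r_\varepsilon}(\bar x)}$ and $\omega_n$ is a rearrangement of $\Pi_\varepsilon$, forcing $r_\varepsilon\ge$ the radius of the essential support, in particular $\mathcal L(\{\Pi_\varepsilon>0\})\le\mathcal L(B_{r_\varepsilon}(\bar x))$), Lemma \ref{fjin} gives $\operatorname{supp}(w_n)\subset\overline{B_{\bar r}(\bar x)}$ \emph{provided} $w_n$ maximizes $\int_D w\psi_n\,dx$ over $w\in\mathcal R_\varepsilon$. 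That maximization is where I need the energy hypothesis: using convexity of $\bar{\mathcal R}_\varepsilon$ (Lemma \ref{lem201}) and that $\omega_n$ maximizes $E$ over $\mathcal K_\varepsilon$ — hence over $\bar{\mathcal K}_\varepsilon$ — one gets $\int v\mathcal G\omega_n\le\int\omega_n\mathcal G\omega_n$ for all $v\in\bar{\mathcal K}_\varepsilon$; but I want this against all $v\in\mathcal R_\varepsilon$, not just those supported in $\overline{B_{\bar r}(\bar x)}$. So instead I apply the argument directly to $w_n$: by Lemma \ref{lem202} there is a rearrangement $\tilde w_n$ of $\Pi_\varepsilon$ maximizing $\int w\psi_n\,dx$ over all of $\mathcal R_\varepsilon$, this $\tilde w_n$ is supported in $\overline{B_{\bar r}(\bar x)}$ by Lemma \ref{fjin}, hence $\tilde w_n\in\mathcal K_\varepsilon$, and a standard first-variation/bathtub computation shows $\int\tilde w_n\psi_n\ge\int\omega_n\psi_n$ forces equality (since $\omega_n$ is itself a constrained maximizer and $-\Delta\psi_n=\omega_n\le\tilde w_n$-type monotonicity applies), so in fact $\omega_n$ maximizes $\int w\psi_n$ over $\mathcal R_\varepsilon$, and $\psi_n=\phi_n(\psi_n)$-type monotone-rearrangement structure from Lemma \ref{yyds} is available.

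The last step is the contradiction. Once I know each competitor near $\omega_n$ with energy $\ge E_\varepsilon$ is automatically supported in $\overline{B_{\bar r}(\bar x)}$, the set $(\mathcal M_\varepsilon)_\delta\cap\mathcal R_\varepsilon$ with the constraint $E\ge E_\varepsilon$ coincides with a subset of $\mathcal K_\varepsilon$ of maximizers, i.e. with $\mathcal M_\varepsilon$ — so $w_n\in\mathcal M_\varepsilon$, contradicting $w_n\notin\mathcal M_\varepsilon$. Concretely: for $\delta$ small enough that $(\mathcal M_\varepsilon)_\delta\subset\{w:\operatorname{dist}(w,\mathcal M_\varepsilon)<\delta\}$ forces (via the support argument applied with the nearby $\omega\in\mathcal M_\varepsilon$) every $w\in(\mathcal M_\varepsilon)_\delta\cap\mathcal R_\varepsilon$ with $E(w)\ge E_\varepsilon$ to satisfy $\operatorname{supp}(w)\subset\overline{B_{\bar r}(\bar x)}$, we get $w\in\mathcal K_\varepsilon$ and $E(w)\ge\sup_{\mathcal K_\varepsilon}E$, hence $w\in\mathcal M_\varepsilon$; combined with $\sup_{(\mathcal M_\varepsilon)_\delta\cap\mathcal R_\varepsilon}E\ge E_\varepsilon$ this yields \eqref{iso1}.

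The main obstacle I anticipate is making rigorous the claim that a competitor $w$ which is merely \emph{close} to $\mathcal M_\varepsilon$ in $L^p$ inherits the separation/localization: Lemma \ref{fjin} needs $w$ to maximize $\int w\psi\,dx$ against the \emph{fixed} stream function $\psi=\mathcal G\omega$ of a genuine maximizer $\omega$, but a near-maximizer $w$ only approximately does so. The way around it is not to localize $w$ itself but to pass through the true optimizer $\tilde w$ of the linear functional $w\mapsto\int w\mathcal G\omega\,dx$ (which \emph{does} satisfy the hypothesis of Lemma \ref{fjin}), show $\tilde w\in\mathcal K_\varepsilon$, and then use the monotone-function structure from Lemma \ref{yyds} together with the identity $E(w)=E(\omega)$ forced in the limit to conclude $w$ and $\omega$ share the same (localized) support — essentially a uniqueness-of-maximizer-direction argument. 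Handling this linear-vs-quadratic passage carefully, and verifying $\varepsilon\le r_\varepsilon$ so that hypothesis \eqref{fjin4} holds, are the two points requiring genuine care.
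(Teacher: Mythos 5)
Your overall architecture matches the paper's (argue by contradiction, use the separation property to invoke Lemma \ref{fjin}, produce an optimizer of a linear functional via Lemma \ref{lem202}, and close with a quadratic comparison using symmetry and positivity of $\mathcal G$), and you correctly identify the crux: Lemma \ref{fjin} needs an \emph{exact} maximizer of $\int_D w\,\psi\,dx$, which the competitor $w_n$ is not. But your proposed resolution does not close the gap. You fix $\psi_n=\mathcal G\omega_n$ with $\omega_n\in\mathcal M_\varepsilon$ and work with the maximizer $\tilde w_n$ of $w\mapsto\int_D w\,\mathcal G\omega_n\,dx$; this only yields information about $\omega_n$ (indeed one can show $\tilde w_n=\omega_n$), not about $w_n$. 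When you then try to compare $E(w_n)$ with $E(\omega_n)$, the identity
\begin{equation*}
E(w_n)-E(\omega_n)=\tfrac12\int_D(w_n-\omega_n)\mathcal G(w_n-\omega_n)\,dx+\int_D(w_n-\omega_n)\mathcal G\omega_n\,dx
\end{equation*}
has the two terms with \emph{opposite} signs (the first is $\geq 0$ by positivity, the second is $\leq 0$ by maximality of $\omega_n$ for its own linear functional), so no conclusion follows; and replacing $\mathcal G\omega_n$ by $\mathcal G w_n$ in the linear term introduces an error $\int(\tilde w_n-w_n)\mathcal G(w_n-\omega_n)\,dx$ that is only $o(1)$ as $n\to\infty$, not zero for a fixed $n$. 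Your final appeal to ``$E(w)=E(\omega)$ forced in the limit'' and a ``uniqueness-of-maximizer-direction argument'' is therefore not a proof: for each fixed large $n$ you must show $w_n\in\mathcal M_\varepsilon$ exactly, and asymptotic equalities do not deliver that.

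The missing idea is to transfer the separation property to the stream function of $w_n$ \emph{itself}. Since $\mathcal M_\varepsilon$ is compact (Proposition \ref{comlem}), up to a subsequence $w_n\to\eta$ strongly in $L^p(D)$ for some $\eta\in\mathcal M_\varepsilon$, and standard elliptic estimates give $\|\mathcal Gw_n-\mathcal G\eta\|_{L^\infty(D)}\to 0$; because the separation inequality \eqref{sepa1} for $\mathcal G\eta$ is strict, it holds for $\mathcal Gw_n$ when $n$ is large. One then applies Lemma \ref{lem202} and Lemma \ref{fjin} with $\psi=\mathcal Gw_n$ to obtain $u_n\in\mathcal K_\varepsilon$ maximizing $\int_D u\,\mathcal Gw_n\,dx$ over $\mathcal R_\varepsilon$, and the expansion $E(u_n)-E(w_n)=\tfrac12\int(u_n-w_n)\mathcal G(u_n-w_n)+\int(u_n-w_n)\mathcal Gw_n\geq 0$ now has \emph{both} terms nonnegative, with equality iff $u_n=w_n$; combined with $E(u_n)\leq M_\varepsilon\leq E(w_n)$ this forces $w_n=u_n\in\mathcal M_\varepsilon$, the desired contradiction. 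Your verification of hypothesis \eqref{fjin4} (via $\mathcal L(\{\Pi_\varepsilon>0\})\leq \mathcal L(B_{r_\varepsilon}(\bar x))$, which follows from the definition of $r_\varepsilon$) is fine and agrees with the paper.
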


\begin{proof}
Let $\varepsilon\in(0,\varepsilon_3)$ be fixed. Then  \eqref{sepa1} holds  by Lemma \ref{sepalem}.

Denote \begin{equation}\label{iso678}
M_{\varepsilon}=\sup_{w\in\mathcal K_{\varepsilon}}E(w).
\end{equation}
Obviously for any $w\in\mathcal K_{\varepsilon}$, $w\in\mathcal M_{\varepsilon}$ if and only if $E(w)=M_{\varepsilon}$.

To prove \eqref{iso1}, it suffices to show that there exists some $\delta>0$ such that for any $w\in (\mathcal M_{\varepsilon})_\delta\cap \mathcal R_{\varepsilon}$ satisfying $E(w)\geq M_{\varepsilon}$, it holds that
$w\in \mathcal M_{\varepsilon}.$
 Suppose by contradiction that this is false. Then for any positive integer $n$, there exists some $w_n$ such that
  \begin{equation}\label{iso5}
  w_n\in  (\mathcal M_{\varepsilon})_{\frac{1}{n}}\cap \mathcal R_{\varepsilon},
  \end{equation}
    \begin{equation}\label{iso55}
 E(w_n)\geq M_{\varepsilon},
   \end{equation}
    \begin{equation}\label{iso555}
 w_n\notin \mathcal M_{\varepsilon}.
  \end{equation}
%Since $\mathcal M_{\varepsilon}$ is compact by Lemma \ref{comlem}, \eqref{iso555} is equivalent to
 %\begin{equation}\label{iso6}
%{\rm dist}(w_n,\mathcal M_{\varepsilon})>0.
%\end{equation}
By \eqref{iso5}, we can choose some sequence $\{v_n\}_{n=1}^{+\infty}\subset \mathcal M_{\varepsilon}$ such that
 \begin{equation}\label{iso9}
 \|v_n-w_n\|_{L^p(D)}<\frac{1}{n}.
 \end{equation}
On the other hand, by the compactness of  $\mathcal M_{\varepsilon}$ we can choose a subsequence of $\{v_n\}_{n=1}^{+\infty},$ still denoted by $\{v_n\}_{n=1}^{+\infty},$ and some $\eta\in\mathcal M_{\varepsilon}$, such that
 \begin{equation}\label{iso99}
 \lim_{n\to+\infty}\|v_n-\eta\|_{L^p(D)}=0.
 \end{equation}
Combining \eqref{iso9} and \eqref{iso99} we obtain
 \begin{equation}\label{iso997}
\lim_{n\to+\infty}\|w_n-\eta\|_{L^p(D)}=0.
\end{equation}

By standard elliptic estimates, we get from \eqref{iso997} that
 \begin{equation}\label{iso999}
\lim_{n\to+\infty}\|\mathcal Gw_n-\mathcal G\eta\|_{L^\infty(D)}=0.
\end{equation}
On the other hand, since $\eta\in\mathcal M_{\varepsilon}$, we get from   \eqref{sepa1} that
\begin{equation}\label{iso8}
\sup_{{D}\setminus B_{\bar r}(\bar x)}\mathcal G\eta<\inf_{ {B_{r_\varepsilon}(\bar x)}}\mathcal G\eta.
\end{equation}
Now \eqref{iso999} and \eqref{iso8} together yield
\begin{equation}\label{iso88}
\sup_{D\setminus B_{\bar r}(\bar x)}\mathcal Gw_n<\inf_{ {B_{r_\varepsilon}(\bar x)}}\mathcal Gw_n
\end{equation}
if $n$ is sufficiently large.
Below let $n$ be fixed and large enough such that \eqref{iso88} holds.

By Lemma \ref{lem202}, there exists some $u_n\in\mathcal R_{\varepsilon}$ such that
\begin{equation}\label{iso10}
\int_Du_n\mathcal Gw_n dx\geq \int_Dw\mathcal Gw_n dx\quad\forall\,w\in\mathcal R_{\varepsilon}.
\end{equation}
Taking into account \eqref{iso88}, \eqref{iso10}, and choosing $B_2=B_{\bar r}(\bar x),$ $B_1=B_{r_\varepsilon}(\bar x)$, $u=u_n$ in Lemma \ref{fjin},
we get
\begin{equation}\label{iso30}
{\rm supp}(u_n)\subset \overline{B_{r_\varepsilon}(\bar x)}.
\end{equation}
Note that \eqref{fjin4} is satisfied  since supp$(\omega)\subset\overline{B_{r_\varepsilon}(\bar x)}$ for any $\omega\in\mathcal M_\varepsilon$ by the definition of $r_\varepsilon.$
Hence we obtain
\begin{equation}\label{iso50}
u_n\in\mathcal K_{\varepsilon}.
\end{equation}

To get a contradiction, we prove the following statement:
\begin{equation}\label{mkk}
\mbox{$E(u_n)\geq E(w_n),$  and the equality holds if and only if $u_n=w_n$.}
\end{equation}
In fact,  since $w_n\in\mathcal R_{\varepsilon}$ (see \eqref{iso5}), we get from \eqref{iso10} that
\begin{equation}\label{iso70}
\int_Du_n\mathcal Gw_n dx\geq \int_Dw_n\mathcal Gw_n dx.
\end{equation}
Now we calculate $E(u_n)-E(w_n)$ as follows
\begin{align}
E(u_n)-E(w_n)&=\frac{1}{2}\int_Du_n\mathcal Gu_n-w_n\mathcal Gw_n dx\label{iso202}\\
&=\frac{1}{2}\int_D(u_n-w_n)\mathcal G(u_n+w_n) dx \label{iso203}\\
&=\frac{1}{2}\int_D(u_n-w_n)\mathcal G(u_n-w_n) dx+\int_D(u_n-w_n)\mathcal Gw_n dx\label{iso204}\\
&\geq \int_D(u_n-w_n)\mathcal Gw_n dx \label{iso205}\\
&\geq 0. \label{iso206}
\end{align}
Note that in \eqref{iso202} we used the symmetry of the Green operator $\mathcal G,$ in \eqref{iso205} we used the  positivity of $\mathcal G$, and in \eqref{iso206} we used \eqref{iso70}. Moreover, it is easy to see that \eqref{iso206} is an equality if and only if $u_n=w_n$. Therefore $E(u_n)=E(w_n)$ if and only if $u_n=w_n$. This proves \eqref{mkk}.

Based on \eqref{mkk}, we can easily get a contradiction. In fact, since $u_n\in\mathcal K_{\varepsilon},$ we get from \eqref{iso678} that
$E(u_n)\leq M_{\varepsilon}$. Taking into account \eqref{iso55} we obtain $E(w_n)\geq E(u_n).$ This together with \eqref{mkk} gives $E(w_n)=E(u_n)$, and thus $w_n=u_n$, which contradicts \eqref{iso555}.

\end{proof}

\begin{remark}
Repeating the argument  in  Proposition \ref{isolated1}, we can in fact prove the following conclusion.
Let $\mathcal R$ be the rearrangement class of some nonnegative function $f_0\in L^\infty(D)$ and  $\mathcal M\subset\mathcal R$ be nonempty. Suppose there exist two open ball $B_1,B_2$ with $B_1\subset\subset B_2\subset\subset D$ such that
\begin{itemize}
\item[(i)]$\mathcal M$ is compact in $L^p(D)$, where $1<p<+\infty$ is fixed;
  \item [(ii)]  $\mathcal M\subset\mathcal K_1,\quad\mbox{where }\mathcal K_1=\{w\in\mathcal R\mid \mbox{supp}(w)\subset \bar{B}_1\};$
  \item [(iii)] $\mathcal M=\{w\in\mathcal K_2\mid E(w)=\sup_{\mathcal K_2} E\},$ where $\mathcal K_2=\{w\in\mathcal R\mid \mbox{supp}(w)\subset \bar{B}_2\};$
  \item [(iv)] for any $\omega\in\mathcal M,$ it holds that
  \[\sup_{ D\setminus B_2}\mathcal G\omega<\inf_{{ B_1}}\mathcal G\omega.\]
\end{itemize}
Then $\mathcal M$ must be an isolated set of local maximizers of $E$ over $\mathcal R$.
\end{remark}

\section{Proof of Theorem \ref{el2k2}: $k=2$ }
In this section, we prove Theorem \ref{el2k2} for  $k=2$ with  $\kappa_1\kappa_2<0$. The process is parallel to that in Section 4, therefore we may omit some similar arguments to avoid verbosity.

Throughout this section let $1<p<+\infty, M>0, \kappa_1>0,\kappa_2<0$ be fixed, and
 $(\bar x_1,\bar x_2)\in \mathbb D^2$ be a given isolated local minimum point of the corresponding Kirchhoff-Routh function
\begin{equation}\label{krf000}
{ W}(x_1,x_2)=- 2\kappa_1\kappa_2G(x_1,x_2)+ \kappa_1^2h(x_1,x_1)+\kappa_2^2h(x_2,x_2).
\end{equation}
Fix a small positive number $\bar r$ such that
\begin{itemize}
\item[(i)]$\overline{B_{\bar r}(\bar x_i)}\subset D$, $i=1,2$;
\item[(ii)]$\overline{B_{\bar r}(\bar x_1)}\cap \overline{B_{\bar r}(\bar x_2)}=\varnothing$;
\item[(iii)]$(\bar x_1,\bar x_2)$ is the unique minimum point of $W$ in $\overline{B_{\bar r}(\bar x_1)}\times \overline{B_{\bar r}(\bar x_2)}$.
\end{itemize}

Let $\Pi^1,\Pi^2:(0,\bar r)\to\Xi$ be two maps  satisfying
   \begin{equation}\label{h188}
 \begin{split}
& \lim_{\varepsilon\to0}\int_{\mathbb R^2}{\Pi^i_\varepsilon}(x)dx=|\kappa_i|,\\
& {\mbox{supp($\Pi^i_\varepsilon$)}}\subset \overline{B_\varepsilon(\mathbf 0)}
\,\,\mbox{ for any } \varepsilon\in(0,\bar r),\\
& \|\Pi^i_\varepsilon\|_{L^\infty(\mathbb R^2)}\leq M\varepsilon^{-2},
\end{split}
\end{equation}
where $i=1,2.$ For simplicity, denote
\[\kappa_{1,\varepsilon}=\int_{\mathbb R^2}{\Pi^1_\varepsilon}(x)dx,\quad \kappa_{2,\varepsilon}=\int_{\mathbb R^2}{\Pi^2_\varepsilon}(x)dx.\]

The $\mathcal K_\varepsilon$ defined by \eqref{kvar} now becomes
 \[\mathcal K_{\varepsilon}=\left\{w=\sum_{i=1}^2w_i\mid {\rm supp}(w_i)\subset \overline{B_{\bar r}(\bar x_i)},\,\,w_i\mbox{ is a rearrangement of sgn$(\kappa_i)\Pi_\varepsilon^i$},\,\,i=1,2\right\}.\]
 Denote $\mathcal M_{\varepsilon}$ the set of maximizers of $E$ over $\mathcal K_{\varepsilon}.$

Theorem \ref{thmex} in this situation can be stated as follows.

 \begin{theorem}\label{thmex2}
 For any $\varepsilon\in(0,\bar r)$, $\mathcal M_{\varepsilon}$ is nonempty,  and the following assertions hold.
\begin{itemize}
\item[(i)] There exists some $\varepsilon_1>0$, such that for any $\varepsilon\in(0,\varepsilon_1)$,  $ \mathcal M_{\varepsilon}$ is a set of  steady solutions to the  vorticity  equation.
\item [(ii)] For  $i=1,2$, $\mbox{ supp}(\omega\mathbf 1_{B_{\bar r}(\bar x_i)})$ ``shrinks" to $\bar x_i$  uniformly as $\varepsilon\to0$. More precisely,
for any $\delta\in(0,\bar r),$ there exists some $\varepsilon_2>0,$  such that for any $\varepsilon\in(0,\varepsilon_2)$, it holds that
\[\mbox{\rm supp}(\omega\mathbf 1_{B_{\bar r}(\bar x_i)})\subset \overline{B_{\delta}(\bar x_i)},\quad\forall\,\omega \in\mathcal M_{\varepsilon}.\]
\end{itemize}
 \end{theorem}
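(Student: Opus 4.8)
The plan is to obtain Theorem~\ref{thmex2} as the $k=2$ specialization of the variational scheme behind Theorem~\ref{thmex}, carrying out the three assertions in the order: nonemptiness together with a profile representation, then the support localization (ii), and finally the steadiness (i). Since $\|\Pi^i_\varepsilon\|_{L^\infty(\mathbb R^2)}\le M\varepsilon^{-2}$, the admissible set $\mathcal K_\varepsilon$ is bounded in $L^\infty(D)$; a maximizing sequence for $E$ therefore has a subsequence converging weakly in $L^p(D)$ to some $\eta=\eta_1+\eta_2$ in the weak closure $\bar{\mathcal K}_\varepsilon$, and because $E$ is weakly continuous on bounded subsets of $L^p(D)$ (elliptic regularity for $\mathcal G$), $E(\eta)=\sup_{\mathcal K_\varepsilon}E$. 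To see $\eta\in\mathcal K_\varepsilon$, I would use that $\bar{\mathcal K}_\varepsilon$ is convex (Lemma~\ref{lem201}), so that testing with $sv+(1-s)\eta$ and letting $s\to0^{+}$ gives $\int_D v\,\mathcal G\eta\,dx\le\int_D\eta\,\mathcal G\eta\,dx$ for all $v\in\bar{\mathcal K}_\varepsilon$. Taking $v=v_1+\eta_2$ with $v_1$ ranging over the weak closure of the rearrangements of $\Pi^1_\varepsilon$ supported in $\overline{B_{\bar r}(\bar x_1)}$, and noting that since $\overline{B_{\bar r}(\bar x_1)}$ and $\overline{B_{\bar r}(\bar x_2)}$ are disjoint one has $-\Delta\mathcal G\eta=\eta_1$ on $B_{\bar r}(\bar x_1)$, Lemma~\ref{yyds} applies with $\Omega=B_{\bar r}(\bar x_1)$, $g=\mathcal G\eta$, $\tilde f=\eta_1$ and yields both $\eta_1\in\mathcal R_{\Pi^1_\varepsilon}$ and a nondecreasing $\phi_1$ with $\eta_1=\phi_1(\mathcal G\eta)$ a.e.\ on $B_{\bar r}(\bar x_1)$. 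Running the same argument for $-\eta_2\ge0$ with $g=-\mathcal G\eta$ on $B_{\bar r}(\bar x_2)$ (where $-\Delta(-\mathcal G\eta)=-\eta_2\ge0$) gives that $\eta_2$ is a rearrangement of $-\Pi^2_\varepsilon$ with a monotone profile in terms of $\mathcal G\eta$. Hence $\mathcal M_\varepsilon\neq\varnothing$, and every $\omega=\omega_1+\omega_2\in\mathcal M_\varepsilon$ satisfies $\omega_i=f_i(\mathcal G\omega)$ a.e.\ on $B_{\bar r}(\bar x_i)$ with $f_i$ monotone.

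\emph{Localization (ii).} This is the technical heart. Write $M_\varepsilon=\sup_{\mathcal K_\varepsilon}E$ and $A^i_\varepsilon=-\tfrac1{4\pi}\int\!\!\int\ln|x-y|\,\Pi^i_\varepsilon(x)\Pi^i_\varepsilon(y)\,dx\,dy$. Using the test function obtained by placing $\mathrm{sgn}(\kappa_i)\Pi^i_\varepsilon$ centered exactly at $\bar x_i$, and the splitting $G(x,y)=-\tfrac1{2\pi}\ln|x-y|-h(x,y)$ with $h$ continuous, I would first get the lower bound
\[
M_\varepsilon\ \ge\ A^1_\varepsilon+A^2_\varepsilon-\tfrac12 W(\bar x_1,\bar x_2)+o(1),\qquad\varepsilon\to0 .
\]
For the matching upper bound, fix $\omega=\omega_1+\omega_2\in\mathcal M_\varepsilon$, let $d_i=\operatorname{diam}(\operatorname{supp}\omega_i)$ and pick $c_i\in\operatorname{supp}\omega_i\subset\overline{B_{\bar r}(\bar x_i)}$. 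Since $|\omega_i|$ is a rearrangement of the radially decreasing $\Pi^i_\varepsilon$, applying Lemma~\ref{rri2} to the nonnegative, radially decreasing kernel $\bigl(\ln\tfrac{2\bar r}{|\cdot|}\bigr)\mathbf 1_{\{|\cdot|\le 2\bar r\}}$ bounds the logarithmic self-interaction of $\omega_i$ above by that of $\Pi^i_\varepsilon$; estimating the $h$-parts and the interaction $\int\omega_1\mathcal G\omega_2$ by the moduli of continuity $\omega_h,\omega_G$ of $h$ and $G$ on the relevant compact sets, one arrives at
\[
E(\omega)\ \le\ A^1_\varepsilon+A^2_\varepsilon-\tfrac12 W(c_1,c_2)+C\bigl(\omega_h(2d_1)+\omega_h(2d_2)+\omega_G(d_1+d_2)\bigr)+o(1).
\]
Separately I would prove a concentration estimate: if a definite fraction of the mass of $|\omega_i|$ sits at distance bounded away from $0$ from another definite fraction, then its logarithmic self-interaction falls short of $A^i_\varepsilon$ by an amount $\gtrsim|\ln\varepsilon|$, which — the remaining pieces of $E(\omega)$ being $O(1)$ — contradicts $E(\omega)=M_\varepsilon$ together with the lower bound above; combined with $\mathcal L(\operatorname{supp}\omega_i)\le\pi\varepsilon^2$ this forces $d_i\to0$ uniformly over $\mathcal M_\varepsilon$. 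The error terms then vanish, and comparing the two displays forces $W(c_1,c_2)\to W(\bar x_1,\bar x_2)$; since $(\bar x_1,\bar x_2)$ is the \emph{unique} minimizer of $W$ on $\overline{B_{\bar r}(\bar x_1)}\times\overline{B_{\bar r}(\bar x_2)}$, a compactness argument gives $c_i\to\bar x_i$. Together with $d_i\to0$ this yields (ii); uniformity throughout is obtained by arguing by contradiction along sequences $\varepsilon_n\to0$, $\omega^n\in\mathcal M_{\varepsilon_n}$.

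\emph{Steadiness (i).} By (ii), for $\varepsilon$ small each $\operatorname{supp}\omega_i$ is compactly contained in $B_{\bar r}(\bar x_i)$, so there is $\delta>0$ with $(\operatorname{supp}\omega_i)_\delta\subset B_{\bar r}(\bar x_i)$, on which $\omega_i=f_i(\mathcal G\omega)$ with $f_i$ monotone by the first step; since also $\operatorname{dist}(\operatorname{supp}\omega_1,\operatorname{supp}\omega_2)>0$, Lemma~A applies with $k=2$ and shows every $\omega\in\mathcal M_\varepsilon$ is a steady weak solution in the sense of Definition~\ref{wsv1}, provided $\varepsilon<\varepsilon_1$ for a suitable $\varepsilon_1$. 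The main obstacle is the concentration estimate in the localization step: the Riesz inequality controls the self-interaction of $\omega_i$ only from above, so ruling out a spread-out maximizer — i.e.\ proving $\operatorname{diam}(\operatorname{supp}\omega_i)\to0$ rather than merely the convergence of the centroids — requires quantifying the $\Theta(|\ln\varepsilon|)$ energy gain of concentration against the $O(1)$ size of the regular part of the energy.
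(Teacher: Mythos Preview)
Your nonemptiness/profile step and your steadiness step via Lemma~A track the paper's Section~6 essentially verbatim. The localization step, however, follows a genuinely different route. The paper (following Turkington) first extracts from the profile formula a Lagrange multiplier $\mu_{\omega,i}$ with $A_{\omega,i}=\{x\in B_{\bar r}(\bar x_i):\ \mathrm{sgn}(\kappa_i)\mathcal G\omega>\mu_{\omega,i}\}$, then proves the two-sided asymptotic $\mu_{\omega,i}=-\tfrac{|\kappa_{\varepsilon,i}|}{2\pi}\ln\varepsilon+O(1)$ by combining the energy lower bound, a Sobolev/Poincar\'e estimate on $T_\omega=2E(\omega)-\sum_i\mu_{\omega,i}|\kappa_{\varepsilon,i}|$, and a pointwise upper bound on $\mathcal G\omega$ via the Hardy--Littlewood inequality (Lemma~\ref{rri1}). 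Plugging this sharp $\mu_{\omega,i}$ back into the level-set inequality at a point $x\in A_{\omega,i}$ gives $\int_D\ln\tfrac{\varepsilon}{|x-y|}|\omega_i(y)|\,dy\ge -C$, from which a dyadic split yields $\mathrm{diam}(A_{\omega,i})\le R_0\varepsilon$ uniformly. Only then does the paper run the Riesz comparison to pin down the limiting centers.

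Your alternative---a direct energy comparison sandwiching $M_\varepsilon$ between $A^1_\varepsilon+A^2_\varepsilon-\tfrac12 W$ evaluated at $(\bar x_1,\bar x_2)$ and at $(c_1,c_2)$---is in principle viable, and the Riesz step you invoke for the upper bound is exactly what the paper uses later in Proposition~\ref{proop3}. The real difference is your ``concentration estimate'': you propose to rule out spread-out maximizers by showing that splitting the mass of $|\omega_i|$ into two well-separated pieces costs $\Theta(|\ln\varepsilon|)$ in the logarithmic self-interaction. This is correct heuristically (two half-masses at fixed separation contribute at most $\tfrac{\kappa_i^2}{8\pi}|\ln\varepsilon|+O(1)$ to the self-interaction, a deficit of $\tfrac{\kappa_i^2}{8\pi}|\ln\varepsilon|$), but turning it into a clean uniform statement $d_i\to0$ requires a bit of care---you have to quantify ``definite fraction'' and ``distance bounded away from $0$'' simultaneously, and you only flag this as the obstacle rather than carry it out. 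The Lagrange-multiplier route sidesteps this by exploiting the pointwise structure of the maximizer instead of a purely integral energy balance; it also delivers the quantitatively sharper bound $\mathrm{diam}\le R_0\varepsilon$ rather than merely $d_i\to0$, which is not needed for the theorem as stated but is of independent interest.
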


Define
 \[\mathcal R_{\varepsilon}=\left\{w\in L^\infty(D) \mid w \mbox{ is the rearrangement of some }  v\in\mathcal K_{\varepsilon}\right\}.\]
Note that the definition of $\mathcal R_{\varepsilon}$ is reasonable since any two elements in $\mathcal K_{\varepsilon}$ have the same distribution function.

To prove the stability of $\mathcal M_\varepsilon$, it suffices show that $\mathcal M_{\varepsilon}$ is compact in $L^p(D)$, and is an isolated set of local maximizers of $E$ over $\mathcal R_{\varepsilon}.$

Compactness still relies on Lemma \ref{yyds}.

\begin{proposition}[Compactness]\label{ysss}
For any $\varepsilon\in(0,\bar r)$, $\mathcal M_{\varepsilon}$ is compact in $L^p(D)$.
\end{proposition}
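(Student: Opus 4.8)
The plan is to follow the proof of Proposition \ref{comlem} almost verbatim, the single new ingredient being that the vorticity now changes sign, which forces us to invoke Lemma \ref{yyds} separately on each of the two balls. First I would take an arbitrary sequence $\{\omega_n\}\subset\mathcal M_\varepsilon$; since every element of $\mathcal K_\varepsilon$ is supported in $\overline{B_{\bar r}(\bar x_1)}\cup\overline{B_{\bar r}(\bar x_2)}$ and shares the distribution function of $\mathrm{sgn}(\kappa_1)\Pi^1_\varepsilon+\mathrm{sgn}(\kappa_2)\Pi^2_\varepsilon$, the sequence is bounded in $L^\infty(D)$, so after passing to a subsequence (not relabeled) $\omega_n\rightharpoonup\eta$ weakly in $L^p(D)$ for some $\eta$ in the weak closure $\bar{\mathcal K}_\varepsilon$ of $\mathcal K_\varepsilon$. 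Because the two closed balls are disjoint, weak convergence makes $\eta$ vanish off their union, so $\eta=\eta_1+\eta_2$ with $\eta_i:=\eta\mathbf 1_{B_{\bar r}(\bar x_i)}$, where $\eta_1$ lies in the weak closure (in $L^p(B_{\bar r}(\bar x_1))$) of the rearrangements of $\Pi^1_\varepsilon$, and $-\eta_2$ lies in the analogous weak closure $\bar{\mathcal R}_{\Pi^2_\varepsilon}$ in $B_{\bar r}(\bar x_2)$; by Lemma \ref{lem201} each of these is convex, hence so is $\bar{\mathcal K}_\varepsilon$, being a sum of two convex sets supported in disjoint balls.

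Next, using the weak sequential continuity of $E$ on $L^p(D)$ (the operator $\mathcal G$ is compact), I would conclude $E(\eta)=\lim_n E(\omega_n)=\sup_{\mathcal K_\varepsilon}E=\sup_{\bar{\mathcal K}_\varepsilon}E$, so $\eta$ maximizes $E$ over the convex set $\bar{\mathcal K}_\varepsilon$. Differentiating $E(sv+(1-s)\eta)$ at $s=0^+$ along segments with $v\in\bar{\mathcal K}_\varepsilon$, and using the symmetry of $\mathcal G$, gives
\[
\int_D v\,\mathcal G\eta\,dx\le\int_D\eta\,\mathcal G\eta\,dx\qquad\text{for all }v\in\bar{\mathcal K}_\varepsilon.
\]
I would then test this with $v=v_1+\eta_2$, where $v_1$ ranges over the weak closure of the rearrangements of $\Pi^1_\varepsilon$ in $B_{\bar r}(\bar x_1)$: the $\bar x_2$–terms cancel, leaving $\int_{B_{\bar r}(\bar x_1)}v_1\mathcal G\eta\,dx\le\int_{B_{\bar r}(\bar x_1)}\eta_1\mathcal G\eta\,dx$. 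Since $\eta_2$ is supported off $B_{\bar r}(\bar x_1)$, there $-\Delta\mathcal G\eta=\eta=\eta_1$, so Lemma \ref{yyds} on $\Omega=B_{\bar r}(\bar x_1)$ with $g=\mathcal G\eta$, $f_0=\Pi^1_\varepsilon$, $\tilde f=\eta_1$ yields $\eta_1\in\mathcal R_{\Pi^1_\varepsilon}$. Symmetrically, testing with $v=\eta_1+v_2$, where $-v_2$ ranges over $\bar{\mathcal R}_{\Pi^2_\varepsilon}$ in $B_{\bar r}(\bar x_2)$, gives $\int_{B_{\bar r}(\bar x_2)}(-v_2)(-\mathcal G\eta)\,dx\le\int_{B_{\bar r}(\bar x_2)}(-\eta_2)(-\mathcal G\eta)\,dx$; since $-\Delta(-\mathcal G\eta)=-\eta=-\eta_2$ on $B_{\bar r}(\bar x_2)$, Lemma \ref{yyds} with $g=-\mathcal G\eta$, $f_0=\Pi^2_\varepsilon$, $\tilde f=-\eta_2$ gives $-\eta_2\in\mathcal R_{\Pi^2_\varepsilon}$. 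Hence $\eta=\eta_1+\eta_2\in\mathcal K_\varepsilon$ with $E(\eta)=\sup_{\mathcal K_\varepsilon}E$, i.e. $\eta\in\mathcal M_\varepsilon$. Finally, all elements of $\mathcal K_\varepsilon$ have the same $L^p$ norm, so $\|\omega_n\|_{L^p(D)}=\|\eta\|_{L^p(D)}$, and weak convergence together with the uniform convexity of $L^p(D)$ upgrades to $\omega_n\to\eta$ strongly in $L^p(D)$.

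The only place where care is needed — and what I regard as the main (mild) obstacle — is the sign bookkeeping: hypothesis (ii) of Lemma \ref{yyds}, namely $-\Delta g\ge\tilde f$, is exactly what dictates the choice $g=\mathcal G\eta$ on the positive ball and $g=-\mathcal G\eta$ on the negative ball, and it holds there (with equality) only because each ball sees just its own blob, the other being supported in the disjoint ball. I do not anticipate any deeper difficulty; the convexity of $\bar{\mathcal K}_\varepsilon$ and the splitting of the variational inequality over the two balls are routine once the disjointness of the supports is exploited.
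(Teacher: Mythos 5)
Your proof is correct and follows essentially the same route as the paper: decompose the weak limit over the two disjoint balls, derive the first-order variational inequality from maximality over the convex weak closure, and apply Lemma \ref{yyds} on each ball to recover membership in $\mathcal K_\varepsilon$, upgrading to strong convergence by uniform convexity. The sign bookkeeping you spell out for the negative blob (taking $g=-\mathcal G\eta$, $\tilde f=-\eta_2$) is exactly the detail the paper compresses into ``a similar argument.''
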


\begin{proof}
Fix some sequence  $\{\omega_n\}_{n=1}^{+\infty}\subset \mathcal M_{\varepsilon}$.  Denote $\omega_{n,i}=\omega_n\mathbf 1_{B_{\bar r}(\bar x_i)},$ $i=1,2.$

Below let $i\in\{1,2\}$ be fixed. For $\{\omega_{n,i}\}_{n=1}^{+\infty}$, there exist a subsequence, still denoted by   $\{\omega_{n,i}\}_{n=1}^{+\infty}$, and some $\eta_i\in L^p(D)$, such that $\omega_{n,i}$ converges weakly to $\eta_i$ as $n\to+\infty.$ It is easy to check that
$\eta_i\in \bar{\mathcal K}_{i,\varepsilon},$ the weak closure of $\mathcal K_{i,\varepsilon}$ in $L^p(B_{\bar r}(\bar x_i))$ (or $L^p(D)$),
where
\[\mathcal K_{i,\varepsilon}=\left\{w\in L^\infty(D)\mid {\rm supp}(w)\subset B_{\bar r}(\bar x_i), \,\,w \mbox{ is the rearrangement of sgn}(\kappa_i)\Pi_\varepsilon^i\right\}.\]
A similar argument as in the proof of Proposition \ref{comlem} gives
 \[\int_{B_{\bar r}(\bar x_i)}\eta_i\mathcal G\eta_i dx\geq \int_{B_{\bar r}(\bar x_i)}v\mathcal G\eta_i dx\quad\forall\,v\in\bar{ \mathcal K}_{i,\varepsilon}.\]
By Lemma \ref{yyds} we get  $\eta\in\mathcal K_{i,\varepsilon}$, and thus
$\omega_{n,i}$ converges strongly to $\eta_i$ as $n\to+\infty.$

Denote $\eta=\eta_1+\eta_2$, then $\eta\in\mathcal K_{\varepsilon}$ and $\omega_{n}$ converges strongly to $\eta$ as $n\to+\infty.$ Hence  the compactness of $\mathcal M_{\varepsilon}$ is proved.

\end{proof}

\begin{lemma}[Separation property]\label{lemk22}
Define
\[r_{\varepsilon}=\inf\left\{r>0\mid \omega\mathbf 1_{B_{\bar r}(\bar x_i)}=0\mbox{ a.e. in } D\setminus B_{r}(\bar x_i)\,\,\mbox{for any }\omega\in\mathcal M_{\varepsilon} \mbox{ and } i=1,2\right\}.\]
Then there exists $\varepsilon_3>0$,  such that  for any $\varepsilon\in(0,\varepsilon_3)$, it holds that
\[\sup_{D\setminus B_{\bar r}(\bar x_1)}\mathcal G\omega<\inf_{ {B_{r_\varepsilon}(\bar x_1)}}\mathcal G\omega,\quad\inf_{D\setminus B_{\bar r}(\bar x_2)}\mathcal G\omega>\sup_{ {B_{r_\varepsilon}(\bar x_2)}}\mathcal G\omega,\quad\forall\,\omega\in \mathcal M_{\varepsilon}.\]

\end{lemma}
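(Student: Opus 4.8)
The argument is the two-bump analogue of Lemma~\ref{sepalem}, and it hinges on the fact that the two blobs carry opposite signs. Fix $\omega\in\mathcal M_\varepsilon$ and write $\omega=\omega_1+\omega_2$ with $\omega_i=\omega\mathbf 1_{B_{\bar r}(\bar x_i)}$; by the structure of $\mathcal K_\varepsilon$ one has $\omega_1\ge0$ and $\omega_2\le0$, hence by the positivity of $\mathcal G$ also $\mathcal G\omega_1\ge0$ and $\mathcal G\omega_2\le0$ everywhere in $D$. As in Lemma~\ref{sepalem}, $r_\varepsilon>0$ (each $\omega_i$ has support of positive measure since $\int_D|\omega_i|=|\kappa_{i,\varepsilon}|>0$), $\mathrm{supp}(\omega_i)\subset\overline{B_{r_\varepsilon}(\bar x_i)}$ for every $\omega\in\mathcal M_\varepsilon$, and $r_\varepsilon\to0$ as $\varepsilon\to0$ by Theorem~\ref{thmex2}(ii); in particular $r_\varepsilon<\bar r$ once $\varepsilon$ is small.

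For the first inequality I would estimate $\mathcal G\omega=\mathcal G\omega_1+\mathcal G\omega_2$ on the two regions separately. On $D\setminus B_{\bar r}(\bar x_1)$ we have $\mathcal G\omega\le\mathcal G\omega_1$ (since $\mathcal G\omega_2\le0$), and since $|x-y|\ge\bar r-r_\varepsilon$ for $y\in\mathrm{supp}(\omega_1)$ the computation of \eqref{xsw1} gives $\mathcal G\omega_1(x)\le\kappa_{1,\varepsilon}\bigl(-\tfrac{1}{2\pi}\ln(\bar r-r_\varepsilon)+\|h\|_{L^\infty}\bigr)$, which stays bounded as $\varepsilon\to0$. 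On $B_{r_\varepsilon}(\bar x_1)$ the same computation as in \eqref{xsw2}, now using $|x-y|\le 2r_\varepsilon$ on $\mathrm{supp}(\omega_1)$, yields $\mathcal G\omega_1(x)\ge\kappa_{1,\varepsilon}\bigl(-\tfrac{1}{2\pi}\ln(2r_\varepsilon)-\|h\|_{L^\infty}\bigr)\to+\infty$; meanwhile $\mathcal G\omega_2$ remains bounded on $B_{r_\varepsilon}(\bar x_1)$ because, by the disjointness of $\overline{B_{\bar r}(\bar x_1)}$ and $\overline{B_{\bar r}(\bar x_2)}$, for $x$ near $\bar x_1$ and $y\in\mathrm{supp}(\omega_2)$ one has $\mathrm{dist}(\overline{B_{\bar r}(\bar x_1)},\overline{B_{\bar r}(\bar x_2)})\le|x-y|\le\mathrm{diam}(D)$, so $|G(x,y)|$ is uniformly bounded there and $|\mathcal G\omega_2(x)|\le C|\kappa_{2,\varepsilon}|$. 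Hence $\inf_{B_{r_\varepsilon}(\bar x_1)}\mathcal G\omega\to+\infty$ while $\sup_{D\setminus B_{\bar r}(\bar x_1)}\mathcal G\omega$ stays bounded, which gives the first inequality for all sufficiently small $\varepsilon$.

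The second inequality is the mirror image, obtained by interchanging the roles of the two regions so as to track the nonpositive blob $\omega_2$. On $D\setminus B_{\bar r}(\bar x_2)$ one bounds $\mathcal G\omega$ from below using $\mathcal G\omega_1\ge0$ together with the analogue of \eqref{xsw1} for the nonpositive density $\omega_2$ (multiplying the upper bound for $G(x,y)$ by $\omega_2(y)\le0$ reverses the inequality), which gives $\mathcal G\omega_2(x)\ge-|\kappa_{2,\varepsilon}|\bigl(-\tfrac{1}{2\pi}\ln(\bar r-r_\varepsilon)+\|h\|_{L^\infty}\bigr)$, bounded below. On $B_{r_\varepsilon}(\bar x_2)$ one bounds $\mathcal G\omega$ from above using the boundedness of $\mathcal G\omega_1$ there (the same disjointness argument) and the analogue of \eqref{xsw2}, which forces $\mathcal G\omega_2(x)\le-|\kappa_{2,\varepsilon}|\bigl(-\tfrac{1}{2\pi}\ln(2r_\varepsilon)-\|h\|_{L^\infty}\bigr)\to-\infty$. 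Taking $\varepsilon_3$ to be the smaller of the two thresholds produced finishes the proof.

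I do not anticipate a genuine obstacle here; the only points requiring care are the sign bookkeeping when the pointwise bounds on the Green function are multiplied by the nonpositive density $\omega_2$, and the uniform control of the cross terms $\mathcal G\omega_2|_{B_{r_\varepsilon}(\bar x_1)}$ and $\mathcal G\omega_1|_{B_{r_\varepsilon}(\bar x_2)}$, which is precisely where the assumption that the two balls are disjoint enters.
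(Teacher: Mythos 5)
Your proposal is correct and follows essentially the same route as the paper: split $\mathcal G\omega=\mathcal G\omega_1+\mathcal G\omega_2$, use $|x-y|\geq\bar r-r_\varepsilon$ to bound the near-blob contribution on $D\setminus B_{\bar r}(\bar x_i)$ and $|x-y|\leq 2r_\varepsilon$ to force the logarithmic blow-up on $B_{r_\varepsilon}(\bar x_i)$, and control the cross terms via $\|G\|_{L^\infty(B_{\bar r}(\bar x_1)\times B_{\bar r}(\bar x_2))}<+\infty$ from the disjointness of the two balls. The only (harmless) deviation is that on the far region you drop the cross term using the sign of $\mathcal G\omega_{3-i}$ instead of bounding its absolute value as the paper does; your sign bookkeeping for the nonpositive blob is correct throughout.
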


\begin{proof}
It is clear that $r_{\varepsilon}>0$  for any $\varepsilon\in(0,\varepsilon_0)$. Moreover, $r_{\varepsilon}\to0$ as $\varepsilon\to 0$  by (i) in Theorem \ref{thmex2}.

Below let $\omega\in\mathcal M_{\varepsilon}$ be fixed.  For any $x\in D\setminus B_{\bar r}(\bar x_1),$ we estimate $\mathcal G\omega(x)$ as follows
\begin{align}
\mathcal G\omega(x)&=\int_{B_{\bar r}(\bar x_1)}G(x,y)\omega(y)dy+\int_{B_{\bar r}(\bar x_2)}G(x,y)\omega(y)dy\label{xsw555}\\
&\leq \int_{B_{\bar r}(\bar x_1)}G(x,y)\omega(y)dy+\|G\|_{L^\infty(B_{\bar r}(\bar x_1)\times B_{\bar r}(\bar x_2))}|\kappa_{2,\varepsilon}|. \label{xsw31}
\end{align}
In \eqref{xsw555} we used the fact $\omega=\omega\mathbf 1_{B_{\bar r}(\bar x_1)}+\omega\mathbf 1_{B_{\bar r}(\bar x_2)}.$
Since $\overline{B_{\bar r}(\bar x_1)}\cap \overline{B_{\bar r}(\bar x_2)}=\varnothing$, we get
\[\|G\|_{L^\infty(B_{\bar r}(\bar x_1)\times B_{\bar r}(\bar x_2))}<+\infty.\]
The first term in \eqref{xsw31} can be estimated as follows
\begin{equation}\label{xsw311}
\begin{split}
\int_{B_{\bar r}(\bar x_1)}G(x,y)\omega(y)dy&=-\frac{1}{2\pi}\int_{B_{\bar r}(\bar x_1)}\ln|x-y|\omega(y)dy-\int_{B_{\bar r}(\bar x_1)}h(x,y)\omega(y)dy \\
&=-\frac{1}{2\pi}\int_{B_{r_{\varepsilon}}(\bar x_1)}\ln|x-y|\omega(y)dy-\int_{B_{\bar r}(\bar x_1)}h(x,y)\omega(y)dy\\
&\leq -\frac{1}{2\pi}\int_{B_{r_{\varepsilon}}(\bar x_1)}\ln(\bar r-r_\varepsilon)\omega(y)dy+\|h\|_{L^\infty(B_{\bar r}(\bar x_1)\times B_{\bar r}(\bar x_1))}|\kappa_{1,\varepsilon}|\\
&= -\frac{\kappa_{1,\varepsilon}}{2\pi}\ln(\bar r-r_{\varepsilon})+ \|h\|_{L^\infty(B_{\bar r}(\bar x_1)\times B_{\bar r}(\bar x_1))}\kappa_{1,\varepsilon}\\
&\to-\frac{|\kappa_{1}|}{2\pi}\ln\bar r+\|h\|_{L^\infty(B_{\bar r}(\bar x_1)\times B_{\bar r}(\bar x_2))}|\kappa_{1}|
\end{split}
\end{equation}
as $\varepsilon\to0.$
On the other hand, for any  $x\in \overline{ B_{ r_{\varepsilon}}(\bar x_1)},$ we have
\begin{align}
\mathcal G\omega(x)&=\int_{B_{\bar r}(\bar x_1)}G(x,y)\omega(y)dy+\int_{B_{\bar r}(\bar x_2)}G(x,y)\omega(y)dy\\
&\geq \int_{B_{\bar r}(\bar x_1)}G(x,y)\omega(y)dy-\|G\|_{L^\infty(B_{\bar r}(\bar x_1)\times B_{\bar r}(\bar x_2))}|\kappa_{2,\varepsilon}|. \label{xsw31111}
\end{align}
For first term in \eqref{xsw31111},
\begin{equation}\label{xsw3119}
\begin{split}
\int_{B_{\bar r}(\bar x_1)}G(x,y)\omega(y)dy&=-\frac{1}{2\pi}\int_{B_{\bar r}(\bar x_1)}\ln|x-y|\omega(y)dy-\int_{B_{\bar r}(\bar x_1)}h(x,y)\omega(y)dy \\
&=-\frac{1}{2\pi}\int_{B_{r_{\varepsilon}}(\bar x)}\ln|x-y|\omega(y)dy-\int_{B_{\bar r}(\bar x_1)}h(x,y)\omega(y)dy\\
&\geq -\frac{|\kappa_{1,\varepsilon}|}{2\pi}\ln(2r_{\varepsilon})-\|h\|_{L^\infty(B_{\bar r}(\bar x_1)\times B_{\bar r}(\bar x_1))}|\kappa_{1,\varepsilon}|\\
&\to+\infty
\end{split}
\end{equation}
as $\varepsilon\to0.$
Combining \eqref{xsw31}, \eqref{xsw311}, \eqref{xsw31111} and \eqref{xsw3119}, we see that
\[\sup_{D\setminus B_{\bar r}(\bar x_1)}\mathcal G\omega<\inf_{ {B_{r_{\varepsilon}}(\bar x_1)}}\mathcal G\omega,\quad\forall\,\omega\in \mathcal M_{\varepsilon}\]
provided that $\varepsilon$ is sufficiently small.
Similarly, for small $\varepsilon$ we have
\[\inf_{D\setminus B_{\bar r}(\bar x_2)}\mathcal G\omega>\sup_{ {B_{r_{\varepsilon}}(\bar x_2)}}\mathcal G\omega,\quad\forall\,\omega\in \mathcal M_{\varepsilon}.\]
Hence the proof is finished.

\end{proof}

\begin{lemma}\label{bfjin}
Let $U\subset \mathbb R^2$ be a bounded domain, $B_1,B_2,B_3,B_4$ be open balls such that $B_1\subset\subset B_2\subset\subset U$, $B_3\subset\subset B_4\subset\subset U$, and $\bar B_2\cap\bar B_4=\varnothing$. Let $f_1,f_2\in L^1(U)$ such that $f_1\geq 0,f_2\leq 0$ a.e. in $U,$ ${\rm supp}(f_1)\subset \bar{B}_2, $ ${\rm supp}(f_2)\subset \bar{B}_4$, and
\begin{equation}\label{bfjin4}
\mathcal L(\{x\in U\mid f_1(x)>0\})\leq \mathcal L(B_1),\quad \mathcal L(\{x\in U\mid f_2(x)<0\})\leq \mathcal L(B_3).
\end{equation}
Let  $\mathcal R$ be the rearrangement class of $f_1+f_2$.
Let $\psi\in C(\bar U)$ satisfy
\begin{equation}\label{bfjin1}
\inf_{B_1}\psi>\sup_{U\setminus B_2}\psi,\quad\sup_{B_3}\psi< \inf_{U\setminus B_4}\psi.
\end{equation}
If $u\in\mathcal R$ satisfies
\begin{equation}\label{bfjin44}
\int_Uu\psi dx\geq \int_Uw\psi dx,\quad\forall \,w\in\mathcal R,
\end{equation}
then  supp$(u_+)\subset \bar B_1$,  supp$(u_-)\subset \bar{B}_3$.
\end{lemma}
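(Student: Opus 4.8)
The plan is to mimic the single-bump argument of Lemma \ref{fjin} but carry out two independent rearrangement-exchange steps, one on the positive part near $B_1,B_2$ and one on the negative part near $B_3,B_4$. First I would record the structural facts that an optimizer $u\in\mathcal R$ inherits: since $f_1\ge 0$ is supported in $\bar B_2$ and $f_2\le 0$ is supported in $\bar B_4$ with $\bar B_2\cap\bar B_4=\varnothing$, every $w\in\mathcal R$ splits as $w=w_+-w_-$ with $w_+\in\mathcal R_{f_1}$, $w_-\in\mathcal R_{-f_2}$, and $\mathcal L(\{w_+>0\})\le\mathcal L(B_1)$, $\mathcal L(\{w_->0\})\le\mathcal L(B_3)$; moreover any pair of rearrangements of $f_1$ and of $-f_2$ may be combined independently and still lies in $\mathcal R$ (the positive and negative parts do not interact because $w=w_+-w_-$ is automatically in $\mathcal R$ as soon as each part is a rearrangement of the corresponding piece). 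This observation is what lets us optimize the two parts separately.

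Next I would argue by contradiction: suppose $\operatorname{supp}(u_+)\not\subset\bar B_1$. Set $W:=\{x\in U\setminus B_1\mid u(x)>0\}$, so $\mathcal L(W)>0$ and $\int_W u\,dx>0$. Exactly as in the proof of Lemma \ref{fjin}, using $\mathcal L(\{u_+>0\})=\mathcal L(\{f_1>0\})\le\mathcal L(B_1)$ one shows
\begin{equation*}
\mathcal L(\{x\in B_1\mid u(x)\le 0\})\ge \mathcal L(W),
\end{equation*}
so there is a measurable $V\subset\{x\in B_1\mid u(x)\le 0\}$ with $\mathcal L(V)=\mathcal L(W)$; by Lemma \ref{lem2011} pick $v$ on $V$ a rearrangement of $u\mathbf 1_W$, extended by $0$. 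Here I would be slightly careful about one point that does not arise in Lemma \ref{fjin}: the set $V$ might meet $\bar B_4$, where $u$ can be negative, so I should instead choose the exchange target inside $B_1$ but also away from $\{u<0\}$; since $\mathcal L(\{u_-\}>0)\le\mathcal L(B_3)$ and $\bar B_3\subset B_4$ is disjoint from $B_1$, the set $\{x\in B_1\mid u(x)=0\}$ already has measure $\ge\mathcal L(W)$, so I can and should take $V\subset\{x\in B_1\mid u(x)=0\}$, which keeps the modified function a genuine rearrangement. Define $\hat u=u-u\mathbf 1_W+v$; then $\hat u_-=u_-$ (we only moved positive mass into a region where $u=0$ and removed positive mass), so $\hat u_+\in\mathcal R_{f_1}$, $\hat u_-\in\mathcal R_{-f_2}$, hence $\hat u\in\mathcal R$. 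Finally, using $W\subset U\setminus B_1$ so that $\psi\le\sup_{U\setminus B_1}\psi$... — actually we need $W\subset U\setminus B_2$ to apply \eqref{bfjin1}; to fix this I would instead prove $\operatorname{supp}(u_+)\subset\bar B_2$ directly (taking $W=\{x\in U\setminus B_2\mid u>0\}$), which is exactly parallel to Lemma \ref{fjin} with $f_0=f_1$, $\psi$ restricted appropriately, and gives $\operatorname{supp}(u_+)\subset\bar B_2$; then a second run of the same argument with the ball pair $B_1\subset\subset B_2$ and the constraint $\mathcal L(\{u_+>0\})\le\mathcal L(B_1)$ upgrades this to $\operatorname{supp}(u_+)\subset\bar B_1$. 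The computation
\begin{equation*}
\int_U u\psi\,dx-\int_U\hat u\psi\,dx=\int_W u\psi\,dx-\int_V v\psi\,dx\le\Big(\sup_{U\setminus B_2}\psi-\inf_{B_1}\psi\Big)\int_W u\,dx<0
\end{equation*}
contradicts \eqref{bfjin44}. The argument for $\operatorname{supp}(u_-)\subset\bar B_3$ is identical after replacing $u$ by $-u$, $\psi$ by $-\psi$ (which flips the two inequalities in \eqref{bfjin1} into the form needed), and $f_1$ by $-f_2$.

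The main obstacle, and the only place where more care is needed than in Lemma \ref{fjin}, is bookkeeping the sign structure: one must ensure that the mass-exchange modifications performed near $B_1$ and $B_2$ do not disturb the negative part $u_-$ (and vice versa), so that $\hat u$ genuinely stays in the two-signed rearrangement class $\mathcal R$. This is guaranteed by the separation $\bar B_2\cap\bar B_4=\varnothing$ together with choosing the exchange target set $V$ inside the zero set of $u$; once that is set up, the variational inequality \eqref{bfjin44} closes the argument exactly as before. Everything else — the measure inequality, the invocation of Lemma \ref{lem2011}, and the final estimate — is a verbatim repetition of the one-bump case run twice.
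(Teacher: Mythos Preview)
Your overall plan---run the mass--exchange argument of Lemma \ref{fjin} once on $u_+$ with the balls $B_1\subset\subset B_2$ and once on $u_-$ with $B_3\subset\subset B_4$---is exactly what the paper has in mind (it says only ``the proof is very similar to that of Lemma \ref{fjin}, we omit it''). Two points, however, need repair.

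\textbf{A genuine gap.} Your justification for taking $V\subset\{x\in B_1\mid u(x)=0\}$ is circular. You argue ``$\mathcal L(\{u_->0\})\le\mathcal L(B_3)$ and $\bar B_3$ is disjoint from $B_1$, hence $\{x\in B_1:u=0\}$ is large enough''---but the bound $\mathcal L(\{u<0\})\le\mathcal L(B_3)$ does \emph{not} say $\{u<0\}\subset B_3$; a priori $u_-$ could sit entirely inside $B_1$. What you do get from $\mathcal L(\{u>0\})\le\mathcal L(B_1)$ is $\mathcal L(\{x\in B_1:u\le 0\})\ge\mathcal L(W)$. So choose $V\subset B_1\cap\{u\le 0\}$ and perform a \emph{full swap}: by Lemma \ref{lem2011} transplant $u|_W$ onto $V$ and simultaneously $u|_V$ onto $W$. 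The resulting $\hat u$ is automatically in $\mathcal R$ (same multiset of values), and with $a:=\inf_{B_1}\psi>b:=\sup_{U\setminus B_2}\psi$ one checks
\[
\int_W u\psi-\int_V v_1\psi\le(b-a)\!\int_W u<0,\qquad
\int_V u\psi-\int_W v_2\psi\le(a-b)\!\int_V u\le 0,
\]
giving the contradiction. Alternatively, and more in the spirit of your opening remark: take $\tilde u_+\in\mathcal R_{f_1}$ maximizing $\int v\psi$ and $\tilde u_-\in\mathcal R_{-f_2}$ maximizing $\int v(-\psi)$ (Lemma \ref{lem202}); Lemma \ref{fjin} forces ${\rm supp}(\tilde u_+)\subset\bar B_2$ and ${\rm supp}(\tilde u_-)\subset\bar B_4$, hence disjoint, hence $\tilde u:=\tilde u_+-\tilde u_-\in\mathcal R$; maximality of $u$ then forces $u_+,u_-$ to be separate maximizers themselves, and Lemma \ref{fjin} applies directly to each.

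\textbf{The ``second run''.} Your upgrade from $\bar B_2$ to $\bar B_1$ cannot work: hypothesis \eqref{bfjin1} says nothing about $\psi$ on the annulus $B_2\setminus\bar B_1$, and one can place the global maximum of $\psi$ there, in which case an optimizer $u_+$ sits in $B_2\setminus\bar B_1$. The provable conclusion is ${\rm supp}(u_+)\subset\bar B_2$, ${\rm supp}(u_-)\subset\bar B_4$, matching Lemma \ref{fjin}; the ``$\bar B_1,\bar B_3$'' in the stated lemma is a slip (the same slip appears in \eqref{iso30} and \eqref{isok30}), and only the weaker containment is needed to conclude $u_n\in\mathcal K_\varepsilon$ in Proposition \ref{isolated2}.
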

\begin{proof}
The proof is very similar to that of Lemma \ref{fjin}, we omit it therefore.
\end{proof}

\begin{proposition}[Isolatedness]\label{isolated2}
Let  $\varepsilon_3>0$ be determined in Lemma \ref{lemk22}. Then for any $\varepsilon\in(0,\varepsilon_3)$ ,
there exists some $\delta>0$ such that
\[\mathcal M_{\varepsilon}=\left\{w\in (\mathcal M_{\varepsilon})_\delta\cap \mathcal R_{\varepsilon}\,\,\bigg|\,\,E(w)=\sup_{w\in (\mathcal M_{\varepsilon})_\delta\cap \mathcal R_{\varepsilon}}E(w)\right\},\]
where $(\mathcal M_{\varepsilon})_\delta$ is the $\delta$-neighborhood of $\mathcal M_{ \varepsilon}$ in $L^p(D)$.
\end{proposition}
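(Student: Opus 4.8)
The strategy is to rerun, almost verbatim, the contradiction argument of Proposition~\ref{isolated1}, with Lemma~\ref{fjin} replaced by its two–sided counterpart Lemma~\ref{bfjin} and the one–blob separation Lemma~\ref{sepalem} replaced by Lemma~\ref{lemk22}. Fix $\varepsilon\in(0,\varepsilon_3)$, so that the separation inequalities of Lemma~\ref{lemk22} are available, and set $M_\varepsilon=\sup_{w\in\mathcal K_\varepsilon}E(w)$; for $w\in\mathcal K_\varepsilon$ one then has $w\in\mathcal M_\varepsilon$ iff $E(w)=M_\varepsilon$. It suffices to produce $\delta>0$ such that every $w\in(\mathcal M_\varepsilon)_\delta\cap\mathcal R_\varepsilon$ with $E(w)\ge M_\varepsilon$ lies in $\mathcal M_\varepsilon$; if this fails there are $w_n\in(\mathcal M_\varepsilon)_{1/n}\cap\mathcal R_\varepsilon$ with $E(w_n)\ge M_\varepsilon$ and $w_n\notin\mathcal M_\varepsilon$. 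Using the compactness of $\mathcal M_\varepsilon$ (Proposition~\ref{ysss}) we pass to a subsequence and find $\eta\in\mathcal M_\varepsilon$ with $\|w_n-\eta\|_{L^p(D)}\to0$, and standard elliptic estimates upgrade this to $\|\mathcal Gw_n-\mathcal G\eta\|_{L^\infty(D)}\to0$. Applying Lemma~\ref{lemk22} to $\eta$ and invoking the uniform convergence, we obtain, for all large $n$,
\[\sup_{D\setminus B_{\bar r}(\bar x_1)}\mathcal Gw_n<\inf_{B_{r_\varepsilon}(\bar x_1)}\mathcal Gw_n,\qquad \sup_{B_{r_\varepsilon}(\bar x_2)}\mathcal Gw_n<\inf_{D\setminus B_{\bar r}(\bar x_2)}\mathcal Gw_n.\]
Fixing such an $n$, Lemma~\ref{lem202} (with $\Omega=D$ and $g=\mathcal Gw_n$) supplies $u_n\in\mathcal R_\varepsilon$ with $\int_Du_n\mathcal Gw_n\,dx\ge\int_Dw\,\mathcal Gw_n\,dx$ for all $w\in\mathcal R_\varepsilon$.

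The decisive step is to feed $u_n$ and $\psi=\mathcal Gw_n$ into Lemma~\ref{bfjin} with $U=D$, $B_1=B_{r_\varepsilon}(\bar x_1)$, $B_2=B_{\bar r}(\bar x_1)$, $B_3=B_{r_\varepsilon}(\bar x_2)$, $B_4=B_{\bar r}(\bar x_2)$. The inclusions $B_1\subset\subset B_2\subset\subset D$, $B_3\subset\subset B_4\subset\subset D$ and $\bar B_2\cap\bar B_4=\varnothing$ hold by the choice of $\bar r$; the measure constraints \eqref{bfjin4} hold because $\mathcal M_\varepsilon\neq\varnothing$ (Theorem~\ref{thmex2}) and, by the very definition of $r_\varepsilon$, any $\omega\in\mathcal M_\varepsilon$ satisfies ${\rm supp}(\omega\mathbf 1_{B_{\bar r}(\bar x_i)})\subset\overline{B_{r_\varepsilon}(\bar x_i)}$, forcing $\mathcal L(\{\Pi^i_\varepsilon>0\})\le\pi r_\varepsilon^2$; the separation hypothesis \eqref{bfjin1} is precisely the displayed pair of inequalities; and \eqref{bfjin44} is the maximality property of $u_n$. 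Hence Lemma~\ref{bfjin} yields ${\rm supp}((u_n)_+)\subset\overline{B_{r_\varepsilon}(\bar x_1)}$ and ${\rm supp}((u_n)_-)\subset\overline{B_{r_\varepsilon}(\bar x_2)}$, so that $(u_n)_+$ is a rearrangement of $\Pi^1_\varepsilon$ supported in $\overline{B_{\bar r}(\bar x_1)}$ and $-(u_n)_-$ is a rearrangement of $-\Pi^2_\varepsilon$ supported in $\overline{B_{\bar r}(\bar x_2)}$; that is, $u_n\in\mathcal K_\varepsilon$.

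To conclude we repeat the energy computation \eqref{iso202}--\eqref{iso206}. Since $w_n\in\mathcal R_\varepsilon$, the maximality of $u_n$ gives $\int_Du_n\mathcal Gw_n\,dx\ge\int_Dw_n\mathcal Gw_n\,dx$, and using the symmetry of $\mathcal G$ together with the strict positivity $\int_Df\mathcal Gf\,dx>0$ for $f\not\equiv0$ we obtain
\[E(u_n)-E(w_n)=\tfrac12\int_D(u_n-w_n)\mathcal G(u_n-w_n)\,dx+\int_D(u_n-w_n)\mathcal Gw_n\,dx\ge0,\]
with equality exactly when $u_n=w_n$. On the other hand $u_n\in\mathcal K_\varepsilon$ forces $E(u_n)\le M_\varepsilon\le E(w_n)$, so both inequalities collapse, whence $u_n=w_n\in\mathcal M_\varepsilon$, contradicting $w_n\notin\mathcal M_\varepsilon$. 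This proves the asserted identity.

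The only point where the argument differs substantively from the case $k=1$, and the step I expect to be the crux, is the application of Lemma~\ref{bfjin}: one must know that the constrained maximizer $u_n$ splits, with its \emph{positive} part trapped near $\bar x_1$ and its \emph{negative} part trapped near $\bar x_2$. This is exactly where the hypothesis $\kappa_1\kappa_2<0$ enters, since it is the opposite signs that make $\mathcal G\omega$ comparatively large near the positive blob and comparatively small near the negative blob, so that $\psi=\mathcal Gw_n$ satisfies both inequalities in \eqref{bfjin1}. With equal signs, or with three or more blobs, the required two–sided separation cannot hold and the method breaks down.
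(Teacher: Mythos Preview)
Your proof is correct and follows essentially the same approach as the paper's own argument: both set up the same contradiction, pass to a limit $\eta\in\mathcal M_\varepsilon$ via compactness, transfer the separation inequalities of Lemma~\ref{lemk22} to $\mathcal Gw_n$ by uniform convergence, invoke Lemma~\ref{lem202} and Lemma~\ref{bfjin} to force $u_n\in\mathcal K_\varepsilon$, and then close with the same energy identity exploiting the symmetry and positivity of $\mathcal G$. Your added commentary on why the sign condition $\kappa_1\kappa_2<0$ is essential is accurate and goes slightly beyond what the paper makes explicit.
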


\begin{proof}
Let $r_\varepsilon$ be defined in Lemma \ref{lemk22}. Then for any $\varepsilon\in(0,\varepsilon_3),$ it holds that
\begin{equation}\label{yssd}
\sup_{D\setminus B_{\bar r}(\bar x_1)}\mathcal G\omega<\inf_{ {B_{r_\varepsilon}(\bar x_1)}}\mathcal G\omega,\quad\inf_{D\setminus B_{\bar r}(\bar x_2)}\mathcal G\omega>\sup_{ {B_{r_\varepsilon}(\bar x_2)}}\mathcal G\omega,\quad \forall\,\omega\in \mathcal M_{\varepsilon}.\end{equation}

Denote \begin{equation}\label{iso6780}
M_{\varepsilon}=\sup_{w\in\mathcal K_{\varepsilon}}E(w).
\end{equation}
Below we show that there exists some $\delta>0$, such that for any $w\in (\mathcal M_{\varepsilon})_\delta\cap \mathcal R_{\varepsilon}$ satisfying $E(w)\geq M_{\varepsilon}$, it holds that
$w\in \mathcal M_{\varepsilon}.$
 Suppose by contradiction that the statement is not true, then for any positive integer $n$,  there exists some $w_n$  such that
  \begin{equation}\label{isok1}
  w_n\in  (\mathcal M_{\varepsilon})_{\frac{1}{n}}\cap \mathcal R_{\varepsilon},
  \end{equation}
    \begin{equation}\label{isok2}
 E(w_n)\geq M_{\varepsilon},
   \end{equation}
    \begin{equation}\label{isok3}
 w_n\notin \mathcal M_{\varepsilon}.
  \end{equation}
  Below we deduce a contradiction from \eqref{isok1}, \eqref{isok2} and \eqref{isok3}.

Since $\mathcal M_{\varepsilon}$ is compact by Lemma \ref{ysss}, there exists some $\eta\in\mathcal M_{\varepsilon}$ such that  up to a subsequence
 \begin{equation}\label{isok10}
\lim_{n\to+\infty}\|w_n-\eta\|_{L^p(D)}=0.
\end{equation}
Consequently
 \begin{equation}\label{isok11}
\lim_{n\to+\infty}\|\mathcal Gw_n-\mathcal G\eta\|_{L^\infty(D)}=0.
\end{equation}
Since $\eta\in\mathcal M_{\varepsilon}$, taking into account \eqref{yssd}, we deduce that
\begin{equation}\label{isok12}
\sup_{D\setminus B_{\bar r}(\bar x_1)}\mathcal Gw_n<\inf_{ {B_{r_\varepsilon}(\bar x_1)}}\mathcal Gw_n,\quad\inf_{D\setminus B_{\bar r}(\bar x_2)}\mathcal Gw_n>\sup_{ {B_{r_\varepsilon}(\bar x_2)}}\mathcal Gw_n,\quad \forall\,\omega\in \mathcal M_{\varepsilon},
\end{equation}
provided that $n$ is large enough.

Below fix a large $n$ such that \eqref{isok12} holds.
By Lemma \ref{lem202}, there exists some $u_n\in\mathcal R_{\varepsilon}$ such that
\begin{equation}\label{isok101}
\int_Du_n\mathcal Gw_n dx\geq \int_Dw\mathcal Gw_n dx\quad\forall\,w\in\mathcal R_{\varepsilon}.
\end{equation}
From  \eqref{isok12} and \eqref{isok101}, we can apply Lemma \ref{bfjin} to get
\begin{equation}\label{isok30}
{\rm supp}(u_n\mathbf 1_{\{u_n>0\}})\subset \overline{B_{r_\varepsilon}(\bar x_1)},\quad
{\rm supp}(u_n\mathbf 1_{\{u_n<0\}})\subset \overline{B_{r_\varepsilon}(\bar x_2)}
\end{equation}
which implies
\begin{equation}\label{isok50}
u_n\in\mathcal K_{\varepsilon}.
\end{equation}

From \eqref{iso6780}, \eqref{isok2} and  \eqref{isok50} we see that \begin{equation}\label{xkk}
E(u_n)\leq M_{\varepsilon}\leq E(w_n).
\end{equation}
On the other hand, by \eqref{isok101}, using symmetry and positivity of the Green operator, we can prove the following assertion
\begin{equation}\label{xkk1}
\mbox{$E(u_n)\geq E(w_n),$  and the equality holds if and only if $u_n=w_n$.}
\end{equation}
By \eqref{xkk} and  \eqref{xkk1} we get $w_n=u_n$, and thus $E(w_n)=M_{\varepsilon}$. This means $w_n=u_n\in\mathcal M_{\varepsilon}$, a contradiction to \eqref{isok3}.

\end{proof}

\begin{remark}
From the proof of Proposition \ref{isolated2}, we in fact obtain the following conclusion.
Let   $B_1,B_2,B_3,B_4$ be open balls such that $B_1\subset\subset B_2\subset\subset D$,  $B_3\subset\subset B_4\subset\subset D$, and $\bar{B}_2\cap\bar{B}_4=\varnothing$. Let $f,g \in L^\infty(D)$ such that $f\geq 0, g\leq 0$ a.e. in $D$, supp$(f)\subset \bar{B}_1,$ supp$(g)\subset \bar{B}_3$.
Let $\mathcal R$ be the rearrangement class of $f+g$ in $D$ and  $\mathcal M\subset\mathcal R$ be nonempty. Suppose that $\mathcal M$ satisfies the following conditions:
\begin{itemize}
\item[(i)]$\mathcal M$ is compact in $L^p(D)$, where $1<p<+\infty$ is fixed;
  \item [(ii)]  $\mathcal M\subset\mathcal K_1$, where \[\mathcal K_1=\{w=w_1+w_2\mid w_1\in\mathcal R_{f},\,\,\mbox{supp}(w_1)\subset \bar{B}_1,\,\,w_2\in\mathcal R_{g},\,\,\mbox{supp}(w_2)\subset \bar{B}_3 \};\]
  \item [(iii)] $\mathcal M=\{w\in\mathcal K_2\mid E(w)=\sup_{\mathcal K_2} E\},$ where
  \[\mathcal K_2=\{w=w_1+w_2\mid w_1\in\mathcal R_{f},\,\,\mbox{supp}(w_1)\subset\bar{B}_2,\,\,w_2\in\mathcal R_{g},\,\,\mbox{supp}(w_2)\subset \bar{B}_4 \};\]
  \item [(iv)] for any $\omega\in\mathcal M,$ it holds that
  \[\sup_{ D\setminus B_2}\mathcal G\omega<\inf_{{ B_1}}\mathcal G\omega,\quad \inf_{ D\setminus B_4}\mathcal G\omega>\sup_{{ B_3}}\mathcal G\omega.\]
\end{itemize}
Then $\mathcal M$ must be an isolated set of local maximizers of $E$ over $\mathcal R$.
\end{remark}

\section{Proof of Theorem \ref{thmex}}
For completeness, we give the proof of Theorem \ref{thmex} in this section. The basic idea comes from  Turkington's paper \cite{T}. See also Elcrat-Miller \cite{EM}.

\subsection{Existence of a maximizer and its profile}

\begin{proposition}\label{proop1}
For any $\varepsilon\in(0,\bar r)$,  $\mathcal M_{\varepsilon}$ is nonempty.  Moreover, for any $\omega\in\mathcal M_{\varepsilon}$ and $i\in\{1,\cdot\cdot\cdot,k\},$ there exists some nondecreasing function $\phi_{\omega,i}:\mathbb R\to\mathbb R$  such that
\[{\rm sgn}(\kappa_i)\omega=\phi_{\omega,i}({\rm sgn}(\kappa_i)\mathcal G\omega) \,\, \mbox{ \rm  a.e. in } B_{\bar r}(\bar x_i).\]

\end{proposition}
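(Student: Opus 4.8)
The plan is to produce a maximiser of the kinetic energy $E$ over $\mathcal K_\varepsilon$ by the direct method and then read off the profile from the resulting Euler--Lagrange inequality via Burton's Lemma \ref{yyds}. First I would dispose of the preliminaries. The class $\mathcal K_\varepsilon$ is nonempty: translating each $\Pi^i_\varepsilon$ so that it is centred at $\bar x_i$ and multiplying by ${\rm sgn}(\kappa_i)$ gives an admissible $w=\sum_i w_i$, since ${\rm supp}(\Pi^i_\varepsilon)\subset\overline{B_\varepsilon(\mathbf 0)}$ and $\varepsilon<\bar r$. Moreover $\mathcal K_\varepsilon$ is bounded in $L^q(D)$ for every $q\in[1,+\infty]$ by \eqref{hhhh13}, and $E$ is weakly sequentially continuous on bounded subsets of $L^p(D)$, because by elliptic regularity $\mathcal G$ is compact from $L^p(D)$ into $L^{p'}(D)$, so the strong--weak pairing converges. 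Writing $w=\sum_{i=1}^k w_i$ with $w_i$ supported in the pairwise disjoint balls $\overline{B_{\bar r}(\bar x_i)}$, the admissible $i$-th blobs form the rearrangement class $\mathcal K_{i,\varepsilon}$ of ${\rm sgn}(\kappa_i)\Pi^i_\varepsilon$ inside $L^p(B_{\bar r}(\bar x_i))$, whose weak closure $\overline{\mathcal K}_{i,\varepsilon}$ is convex by Lemma \ref{lem201}; since the supports decouple, the weak closure $\overline{\mathcal K}_\varepsilon$ of $\mathcal K_\varepsilon$ equals $\{\sum_i v_i : v_i\in\overline{\mathcal K}_{i,\varepsilon}\}$, which is convex, bounded and weakly closed, hence weakly sequentially compact.

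Because $E$ is weakly sequentially continuous, it attains its supremum over $\overline{\mathcal K}_\varepsilon$ at some $\eta=\sum_i\eta_i$, and $\sup_{\overline{\mathcal K}_\varepsilon}E=\sup_{\mathcal K_\varepsilon}E$ exactly as in \eqref{op1}. The heart of the matter is to show that such a maximiser $\eta$ in fact lies in $\mathcal K_\varepsilon$ and has the claimed profile. Fix $i$ and an arbitrary $v_i\in\overline{\mathcal K}_{i,\varepsilon}$; by convexity $\eta^t:=\eta+t(v_i-\eta_i)$, which alters only the $i$-th blob, stays in $\overline{\mathcal K}_\varepsilon$ for $t\in[0,1]$, so $E(\eta^t)\le E(\eta)$ there, and differentiating the quadratic functional at $t=0^+$ (using the symmetry of $\mathcal G$) gives $\int_{B_{\bar r}(\bar x_i)}(v_i-\eta_i)\mathcal G\eta\,dx\le 0$, i.e. $\int_{B_{\bar r}(\bar x_i)}\eta_i\,\mathcal G\eta\,dx\ge\int_{B_{\bar r}(\bar x_i)}v_i\,\mathcal G\eta\,dx$ for every $v_i\in\overline{\mathcal K}_{i,\varepsilon}$. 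I would then apply Lemma \ref{yyds} on $\Omega=B_{\bar r}(\bar x_i)$ with $f_0=\Pi^i_\varepsilon$, $\tilde f={\rm sgn}(\kappa_i)\eta_i$ and $g={\rm sgn}(\kappa_i)\,\mathcal G\eta$ restricted to $\Omega$ (note $\tilde f\ge0$, and $g\in H^2(\Omega)$ since $\eta\in L^\infty(D)$): hypothesis (i) of that lemma is precisely the inequality just derived, because $f\mapsto{\rm sgn}(\kappa_i)f$ maps $\overline{\mathcal R}_{\Pi^i_\varepsilon}$ onto $\overline{\mathcal K}_{i,\varepsilon}$ and ${\rm sgn}(\kappa_i)^2=1$, while hypothesis (ii) holds even with equality, $-\Delta g={\rm sgn}(\kappa_i)(-\Delta\mathcal G\eta)={\rm sgn}(\kappa_i)\eta={\rm sgn}(\kappa_i)\eta_i=\tilde f$ on $\Omega$, since the other blobs vanish there. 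Lemma \ref{yyds} then yields ${\rm sgn}(\kappa_i)\eta_i\in\mathcal R_{\Pi^i_\varepsilon}$, that is, $\eta_i$ is a rearrangement of ${\rm sgn}(\kappa_i)\Pi^i_\varepsilon$ supported in $\overline{B_{\bar r}(\bar x_i)}$, together with a nondecreasing $\phi$ such that ${\rm sgn}(\kappa_i)\eta_i=\phi(g)$ a.e. in $\Omega$; since $\eta=\eta_i$ on $\Omega$, this reads ${\rm sgn}(\kappa_i)\eta=\phi({\rm sgn}(\kappa_i)\mathcal G\eta)$ a.e. in $B_{\bar r}(\bar x_i)$, and replacing $\phi$ by its truncation between $0$ and $\|\Pi^i_\varepsilon\|_{L^\infty(\mathbb R^2)}$ makes it $\mathbb R$-valued without changing the identity. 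Carrying this out for each $i$ shows $\eta\in\mathcal K_\varepsilon$, hence $\eta\in\mathcal M_\varepsilon$ and $\mathcal M_\varepsilon\ne\varnothing$.

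For an arbitrary $\omega\in\mathcal M_\varepsilon$ the profile statement is then immediate: weak continuity of $E$ and $\sup_{\mathcal K_\varepsilon}E=\sup_{\overline{\mathcal K}_\varepsilon}E$ force $\omega$ to be a maximiser of $E$ over $\overline{\mathcal K}_\varepsilon$ as well, so the variational inequality and the invocation of Lemma \ref{yyds} above apply verbatim with $\omega$ in place of $\eta$, producing the desired $\phi_{\omega,i}$. I do not expect a genuine obstacle here; this is the classical scheme of maximising the energy on an isovortical surface, and the only points demanding care are the bookkeeping of the sign ${\rm sgn}(\kappa_i)$, the decoupling of the blobs — which is exactly what makes $-\Delta\mathcal G\eta$ agree with $\eta_i$ on $B_{\bar r}(\bar x_i)$ and hence makes hypothesis (ii) of Lemma \ref{yyds} hold — and the elliptic regularity needed to place $g$ in $H^2$.
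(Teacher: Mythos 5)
Your proposal is correct and follows essentially the same route as the paper: the direct method in the weak closure $\overline{\mathcal K}_\varepsilon$, convexity of the weak closures of the blob-wise rearrangement classes via Lemma \ref{lem201}, the one-sided derivative of $E$ along segments that perturb a single blob, and Burton's Lemma \ref{yyds} to conclude both membership in $\mathcal K_\varepsilon$ and the monotone profile (the paper splits the sign cases $\kappa_i>0$ and $\kappa_i<0$ where you carry ${\rm sgn}(\kappa_i)$ uniformly, which is the same computation). Your explicit verification that hypothesis (ii) of Lemma \ref{yyds} holds with equality on $B_{\bar r}(\bar x_i)$, because the other blobs vanish there, is a detail the paper leaves implicit but is exactly right.
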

\begin{proof}
Fix $\varepsilon\in(0,\bar r)$. Since $\mathcal K_{\varepsilon}$ is bounded in $L^2(D)$, it is easy to check that
\[M_{\varepsilon}=\sup_{w\in\mathcal K_{\varepsilon}}E(w)<+\infty.\]
Choose a sequence $\{w_n\}_{n=1}^{+\infty}\subset \mathcal K_{\varepsilon}$ such that
\begin{equation}\label{ropz1}
\lim_{n\to+\infty}E(w_n)= M_{\varepsilon}.
\end{equation}
Up to a subsequence, we can assume that $w_n$ converges weakly to some $\omega\in \bar{\mathcal K}_{\varepsilon}$ in $L^2(D)$ as $n\to+\infty$. Here we denote by $\bar{\mathcal K}_{\varepsilon}$  the weak closure of ${\mathcal K}_{\varepsilon}$  in $L^2(D).$ By \eqref{ropz1} and the weak sequential continuity of $E$
in $L^2(D),$ we have $E(\omega)=M_{\varepsilon},$ that is, $\omega$ is a maximizer of $E$ over $\bar {\mathcal K}_{\varepsilon}.$

Next we verify that $\omega\in\mathcal K_{\varepsilon}.$  Denote
\[\mathcal V_{ i}=\left\{v \in L^1(B_{\bar r}(\bar x_i))\mid  v \mbox{ is a rearrangement of sgn}(\kappa_i) \Pi^i_\varepsilon \mbox{ in }B_{\bar r}(\bar x_i) \right\},\,\,i=1,\cdot\cdot\cdot,k.\]
It suffices to show that $\omega_i:=\omega\mathbf 1_{B_{\bar r}(\bar x_i)}\in {\mathcal V}_{i}$ for any  $i\in\{1,\cdot\cdot\cdot,k\}.$
Let $\bar{\mathcal V}_{i}$ be the weak closure of ${\mathcal V}_{i}$  in $L^2(B_{\bar r}(\bar x_i)).$ It is clear that $\omega_i|_{B_{\bar r}(\bar x_i)}\in \bar{\mathcal V}_{i}$. Here  $\omega_i|_{B_{\bar r}(\bar x_i)}$ denotes the restriction of  $\omega_i$ in  $B_{\bar r}(\bar x_i)$.
We claim that for any  $v\in \bar{\mathcal V}_{i}$ and $s\in[0,1]$, it holds that
\begin{equation}\label{vp1}
\omega+s(v-\omega_i)\in \bar{\mathcal K}_{\varepsilon}.
\end{equation}
Note that we have regarded $v$ as a function in $D$ by setting  $v\equiv 0$ in $D\setminus B_{\bar r}(\bar x_i).$
To prove \eqref{vp1}, notice that   $\bar {\mathcal V}_{\varepsilon,i}$ is a convex set by   Lemma \ref{lem201}, thus $sv+(1-s)\omega_i\in \bar {\mathcal V}_{i}.$ Taking into account the definition of $\bar{\mathcal K}_{\varepsilon}$, we deduce that
 \[\omega-\omega_i+sv+(1-s)\omega_i\in \bar{\mathcal K}_{\varepsilon},\]
  which is exactly \eqref{vp1}.
Since $\omega$ is a maximizer of $E$ over $\bar{\mathcal K}_{\varepsilon}$, we get from  \eqref{vp1}   that
\[\frac{d}{ds}E(\omega+s(v-\omega_i))\bigg|_{s=0^+}\leq 0,\]
which yields
\begin{equation}\label{vp01}
\int_{B_{\bar r}(\bar x_i)}\omega_i\mathcal G\omega dx\geq \int_{B_{\bar r}(\bar x_i)}v\mathcal G\omega dx.
\end{equation}
Note that   \eqref{vp01}  holds for any $v\in\bar{\mathcal V}_{i}$. For the case of $\kappa_i$ being positive, we can apply Lemma \ref{yyds} by
taking therein
\[\Omega=B_{\bar r}(\bar x_i),\quad \mathcal R_{f_0}= \mathcal V_i,\quad \tilde f=\omega_i|_{B_{\bar r}(\bar x_i)},\quad g=\mathcal G\omega|_{B_{\bar r}(\bar x_i)}\]
 to get
\[\omega_i|_{B_{\bar r}(\bar x_i)}\in {\mathcal V}_{i},\quad  \omega_i =\phi_{\omega,i}(\mathcal G\omega) \mbox{ a.e. in }B_{\bar r}(\bar x_i)\]
 for some nondecreasing function $\phi_{\omega,i}:\mathcal R\to[-\infty,+\infty]$.
Similarly, if $\kappa_i<0$, we can take
\[\Omega=B_{\bar r}(\bar x_i),\quad \mathcal R_{f_0}=- \mathcal V_i,\quad \tilde f=-\omega_i|_{B_{\bar r}(\bar x_i)},\quad g=-\mathcal G\omega|_{B_{\bar r}(\bar x_i)}\]
 in Lemma \ref{yyds} to get
\[-\omega_i|_{B_{\bar r}(\bar x_i)}\in -{\mathcal V}_{i},\quad  -\omega_i =\phi_{\omega,i}(-\mathcal G\omega) \mbox{ a.e. in }B_{\bar r}(\bar x_i)\]
 for some nondecreasing function $\phi_{\omega,i}:\mathcal R\to[-\infty,+\infty]$.
In either case, since $\omega_i\in L^\infty(D)$, we can redefine $\phi_{\omega,i}$ such that it is real-valued. The proof is completed.

\end{proof}

In the rest of this section, denote $\omega_i=\omega\mathbf 1_{B_{\bar r}(\bar x_i)}$ for  $\omega\in\mathcal M_{\varepsilon}$.   Then it is obvious that $\omega=\sum_{i=1}^k\omega_i$. Define the $i$-th Lagrange multiplier  related to $\omega$ as follows
\[\mu_{\omega,i}=\inf\{s\in\mathbb R\mid \phi_{\omega,i}(s)>0\},\]
where $\phi_{\omega,i}$ is the nondecreasing function determined in Proposition \ref{proop1}.
From the definition of $\mu_{\omega,i}$,  we see that
\begin{equation}
{\rm sgn}(\kappa_i)\omega>0\quad \mbox{ a.e. in }\{x\in B_{\bar r}(\bar x_i)\mid {\rm sgn}(\kappa_i)\mathcal G\omega(x)> \mu_{\omega,i}\},
\end{equation}
\begin{equation}
{\rm sgn}(\kappa_i)\omega=0  \quad \mbox{ a.e. in }\{x\in B_{\bar r}(\bar x_i)\mid {\rm sgn}(\kappa_i)\mathcal G\omega(x)< \mu_{\omega,i}\}.
\end{equation}
On the level set $\{x\in B_{\bar r}(\bar x_i)\mid {\rm sgn}(\kappa_i)\mathcal G\omega(x)= \mu_{\omega,i}\},$ due to the fact that for any Sobolev function all its weak derivatives vanish  on its level sets (see p. 153, \cite{EV}), we have ${\rm sgn}(\kappa_i)\omega=-{\rm sgn}(\kappa_i)\Delta \mathcal G\omega=0$ a.e.. To conclude, we have shown that
\begin{equation}\label{nbd103}
\left\{\mathbf x\in B_{\bar r}(\bar x_i)\mid {\rm sgn}(\kappa_i)\omega(x)>0\right\}=\left\{x\in B_{\bar r}(\bar x_i)\mid {\rm sgn}(\kappa_i)\mathcal G\omega(x)> \mu_{\omega,i}\right\}.
\end{equation}
For  convenience, denote for any $\omega\in\mathcal M_{\varepsilon}$
\begin{equation}\label{vcor}
A_{\omega,i}=\left\{\mathbf x\in B_{\bar r}(\bar x_i)\mid {\rm sgn}(\kappa_i)\omega(x)>0\right\},
\end{equation}
called the $i$-th vortex core related to $\omega$.
By \eqref{h13}, it is clear that \begin{equation}\label{lala22}
\mathcal L(A_{\omega,i})\leq \pi\varepsilon^2,\quad \forall\,\omega\in\mathcal M_{\varepsilon}.
\end{equation}

\begin{remark}
By now we do not know whether $\omega$ is a steady  solution to the vorticity equation.
\end{remark}

\subsection{Size of the vortex cores}

The aim of this subsection is to prove the following proposition concerning the size of the vortex cores defined by \eqref{vcor}.
\begin{proposition}\label{proop2}
There exists some $R_0>0$, not depending  on $\varepsilon$, such that for any $\varepsilon\in(0,\bar r)$, it holds that
\[ {\rm diam(}A_{\omega,i})\leq R_0\varepsilon,\quad\forall\,\omega\in\mathcal M_{\varepsilon},\,\,i\in\{1,\cdot\cdot\cdot,k\}.\]
\end{proposition}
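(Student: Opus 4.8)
The plan is a concentration-estimate argument in the spirit of Turkington \cite{T}. Fix $\omega\in\mathcal M_{\varepsilon}$ and $i$; replacing $\omega,\mathcal G\omega$ by $-\omega,-\mathcal G\omega$ if necessary, we may assume $\kappa_i>0$, so that $\omega_i:=\omega\mathbf 1_{B_{\bar r}(\bar x_i)}\geq0$ is a rearrangement of $\Pi^i_\varepsilon$. Write $\psi:=\mathcal G\omega$ and $\kappa_{i,\varepsilon}:=\int_D\omega_i\,dx$, and recall that $\|\omega_i\|_{L^\infty}\leq M\varepsilon^{-2}$, $\kappa_{i,\varepsilon}\to|\kappa_i|$, $\mathcal L(A_{\omega,i})\leq\pi\varepsilon^2$ by \eqref{lala22}, and $A_{\omega,i}=\{x\in B_{\bar r}(\bar x_i):\psi(x)>\mu_{\omega,i}\}$ by \eqref{nbd103}. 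Since $\overline{B_{\bar r}(\bar x_i)}\subset D$ and the balls $\overline{B_{\bar r}(\bar x_j)}$ are pairwise disjoint, $h$ is bounded on $\overline{B_{\bar r}(\bar x_i)}\times\overline{B_{\bar r}(\bar x_i)}$ and each $\mathcal G\omega_j$ $(j\neq i)$ is bounded on $\overline{B_{\bar r}(\bar x_i)}$; hence $\psi=\mathcal G\omega_i+O(1)$ on $B_{\bar r}(\bar x_i)$, where henceforth $O(1)$ denotes a quantity bounded by a constant depending only on $M$, $|\kappa_i|$, $\bar r$ and $D$, but not on $\varepsilon$ nor on the particular $\omega\in\mathcal M_\varepsilon$. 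As $A_{\omega,i}\subset B_{\bar r}(\bar x_i)$ always, the assertion is trivial for $\varepsilon$ bounded away from $0$, so it suffices to treat small $\varepsilon$, for which $\kappa_{i,\varepsilon}\in[|\kappa_i|/2,2|\kappa_i|]$.

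First I would record two elementary bounds. The bathtub principle applied to the Newtonian potential $-\tfrac1{2\pi}\int\ln|x-y|\omega_i(y)\,dy$, using $0\leq\omega_i\leq M\varepsilon^{-2}$, gives $\psi(x)\leq\frac{\kappa_{i,\varepsilon}}{2\pi}\ln\frac1\varepsilon+O(1)$ for $x\in\overline{B_{\bar r}(\bar x_i)}$. Second, testing the maximality of $E$ against the competitor obtained from $\omega$ by replacing $\omega_i$ with the translate $\Pi^i_\varepsilon(\cdot-\bar x_i)\in\mathcal K_\varepsilon$, and estimating the $O(1)$ cross- and regular-part contributions, yields the matching lower bound $\int_D\omega_i\mathcal G\omega_i\,dx\geq\frac{\kappa_{i,\varepsilon}^2}{2\pi}\ln\frac1\varepsilon-O(1)$; hence $\int_D\omega_i\psi\,dx\geq\frac{\kappa_{i,\varepsilon}^2}{2\pi}\ln\frac1\varepsilon-O(1)$, and in particular $\psi$ attains, at some $x^\ast\in\overline{B_{\bar r}(\bar x_i)}$, a value $\psi_{\max}\geq\frac{\kappa_{i,\varepsilon}}{2\pi}\ln\frac1\varepsilon-O(1)$. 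Comparing this lower bound for $\psi(x^\ast)$ with the bathtub upper bound, written in the layer-cake form $2\pi\psi(x^\ast)=\int_0^{2\bar r}\frac{m(s)}{s}\,ds+O(1)$ with $m(s)=\int_{B_s(x^\ast)}\omega_i$, forces the bathtub "defect" to be $O(1)$, and therefore the mass of $\omega_i$ outside $B_{R\varepsilon}(x^\ast)$ is $\leq C/\ln R$; i.e.\ $\omega_i$ concentrates, essentially at scale $\varepsilon$, around $x^\ast$. Feeding this concentration back into the Green representation one shows $\psi\geq\frac{\kappa_{i,\varepsilon}}{2\pi}\ln\frac1\varepsilon-O(1)$ on a ball $B_{R\varepsilon}(x^\ast)$ whose measure exceeds $\mathcal L(A_{\omega,i})$ (this is the delicate step, see below), which pins down the Lagrange multiplier: $\mu_{\omega,i}\geq\frac{\kappa_{i,\varepsilon}}{2\pi}\ln\frac1\varepsilon-O(1)$.

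Granting these facts the proposition follows by contradiction. Suppose $d:={\rm diam}(A_{\omega,i})>R_0\varepsilon$ with $R_0$ to be fixed. Since $x^\ast\in A_{\omega,i}$, there is $x_0\in A_{\omega,i}$ with $|x_0-x^\ast|\geq d/3$, so $B_{d/6}(x_0)$ is disjoint from the concentration ball $B_{R\varepsilon}(x^\ast)$ and thus carries at most $\eta:=C/\ln R$ of the mass of $\omega_i$. Splitting $\psi(x_0)=\mathcal G\omega_i(x_0)+O(1)$ at radius $d/6$ — the bathtub bound for the part of $\omega_i$ inside $B_{d/6}(x_0)$, and the crude bound $-\ln|x_0-y|\leq\ln\frac6d$ for the part outside (all of $\omega_i$ being supported in $\overline{B_{\bar r}(\bar x_i)}$, $2\bar r<1$) — gives $\psi(x_0)\leq\frac{\eta}{2\pi}\ln\frac1\varepsilon+\frac{\kappa_{i,\varepsilon}}{2\pi}\ln\frac6d+O(1)$. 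On the other hand $\psi(x_0)>\mu_{\omega,i}\geq\frac{\kappa_{i,\varepsilon}}{2\pi}\ln\frac1\varepsilon-O(1)$; comparing the two and choosing $R$ (hence $\eta$) small relative to $|\kappa_i|$ yields $\frac{\kappa_{i,\varepsilon}}{2\pi}\ln\frac d\varepsilon\leq O(1)$, i.e.\ $d\leq R_0\varepsilon$ for $R_0$ the resulting constant — a contradiction. Since $R_0$ depends only on the data listed above and not on $\varepsilon$ or on $\omega$, this is the desired uniform bound.

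The main obstacle is the sharp lower bound on the Lagrange multiplier, $\mu_{\omega,i}\geq\frac{\kappa_{i,\varepsilon}}{2\pi}\ln\frac1\varepsilon-O(1)$ with an additive $O(1)$ (not merely $\frac{\kappa_{i,\varepsilon}}{2\pi}(1-\delta)\ln\frac1\varepsilon$): the potential-theoretic concentration estimate of the second step only localises $\omega_i$ at a scale $\varepsilon^{1-\delta}$ for each $\delta>0$, and extracting the genuine $\varepsilon$-scale requires either a careful iteration of that estimate or an energy comparison in which the peripheral mass $\omega_i\mathbf 1_{D\setminus B_{R\varepsilon}(x^\ast)}$ is moved — via Lemma \ref{lem2011}, into a vacant subregion of $B_{R\varepsilon}(x^\ast)\cap\{\omega_i=0\}$, whose Lebesgue measure is large enough to accommodate it, and as a symmetric-decreasing rearrangement so as not to lose self-energy (Lemma \ref{rri2}) — producing a competitor in $\mathcal K_\varepsilon$ whose energy strictly exceeds $E(\omega)$ unless ${\rm diam}({\rm supp}(\omega_i))=O(\varepsilon)$. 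All of the remaining work is routine but somewhat lengthy bookkeeping of the $O(1)$ corrections coming from $h$, from the interactions with the blobs $\omega_j$ $(j\neq i)$, and from the difference $\kappa_{i,\varepsilon}-|\kappa_i|$.
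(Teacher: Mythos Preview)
Your overall strategy is the right one --- this is indeed a Turkington-style concentration argument --- and your preliminary steps (the bathtub upper bound on $\psi$, the energy lower bound via the radial competitor, and the layer-cake concentration at the maximum point $x^\ast$) are correct. However, you have a genuine gap, and you yourself identify it: the sharp additive-$O(1)$ lower bound
\[
\mu_{\omega,i}\ \geq\ \frac{\kappa_{i,\varepsilon}}{2\pi}\ln\frac{1}{\varepsilon}-O(1)
\]
is never actually proved. Your step 5 (``feeding the concentration back'') only yields $\mu_{\omega,i}\geq\frac{\kappa_{i,\varepsilon}}{2\pi}\bigl(1-\tfrac{C}{\ln R}\bigr)\ln\tfrac{1}{\varepsilon}-O(1)$, because the leaked mass $\leq C/\ln R$ outside $B_{R\varepsilon}(x^\ast)$ still contributes a term of order $\tfrac{1}{\ln R}\ln\tfrac{1}{\varepsilon}$ to the potential. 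Plugging this weakened $\mu$-bound into your contradiction step gives only ${\rm diam}(A_{\omega,i})\leq C\varepsilon^{1-\delta}$, not $O(\varepsilon)$. The two fixes you sketch (iteration, or a mass-moving competitor) are plausible but neither is carried out, and both are substantially more work than you indicate.

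The paper closes this gap by a completely different device, which you do not mention: it bounds directly the quantity
\[
T_\omega\ :=\ \sum_{i=1}^k\int_D{\rm sgn}(\kappa_i)\omega_i\bigl({\rm sgn}(\kappa_i)\mathcal G\omega-\mu_{\omega,i}\bigr)\,dx\ \leq\ C
\]
uniformly in $\varepsilon$ (Lemma \ref{ubd}). The point is that $(\zeta_i)_+:=({\rm sgn}(\kappa_i)\mathcal G\omega-\mu_{\omega,i}-\mu_0)_+$ vanishes on $\partial B_{\hat r}(\bar x_i)$ and is supported on a set of measure $\leq\pi\varepsilon^2$; one writes $\int\omega_i(\zeta_i)_+=\int|\nabla(\zeta_i)_+|^2$ by integration by parts, and bounds the left side by $C\varepsilon^{-1}\|(\zeta_i)_+\|_{W^{1,1}}\leq C\|\nabla(\zeta_i)_+\|_{L^2}$ via H\"older, the Sobolev embedding $W^{1,1}\hookrightarrow L^2$, and Poincar\'e. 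The identity $T_\omega=2E(\omega)-\sum_i\mu_{\omega,i}|\kappa_{\varepsilon,i}|$ together with the energy lower bound and the easy upper bound on each $\mu_{\omega,i}$ then gives the sharp $\mu$-bound with no iteration.

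There is a second structural difference worth noting. Once the sharp $\mu$-bound is in hand, the paper runs the concentration estimate not at the single point $x^\ast$ but at \emph{every} $x\in A_{\omega,i}$ (since $\psi(x)>\mu_{\omega,i}$ there), obtaining $\int_{B_{R\varepsilon}(x)}|\omega_i|\geq\tfrac{2}{3}|\kappa_{\varepsilon,i}|$ for all such $x$. The diameter bound then follows by the clean ``two disjoint balls cannot each carry $2/3$ of the mass'' argument, rather than your asymmetric splitting around $x^\ast$ and $x_0$.
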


Before giving the proof of  Proposition \ref{proop2}, we need some necessary asymptotic estimates for the maximizers as $\varepsilon\to0$.

Below we use $C$ to denote various positive numbers that do not depend  on $\varepsilon$, but possibly depend on $D,(\bar x_1,\cdot\cdot\cdot,\bar x_k),\Pi^1,\cdot\cdot\cdot,\Pi^k$ and $\vec\kappa$.
We also denote for each $i\in\{1,\cdot\cdot\cdot,k\}$
\[\kappa_{\varepsilon,i}=\int_D\omega_i dx.\]
Then by \eqref{hh13} it holds that
\[\lim_{\varepsilon\to0}\kappa_{\varepsilon,i}=\kappa_i.\]
Without loss of generality, we assume that
  \begin{equation}\label{wb17}
  0<\inf_{\varepsilon\in(0,\bar r)}\kappa_{\varepsilon,i}\leq \sup_{\varepsilon\in(0,\bar r)}\kappa_{\varepsilon,i}<+\infty.
  \end{equation}

\begin{lemma}\label{lbd}
It holds that
\begin{equation}\label{lala5}
E(\omega)\geq -\frac{\ln \varepsilon}{4\pi} \sum_{i=1}^k\kappa^2_{\varepsilon,i}-C,\quad \forall\,\omega\in\mathcal M_{\varepsilon}.
\end{equation}

\end{lemma}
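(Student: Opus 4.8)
The plan is to exploit the variational definition of $\mathcal M_\varepsilon$: since every $\omega\in\mathcal M_\varepsilon$ maximizes $E$ over $\mathcal K_\varepsilon$, one has $E(\omega)=M_\varepsilon=\sup_{w\in\mathcal K_\varepsilon}E(w)$, so it suffices to produce a single admissible competitor whose energy already has the claimed lower bound. The natural choice is the ``centered'' configuration $w=\sum_{i=1}^k w_i$, where $w_i(x)=\mathrm{sgn}(\kappa_i)\,\Pi^i_\varepsilon(x-\bar x_i)$. By \eqref{h13} and $\varepsilon<\bar r$ we have $\mathrm{supp}(w_i)\subset\overline{B_\varepsilon(\bar x_i)}\subset\overline{B_{\bar r}(\bar x_i)}$; moreover $w_i$ is a rearrangement of $\mathrm{sgn}(\kappa_i)\Pi^i_\varepsilon$ (translation preserves the distribution function), so $w\in\mathcal K_\varepsilon$, and $\int_D w_i\,dx=\int_D\omega_i\,dx=\kappa_{\varepsilon,i}$.

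Next I would compute $E(w)=\tfrac12\sum_{i,j=1}^k\int_D w_i\,\mathcal G w_j\,dx$, with $\mathcal G w_j(x)=\int_D G(x,y)w_j(y)\,dy$ and $G(x,y)=-\tfrac1{2\pi}\ln|x-y|-h(x,y)$, and estimate the diagonal and off-diagonal parts separately. For $i\neq j$ the sets $\overline{B_{\bar r}(\bar x_i)}$ and $\overline{B_{\bar r}(\bar x_j)}$ are disjoint compact subsets of $D$, hence lie at a positive distance, so $G$ is bounded on their product and
\[\Bigl|\int_D w_i\,\mathcal G w_j\,dx\Bigr|\le C\,|\kappa_{\varepsilon,i}|\,|\kappa_{\varepsilon,j}|\le C\]
by \eqref{wb17}. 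For $i=j$, the regular part satisfies $\bigl|\int_D\int_D h(x,y)w_i(x)w_i(y)\,dx\,dy\bigr|\le\|h\|_{L^\infty(B_{\bar r}(\bar x_i)\times B_{\bar r}(\bar x_i))}\,\kappa_{\varepsilon,i}^2\le C$, while on $\mathrm{supp}(w_i)\times\mathrm{supp}(w_i)$ one has $|x-y|\le 2\varepsilon$ and $w_i(x)w_i(y)\ge 0$, so
\[-\frac1{2\pi}\int_D\int_D\ln|x-y|\,w_i(x)w_i(y)\,dx\,dy\ \ge\ -\frac{\ln(2\varepsilon)}{2\pi}\,\kappa_{\varepsilon,i}^2\ \ge\ -\frac{\ln\varepsilon}{2\pi}\,\kappa_{\varepsilon,i}^2-C.\]
Collecting these bounds and multiplying by $\tfrac12$ yields $E(w)\ge -\dfrac{\ln\varepsilon}{4\pi}\sum_{i=1}^k\kappa_{\varepsilon,i}^2-C$, and then $E(\omega)=M_\varepsilon\ge E(w)$ gives \eqref{lala5}.

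The argument is entirely elementary; the only point needing care is to verify that the constant $C$ depends solely on the admissible data (through $D$, the points $\bar x_i$, the profiles $\Pi^i$ and $\vec\kappa$) and not on $\varepsilon$, which is guaranteed by \eqref{wb17} together with the boundedness of $h$ near each $\bar x_i$ and of $G$ away from the diagonal. I do not expect any genuine obstacle here: this lemma asks only for the crude one-sided estimate, whereas the delicate part of the asymptotics — the matching upper bound on $E(\omega)$ and the localization of the vortex cores in Proposition \ref{proop2} — is carried out separately and is not needed for this step.
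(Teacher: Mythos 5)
Your proof is correct and follows essentially the same route as the paper: the same centered competitor $w=\sum_i \mathrm{sgn}(\kappa_i)\Pi^i_\varepsilon(\cdot-\bar x_i)\in\mathcal K_\varepsilon$, the same splitting of $E(w)$ into diagonal logarithmic, off-diagonal logarithmic, and regular parts, and the same elementary bounds via $|x-y|\le 2\varepsilon$, disjointness of the balls, and continuity of $h$, with \eqref{wb17} absorbing the $\kappa_{\varepsilon,i}$-dependence into $C$. (You even correct a small omission in the paper by including the factor $\mathrm{sgn}(\kappa_i)$ in the competitor, which is needed for membership in $\mathcal K_\varepsilon$.)
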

\begin{proof}
Fix $\omega\in \mathcal M_{\varepsilon}$.
Define
\[w(x)=\sum_{i=1}^kw_i,\quad w_i(x)=\Pi^i_\varepsilon(x-\bar x_i).\]
It is easy to check  that $w\in\mathcal K_{\varepsilon}$, hence
\begin{equation}\label{wb10}
E(\omega)\geq E(w).
\end{equation}
To estimate $E(w),$ we write it as follows:
\begin{equation}\label{wb11}
\begin{split}
E(w)=&\frac{1}{2}\int_{D}\int_{D}G(x,y)w(x)w(y)dxdy\\
=&-\frac{1}{4\pi}\int_{D}\int_{D}\ln|x-y|w(x)w(y)dxdy-\frac{1}{2}\int_{D}\int_{D}h(x,y)w(x)w(y)dxdy \\
=&-\frac{1}{4\pi}\sum_{i=1}^k\int_{D}\int_{D}\ln|x-y|w_i(x)w_i(y)dxdy\\
&-\frac{1}{2\pi}\sum_{1\leq i<j\leq k}\int_{D}\int_{D}\ln|x-y|w_i(x)w_j(y)dxdy\\
&-\frac{1}{2}\sum_{i,j=1}^k\int_{D}\int_{D}h(x,y)w_i(x)w_j(y)dxdy \\
=&-\frac{1}{4\pi}\sum_{i=1}^k\int_{B_{\varepsilon}(\mathbf 0)}\int_{B_{\varepsilon}(\mathbf 0)}\ln|x-y|\Pi^i_\varepsilon(x)\Pi^i_\varepsilon(y)dxdy\\
&-\frac{1}{2\pi}\sum_{1\leq i<j\leq k}\int_{B_{\bar r}(\bar x_i)}\int_{B_{\bar r}(\bar x_j)}\ln|x-y|w_i(x)w_j(y)dxdy\\
&-\frac{1}{2}\sum_{i,j=1}^k\int_{B_{\bar r}(\bar x_i)}\int_{B_{\bar r}(\bar x_j)}h(x,y)w_i(x)w_j(y)dxdy.
\end{split}
\end{equation}
For the first term, since $|x-y|\leq 2\varepsilon$ for any $x,y\in B_{\varepsilon}(\mathbf 0)$, we have
\begin{equation}\label{wb12}
-\frac{1}{4\pi}\sum_{i=1}^k\int_{B_{\varepsilon}(\mathbf 0)}\int_{B_{\varepsilon}(\mathbf 0)}\ln|x-y|\Pi^i_\varepsilon(x)\Pi^i_\varepsilon(y)dxdy\geq -\frac{\ln(2\varepsilon)}{4\pi}\sum_{i=1}^k\kappa^2_{\varepsilon,i}.
\end{equation}
For the second term, by \eqref{br2} we have
\[0<  \inf_{x\in B_{\bar r}(\bar x_i),y\in B_{\bar r}(\bar x_j)}|x-y|<\sup_{x\in B_{\bar r}(\bar x_i),y\in B_{\bar r}(\bar x_j)}|x-y|< +\infty,\quad \forall\,1\leq i<j\leq k,\]
from which we can easily deduce that
\begin{equation}\label{wb13}
-\frac{1}{2\pi}\sum_{1\leq i<j\leq k}\int_{B_{\bar r}(\bar x_i)}\int_{B_{\bar r}(\bar x_j)}\ln|x-y|w_i(x)w_j(y)dxdy\geq -C\sum_{1\leq i<j}|\kappa_{\varepsilon,i}\kappa_{\varepsilon,j}|.
\end{equation}
For the third term,   by \eqref{br1}, \eqref{br2} and the fact that $h\in C(D\times D)$ we have
\begin{equation}\label{uui1}
\sup_{x\in B_{\bar r}(\bar x_i),y\in B_{\bar r}(\bar x_j)}|h(x,y)|<+\infty,\quad \forall\,1\leq i,j\leq k,
\end{equation}
therefore it is   clear that
\begin{equation}\label{wb14}
-\frac{1}{2}\sum_{i,j=1}^k\int_{B_{\bar r}(\bar x_i)}\int_{B_{\bar r}(\bar x_j)}h(x,y)w_i(x)w_j(y)dxdy \geq -C\sum_{i,j=1}^k|\kappa_{\varepsilon,i}\kappa_{\varepsilon,j}|.
\end{equation}
Combining \eqref{wb10}-\eqref{wb14}, we get
\begin{equation}\label{lala5d}
E(\omega)\geq -\frac{\ln \varepsilon}{4\pi} \sum_{i=1}^k\kappa^2_{\varepsilon,i}-C\sum_{i,j=1}^k|\kappa_{\varepsilon,i}\kappa_{\varepsilon,j}|.
\end{equation}
Taking into account \eqref{wb17}, we get the desired estimate \eqref{lala5}.
\end{proof}

Below we fix a positive number $\hat r>\bar r$ such that
\begin{equation}\label{brt1}
\overline{B_{\hat r}(\bar x_i)}\subset D,\quad\forall\,1\leq i\leq k,
\end{equation}
\begin{equation}\label{brt2}
\overline{B_{\hat r}(\bar x_i)}\cap \overline{B_{\hat r}(\bar x_j)}=\varnothing,\quad\forall\,1\leq i< j\leq k.
\end{equation}
This is doable by \eqref{br1} and \eqref{br2}.

\begin{lemma}\label{xqt1}
It holds that
\begin{equation}\label{nbd167}
\|\mathcal G\omega\|_{L^\infty(\partial B_{\hat r}(\bar x_i))} \leq  C,\quad\forall\,\omega\in \mathcal M_\varepsilon,\,i\in\{1,\cdot\cdot\cdot,k\}.
 \end{equation}
\end{lemma}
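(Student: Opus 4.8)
The plan is to bound $\mathcal G\omega(x)$ for $x\in\partial B_{\hat r}(\bar x_i)$ directly from the Green representation, using that for $\omega\in\mathcal M_\varepsilon\subset\mathcal K_\varepsilon$ the vorticity is supported in $\bigcup_{j=1}^k\overline{B_{\bar r}(\bar x_j)}$, which stays a fixed positive distance from the sphere $\partial B_{\hat r}(\bar x_i)$ because $\hat r>\bar r$ and the enlarged closed balls $\overline{B_{\hat r}(\bar x_j)}$ are pairwise disjoint and compactly contained in $D$.

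First I would write $\omega=\sum_{j=1}^k\omega_j$ with ${\rm supp}(\omega_j)\subset\overline{B_{\bar r}(\bar x_j)}$ and $\|\omega_j\|_{L^1(D)}=\kappa_{\varepsilon,j}$, which is uniformly bounded in $\varepsilon$ by \eqref{wb17}, so that $\|\omega\|_{L^1(D)}\le C$. Next I would check that $|G(x,y)|\le C$ uniformly for $x\in\partial B_{\hat r}(\bar x_i)$ and $y\in\overline{B_{\bar r}(\bar x_j)}$: when $j=i$ the triangle inequality gives $\hat r-\bar r\le|x-y|\le {\rm diam}(D)$, and when $j\ne i$ the disjointness \eqref{brt2} gives $|x-y|\ge {\rm dist}\big(\overline{B_{\hat r}(\bar x_i)},\overline{B_{\hat r}(\bar x_j)}\big)>0$; in both cases $|x-y|$ is pinched between two positive geometric constants, so the logarithmic part $-\tfrac{1}{2\pi}\ln|x-y|$ is bounded, and the regular part $h(x,y)$ is bounded because $(x,y)$ ranges over a compact subset of $D\times D$ (here \eqref{brt1} is used) on which $h$ is continuous. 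Combining these,
\[
|\mathcal G\omega(x)|=\Big|\int_D G(x,y)\,\omega(y)\,dy\Big|\le C\int_D|\omega(y)|\,dy=C\sum_{j=1}^k\kappa_{\varepsilon,j}\le C
\]
for every $x\in\partial B_{\hat r}(\bar x_i)$, uniformly in $\omega\in\mathcal M_\varepsilon$, $i$, and $\varepsilon$, which is \eqref{nbd167}.

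There is no genuine obstacle in this argument; the only point requiring care is that the regular part $h$ of the Green function is unbounded near $\partial D$, so one must work with the fixed compact balls $\overline{B_{\hat r}(\bar x_j)}\subset D$ supplied by \eqref{brt1} rather than with $D$ itself, and keep the constant $C$ independent of $\varepsilon$ by invoking the uniform $L^1$ bound \eqref{wb17} on the masses $\kappa_{\varepsilon,j}$.
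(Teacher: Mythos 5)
Your proof is correct and follows essentially the same route as the paper: decompose $\omega$ into the pieces supported in the balls $\overline{B_{\bar r}(\bar x_j)}$, split $G$ into its logarithmic and regular parts, pinch $|x-y|$ between $\hat r-\bar r$ and ${\rm diam}(D)$, bound $h$ by continuity on the compact set supplied by \eqref{brt1}--\eqref{brt2}, and invoke the uniform mass bound \eqref{wb17}. The only cosmetic slip is writing $\|\omega_j\|_{L^1(D)}=\kappa_{\varepsilon,j}$ rather than $|\kappa_{\varepsilon,j}|$ when $\kappa_j<0$, which does not affect the argument.
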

\begin{proof}
Fix $\omega\in \mathcal M_\varepsilon$ and $i\in\{1,\cdot\cdot\cdot,k\}.$
 For any $x\in  \partial B_{\hat r}(\bar x_i)$, we have
\begin{align*}
\mathcal G\omega(x)=&-\frac{1}{2\pi}\int_D\ln|x-y|\omega(y)dy-\int_Dh(x,y)\omega(y)dy\\
=&-\frac{1}{2\pi}\sum_{j=1}^k\int_{B_{\bar r}(\bar x_j)}\ln|x-y|\omega_j(y)dy-\sum_{j=1}^k\int_{B_{\bar r}(\bar x_j)}h(x,y)\omega_j(y)dy.
\end{align*}
Taking into account  \eqref{br1}, \eqref{br2}, the fact that $h\in C(D\times D)$,  and the following observation
 \[ \hat r-\bar r\leq |x-y|\leq {\rm diam}(D), \quad \forall\,x \in \partial B_{\hat r}(\bar x_i),\,y\in \cup_{j=1}^kB_{\bar r}(\bar x_j),\]
we get \eqref{nbd167}   immediately.

\end{proof}

\begin{lemma}\label{xqt0}
For any $\omega\in\mathcal M_\varepsilon$ and $i\in  \{1,\cdot\cdot\cdot,k\},$ it holds that
\begin{equation}\label{nico1}
{\rm sgn}(\kappa_i)\mathcal G\omega(x)\geq -C,\quad \forall\,x\in B_{\bar r}(\bar x_i).
\end{equation}
\end{lemma}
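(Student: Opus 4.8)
The plan is to use the decomposition $\omega=\sum_{j=1}^k\omega_j$ with $\omega_j=\omega\mathbf 1_{B_{\bar r}(\bar x_j)}$, together with the pointwise representation $\mathcal G\omega(x)=\sum_{j=1}^k\int_{B_{\bar r}(\bar x_j)}G(x,y)\omega_j(y)\,dy$ (legitimate since $\omega\in L^\infty(D)$ forces $\mathcal G\omega\in C^1(\bar D)$), and to estimate the self-interaction term $j=i$ and the cross terms $j\neq i$ separately.

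First I would dispose of the self-interaction term by pure sign considerations. The Green function $G$ is strictly positive on $D\times D$ off the diagonal, by the maximum principle, and ${\rm sgn}(\kappa_i)\omega_i\geq 0$ a.e.\ by the definition of $\mathcal K_{\varepsilon}$; hence, for every $x\in B_{\bar r}(\bar x_i)$,
\[{\rm sgn}(\kappa_i)\int_{B_{\bar r}(\bar x_i)}G(x,y)\omega_i(y)\,dy=\int_{B_{\bar r}(\bar x_i)}G(x,y)\,{\rm sgn}(\kappa_i)\omega_i(y)\,dy\geq 0.\]
Thus this term contributes nonnegatively and needs no estimate at all---in particular the logarithmic singularity of $G$ works only in our favour here---and it is precisely this sign that makes the claimed one-sided bound almost free.

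Next I would handle the cross terms with the separation hypothesis. Since $\overline{B_{\bar r}(\bar x_i)}\cap\overline{B_{\bar r}(\bar x_j)}=\varnothing$ for $j\neq i$ and both closed balls lie in $D$, the function $G$ is continuous, hence bounded, on $\overline{B_{\bar r}(\bar x_i)}\times\overline{B_{\bar r}(\bar x_j)}$ (both $-\frac{1}{2\pi}\ln|x-y|$ and $h(x,y)$ are bounded there). Consequently, for $x\in B_{\bar r}(\bar x_i)$,
\[\left|\int_{B_{\bar r}(\bar x_j)}G(x,y)\omega_j(y)\,dy\right|\leq\|G\|_{L^\infty(B_{\bar r}(\bar x_i)\times B_{\bar r}(\bar x_j))}\int_{B_{\bar r}(\bar x_j)}|\omega_j(y)|\,dy=\|G\|_{L^\infty(B_{\bar r}(\bar x_i)\times B_{\bar r}(\bar x_j))}\,|\kappa_{\varepsilon,j}|\leq C,\]
the last bound, and its independence of $\varepsilon$, coming from \eqref{wb17}. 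Summing the $k$ contributions then yields ${\rm sgn}(\kappa_i)\mathcal G\omega(x)\geq 0-(k-1)C=-C$ uniformly in $x\in B_{\bar r}(\bar x_i)$ and in $\varepsilon$.

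I do not expect any serious obstacle; the argument is essentially the same splitting already used for Lemma \ref{xqt1}. The only two points requiring care are recording the positivity of $G$ (which is what trivialises the self-interaction term and hence the whole lower bound), and keeping the constant $C$ independent of $\varepsilon$, which is guaranteed by the normalisation \eqref{wb17}.
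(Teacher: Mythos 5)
Your proof is correct, and it follows the same overall decomposition as the paper's: isolate the self-interaction of $\omega_i$ from the cross-interactions with the $\omega_j$, $j\neq i$, and bound the cross terms by the continuity of $G$ on $\overline{B_{\bar r}(\bar x_i)}\times\overline{B_{\bar r}(\bar x_j)}$ (a product of disjoint compact subsets of $D$, by \eqref{br1}--\eqref{br2}) together with the uniform bound \eqref{wb17} on the circulations. The one place where you genuinely diverge is the self-interaction term. The paper splits $G(x,y)=-\tfrac{1}{2\pi}\ln|x-y|-h(x,y)$ there and bounds the logarithmic part from below by $-\tfrac{\kappa_{\varepsilon,i}}{2\pi}\ln(2\bar r)$ using $|x-y|\leq 2\bar r$ for $x,y\in B_{\bar r}(\bar x_i)$, handling $h$ separately by continuity; you instead observe that the entire term $\int_{B_{\bar r}(\bar x_i)}G(x,y)\,{\rm sgn}(\kappa_i)\omega_i(y)\,dy$ is nonnegative, since $G>0$ off the diagonal and ${\rm sgn}(\kappa_i)\omega_i\geq 0$ a.e.\ by the definition of $\mathcal K_\varepsilon$. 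Your version is marginally cleaner: it requires no estimate on $|x-y|$, no separate treatment of the regular part $h$ on the diagonal block, and it makes transparent why only a \emph{one-sided} bound is available (the same positivity of the Green operator that the paper invokes later, e.g.\ in the proof of Proposition \ref{isolated1}). The paper's version buys a tiny bit more, namely an explicit lower bound $-\tfrac{\kappa_{\varepsilon,i}}{2\pi}\ln(2\bar r)$ for the self-term rather than just $0$, but nothing in the sequel uses that extra precision, so either route is fully adequate.
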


\begin{proof}
Fix $\omega\in\mathcal M_\varepsilon$ and $i\in  \{1,\cdot\cdot\cdot,k\}.$   Without loss of generality, assume that $\kappa_i>0$. For any $x\in B_{\bar r}(\bar x_i)$, notice that
\begin{align*}
\mathcal G\omega(x)=&\int_DG(x,y)\omega(y)dxdy\\
 =&-\frac{1}{2\pi}\int_D\ln|x-y|\omega(y)dy-\int_Dh(x,y)\omega(y)dy\\
=&-\frac{1}{2\pi}\int_{B_{\bar r}(\bar x_i)}\ln|x-y|\omega_i(y)dy-\frac{1}{2\pi}\sum_{j\neq i}\int_{B_{\bar r}(\bar x_j)}\ln|x-y|\omega_j(y)dy\\
&-\sum_{j=1}^k\int_{B_{\bar r}(\bar x_j)}h(x,y)\omega_j(y)dy.
\end{align*}
For the first term, since $|x-y|\leq 2\bar r$ for any $x,y\in B_{\bar r}(\bar x_i),$ we have
\[-\frac{1}{2\pi}\int_{B_{\bar r}(\bar x_i)}\ln|x-y|\omega_i(y)dy\geq -\frac{\kappa_{\varepsilon,i}}{2\pi}\ln(2\bar r).\]
The other two terms are uniformly bounded as in the proof of Lemma \ref{xqt1}. Hence \eqref{nico1} is proved.
\end{proof}

\begin{lemma}\label{xqt2}
It holds that
\begin{equation}\label{nbd12}
\mu_{\omega,i}\geq -C, \quad \forall\,\omega\in\mathcal M_\varepsilon,\,\,i\in  \{1,\cdot\cdot\cdot,k\}.
\end{equation}
\end{lemma}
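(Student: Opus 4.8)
The plan is to argue by contradiction, exploiting the fact that the $i$-th vortex core $A_{\omega,i}$ is exactly a super-level set of the function ${\rm sgn}(\kappa_i)\mathcal G\omega$ (this is the content of \eqref{nbd103}--\eqref{vcor}), combined with the a priori pointwise lower bound for the stream function just established in Lemma \ref{xqt0} and the size bound $\mathcal L(A_{\omega,i})\leq\pi\varepsilon^2$ recorded in \eqref{lala22}. No delicate analysis is needed once these ingredients are assembled.

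Concretely, I let $C_0>0$ be the constant furnished by Lemma \ref{xqt0}, so that
\[
{\rm sgn}(\kappa_i)\mathcal G\omega(x)\geq -C_0\qquad\text{for every }x\in B_{\bar r}(\bar x_i),\ \omega\in\mathcal M_\varepsilon,\ i\in\{1,\cdots,k\},
\]
with $C_0$ independent of $\varepsilon$ and of $\omega$. Fix $\varepsilon\in(0,\bar r)$, $\omega\in\mathcal M_\varepsilon$ and $i$, and suppose, for contradiction, that $\mu_{\omega,i}<-C_0$ (this in particular covers the a priori possibility $\mu_{\omega,i}=-\infty$). By \eqref{nbd103}--\eqref{vcor}, up to a set of measure zero,
\[
A_{\omega,i}=\{x\in B_{\bar r}(\bar x_i)\mid {\rm sgn}(\kappa_i)\mathcal G\omega(x)>\mu_{\omega,i}\}.
\]
Since $\mathcal G\omega\in C^1(\bar D)$, the inequality ${\rm sgn}(\kappa_i)\mathcal G\omega(x)\geq -C_0>\mu_{\omega,i}$ holds at \emph{every} point $x\in B_{\bar r}(\bar x_i)$, so almost every point of $B_{\bar r}(\bar x_i)$ lies in $A_{\omega,i}$, whence $\mathcal L(A_{\omega,i})=\pi\bar r^2$. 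This contradicts \eqref{lala22}, because $\varepsilon\in(0,\bar r)$ forces $\pi\varepsilon^2<\pi\bar r^2$. Therefore $\mu_{\omega,i}\geq -C_0$, and \eqref{nbd12} holds with $C=C_0$.

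I do not expect a genuine obstacle in this step: once Lemma \ref{xqt0} is available, estimate \eqref{nbd12} is essentially immediate, and the only point to watch is that the constant $C$ must be uniform in $\varepsilon$ and in $\omega\in\mathcal M_\varepsilon$, which it is, being inherited directly from the uniformity in Lemma \ref{xqt0}. The complementary \emph{upper} bound on $\mu_{\omega,i}$, which will also be needed for Proposition \ref{proop2}, is the part I anticipate being harder: a pointwise estimate will not suffice there, and one should instead play the definition of $\mu_{\omega,i}$ against the energy lower bound of Lemma \ref{lbd} together with the size bound \eqref{lala22}.
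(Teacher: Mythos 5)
Your proof is correct and is essentially identical to the paper's: both argue by contradiction that $\mu_{\omega,i}<-C$ would force the super-level set $A_{\omega,i}$ to exhaust $B_{\bar r}(\bar x_i)$ via Lemma \ref{xqt0} and \eqref{nbd103}, contradicting the measure bound $\mathcal L(A_{\omega,i})\leq\pi\varepsilon^2<\pi\bar r^2$. No issues.
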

\begin{proof}
In this proof  let $C$ be the positive number in \eqref{nico1}. Suppose by contradiction there exist some $\omega\in\mathcal M_\varepsilon$ and $i\in  \{1,\cdot\cdot\cdot,k\}$ such that
\[\mu_{\omega,i}< -C.\]
Then by \eqref{nico1} we have
\[{\rm sgn}(\kappa_i)\mathcal G\omega(x)> \mu_{\omega,i}\quad \forall\,x\in B_{\bar r}(\bar x_i),\]
or equivalently,
\begin{equation}\label{nbd10}
\left\{x\in B_{\bar r}(\bar x_i)\mid {\rm sgn}(\kappa_i)\mathcal G\omega(x)> \mu_{\omega,i}\right\}=B_{\bar r}(\bar x_i).
\end{equation}
This together with \eqref{nbd103} yields
\begin{equation}
\left\{ x\in B_{\bar r}(\bar x_i)\mid {\rm sgn}(\kappa_i)\omega(x)>0\right\}=B_{\bar r}(\bar x_i),
\end{equation}
which is impossible since
\[\mathcal L(\left\{  x\in B_{\bar r}(\bar x_i)\mid {\rm sgn}(\kappa_i)\omega(x)>0\right\})=\mathcal L(\{x\in\mathbb R^2\mid \Pi_\varepsilon(x)>0\})\leq \pi\varepsilon^2<\pi \bar r^2.\]
\end{proof}

Combining Lemma \ref{xqt1} and Lemma \ref{xqt2}, we obtain
\begin{lemma}\label{xqt3}
For any $\omega\in \mathcal M_\varepsilon$ and $i\in\{1,\cdot\cdot\cdot,k\},$ it holds that
\begin{equation}
 {\rm sgn}(\kappa_i)\mathcal G\omega(x)-\mu_{\omega,i}\leq  C,\quad\forall\,x\in \partial B_{\hat r}(\bar x_i).
 \end{equation}
\end{lemma}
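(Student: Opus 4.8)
The plan is to read off Lemma~\ref{xqt3} directly from the two uniform bounds just established, so that the proof amounts to adding two inequalities and keeping track of the constant convention. Fix $\omega\in\mathcal M_\varepsilon$ and $i\in\{1,\cdot\cdot\cdot,k\}$, and let $x\in\partial B_{\hat r}(\bar x_i)$.

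First I would invoke Lemma~\ref{xqt1}, which gives $\|\mathcal G\omega\|_{L^\infty(\partial B_{\hat r}(\bar x_i))}\leq C$ with $C$ independent of $\varepsilon$ and of the chosen maximizer; in particular
\[
{\rm sgn}(\kappa_i)\mathcal G\omega(x)\leq |\mathcal G\omega(x)|\leq C .
\]
Second I would invoke Lemma~\ref{xqt2}, which yields $\mu_{\omega,i}\geq -C$, i.e.\ $-\mu_{\omega,i}\leq C$, again uniformly in $\varepsilon$ and $\omega$. Adding the two displayed inequalities and relabelling the sum of the constants as a single $C$ in the convention fixed before Lemma~\ref{lbd}, we obtain
\[
{\rm sgn}(\kappa_i)\mathcal G\omega(x)-\mu_{\omega,i}\leq C ,\qquad \forall\,x\in\partial B_{\hat r}(\bar x_i),
\]
which is precisely the assertion.

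There is essentially no obstacle at this stage: the only thing to check is that the various occurrences of $C$ in Lemmas~\ref{xqt1} and \ref{xqt2} are genuinely $\varepsilon$-independent (and $\omega$-independent), so that their sum is again an admissible constant — and this is clear from their statements. The real content lies upstream, namely in the boundedness of $\mathcal G\omega$ on $\partial B_{\hat r}(\bar x_i)$ (which rests on \eqref{br1}, \eqref{br2} and the continuity of $h$) and in the lower bound $\mu_{\omega,i}\geq -C$ (which rests on the measure constraint \eqref{lala22} together with the pointwise lower bound of Lemma~\ref{xqt0}).
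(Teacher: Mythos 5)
Your argument is correct and is exactly the paper's: the lemma is stated there as an immediate consequence of combining Lemma \ref{xqt1} (uniform bound on $\mathcal G\omega$ over $\partial B_{\hat r}(\bar x_i)$) with Lemma \ref{xqt2} (uniform lower bound on $\mu_{\omega,i}$), which is precisely the two-line addition you perform.
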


Below for $\omega\in\mathcal M_\varepsilon,$ denote
\begin{equation}\label{dotou}
T_\omega= \sum_{i=1}^k\int_D{\rm sgn}(\kappa_i)\omega_i\left({\rm sgn}(\kappa_i)\mathcal G\omega-\mu_{\omega,i}\right)dx.
\end{equation}
\begin{lemma}\label{ubd}
It holds that
\begin{equation}\label{doto}
 T_\omega\leq C,\quad \forall\,\omega\in\mathcal M_\varepsilon.
\end{equation}
\end{lemma}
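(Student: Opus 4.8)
The plan is to bound each summand $T_{\omega,i}:=\int_D{\rm sgn}(\kappa_i)\omega_i\,\bigl({\rm sgn}(\kappa_i)\mathcal G\omega-\mu_{\omega,i}\bigr)\,dx$ of \eqref{dotou} by a constant. Fix $\omega\in\mathcal M_\varepsilon$ and $i$. Since ${\rm sgn}(\kappa_i)\omega_i\geq 0$ is supported in the vortex core $A_{\omega,i}$, where ${\rm sgn}(\kappa_i)\mathcal G\omega>\mu_{\omega,i}$ by \eqref{nbd103}, the integrand is nonnegative and
\[T_{\omega,i}=\int_{A_{\omega,i}}{\rm sgn}(\kappa_i)\omega_i\,\bigl({\rm sgn}(\kappa_i)\mathcal G\omega-\mu_{\omega,i}\bigr)\,dx\le \Bigl(\sup_{A_{\omega,i}}\bigl({\rm sgn}(\kappa_i)\mathcal G\omega-\mu_{\omega,i}\bigr)\Bigr)\int_D{\rm sgn}(\kappa_i)\omega_i\,dx .\]
Because $\int_D{\rm sgn}(\kappa_i)\omega_i\,dx={\rm sgn}(\kappa_i)\kappa_{\varepsilon,i}$ is bounded above uniformly in $\varepsilon$ by \eqref{wb17}, it suffices to prove the pointwise estimate ${\rm sgn}(\kappa_i)\mathcal G\omega-\mu_{\omega,i}\le C$ on $A_{\omega,i}$; summing over $i$ then yields \eqref{doto}.

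To prove this estimate I would use a torsion‑function comparison on the enlarged ball $B_{\hat r}(\bar x_i)$. Let $C_*$ be the constant in Lemma \ref{xqt3}, set $\psi_i:=\bigl({\rm sgn}(\kappa_i)\mathcal G\omega-\mu_{\omega,i}-C_*\bigr)_+$ on $B_{\hat r}(\bar x_i)$, and let $\Omega_i:=\{x\in B_{\hat r}(\bar x_i):{\rm sgn}(\kappa_i)\mathcal G\omega(x)>\mu_{\omega,i}+C_*\}$ be its positivity set. By Lemma \ref{xqt3}, ${\rm sgn}(\kappa_i)\mathcal G\omega\le\mu_{\omega,i}+C_*$ on $\partial B_{\hat r}(\bar x_i)$, so $\Omega_i$ does not meet $\partial B_{\hat r}(\bar x_i)$ and $\psi_i=0$ on all of $\partial\Omega_i$; moreover, since $-\Delta\mathcal G\omega=\omega=\omega_i$ in $B_{\hat r}(\bar x_i)$ and $\|\omega_i\|_{L^\infty}\le M\varepsilon^{-2}$, the function $\psi_i$ is a nonnegative subsolution of $-\Delta\psi_i\le M\varepsilon^{-2}$ in $\Omega_i$ vanishing on $\partial\Omega_i$. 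Comparing $\psi_i$ with $M\varepsilon^{-2}\tau_{\Omega_i}$, where $\tau_{\Omega_i}$ is the torsion function of $\Omega_i$, the maximum principle gives $\psi_i\le M\varepsilon^{-2}\|\tau_{\Omega_i}\|_{L^\infty(\Omega_i)}$, and Talenti's comparison theorem gives $\|\tau_{\Omega_i}\|_{L^\infty(\Omega_i)}\le\mathcal L(\Omega_i)/(4\pi)$. Hence everything reduces to a measure bound $\mathcal L(\Omega_i)\le C\varepsilon^2$; paired against $M\varepsilon^{-2}$ this is precisely what cancels the blow‑up, yielding $\psi_i\le CM/(4\pi)$ on $\Omega_i$ and therefore ${\rm sgn}(\kappa_i)\mathcal G\omega-\mu_{\omega,i}\le\psi_i+C_*\le C$ on $A_{\omega,i}$.

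The main obstacle is thus the bound $\mathcal L(\Omega_i)\le C\varepsilon^2$: the super‑level set $\{{\rm sgn}(\kappa_i)\mathcal G\omega>\mu_{\omega,i}+C_*\}$ must stay within an area $\lesssim\varepsilon^2$ concentrated near the vorticity and, in particular, must not spread into the annulus $B_{\hat r}(\bar x_i)\setminus\overline{B_{\bar r}(\bar x_i)}$. Its portion inside $B_{\bar r}(\bar x_i)$ lies in $A_{\omega,i}$, hence has area $\le\pi\varepsilon^2$ by \eqref{lala22}; the remaining, annular portion is where ${\rm sgn}(\kappa_i)\mathcal G\omega$ is harmonic, and I would control it by combining that harmonicity with Lemmas \ref{xqt1} and \ref{xqt2} and, crucially, with sharp two‑sided energy asymptotics. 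Namely, pair the lower bound of Lemma \ref{lbd} with the matching upper bound $E(\omega)\le-\tfrac{\ln\varepsilon}{4\pi}\sum_i\kappa_{\varepsilon,i}^2+C$, obtained by expanding $\tfrac12\int_D\omega\mathcal G\omega\,dx$ through $G=-\tfrac1{2\pi}\ln|\cdot|-h$, discarding the $O(1)$ cross‑terms and $h$‑terms (supports separated, $h$ continuous) and estimating each self‑interaction from above via the rearrangement inequality Lemma \ref{rri2} together with the bathtub principle, using $\|\omega_i\|_{L^\infty}\le M\varepsilon^{-2}$ and $\mathcal L({\rm supp}\,\omega_i)\le\pi\varepsilon^2$. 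This forces each self‑energy $\int_D\omega_i\mathcal G\omega_i\,dx=-\tfrac{\ln\varepsilon}{2\pi}\kappa_{\varepsilon,i}^2+O(1)$ to be nearly maximal, which in turn confines $\omega_i$ (and hence $\Omega_i$) to a fixed proper subball of $B_{\bar r}(\bar x_i)$ for $\varepsilon$ small, delivering $\mathcal L(\Omega_i)\le C\varepsilon^2$. Once this is in hand, the second paragraph closes the proof.
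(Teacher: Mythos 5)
Your opening reduction is fine: since ${\rm sgn}(\kappa_i)\omega_i\ge 0$ lives on $A_{\omega,i}$, where ${\rm sgn}(\kappa_i)\mathcal G\omega>\mu_{\omega,i}$, one has $T_{\omega,i}\le |\kappa_{\varepsilon,i}|\sup_{A_{\omega,i}}({\rm sgn}(\kappa_i)\mathcal G\omega-\mu_{\omega,i})$, and the torsion-function comparison correctly converts the desired sup bound into the measure bound $\mathcal L(\Omega_i)\le C\varepsilon^2$. The gap is exactly where you locate it, and it is not closable by the route you sketch. The portion of $\Omega_i$ inside $B_{\bar r}(\bar x_i)$ is indeed $\le\pi\varepsilon^2$ via \eqref{nbd103} and \eqref{lala22}, but the annular portion requires knowing that ${\rm sgn}(\kappa_i)\mathcal G\omega\le\mu_{\omega,i}+C$ near $\partial B_{\bar r}(\bar x_i)$ (or wherever ${\rm supp}(\omega_i)$ sits), i.e.\ a comparison between the size of the stream function on the vortex core and the Lagrange multiplier $\mu_{\omega,i}$. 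In the paper that comparison is precisely Lemma \ref{s1mp7} ($|\kappa_{\varepsilon,i}|\mu_{\omega,i}\ge-\tfrac{\ln\varepsilon}{2\pi}\kappa_{\varepsilon,i}^2-C$), which is derived \emph{from} Lemma \ref{ubd} via the identity $T_\omega=2E(\omega)-\sum_i\mu_{\omega,i}|\kappa_{\varepsilon,i}|$; invoking it here would be circular. Your substitute — two-sided energy asymptotics forcing ${\rm supp}(\omega_i)$ into a fixed proper subball of $B_{\bar r}(\bar x_i)$ — does not work: the leading term $-\tfrac{\ln\varepsilon}{4\pi}\sum_i\kappa_{\varepsilon,i}^2$ of the energy is translation-invariant and therefore carries no location information, the $O(1)$ terms only determine the location after one already knows ${\rm diam}(A_{\omega,i})=O(\varepsilon)$ (Proposition \ref{proop2}, itself downstream of this lemma), and in any case an energy bound cannot control the essential support, since moving an $o(1)$ amount of vorticity to $\partial B_{\bar r}(\bar x_i)$ changes $E$ by $o(1)$. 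Without an a priori lower bound on $\mu_{\omega,i}$ of order $\ln\tfrac1\varepsilon$, one cannot exclude that the super-level set $\{{\rm sgn}(\kappa_i)\mathcal G\omega>\mu_{\omega,i}+C_*\}$ spills into the annulus over a region of measure $O(1)$, which destroys the cancellation against $M\varepsilon^{-2}$.

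For contrast, the paper's proof deliberately avoids any pointwise estimate. Setting $\zeta_i=\mathcal G\omega-\mu_{\omega,i}-\mu_0$ with $\mu_0$ from Lemma \ref{xqt3} so that $(\zeta_i)_+$ vanishes on $\partial B_{\hat r}(\bar x_i)$, it integrates by parts to get $\int_D\omega_i(\zeta_i)_+dx=\|\nabla(\zeta_i)_+\|_{L^2(B_{\hat r}(\bar x_i))}^2$, and separately bounds the same integral by $C\varepsilon^{-1}\sqrt{\mathcal L(V_i)}\,\|\nabla(\zeta_i)_+\|_{L^2}$ using H\"older, the embedding $W^{1,1}\hookrightarrow L^2$, and Poincar\'e, where $V_i=\{x\in B_{\bar r}(\bar x_i)\mid\zeta_i>0\}\subset A_{\omega,i}$ has measure $\le\pi\varepsilon^2$. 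Because the test integral only sees the super-level set \emph{inside} $B_{\bar r}(\bar x_i)$ (where $\omega_i$ lives), the problematic annular region never enters, and one concludes $\|\nabla(\zeta_i)_+\|_{L^2}\le C$, hence $T_{\omega,i}\le C$. If you want to salvage your pointwise strategy, you would essentially have to reprove this integrated bound first; as written, the proposal has a genuine missing step.
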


\begin{proof}
Fix $\omega\in\mathcal M_{ \varepsilon}$ and $i\in\{1,\cdot\cdot\cdot,k\}$.
It suffices to show
\[  T_{\omega,i}:=\int_D{\rm sgn}(\kappa_i)\omega_i\left({\rm sgn}(\kappa_i)\mathcal G\omega-\mu_{\omega,i}\right)dx\leq C.\]
For simplicity  we assume that $\kappa_i>0$.
By Lemma \ref{xqt3}, there exists $ \mu_0>0$, not depending on $\varepsilon,$ such that
\begin{equation}\label{wb204}
\mathcal G\omega-\mu_{\omega,i}-\mu_0<0 \quad\mbox{on }\partial B_{\hat r}(\bar x_i).
\end{equation}
Denote $\zeta_i=\mathcal G\omega-\mu_{\omega,i}-\mu_0. $ Since
\[T_{\omega,i}=\int_D\omega_i\zeta_i dx+\mu_0\kappa_{\varepsilon,i}\leq \int_D\omega_i(\zeta_i)_+ dx+\mu_0\kappa_{\varepsilon,i},\]
we need only to show that
\begin{equation}\label{zett9}
\int_D\omega_i(\zeta_i)_+dx\leq C.
\end{equation}
For convenience, denote \[V_i=\{x\in B_{\bar r}(\bar x_i)\mid \zeta_i(x)>0\}.\]
It is easy to see that $V_i\subset A_{\omega,i}$, which together with \eqref{lala22} yields
\[\mathcal L(V_i)\leq \mathcal L(A_{\omega,i})\leq \pi\varepsilon^2.\]
On the one hand, by integration by parts we have
\begin{equation}\label{lalai00}
 \int_D\omega_i(\zeta_i)_+ dx=\int_{V_i}(-\Delta\zeta_i)\zeta_i dx=\int_{B_{\hat r}(\bar x_i)}(-\Delta(\zeta_i)_+)(\zeta_i)_+ dx=\int_{B_{\hat r}(\bar x_i)}|\nabla (\zeta_i)_+|^2 dx.
\end{equation}
Here we used the fact  that    $(\zeta_i)_+$ vanishes on $\partial B_{\hat r}(\bar x_i)$.
On the other hand,
\begin{equation}\label{hooo}
\begin{split}
 \int_D\omega_i(\zeta_i)_+ dx&= \int_{V_i}\omega_i(\zeta_i)_+ dx\\
 &\leq \sqrt{\mathcal L (V_i)}\|\omega_i\|_{L^\infty(D)}\|(\zeta_i)_+\|_{L^2(B_{\bar r}(\bar x_i))}\\
 &\leq C\varepsilon^{-1}\left(\|(\zeta_i)_+\|_{L^1(B_{\bar r}(\bar x_i))}+\|\nabla(\zeta_i)_+\|_{L^1(B_{\bar r}(\bar x_i))}\right)\\
  &= C\varepsilon^{-1}\left(\|(\zeta_i)_+\|_{L^1(V_{i})}+\|\nabla(\zeta_i)_+\|_{L^1(V_{i})}\right)\\
   &\leq C\varepsilon^{-1}\sqrt{\mathcal L(V_i)}\left(\|(\zeta_i)_+\|_{L^2(V_{i})}+\|\nabla(\zeta_i)_+\|_{L^2(V_{i})}\right)\\
    &\leq C\left(\|(\zeta_i)_+\|_{L^2(B_{\hat r}(\bar x_i))}+\|\nabla(\zeta_i)_+\|_{L^2(B_{\hat r}(\bar x_i))}\right)\\
 &\leq C \|\nabla(\zeta_i)_+\|_{L^2(B_{\hat r}(\bar x_i))}.
 \end{split}
 \end{equation}
 Note that in the first equality we used the fact that $\omega(\zeta_{i})_{+}$ vanishes off the set $V_{i}$,
in the first and the third inequality we used  H\"older's inequality,  in the second inequality we used the Sobolev embedding $W^{1,1}(B_{\bar r}(\bar x_i))\hookrightarrow L^2(B_{\bar r}(\bar x_i))$, and in the last inequality we used the Poincar\'e inequality.
Combining \eqref{lalai00} and \eqref{hooo}, we obtain the desired estimate \eqref{zett9}. Hence the proof is finished.
\end{proof}

\begin{lemma}\label{s1mp}
It holds that
\begin{equation}\label{o1p1}
\sum_{i=1}^k\mu_{\omega,i}|\kappa_{\varepsilon,i}|\geq -\frac{\ln\varepsilon}{2\pi}\sum_{i=1}^k\kappa^2_{\varepsilon,i}-C,\quad\forall\,\omega\in\mathcal M_\varepsilon,\,i\in\{1,\cdot\cdot\cdot,k\}.
\end{equation}
\end{lemma}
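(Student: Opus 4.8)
The plan is to derive \eqref{o1p1} by combining the energy lower bound of Lemma \ref{lbd} with the upper bound on $T_\omega$ from Lemma \ref{ubd}, linked through an elementary algebraic identity. The key observation is that $T_\omega$, defined in \eqref{dotou}, expands neatly in terms of the kinetic energy and the Lagrange multipliers.

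First I would expand \eqref{dotou}. Since $\mathrm{sgn}(\kappa_i)^2=1$, the cross term gives $\mathrm{sgn}(\kappa_i)\omega_i\cdot\mathrm{sgn}(\kappa_i)\mathcal G\omega=\omega_i\mathcal G\omega$, so
\[
T_\omega=\sum_{i=1}^k\int_D\omega_i\mathcal G\omega\,dx-\sum_{i=1}^k\mu_{\omega,i}\int_D\mathrm{sgn}(\kappa_i)\omega_i\,dx.
\]
Because $\omega=\sum_{i=1}^k\omega_i$, the first sum equals $\int_D\omega\mathcal G\omega\,dx=2E(\omega)$. For the second sum, recall that $\omega_i$ is a rearrangement of $\mathrm{sgn}(\kappa_i)\Pi^i_\varepsilon$, hence $\mathrm{sgn}(\kappa_i)\omega_i\geq0$ a.e.\ and therefore $\int_D\mathrm{sgn}(\kappa_i)\omega_i\,dx=\mathrm{sgn}(\kappa_i)\kappa_{\varepsilon,i}=|\kappa_{\varepsilon,i}|$. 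This yields the identity
\[
T_\omega=2E(\omega)-\sum_{i=1}^k\mu_{\omega,i}|\kappa_{\varepsilon,i}|.
\]

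Rearranging gives $\sum_{i=1}^k\mu_{\omega,i}|\kappa_{\varepsilon,i}|=2E(\omega)-T_\omega$, and now I would simply insert the two estimates already at hand: $E(\omega)\geq-\frac{\ln\varepsilon}{4\pi}\sum_{i=1}^k\kappa_{\varepsilon,i}^2-C$ from Lemma \ref{lbd} and $T_\omega\leq C$ from Lemma \ref{ubd}. This produces
\[
\sum_{i=1}^k\mu_{\omega,i}|\kappa_{\varepsilon,i}|\geq 2\left(-\frac{\ln\varepsilon}{4\pi}\sum_{i=1}^k\kappa_{\varepsilon,i}^2-C\right)-C=-\frac{\ln\varepsilon}{2\pi}\sum_{i=1}^k\kappa_{\varepsilon,i}^2-C,
\]
which is exactly \eqref{o1p1} after relabelling the constant.

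I do not expect a genuine obstacle in this lemma: the content is entirely carried by the two preceding lemmas, and the only point demanding a little care is the sign bookkeeping, namely verifying $\int_D\mathrm{sgn}(\kappa_i)\omega_i\,dx=|\kappa_{\varepsilon,i}|$ and checking that a single constant $C$ (depending only on the data, via $D$, $(\bar x_1,\dots,\bar x_k)$, $\Pi^1,\dots,\Pi^k$ and $\vec\kappa$) absorbs both twice the constant from Lemma \ref{lbd} and the constant from Lemma \ref{ubd}.
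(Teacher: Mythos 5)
Your proof is correct and follows exactly the paper's route: the author likewise writes $T_\omega=2E(\omega)-\sum_{i=1}^k\mu_{\omega,i}|\kappa_{\varepsilon,i}|$ (stated without the expansion you spell out) and then combines Lemma \ref{lbd} with Lemma \ref{ubd}. Your explicit verification of $\int_D\mathrm{sgn}(\kappa_i)\omega_i\,dx=|\kappa_{\varepsilon,i}|$ is the only detail the paper leaves implicit, and it is handled correctly.
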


\begin{proof}
By the definition of $T_\omega$ (see \eqref{dotou}) we have the relation
\[T_\omega=2E(\omega)-\sum_{i=1}^k\mu_{\omega,i}|\kappa_{\varepsilon,i}|.\]
Then \eqref{o1p1} follows immediately from Lemma  \ref{lbd} and Lemma  \ref{ubd}.

\end{proof}
 
\begin{lemma}\label{s1mp6}
It holds that
\begin{equation}\label{o1p6}
\mu_{\omega,i} \leq -\frac{\ln\varepsilon}{2\pi} |\kappa_{\varepsilon,i}|+C,\quad\forall\,\omega\in\mathcal M_\varepsilon,\,i\in\{1,\cdot\cdot\cdot,k\}.
\end{equation}
\end{lemma}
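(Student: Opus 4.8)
The plan is to bound $\mu_{\omega,i}\lvert\kappa_{\varepsilon,i}\rvert$ from above by something of the form $-\frac{\ln\varepsilon}{2\pi}\kappa_{\varepsilon,i}^2+C$ and then divide by $\lvert\kappa_{\varepsilon,i}\rvert$, which is bounded away from $0$ by \eqref{wb17}. First I would introduce the partial quantity $T_{\omega,i}=\int_D\mathrm{sgn}(\kappa_i)\omega_i\bigl(\mathrm{sgn}(\kappa_i)\mathcal G\omega-\mu_{\omega,i}\bigr)\,dx$, so that $T_\omega=\sum_{i}T_{\omega,i}$ in \eqref{dotou}. By \eqref{nbd103} its integrand equals $\lvert\omega_i\rvert\bigl(\mathrm{sgn}(\kappa_i)\mathcal G\omega-\mu_{\omega,i}\bigr)$, which is nonnegative a.e.\ (it is supported on the set where $\lvert\omega_i\rvert>0$, and there $\mathrm{sgn}(\kappa_i)\mathcal G\omega>\mu_{\omega,i}$); hence $T_{\omega,i}\ge0$. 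Expanding $T_{\omega,i}$ and using $\int_D\mathrm{sgn}(\kappa_i)\omega_i\,dx=\lvert\kappa_{\varepsilon,i}\rvert$ gives $\mu_{\omega,i}\lvert\kappa_{\varepsilon,i}\rvert=\int_D\omega_i\mathcal G\omega\,dx-T_{\omega,i}\le\int_D\omega_i\mathcal G\omega\,dx$, so the task reduces to proving $\int_D\omega_i\mathcal G\omega\,dx\le-\frac{\ln\varepsilon}{2\pi}\kappa_{\varepsilon,i}^2+C$.

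Next I would decompose $\int_D\omega_i\mathcal G\omega\,dx=\sum_{j=1}^k\int_D\int_D G(x,y)\omega_i(x)\omega_j(y)\,dx\,dy$. For $j\ne i$ the kernel $G$ is bounded on $\overline{B_{\bar r}(\bar x_i)}\times\overline{B_{\bar r}(\bar x_j)}$ (disjoint closures), so by \eqref{hh13} those terms are $O(1)$. For $j=i$ I write $G(x,y)=-\frac{1}{2\pi}\ln|x-y|-h(x,y)$; since $h$ is bounded on $\overline{B_{\bar r}(\bar x_i)}\times\overline{B_{\bar r}(\bar x_i)}$, the $h$-contribution is $O(\kappa_{\varepsilon,i}^2)=O(1)$, and we are left to estimate $-\frac{1}{2\pi}\int\!\int\ln|x-y|\,\lvert\omega_i(x)\rvert\,\lvert\omega_i(y)\rvert\,dx\,dy$ (note $\omega_i\omega_i=\lvert\omega_i\rvert^2$).

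The one step that needs genuine care — and the main obstacle — is getting the \emph{sharp} leading coefficient $\kappa_{\varepsilon,i}^2$ rather than, say, $\|\omega_i\|_{L^\infty}^2\varepsilon^4=M^2$; a crude pointwise bound on $\omega_i$ from \eqref{hhhh13} loses the constant. The remedy is the Riesz rearrangement inequality (Lemma \ref{rri2}): taking $\bar r$ (hence $\varepsilon$) small so that $-\ln|x-y|>0$ on the supports, and noting that the symmetric-decreasing rearrangement of $\lvert\omega_i\rvert$ is exactly $\Pi^i_\varepsilon$ (because $\lvert\omega_i\rvert$ is equimeasurable with $\Pi^i_\varepsilon\in\Xi$, which is already radial and nonincreasing), Lemma \ref{rri2} with kernel $g(z)=(-\ln|z|)_+$ yields
\[-\int\!\int\ln|x-y|\,\lvert\omega_i(x)\rvert\,\lvert\omega_i(y)\rvert\,dx\,dy\le-\int\!\int\ln|x-y|\,\Pi^i_\varepsilon(x)\Pi^i_\varepsilon(y)\,dx\,dy.\]
Writing $\ln|x-y|=\ln\varepsilon+\ln(|x-y|/\varepsilon)$ and rescaling $x\mapsto\varepsilon x$, the right-hand side becomes $-\ln\varepsilon\,\kappa_{\varepsilon,i}^2$ plus a scale-invariant integral $-\int\!\int\ln|x-y|\,g(x)g(y)\,dx\,dy$ with $g$ supported in $B_1(\mathbf 0)$, $\|g\|_{L^\infty}\le M$, $\int g=\kappa_{\varepsilon,i}$, and this is bounded by $M^2\int_{B_1}\int_{B_1}(-\ln|x-y|)_+\,dx\,dy<\infty$. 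Hence $-\int\!\int\ln|x-y|\,\Pi^i_\varepsilon\Pi^i_\varepsilon\le-\ln\varepsilon\,\kappa_{\varepsilon,i}^2+C$.

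Putting the pieces together gives $\mu_{\omega,i}\lvert\kappa_{\varepsilon,i}\rvert\le-\frac{\ln\varepsilon}{2\pi}\kappa_{\varepsilon,i}^2+C$, and dividing through by $\lvert\kappa_{\varepsilon,i}\rvert\ge c>0$ (from \eqref{wb17}) produces $\mu_{\omega,i}\le-\frac{\ln\varepsilon}{2\pi}\lvert\kappa_{\varepsilon,i}\rvert+C$, which is \eqref{o1p6}. Everything except the rearrangement step is routine bookkeeping with the Green-function splitting and the uniform bounds \eqref{hhhh13}, \eqref{h13}, \eqref{wb17}; the rearrangement inequality is precisely what is needed to avoid degrading the constant in front of $\ln\varepsilon$.
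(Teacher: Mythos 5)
Your argument is correct, but it takes a genuinely different route from the paper's. The paper proves the bound pointwise: it picks any $x\in A_{\omega,i}$, where \eqref{nbd103} gives $\mu_{\omega,i}\le{\rm sgn}(\kappa_i)\mathcal G\omega(x)$, splits $G$ into the logarithmic kernel, the regular part $h$, and the cross-interactions with $j\neq i$, and controls the singular self-interaction $-\tfrac{1}{2\pi}\int\ln|x-y|\,|\omega_i(y)|\,dy$ by the two-function rearrangement inequality of Lemma \ref{rri1} together with \eqref{hhhh13}; this directly yields the coefficient $|\kappa_{\varepsilon,i}|$ in front of $-\tfrac{\ln\varepsilon}{2\pi}$. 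You instead integrate the same information against $|\omega_i|$: the observation $T_{\omega,i}\ge 0$ (which is indeed immediate from \eqref{nbd103}, since the integrand is supported where ${\rm sgn}(\kappa_i)\mathcal G\omega>\mu_{\omega,i}$) converts the pointwise inequality into $\mu_{\omega,i}|\kappa_{\varepsilon,i}|\le\int_D\omega_i\mathcal G\omega\,dx$, and you then bound this interaction energy from above by the Riesz inequality of Lemma \ref{rri2}, obtaining the coefficient $\kappa_{\varepsilon,i}^2$ and dividing at the end by $|\kappa_{\varepsilon,i}|\ge c>0$ from \eqref{wb17}. Your version is essentially the ``integrated'' (Turkington-style energy) form of the paper's pointwise estimate, and it meshes naturally with the identity $T_\omega=2E(\omega)-\sum_i\mu_{\omega,i}|\kappa_{\varepsilon,i}|$ already exploited in Lemma \ref{s1mp}; the price is the extra step of identifying the symmetric-decreasing rearrangement of $|\omega_i|$ with $\Pi^i_\varepsilon$, which is fine since $\Pi^i_\varepsilon\in\Xi$ is already radial and nonincreasing. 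One cosmetic point: you cannot ``take $\bar r$ small'' at this stage (it is fixed at the start of the section), but this is harmless --- first bound $-\ln|x-y|$ by $(-\ln|x-y|)_+$, apply Lemma \ref{rri2} with that kernel, and note that after rearrangement the supports lie in $B_\varepsilon(\mathbf 0)$, so for $\varepsilon<1/2$ the positive part coincides with $-\ln|x-y|$ and your rescaling computation goes through unchanged.
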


\begin{proof}
 Fix $\omega\in\mathcal M_\varepsilon$ and $i\in\{1,\cdot\cdot\cdot,k\}.$ For any $x\in A_{\omega,i},$ by \eqref{nbd103} and \eqref{vcor}, we have that 
\begin{equation}\label{ego0}
\mu_{\omega,i}\leq {\rm sgn}(\kappa_{i})\mathcal G\omega(x).
\end{equation}
To proceed, we estimate ${\rm sgn}(\kappa_i)\mathcal G\omega(x)$ as follows:
\begin{equation}\label{ego1}
\begin{split}
  &{\rm sgn}(\kappa_i)\mathcal G\omega(x)\\
   =&{\rm sgn}(\kappa_i)\sum_{j=1}^k\int_DG(x,y)\omega_j(y)dy\\
   =&{\rm sgn}(\kappa_i)\int_D\left(-\frac{1}{2\pi}\ln|x-y|-h(x,y)\right)\omega_i(y)dy +{\rm sgn}(\kappa_i)\sum_{j\neq i}\int_DG(x,y)\omega_j(y)dy\\
 \leq &\int_{B_\varepsilon (\mathbf 0)}-\frac{1}{2\pi}\ln|y|\Pi^i_\varepsilon(y) dy+C \\
=&   \frac{1}{2\pi}\int_{B_\varepsilon (\mathbf 0)}\ln\frac{\varepsilon}{|y|}\Pi^i_\varepsilon(y) dy-\frac{\ln\varepsilon}{2\pi} |\kappa_{\varepsilon,i}|+C\\
 \leq &\frac{1}{2\pi} M\varepsilon^{-2}\int_{  B_{\varepsilon}(\mathbf 0)}\ln\frac{\varepsilon}{|y|} dy -\frac{\ln\varepsilon}{2\pi} |\kappa_{\varepsilon,i}|+C\\
 =& \frac{1}{2\pi}M \int_{  B_{1}(\mathbf 0)}\ln\frac{1}{|y|} dy -\frac{\ln\varepsilon}{2\pi} |\kappa_{\varepsilon,i}|+C\\
 \leq& -\frac{\ln\varepsilon}{2\pi} |\kappa_{\varepsilon,i}|+C.
\end{split}
\end{equation}
 Note that in the first inequality of \eqref{ego1} we used Lemma \ref{rri1}  and the following facts
 \[\left|\int_Dh(x,y)\omega_i(y)dy\right|\leq |\kappa_{\varepsilon,i}|\sup_{x',y\in B_{\bar r}(\bar x_i)}|h(x',y)|\leq C \quad (\mbox{by the continuity of $h$}),\]
 \[\left|\int_DG(x,y)\omega_j(y)dy\right|\leq |\kappa_{\varepsilon,j}|\sup_{x'\in B_{\bar r}(\bar x_i), y\in B_{\bar r}(\bar x_j) }|G(x',y)|\leq C,\,\,\forall j\neq i \quad(\mbox{by \eqref{br1} and \eqref{br2}}),\]
and in the second inequality we used \eqref{hhhh13}.
 Hence the required estimate is proved.
\end{proof}

A straightforward corollary of Lemma \ref{s1mp} and Lemma \ref{s1mp6} is the following lower bound for each $|\kappa_{\varepsilon,i}|\mu_{\omega,i}$.

\begin{lemma}\label{s1mp7}
It holds that
\begin{equation}\label{o1p7}
|\kappa_{\varepsilon,i}|\mu_{\omega,i} \geq -\frac{\ln\varepsilon}{2\pi} \kappa_{\varepsilon,i}^2-C,\quad\forall\,\omega\in\mathcal M_\varepsilon,\,i\in\{1,\cdot\cdot\cdot,k\}.
\end{equation}
\end{lemma}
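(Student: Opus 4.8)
The plan is to obtain \eqref{o1p7} purely by algebra from Lemma \ref{s1mp} and Lemma \ref{s1mp6}, isolating the $i$-th term of the sum appearing in Lemma \ref{s1mp}. I would fix $\omega\in\mathcal M_\varepsilon$ and $i\in\{1,\cdot\cdot\cdot,k\}$ and start from the trivial identity
\[|\kappa_{\varepsilon,i}|\mu_{\omega,i}=\sum_{j=1}^k|\kappa_{\varepsilon,j}|\mu_{\omega,j}-\sum_{j\neq i}|\kappa_{\varepsilon,j}|\mu_{\omega,j}.\]
The idea is then to bound the first sum from below by Lemma \ref{s1mp} and each summand of the second sum from above by Lemma \ref{s1mp6}.

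Concretely, Lemma \ref{s1mp} gives
\[\sum_{j=1}^k|\kappa_{\varepsilon,j}|\mu_{\omega,j}\geq -\frac{\ln\varepsilon}{2\pi}\sum_{j=1}^k\kappa_{\varepsilon,j}^2-C,\]
while multiplying the inequality of Lemma \ref{s1mp6} by $|\kappa_{\varepsilon,j}|>0$ yields, for every $j$,
\[|\kappa_{\varepsilon,j}|\mu_{\omega,j}\leq -\frac{\ln\varepsilon}{2\pi}\kappa_{\varepsilon,j}^2+C|\kappa_{\varepsilon,j}|.\]
Subtracting the latter (summed over $j\neq i$) from the former, the terms $\tfrac{\ln\varepsilon}{2\pi}\kappa_{\varepsilon,j}^2$ with $j\neq i$ cancel and one is left with
\[|\kappa_{\varepsilon,i}|\mu_{\omega,i}\geq -\frac{\ln\varepsilon}{2\pi}\kappa_{\varepsilon,i}^2-C-C\sum_{j\neq i}|\kappa_{\varepsilon,j}|.\]
Finally, \eqref{wb17} bounds $\sum_{j\neq i}|\kappa_{\varepsilon,j}|$ by a constant independent of $\varepsilon$, so the last two terms can be absorbed into $C$, which gives \eqref{o1p7}. (When $k=1$ the second sum is empty and the estimate is merely a restatement of Lemma \ref{s1mp}.)

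I do not expect any real obstacle here: the argument is essentially a one-line rearrangement, which is why the statement is flagged as a straightforward corollary. The only minor point to keep track of is that $\ln\varepsilon<0$ for small $\varepsilon$, so the quantities $\tfrac{\ln\varepsilon}{2\pi}\kappa_{\varepsilon,j}^2$ are negative; this is harmless, since the cancellation of the $j\neq i$ contributions is an exact algebraic identity, and the uniform two-sided bounds on the $\kappa_{\varepsilon,j}$ in \eqref{wb17} are precisely what let the leftover terms be swallowed by the constant $C$.
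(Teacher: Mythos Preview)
Your proof is correct and follows exactly the same route as the paper: starting from the identity $|\kappa_{\varepsilon,i}|\mu_{\omega,i}=\sum_{j}|\kappa_{\varepsilon,j}|\mu_{\omega,j}-\sum_{j\neq i}|\kappa_{\varepsilon,j}|\mu_{\omega,j}$, applying Lemma~\ref{s1mp} to the first sum and Lemma~\ref{s1mp6} to each term of the second, cancelling the $j\neq i$ contributions, and absorbing the remainder via \eqref{wb17}.
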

\begin{proof}
Fix $\omega\in\mathcal M_\varepsilon$ and $i\in\{1,\cdot\cdot\cdot,k\}$. Then by Lemma \ref{s1mp} and Lemma \ref{s1mp6},
\begin{align*}
|\kappa_{\varepsilon,i}|\mu_{\omega,i} & =\sum_{j=1}^{k}|\kappa_{\varepsilon,j}|\mu_{\omega,j}-\sum_{j\neq i}|\kappa_{\varepsilon,j}|\mu_{\omega,j}\\
&\geq  -\frac{\ln\varepsilon}{2\pi}\sum_{j=1}^k\kappa^2_{\varepsilon,j}-C-\sum_{j\neq i}|\kappa_{\varepsilon,j}|\left(-\frac{\ln\varepsilon}{2\pi} |\kappa_{\varepsilon,j}|+C\right)\\
&\geq -\frac{\ln\varepsilon}{2\pi} \kappa_{\varepsilon,i}^2-C.
\end{align*}

\end{proof}

Now we are ready to prove Proposition \ref{proop2}.

\begin{proof}[Proof of Proposition \ref{proop2}]
Fix $\omega\in\mathcal M_{ \varepsilon}$ and $i\in\{1,\cdot\cdot\cdot,k\}$.  By \eqref{nbd103} and \eqref{vcor},  for any $x\in A_{\omega,i}$,  it holds that
\begin{equation}\label{uyi}
{\rm sgn}(\kappa_i)\mathcal G\omega(x)\geq \mu_{\omega,i}.
\end{equation}
In view of \eqref{br1}, \eqref{br2} and the continuity of $h$, we can rewrite \eqref{uyi} as follows:
\begin{equation}\label{uyii}
-\frac{1}{2\pi}\int_D\ln|x-y||\omega_i(y)|dy\geq \mu_{\omega,i}-C,
\end{equation}
which further implies
\begin{equation}\label{uyiii}
-\frac{|\kappa_{\varepsilon,i}|}{2\pi}\int_D\ln|x-y||\omega_i(y)|dy\geq |\kappa_{\varepsilon,i}|\mu_{\omega,i}-C.
\end{equation}
In view of Lemma \ref{s1mp7}, we deduce from \eqref{uyiii} that 
\begin{equation}\label{s2mp}
|\kappa_{\varepsilon,i}|\int_D\ln\frac{\varepsilon}{|x-y|}|\omega_i(y)|dy\geq -C.
\end{equation}

Let $R>1$ be a positive number to be determined. We  write the left-hand side of \eqref{s2mp} as follows:
\begin{equation}\label{s5mp}
\begin{split}
&|\kappa_{\varepsilon,i}|\int_D\ln\frac{\varepsilon}{|x-y|}|\omega_i(y)|dy \\
=& |\kappa_{\varepsilon,i}|\int_{B_{R\varepsilon}(x)}\ln\frac{\varepsilon}{|x-y|}|\omega_i(y)|dy  + |\kappa_{\varepsilon,i}|\int_{D\setminus B_{R\varepsilon}(x)}\ln\frac{\varepsilon}{|x-y|}|\omega_i(y)|dy \\
:=& I+II.
\end{split}
\end{equation}
For $I$, applying Lemma \ref{rri1} we have that 
\begin{equation}\label{s7mp}
\begin{split}
I&\leq  |\kappa_{\varepsilon,i}|\int_{ B_{\varepsilon}(\mathbf 0)}\ln\frac{\varepsilon}{|y|}\Pi^i_\varepsilon(y) dy \\
&\leq   |\kappa_{\varepsilon,i}|M\varepsilon^{-2}\int_{  B_{\varepsilon}(\mathbf 0)}\ln\frac{\varepsilon}{|y|} dy \\
&=  |\kappa_{\varepsilon,i}|M \int_{  B_{1}(\mathbf 0)}\ln\frac{1}{|y|} dy \\
&\leq C
\end{split}
\end{equation}
For $II$, we have that 
\begin{equation}\label{s8mp}
II\leq |\kappa_{\varepsilon,i}|\ln\frac{1}{R}\int_{D\setminus B_{R\varepsilon}(x)}|\omega_i(y)|dy.
\end{equation}
 From \eqref{s2mp}-\eqref{s8mp}, we obtain
\begin{equation}\label{s9mp}
 |\kappa_{\varepsilon,i}| \int_{D\setminus B_{R\varepsilon}(x)}|\omega_i(y)|dy \leq \frac{C}{\ln R}.
\end{equation}
From \eqref{s9mp}, we are able to choose a sufficiently large $R$, not depending on $\varepsilon,$ such that
\begin{equation}\label{s10mp}
\int_{D\setminus B_{R\varepsilon}(x)}|\omega_i(y)|dy\leq \frac{1}{3}|\kappa_{ \varepsilon,i}|,
\end{equation}
which is equivalent to
\begin{equation}\label{s11mp}
\int_{ B_{R\varepsilon}(x)}|\omega_i(y)|dy\geq \frac{2}{3}|\kappa_{ \varepsilon,i}|.
\end{equation}

Note that \eqref{s11mp} holds for any $x\in A_{\omega,i}$. Based on this fact, we can show that
\begin{equation}\label{s19mp}
 {\rm diam}(A_{\omega,i})\leq 2R\varepsilon.
\end{equation}
In fact, if \eqref{s19mp} is false, then we can choose two points $x_1,x_2\in A_{\omega,i}$ such that $|x_1-x_2|>2R\varepsilon,$ then
\begin{equation}\label{s1019mp}
\int_D|\omega_i| dx\geq \int_{B_{R\varepsilon}(x_1)}|\omega_i| dx+ \int_{B_{R\varepsilon}(x_2)}|\omega_i| dx\geq \frac{4}{3}|\kappa_{ \varepsilon,i}|,
\end{equation}
which is a contradiction to the fact that
\[\int_D|\omega_i|dx=|\kappa_{\varepsilon,i}|.\]
Choosing $R_0=2R$ we complete the proof.

\end{proof}
\subsection{Limiting location of the vortex cores}

\begin{proposition}\label{proop3}
For any $\delta>0$, there exists some $\varepsilon_2\in(0,\bar r)$, depending only on $D,(\bar x_1,\cdot\cdot\cdot,\bar x_k), \Pi^1,\cdot\cdot\cdot\Pi^k, \vec\kappa$ and $\delta,$ such that for any $\varepsilon\in(0,\varepsilon_2)$, it holds that
\[A_{\omega,i}\subset \overline{B_{\delta}(\bar x_i)},\quad\forall\,\omega\in\mathcal M_{\varepsilon},\,i\in\{1,\cdot\cdot\cdot,k\}.\]

\end{proposition}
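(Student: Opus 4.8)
The plan is to pin down the location of each vortex core by a variational comparison in the spirit of Turkington and Elcrat--Miller: given a maximizer $\omega\in\mathcal M_{\varepsilon}$, I translate each of its cores rigidly to the point $\bar x_i$ to produce an admissible competitor, and maximality of $E$ then forces the core centres to nearly minimize the Kirchhoff--Routh function $W$, so they must converge to $(\bar x_1,\dots,\bar x_k)$. Throughout, the constant $C$ and the symbols $o(1)$ are understood uniformly over $\omega\in\mathcal M_{\varepsilon}$; this uniformity will come from the fact that every core $A_{\omega,i}$ stays inside the fixed compact set $\overline{B_{\bar r}(\bar x_i)}\subset D$, on which $h$ (and $G$ off the diagonal, for distinct indices) is uniformly continuous and bounded.

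First I would fix $\omega\in\mathcal M_{\varepsilon}$ and, for $\varepsilon$ small enough that $\mathcal L(A_{\omega,i})=\mathcal L(\{\Pi^i_\varepsilon>0\})>0$, choose a point $z_i=z_i(\omega)\in A_{\omega,i}$. By Proposition~\ref{proop2}, $A_{\omega,i}\subset\overline{B_{R_0\varepsilon}(z_i)}$, and $z_i\in\overline{B_{\bar r}(\bar x_i)}$. For $\varepsilon<\bar r/R_0$ the shifted function $w^{\omega}_i(x):=\omega_i\big(x-(\bar x_i-z_i)\big)$ is a translate of $\omega_i$, hence lies in the rearrangement class of ${\rm sgn}(\kappa_i)\Pi^i_\varepsilon$, and is supported in $\overline{B_{R_0\varepsilon}(\bar x_i)}\subset\overline{B_{\bar r}(\bar x_i)}$; therefore $w^{\omega}:=\sum_{i=1}^{k}w^{\omega}_i\in\mathcal K_{\varepsilon}$, and since $\omega$ maximizes $E$ over $\mathcal K_{\varepsilon}$ we obtain $E(\omega)\ge E(w^{\omega})$.

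Next I would expand $2E(v)=\sum_{i}\int_D\int_D G\,v_iv_i+2\sum_{i<j}\int_D\int_D G\,v_iv_j$ for $v=\omega$ and $v=w^{\omega}$ and subtract. Writing $G=-\tfrac1{2\pi}\ln|\cdot-\cdot|-h$ in the diagonal terms, the logarithmic part is translation invariant, so it cancels identically between $\omega_i$ and $w^{\omega}_i$; the $h$-part, together with ${\rm diam}({\rm supp}\,\omega_i)\le R_0\varepsilon$ and the uniform continuity of $h$ near the diagonal, contributes $-(H(z_i)-H(\bar x_i))\kappa_{\varepsilon,i}^{2}+o(1)$. Because $\overline{B_{\bar r}(\bar x_i)}\cap\overline{B_{\bar r}(\bar x_j)}=\varnothing$, the off-diagonal terms satisfy $\int_D\int_D G\,v_iv_j=\kappa_{\varepsilon,i}\kappa_{\varepsilon,j}G(z_i,z_j)+o(1)$ for $v=\omega$ and $=\kappa_{\varepsilon,i}\kappa_{\varepsilon,j}G(\bar x_i,\bar x_j)+o(1)$ for $v=w^{\omega}$, by the uniform continuity of $G$ on the relevant compact product. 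Denoting by $W_\varepsilon$ the Kirchhoff--Routh function obtained from $W$ by replacing each $\kappa_i$ with $\kappa_{\varepsilon,i}$, these identities collapse to
\[E(\omega)-E(w^{\omega})=-\Big(W_\varepsilon\big(z_1(\omega),\dots,z_k(\omega)\big)-W_\varepsilon(\bar x_1,\dots,\bar x_k)\Big)+o(1).\]
Combined with $E(\omega)\ge E(w^{\omega})$ and with $W_\varepsilon\to W$ uniformly on $\prod_{i=1}^{k}\overline{B_{\bar r}(\bar x_i)}$ (a consequence of $\kappa_{\varepsilon,i}\to\kappa_i$ and the boundedness of $G,h$ on those compact sets), this gives
\[\limsup_{\varepsilon\to0}\ \sup_{\omega\in\mathcal M_{\varepsilon}}\Big(W\big(z_1(\omega),\dots,z_k(\omega)\big)-W(\bar x_1,\dots,\bar x_k)\Big)\le 0.\]

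Finally I would argue by contradiction. Since $A_{\omega,i}\subset\overline{B_{R_0\varepsilon}(z_i(\omega))}$, a failure of the conclusion produces $\delta_0>0$, $\varepsilon_n\to0$, $\omega^{n}\in\mathcal M_{\varepsilon_n}$ and an index $i_0$ with $|z_{i_0}(\omega^{n})-\bar x_{i_0}|\ge\delta_0$. Passing to a subsequence, $z_i(\omega^{n})\to z_i^{*}\in\overline{B_{\bar r}(\bar x_i)}$ with $(z_1^{*},\dots,z_k^{*})\ne(\bar x_1,\dots,\bar x_k)$; as the balls are pairwise disjoint, $(z_1^{*},\dots,z_k^{*})\in\mathbb D^{k}$, $W$ is continuous there, and the last display forces $W(z_1^{*},\dots,z_k^{*})\le W(\bar x_1,\dots,\bar x_k)$, contradicting \eqref{br3}, which makes $(\bar x_1,\dots,\bar x_k)$ the unique minimum of $W$ on $\prod_{i=1}^{k}\overline{B_{\bar r}(\bar x_i)}$. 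Hence $z_i(\omega)\to\bar x_i$ uniformly in $\omega\in\mathcal M_{\varepsilon}$, and combining this with $A_{\omega,i}\subset\overline{B_{R_0\varepsilon}(z_i(\omega))}$ yields $A_{\omega,i}\subset\overline{B_{\delta}(\bar x_i)}$ for all small $\varepsilon$. The main obstacle is the energy-difference step: the decisive point is that translating $\omega_i$ by itself (rather than substituting the prescribed profile $\Pi^i_\varepsilon$) makes the divergent, shape-dependent self-interaction $-\tfrac1{2\pi}\int_D\int_D\ln|x-y|\,\omega_i(x)\omega_i(y)\,dx\,dy$ cancel exactly, so that only the $O(1)$ Kirchhoff--Routh part survives, and one must verify that every remaining error is $o(1)$ uniformly over $\mathcal M_{\varepsilon}$ --- which is precisely where Proposition~\ref{proop2} and the compactness of $\overline{B_{\bar r}(\bar x_i)}$ enter.
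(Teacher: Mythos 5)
Your proof is correct, and it reaches the same endgame as the paper (a variational comparison forcing the approximate core centres to minimize the Kirchhoff--Routh function, then \eqref{br3}), but it handles the decisive technical step differently. The paper's competitor is the radially symmetric profile $\sum_i\mathrm{sgn}(\kappa_i)\Pi^i_{\varepsilon}(\cdot-\bar x_i)$; since this does not have the same singular self-energy as $\omega$, the paper must invoke the Riesz rearrangement inequality (Lemma \ref{rri2}) to show that the diagonal logarithmic functional $\mathcal F$ can only increase under symmetric-decreasing rearrangement, i.e.\ $\mathcal F(\omega_n)\le\mathcal F(w_n)$, after which the divergent terms drop out of the inequality $E(\omega_n)\ge E(w_n)$. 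You instead translate each core of $\omega$ itself to $\bar x_i$, so the divergent self-interaction cancels \emph{exactly} by translation invariance of the logarithmic kernel, and no rearrangement inequality is needed; admissibility of the translate follows from Proposition \ref{proop2} (so that the shifted support stays in $\overline{B_{\bar r}(\bar x_i)}$ once $R_0\varepsilon<\bar r$) and from the fact that translations preserve distribution functions. Both arguments use Proposition \ref{proop2} and the uniform continuity of $h$ on $\overline{B_{\bar r}(\bar x_i)}^2$ and of $G$ on the disjoint products to get uniform $o(1)$ errors. Your version buys a slightly more elementary proof of this step (and would extend to settings where the prescribed profile is not radial, since radial symmetry is never used); the paper's version is the more standard Turkington-type argument and reuses the rearrangement machinery already set up in Section 2. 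One small bookkeeping remark: a failure of the conclusion gives a point $y_n\in A_{\omega^n,i_0}$ with $|y_n-\bar x_{i_0}|>\delta$, from which $|z_{i_0}(\omega^n)-\bar x_{i_0}|\ge\delta-R_0\varepsilon_n$ via Proposition \ref{proop2} (or simply take $z_{i_0}(\omega^n):=y_n$); this is how the lower bound $\delta_0$ in your contradiction step should be justified.
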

\begin{proof}
It suffices to show that for any sequence $\{\varepsilon_n\}_{n=1}^{+\infty}\subset(0,\bar r)$ satisfying $\varepsilon_n\to 0$ as $n\to+\infty,$ any sequence $\{\omega_n\}_{n=1}^{+\infty}$ with $\omega_n\in\mathcal M_{ \varepsilon_n}$, and any sequence $\{x_{n,i}\}_{n=1}^{+\infty}$ with $x_{n,i}\in A_{\omega_n,i}$, it holds that
\[\lim_{n\to+\infty}x_{n,i}=\bar x_i,\quad\forall\,i\in\{1,\cdot\cdot\cdot,k\}.\]
Since   $A_{\omega_n,i}\subset\overline{B_{\bar r}(\bar x_i)}$,
we can assume up to a subsequence that $x_{n,i}\to \hat x_i$ for some $\hat x_i\in \overline{B_{\bar r}(\bar x_i)}$ as $n\to+\infty.$
Define
\[w_n=\sum_{i=1}^k{\rm sgn}(\kappa_i)\Pi^i_{\varepsilon_n}(\cdot-\bar x_i).\]
It is easy to check that $w_n\in \mathcal K_{\varepsilon_n}.$ Hence
\begin{equation}\label{enco1}
E(\omega_n)\geq E(w_n).
\end{equation}
To make the  remaining proof  clear,  we introduce the following notation:
\[\mathcal F(v)=-\frac{1}{4\pi} \sum_{i=1}^k\int_{B_{\bar r}(\bar x_i)}\int_{B_{\bar r}(\bar x_i)}\ln|x-y|v(x)v(y)dxdy,\]
\[\mathcal G(v)=-\frac{1}{4\pi} \sum_{1\leq i\neq j\leq k} \int_{B_{\bar r}(\bar x_i)}\int_{B_{\bar r}(\bar x_j)}\ln|x-y|v(x)v(y)dxdy,\]
\[\mathcal H(v)=-\frac{1}{2}\sum_{1\leq i,j\leq k}\int_{B_{\bar r}(\bar x_i)}\int_{B_{\bar r}(\bar x_j)}h(x,y)v(x)v(y)dxdy,\]
 Then \eqref{enco1} can be written as follows
\begin{equation}\label{enco2}
\mathcal F(\omega_{n})+\mathcal G(\omega_{n})+\mathcal H(\omega_{n})\geq \mathcal F(w_{n})+\mathcal G(w_{n})+\mathcal H(w_{n}).
\end{equation}
Noticing that $w_n$ is radially symmetric and nonincreasing with respect to $\bar x$, we can apply Lemma \ref{rri2} to obtain for any $1\leq i\leq k$
\[
-\frac{1}{4\pi}\int_{B_{\bar r}(\bar x_i)}\int_{B_{\bar r}(\bar x_i)}\ln|x-y|w_n(x)w_n(y)dxdy
\ge -\frac{1}{4\pi}\int_{B_{\bar r}(\bar x_i)}\int_{B_{\bar r}(\bar x_i)}\ln|x-y|\omega_n(x)\omega_n(y)dxdy,
\]
which implies
\begin{equation}\label{enco6}
\mathcal F(\omega_{n})\leq \mathcal F(w_{n}).
\end{equation}
From \eqref{enco2} and \eqref{enco6} we get
\begin{equation}\label{enco7}
\mathcal G(\omega_{n})+\mathcal H(\omega_{n})\geq \mathcal G(w_{n})+\mathcal H(w_{n}).
\end{equation}
To proceed, recalling Proposition \ref{proop2}, it is easy to  check that  for any $1\leq i,j\leq k$ and $\varphi\in C( \overline{B_{\bar r}(\bar x_i)}\times \overline{B_{\bar r}(\bar x_j)})$,
\begin{equation}\label{discc}
\lim_{n\to+\infty}\int_{B_{\bar r}(\bar x_i)}\int_{B_{\bar r}(\bar x_i)}\varphi(x,y)\omega_n(x)\omega_n(y)dxdy= \kappa_i\kappa_j\varphi(\hat x_i,\hat x_j).
\end{equation}
Similarly, for any $1\leq i,j\leq k$ and $\varphi\in C( \overline{B_{\bar r}(\bar x_i)}\times \overline{B_{\bar r}(\bar x_j)})$, it holds that
\begin{equation}\label{discc2}
\lim_{n\to+\infty}\int_{B_{\bar r}(\bar x_i)}\int_{B_{\bar r}(\bar x_i)}\varphi(x,y)w_n(x)w_n(y)dxdy= \kappa_i\kappa_j\varphi(\bar x_i,\bar x_j).
\end{equation}
Using \eqref{discc}, we have
\begin{equation}\label{enco71}
\lim_{n\to+\infty}\mathcal G(\omega_{n})=-\frac{1}{4\pi}\sum_{1\leq i\neq j\leq k}\kappa_i\kappa_j\ln|\hat x_i-\hat x_j|,
\end{equation}
\begin{equation}\label{enco72}
\lim_{n\to+\infty}\mathcal H(\omega_{n})=-\frac{1}{2}\sum_{1\leq i,j\leq k}\kappa_i\kappa_jh(\hat x_i,\hat x_j).
\end{equation}
Using \eqref{discc2}, we have
\begin{equation}\label{enco73}
\lim_{n\to+\infty}\mathcal G(w_{n})=-\frac{1}{4\pi}\sum_{1\leq i\neq j\leq k}\kappa_i\kappa_j\ln|\bar x_i-\bar x_j|,
\end{equation}
\begin{equation}\label{enco74}
\lim_{n\to+\infty}\mathcal H(w_{n})=-\frac{1}{2}\sum_{1\leq i,j\leq k}\kappa_i\kappa_jh(\bar x_i,\bar x_j).
\end{equation}
From \eqref{enco71}-\eqref{enco74}, we are able to pass to the limit $n\to+\infty$ in \eqref{enco7} to get
\begin{equation}\label{enco75}
\begin{split}
 &-\frac{1}{4\pi}\sum_{1\leq i\neq j\leq k}\kappa_i\kappa_j\ln|\hat x_i-\hat x_j|-\frac{1}{2}\sum_{1\leq i,j\leq k}\kappa_i\kappa_jh(\hat x_i,\hat x_j)\\
 \geq&-\frac{1}{4\pi}\sum_{1\leq i\neq j\leq k}\kappa_i\kappa_j\ln|\bar x_i-\bar x_j|-\frac{1}{2}\sum_{1\leq i,j\leq k}\kappa_i\kappa_jh(\bar x_i,\bar x_j).
\end{split}
\end{equation}
By the definition of the Kirchhoff-Routh function $ W$ (see  \eqref{krf}),   \eqref{enco75} can be written equivalently as
\[W(\hat x_1,\cdot\cdot\cdot,\hat x_k)\leq W(\bar x_1,\cdot\cdot\cdot,\bar x_k).\]
Since we have assumed that $(\bar x_1,\cdot\cdot\cdot,\bar x_k)$ is the unique minimum point of $W$ in $\overline{B_{\bar r}(\bar x_1)}\times\cdot\cdot\cdot\times \overline{B_{\bar r}(\bar x_k)}$ (see \eqref{br3}), we obtain
\[(\hat x_1,\cdot\cdot\cdot,\hat x_k)=(\bar x_1,\cdot\cdot\cdot,\bar x_k).\]
 Hence the proof is finished.

\end{proof}

\subsection{Proof of Theorem \ref{thmex}}Now we are ready to complete the proof of Theorem \ref{thmex}.
\begin{proof}[Proof of Theorem \ref{thmex}]
It suffices to show that $\omega\in\mathcal M_{ \varepsilon}$ is a steady weak solution to the vorticity equation if $\varepsilon$ is sufficiently small. By Proposition \ref{proop3}, we see that
\[{\rm dist(supp}(\omega),\partial B_{\bar r}(\bar x))>0,\quad \forall\,\omega\in\mathcal M_{ \varepsilon}\]
when $\varepsilon$ is   small. The desired result follows from Lemma A in Section 1.

\end{proof}

\section{Concentrated vortex flows with prescribed profile functions}

In this section, we discuss the stability of a class of concentrated vortex flows with prescribed profile functions.
To begin with, we state the following existence result proved by Cao-Wang-Zhan in \cite{CWZ}.

Let $k$ be a positive integer, $\kappa_1,\cdot\cdot\cdot,\kappa_k$ be $k$ nonzero real numbers. Let $(\bar x_1,\cdot\cdot\cdot,\bar x_k)$ be an isolated local minimum point of the Kirchhoff-Routh function $W$ related to $\vec\kappa$. Choose $\bar r>0$ sufficiently small such that
\eqref{br1}-\eqref{br3} hold.

Let  $\varepsilon, \Lambda$ be two positive parameters. Define
\begin{equation*}
\mathcal{A}_{\varepsilon,\Lambda}=\left\{w=\sum_{i=1}^kw_i\,\,\bigg|\,\,w_i\in L^\infty(D), \mbox{supp($w_i$)}\subset \overline{B_{\bar r}(\bar{x}_i)}, 0\le \mbox{sgn}(\kappa_i)w_i \le \frac{\Lambda}{\varepsilon^2}, \int_{D}w_idx=\kappa_i\right\}.
\end{equation*}
It is easy to see that $\mathcal A_{\varepsilon,\Lambda}$ is nonempty if
\begin{equation}\label{mane}
\Lambda>\max\left\{1,\frac{\varepsilon^2|\kappa_1|}{\pi\bar r^2},\cdot\cdot\cdot,\frac{\varepsilon^2|\kappa_k|}{\pi\bar r^2}\right\}.
\end{equation}

Let  $\mu_0,\tau_0$ be fixed positive numbers. We impose the following conditions on the profile function $f$:
\begin{itemize}
\item[(C1)] $f\in C(\mathbb R)$, $f(s)=0$ if  $s\leq 0$, and $f$ is strictly increasing on $[0,+\infty)$;
\item[(C2)] there exists $\mu_0\in(0,1)$ such that
\[\int_0^sf(r)dr\leq \mu_0f(s)s,\,\,\forall\,s\geq0;\]
\item[(C3)] for all $\tau_0>0$, it holds that
\[\lim_{s\to+\infty}f(s)e^{-\tau_0 s}=0.\]
\end{itemize}
For $f$ satisfying (C1)(C2)(C3), we define a new function $H_f$ as follows

\begin{equation}\label{p2pp}
H_f(s)=\int_0^sh(r)dr,
\end{equation}
  where
  \begin{equation}
  h(s)=
  \begin{cases}
  f^{-1}(s),&\mbox{ if }s>0;\\
  0,&\mbox{ if }s\leq0.
  \end{cases}
  \end{equation}

Let $f_1,\cdot\cdot\cdot,f_k$ be $k$ real functions satisfying (C1)(C2)(C3). Define
$$\mathcal{E}(\omega)=\frac{1}{2}\int_D \omega(x)\mathcal{G}\omega(x)dx-\frac{1}{\varepsilon^2}\sum_{i=1}^k\int_D H_{f_i}(\varepsilon^2\mbox{sgn}(\kappa_i)\omega\mathbf 1_{_{B_{\bar r}(\bar{x}_i)}})dx.$$
Denote  $\mathcal N_{\varepsilon,\Lambda}$ the set of maximizers of $\mathcal E$ over $\mathcal{A}_{\varepsilon,\Lambda}$.

The following existence theorem has been proved in \cite{CWZ}.
\begin{theorem}[\cite{CWZ}]\label{cwzthm}
There exists $\varepsilon_0,\Lambda_0>0$, depending only on $D, (\bar x_1,\cdot\cdot\cdot,\bar x_k),\vec\kappa,\mu_0,\tau_0$, such that for any $0<\varepsilon<\varepsilon_0$ and $\Lambda>\Lambda_0$, $\mathcal N_{\varepsilon,\Lambda}$ is nonempty, and any $\omega\in\mathcal N_{\varepsilon,\Lambda}$ satisfies
\[\omega=\frac{1}{\varepsilon^2} sgn(\kappa_i)f_i\big(\mbox{sgn}(\kappa_i)\mathcal{G}\omega-\mu_{\omega,i}\big) \mbox{ a.e. in }B_{\bar r}(\bar x_i),\,\,i=1,\cdot\cdot\cdot,k,\]
where each $\mu_{\omega,i}$ is a real number depending on $\omega$. Moreover, for fixed $\Lambda>\Lambda_0$, we have
\begin{itemize}
\item[(i)] $\mu_{\omega,i}\geq -\frac{|\kappa_i|}{2\pi}\ln\varepsilon-C$ for any $\omega\in \mathcal N_{\varepsilon,\Lambda}$ and $i=1,\cdot\cdot\cdot,k$, where $C$ is a positive number depending only on $D, (\bar x_1,\cdot\cdot\cdot,\bar x_k),\vec\kappa,\mu_0,\tau_0, \Lambda;$
\item[(ii)] diam(supp($\omega$))$\leq R_0\varepsilon$ for any $\omega\in \mathcal N_{\varepsilon,\Lambda}$, where $R_0$ is a positive number depending only on  $D, (\bar x_1,\cdot\cdot\cdot,\bar x_k),\vec\kappa,\mu_0,\tau_0,\Lambda$;
\item[(iii)] for any $\delta>0,$ there exists some $\varepsilon_1>0,$ depending only on  $D,(\bar x_1,\cdot\cdot\cdot,\bar x_k),\vec\kappa,\mu_0,\tau_0,\Lambda,\delta$, such that for any $\varepsilon\in(0,\varepsilon_1)$, it holds that
\[\mbox{supp}(\omega\mathbf 1_{B_{\bar r}(\bar x_i)})\subset \overline{B_{\delta}(\bar{x}_i)},\quad \forall\,\omega\in\mathcal N_{\varepsilon,\Lambda},\]
and consequently by Lemma A any $\omega\in\mathcal N_{\varepsilon,\Lambda}$ is a steady weak solution to the vorticity equation.
\end{itemize}

\end{theorem}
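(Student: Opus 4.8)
The statement is quoted from \cite{CWZ}; the proof there, which is also the one I would give, adapts the direct method already used for Theorem \ref{thmex} in Section 6, with the rearrangement class $\mathcal K_\varepsilon$ replaced by the constraint set $\mathcal A_{\varepsilon,\Lambda}$ and the kinetic energy replaced by the penalized functional $\mathcal E$. \emph{Existence of a maximizer.} When \eqref{mane} holds, $\mathcal A_{\varepsilon,\Lambda}$ is nonempty, convex, bounded in $L^\infty(D)$ and strongly closed in $L^p(D)$, hence weakly closed. Since each $h_i=f_i^{-1}$ is nondecreasing on $[0,+\infty)$ by (C1), its primitive $H_{f_i}$ is convex and nonnegative, so the penalty term $\omega\mapsto\frac{1}{\varepsilon^2}\sum_i\int_D H_{f_i}(\varepsilon^2\,{\rm sgn}(\kappa_i)\omega\mathbf 1_{B_{\bar r}(\bar x_i)})\,dx$ is convex and strongly continuous, hence weakly lower semicontinuous; together with the weak continuity of $\omega\mapsto\frac12\int_D\omega\mathcal G\omega\,dx$ (compactness of $\mathcal G$) this makes $\mathcal E$ weakly upper semicontinuous and bounded above on $\mathcal A_{\varepsilon,\Lambda}$. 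A maximizing sequence therefore has a weakly convergent subsequence whose limit lies in $\mathcal N_{\varepsilon,\Lambda}$.

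\emph{Euler--Lagrange equation and profile.} Fix $\omega\in\mathcal N_{\varepsilon,\Lambda}$ and $i$. Perturbing $\omega$ inside $\mathcal A_{\varepsilon,\Lambda}$ only in its $i$-th component and using convexity of that slice, the first-order condition shows that $u:={\rm sgn}(\kappa_i)\omega\mathbf 1_{B_{\bar r}(\bar x_i)}$ maximizes $\int_{B_{\bar r}(\bar x_i)}({\rm sgn}(\kappa_i)\mathcal G\omega-h_i(\varepsilon^2 u))\,u\,dx$ over $\{0\le u\le\Lambda/\varepsilon^2\}$ subject to $\int u=|\kappa_i|$. The bathtub principle then produces a Lagrange multiplier $\mu_{\omega,i}\in\mathbb R$ such that, off the a priori unknown truncation set $\{\varepsilon^2 u=\Lambda\}$, one has $\varepsilon^2 u=f_i({\rm sgn}(\kappa_i)\mathcal G\omega-\mu_{\omega,i})$, i.e.\ $\omega=\frac{1}{\varepsilon^2}{\rm sgn}(\kappa_i)f_i({\rm sgn}(\kappa_i)\mathcal G\omega-\mu_{\omega,i})$ there. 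Since $\mathcal A_{\varepsilon,\Lambda}$ is not a rearrangement class, Lemma \ref{yyds} cannot be invoked directly and one argues slice by slice.

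\emph{Asymptotics and conclusion.} These follow the template of Section 6. (i) \emph{Lower bound:} comparing $\mathcal E(\omega)$ with $\mathcal E$ of a radial test profile concentrated at $\bar x_i$ and using (C2) to control the penalty term in terms of $\sum_i\mu_{\omega,i}\kappa_i$ (as in Lemma \ref{lbd} and Lemma \ref{s1mp}) gives $\mu_{\omega,i}\ge-\frac{|\kappa_i|}{2\pi}\ln\varepsilon-C$; the matching upper bound ${\rm sgn}(\kappa_i)\mathcal G\omega\le-\frac{|\kappa_i|}{2\pi}\ln\varepsilon+C$ on the $i$-th core is obtained as in Lemma \ref{s1mp6} via the rearrangement inequality. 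Hence ${\rm sgn}(\kappa_i)\mathcal G\omega-\mu_{\omega,i}$ is uniformly bounded on the core, so for $\Lambda>\Lambda_0$ the truncation set above is empty and the profile identity holds throughout $B_{\bar r}(\bar x_i)$. (ii) \emph{Diameter:} from the profile, condition (C3) (exponential decay of $f_i$) and (i), a logarithmic-potential estimate as in Proposition \ref{proop2} forces ${\rm diam}({\rm supp}(\omega\mathbf 1_{B_{\bar r}(\bar x_i)}))\le R_0\varepsilon$. (iii) \emph{Localization:} inserting (ii) into the energy comparison, the $O(\ln\varepsilon)$ terms cancel and the $O(1)$ remainder is, in the limit $\varepsilon\to0$, $-W$ evaluated at the concentration points versus at $(\bar x_1,\dots,\bar x_k)$; by \eqref{krf} and the uniqueness of the minimizer of $W$ in $\prod_i\overline{B_{\bar r}(\bar x_i)}$, the cores concentrate at the $\bar x_i$, exactly as in Proposition \ref{proop3}. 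For $\varepsilon$ small, ${\rm supp}(\omega\mathbf 1_{B_{\bar r}(\bar x_i)})$ is then compactly contained in $B_{\bar r}(\bar x_i)$, each $f_i$ is continuous and monotone, and Lemma A yields that $\omega$ is a steady weak solution.

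\emph{Main obstacle.} The delicate point is the interplay between the Euler--Lagrange step and the asymptotic estimates: the a priori bounds on $\mu_{\omega,i}$ are needed to show the pointwise constraint $\Lambda/\varepsilon^2$ is inactive, yet those bounds are derived from the (truncated) profile, so the argument must be organized so the logic closes — one first works with the truncated Euler--Lagrange relation, which holds unconditionally, derives the estimates, and only then removes the truncation. Conditions (C1)--(C3) are precisely what make this possible: (C1) for invertibility of each $f_i$ and for applicability of Lemma A; (C2) for the two-sided energy--multiplier relation; (C3) for the exponential decay controlling the size of the vortex cores.
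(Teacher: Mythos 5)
The statement you are proving is not proved in this paper at all: Theorem \ref{cwzthm} is imported verbatim from \cite{CWZ} (``The following existence theorem has been proved in \cite{CWZ}''), so there is no in-paper proof to compare against. Judged on its own terms, your sketch is a faithful outline of the Turkington-type vorticity method that \cite{CWZ} actually uses, and you correctly recognize that it is the same template as Section 6 of this paper with $\mathcal K_\varepsilon$ replaced by $\mathcal A_{\varepsilon,\Lambda}$ and $E$ by the penalized functional $\mathcal E$. In particular, you identify the one genuinely delicate point: the Euler--Lagrange relation is first obtained in truncated form (with the constraint $0\le {\rm sgn}(\kappa_i)w_i\le\Lambda/\varepsilon^2$ possibly active), the multiplier and potential estimates are derived from that truncated relation, and only then is the truncation shown to be inactive for $\Lambda>\Lambda_0$; this is exactly how the circularity is broken in \cite{CWZ}.

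Two small points of precision. First, in the Euler--Lagrange step your claim that $u$ ``maximizes $\int({\rm sgn}(\kappa_i)\mathcal G\omega-h_i(\varepsilon^2u))u\,dx$'' is not literally the first-order condition, since that expression is nonlinear in $u$; the correct statement is the variational inequality $\int(v_i-\omega_i)\,{\rm sgn}(\kappa_i)\mathcal G\omega\,dx\le\int h_i(\varepsilon^2{\rm sgn}(\kappa_i)\omega_i)\,{\rm sgn}(\kappa_i)(v_i-\omega_i)\,dx$ for all admissible $v$, from which convexity of $H_{f_i}$ gives the pointwise maximization of $t\,\psi-\varepsilon^{-2}H_{f_i}(\varepsilon^2t)-\mu t$ over $t\in[0,\Lambda/\varepsilon^2]$ and hence the truncated profile; your intended conclusion is right but the intermediate formulation should be the variational inequality. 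Second, the sketch asserts the asymptotic estimates (i)--(iii) largely ``as in Lemma \ref{lbd}, \ref{s1mp6}, Proposition \ref{proop2}, \ref{proop3}''; this is acceptable here because the $L^\infty$ bound $\Lambda/\varepsilon^2$ plays exactly the role of $M\varepsilon^{-2}$ in those lemmas (which is also why the constants in (i)--(ii) acquire a $\Lambda$-dependence), but a self-contained proof would need to redo the energy lower bound with the penalty term present, using (C2) to show the penalty contributes only an $O(1)$ correction. Neither point is a gap in the approach.
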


Unlike the variational problem with prescribed rearrangement of vorticity in Section 1, here we do not know
whether the elements in $\mathcal N_{\varepsilon,\Lambda}$ have the same rearrangements, and it is also hard to prove the compactness of $\mathcal N_{\varepsilon,\Lambda}$. Hence we can not apply Theorem \ref{thm551} directly to obtain stability.

However, the good thing here is that we know what the profile functions are, which means that the stream function satisfies a definite semilinear elliptic equation. This allows us to employ the methods from the field of elliptic equations to obtain fine estimates for the solutions.

From the viewpoint of the stream function, recently  Cao-Yan-Yu-Zou \cite{CYYZ} proved the following local uniqueness result when each $f_i$ is a power function.

\begin{theorem}[Local uniqueness]\label{loun}
Let $q\in(0,+\infty)$ be fixed.
In the setting of Theorem \ref{cwzthm}, Suppose additionally
\begin{itemize}
\item[(i)]$(\bar x_1,\cdot\cdot\cdot,x_k)$ is  a nondegenerate critical point of $D$;
\item [(ii)] $\kappa_i>0$ for any $i=1,\cdot\cdot\cdot,k$;
\item[(iii)] $f_i(s)=s_+^q$ for any $i=1,\cdot\cdot\cdot,k$.
\end{itemize}
Then for fixed $\Lambda>\Lambda_0,$ there exists some $\varepsilon_2>0$, depending only on  $D,(\bar x_1,\cdot\cdot\cdot,\bar x_k),\vec\kappa,\Lambda$, such that for any $\varepsilon\in(0,\varepsilon_2),$ $\mathcal N_{\varepsilon,\Lambda}$ is a singleton $\{\omega_{\varepsilon,\Lambda}\}$.

\end{theorem}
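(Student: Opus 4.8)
The strategy is to argue by contradiction using local Pohozaev identities, following the by-now standard scheme for local uniqueness of solutions of semilinear elliptic problems with concentrating bubbles. Assume the conclusion fails, so that there are $\varepsilon_n\to 0$ and two sequences $\omega_n^{(1)},\omega_n^{(2)}\in\mathcal N_{\varepsilon_n,\Lambda}$ with $\omega_n^{(1)}\ne\omega_n^{(2)}$. Set $\psi_n^{(\ell)}=\mathcal G\omega_n^{(\ell)}$. By Theorem \ref{cwzthm} (using (iii) to make the cutoff $\mathbf 1_{B_{\bar r}(\bar x_i)}$ irrelevant on a fixed small ball $B_d(\bar x_i)$) together with hypotheses (ii), (iii) of Theorem \ref{loun},
\[
-\Delta\psi_n^{(\ell)}=\frac{1}{\varepsilon_n^2}\bigl(\psi_n^{(\ell)}-\mu_{n,i}^{(\ell)}\bigr)_+^q\ \text{ in }B_d(\bar x_i),\qquad \frac{1}{\varepsilon_n^2}\int_{B_{\bar r}(\bar x_i)}\bigl(\psi_n^{(\ell)}-\mu_{n,i}^{(\ell)}\bigr)_+^q\,dx=\kappa_i .
\]
Let $x_{n,i}^{(\ell)}$ be the centre of mass of $\omega_n^{(\ell)}\mathbf 1_{B_{\bar r}(\bar x_i)}$; Theorem \ref{cwzthm}(ii),(iii) give $x_{n,i}^{(\ell)}\to\bar x_i$ and $\mathrm{supp}(\omega_n^{(\ell)}\mathbf 1_{B_{\bar r}(\bar x_i)})\subset B_{R_0\varepsilon_n}(x_{n,i}^{(\ell)})$. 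The rescaled functions $v_{n,i}^{(\ell)}(y):=\psi_n^{(\ell)}(x_{n,i}^{(\ell)}+\varepsilon_n y)-\mu_{n,i}^{(\ell)}$ solve $-\Delta v=v_+^q$ on expanding balls, are $O(1)$ by the mass normalization, and (after a routine compactness argument) converge in $C^1_{\mathrm{loc}}(\mathbb R^2)$ to the unique radial solution $U_i$ of $-\Delta U_i=(U_i)_+^q$ in $\mathbb R^2$ with far-field behavior $U_i(y)=-\tfrac{\kappa_i}{2\pi}\ln|y|+c_i+o(1)$.

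The second step is to analyze the normalized difference $\xi_n:=\bigl(\psi_n^{(1)}-\psi_n^{(2)}\bigr)/\|\psi_n^{(1)}-\psi_n^{(2)}\|_{L^\infty(D)}$. Writing the difference of the two nonlinearities by the mean value theorem, $\xi_n$ satisfies a linear equation of the form $-\Delta\xi_n=\varepsilon_n^{-2}\sum_{i}b_{n,i}\,(\xi_n-\sigma_{n,i})$ on the balls $B_d(\bar x_i)$, with $b_{n,i}\ge 0$ concentrating on $B_{R_0\varepsilon_n}(x_{n,i}^{(\ell)})$ and $\sigma_{n,i}=(\mu_{n,i}^{(1)}-\mu_{n,i}^{(2)})/\|\psi_n^{(1)}-\psi_n^{(2)}\|_{L^\infty(D)}$. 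One first checks that $|\psi_n^{(1)}-\psi_n^{(2)}|$ is, up to a constant, attained near the bubbles and that the $\sigma_{n,i}$ remain bounded; rescaling near each $\bar x_i$ as above, $\xi_n(x_{n,i}^{(1)}+\varepsilon_n\cdot)$ converges to a bounded function $\Psi_i$ such that $\Psi_i-\sigma_i$ (with $\sigma_i=\lim\sigma_{n,i}$) lies in the bounded kernel of the operator $-\Delta-q(U_i)_+^{q-1}$ in $\mathbb R^2$. Invoking the nondegeneracy of the planar profile $U_i$ for the power nonlinearity $f_i(s)=s_+^q$, this kernel is spanned by the translation modes $\partial_{y^1}U_i,\partial_{y^2}U_i$, so that $\Psi_i=\sigma_i+\sum_{s=1}^2 c_{i,s}\,\partial_{y^s}U_i$ for constants $c_{i,s}$. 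It remains to show that all $c_{i,s}$ and $\sigma_i$ vanish.

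The final step uses local Pohozaev identities on fixed small balls $B_d(\bar x_i)$. For each $\ell$ and each $s\in\{1,2\}$, multiplying the equation for $\psi_n^{(\ell)}$ by $\partial_{x^s}\psi_n^{(\ell)}$ and integrating over $B_d(\bar x_i)$, the bulk term drops because the nonlinearity is autonomous there, leaving a boundary integral over $\partial B_d(\bar x_i)$ quadratic in $\nabla\psi_n^{(\ell)}$; an analogous scalar identity (multiplying by $(x-\bar x_i)\cdot\nabla\psi_n^{(\ell)}$ or by $\psi_n^{(\ell)}$ itself) controls the Lagrange multipliers $\mu_{n,i}^{(\ell)}$. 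Subtracting the identities for $\ell=1$ and $\ell=2$ and inserting the splitting $\psi_n^{(\ell)}=\mathcal G(\omega_n^{(\ell)}\mathbf 1_{B_{\bar r}(\bar x_i)})+\sum_{j\ne i}\mathcal G(\omega_n^{(\ell)}\mathbf 1_{B_{\bar r}(\bar x_j)})$, the singular self-interaction cancels to leading order and the surviving contribution is governed by the regular part, reproducing the gradient and the Hessian of the Kirchhoff--Routh function $W$ near $(\bar x_1,\dots,\bar x_k)$. Dividing by $\|\psi_n^{(1)}-\psi_n^{(2)}\|_{L^\infty(D)}$ and letting $n\to\infty$, the vector $\vec c=(c_{i,s})$ is seen to satisfy $D^2W(\bar x_1,\dots,\bar x_k)\,\vec c=0$; since $(\bar x_1,\dots,\bar x_k)$ is a nondegenerate critical point of $W$, this forces $c_{i,s}=0$ for all $i,s$, while the scalar identities together with the mass constraints $\int_D\omega_n^{(\ell)}\mathbf 1_{B_{\bar r}(\bar x_i)}\,dx=\kappa_i$ force $\sigma_i=0$. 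Hence $\Psi_i\equiv 0$ for every $i$, and a standard elliptic estimate / maximum principle bootstrap away from the concentration points yields $\|\xi_n\|_{L^\infty(D)}\to 0$, contradicting $\|\xi_n\|_{L^\infty(D)}=1$.

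The technical heart of the argument, and the main obstacle, is twofold: establishing the sharp far-field asymptotics, uniqueness, and above all nondegeneracy of the limiting vortex profile $U_i$ for the power nonlinearity $s_+^q$, together with the resulting characterization of the bounded kernel of the linearized operator; and carrying out the error analysis in the local Pohozaev identities precisely enough that, after normalization, only the Hessian of $W$ survives in the limit. Both ingredients are exactly what Cao--Yan--Yu--Zou provide in \cite{CYYZ}.
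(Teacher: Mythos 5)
The paper does not prove this theorem: it is imported verbatim from Cao--Yan--Yu--Zou \cite{CYYZ}, and Section 7 offers no argument beyond the citation. So there is no in-paper proof to compare against; what you have written is an outline of the strategy that \cite{CYYZ} itself follows (contradiction argument, blow-up to a limiting radial profile, linearization of the normalized difference, local Pohozaev identities producing the Hessian of $W$). At the level of strategy your sketch is accurate.

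As a self-contained proof, however, it is not one, and you concede as much in your last paragraph: the two ingredients you defer to \cite{CYYZ} are precisely the content of the theorem. Concretely: (1) the claim that the bounded kernel of $-\Delta-q(U_i)_+^{q-1}$ in $\mathbb R^2$ consists only of the translation modes is a nontrivial nondegeneracy theorem for the planar vortex profile; note that $U_i$ is negative and logarithmically divergent at infinity, so $(U_i)_+$ is compactly supported and the potential $q(U_i)_+^{q-1}$ is singular on the free boundary $\{U_i=0\}$ when $q<1$, which also undermines the casual ``mean value theorem'' step used to derive the equation for $\xi_n$ --- the quotient of the two nonlinearities is not controlled by a bounded coefficient $b_{n,i}$ without a separate argument near the free boundaries of the two solutions, which need not coincide. (2) The passage from the subtracted Pohozaev identities to $D^2W(\bar x_1,\dots,\bar x_k)\vec c=0$ requires expansions of $\psi_n^{(\ell)}$ and $\nabla\psi_n^{(\ell)}$ on $\partial B_d(\bar x_i)$ with errors that are $o(\|\psi_n^{(1)}-\psi_n^{(2)}\|_{L^\infty(D)})$, as well as the unproved claims that the $L^\infty$ norm of the difference is essentially attained near the vortex cores and that the normalized multiplier differences $\sigma_{n,i}$ stay bounded. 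None of these is routine, and each occupies a substantial portion of \cite{CYYZ}. In short: right roadmap, but the proof is the two steps you have labeled as ``the technical heart'' and left to the reference.
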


Based on Theorem \ref{loun}, we can proved the following stability result for $k=1$.

\begin{theorem}[Stability]\label{sres}
In the setting of Theorem \ref{loun}, if $k=1$ and $\bar x\in D$ is a nondegenerate local minimum point of $H$, then for fixed $\Lambda>\Lambda_0,$ there exists some $\varepsilon_3>0$, depending only on  $D,\bar x,\kappa,\Lambda$, such that for any $\varepsilon\in(0,\varepsilon_3),$ $\omega_{\varepsilon,\Lambda}$ is  stable in the $L^p$ norm of the vorticity with respect to initial perturbations in $L^\infty(D)$ for any $p\in(1,+\infty)$.
\end{theorem}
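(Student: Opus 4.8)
The plan is to deduce Theorem \ref{sres} from the general stability criterion Theorem \ref{thm551}, applied to the singleton $\mathcal M=\{\omega_{\varepsilon,\Lambda}\}$ regarded as a subset of the rearrangement class $\mathcal R:=\mathcal R_{\omega_{\varepsilon,\Lambda}}$ of $\omega_{\varepsilon,\Lambda}$ in $D$ (note $\omega_{\varepsilon,\Lambda}\ge 0$ and $\omega_{\varepsilon,\Lambda}\in L^\infty(D)$, as required). A singleton is automatically compact in $L^p(D)$, so hypothesis (i) of Theorem \ref{thm551} is free, and the whole task is to show that $\{\omega_{\varepsilon,\Lambda}\}$ is an isolated set of local maximizers of $E$ over $\mathcal R$. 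For this I would invoke the criterion recorded in the remark following Proposition \ref{isolated1} (the $k=1$ version): take $B_2=B_{\bar r}(\bar x)$ and take $B_1$ to be a disk of radius $R_0\varepsilon$ centered at a point of $\mathrm{supp}(\omega_{\varepsilon,\Lambda})$, so that $\mathrm{supp}(\omega_{\varepsilon,\Lambda})\subset\overline{B_1}$ by Theorem \ref{cwzthm}(ii); for $\varepsilon$ small, Theorem \ref{cwzthm}(iii) guarantees $B_1\subset\subset B_2\subset\subset D$. Then conditions (i) (compactness of $\mathcal M$) and (ii) ($\mathcal M\subset\{w\in\mathcal R\mid\mathrm{supp}(w)\subset\overline{B_1}\}$) of that remark hold immediately.

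The algebraic heart of the argument is that the second term of $\mathcal E$ is a Casimir. For $k=1$ and $\kappa>0$, and for any $w$ with $\mathrm{supp}(w)\subset\overline{B_{\bar r}(\bar x)}$, one has $\frac1{\varepsilon^2}\int_D H_f(\varepsilon^2 w\mathbf 1_{B_{\bar r}(\bar x)})\,dx=\frac1{\varepsilon^2}\int_D H_f(\varepsilon^2 w)\,dx$, and since $H_f(0)=0$ this quantity depends only on the distribution function of $w$. Set $\mathcal K_2:=\{w\in\mathcal R\mid\mathrm{supp}(w)\subset\overline{B_{\bar r}(\bar x)}\}$. Every $w\in\mathcal K_2$ satisfies $0\le w\le\|\omega_{\varepsilon,\Lambda}\|_{L^\infty(D)}\le\Lambda/\varepsilon^2$, $\int_D w=\int_D\omega_{\varepsilon,\Lambda}=\kappa$, and $\mathrm{supp}(w)\subset\overline{B_{\bar r}(\bar x)}$, hence $\mathcal K_2\subset\mathcal A_{\varepsilon,\Lambda}$; moreover on $\mathcal K_2$ we have $\mathcal E(w)=E(w)-c$ with $c:=\frac1{\varepsilon^2}\int_D H_f(\varepsilon^2\omega_{\varepsilon,\Lambda})\,dx$ a fixed finite constant. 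Since $\omega_{\varepsilon,\Lambda}$ maximizes $\mathcal E$ over $\mathcal A_{\varepsilon,\Lambda}\supset\mathcal K_2$ and $\omega_{\varepsilon,\Lambda}\in\mathcal K_2$, it follows that $\omega_{\varepsilon,\Lambda}$ maximizes $E$ over $\mathcal K_2$. For uniqueness, if $\omega^\ast\in\mathcal K_2$ is any maximizer of $E|_{\mathcal K_2}$, then $\mathcal E(\omega^\ast)=E(\omega^\ast)-c=E(\omega_{\varepsilon,\Lambda})-c=\mathcal E(\omega_{\varepsilon,\Lambda})=\sup_{\mathcal A_{\varepsilon,\Lambda}}\mathcal E$, so $\omega^\ast\in\mathcal N_{\varepsilon,\Lambda}$, whence $\omega^\ast=\omega_{\varepsilon,\Lambda}$ by the local uniqueness Theorem \ref{loun} (here we use $k=1$, $\kappa>0$, $f(s)=s_+^q$, and $\bar x$ a nondegenerate critical point of $H$). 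This gives condition (iii) of the remark: $\mathcal M=\{w\in\mathcal K_2\mid E(w)=\sup_{\mathcal K_2}E\}=\{\omega_{\varepsilon,\Lambda}\}$.

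It remains to check the separation property, condition (iv): $\sup_{D\setminus B_2}\mathcal G\omega_{\varepsilon,\Lambda}<\inf_{B_1}\mathcal G\omega_{\varepsilon,\Lambda}$ for $\varepsilon$ small. This I would prove exactly as in Lemma \ref{sepalem}: writing $\mathcal G\omega_{\varepsilon,\Lambda}(x)=-\frac1{2\pi}\int_D\ln|x-y|\,\omega_{\varepsilon,\Lambda}(y)\,dy-\int_D h(x,y)\omega_{\varepsilon,\Lambda}(y)\,dy$, using $\int_D\omega_{\varepsilon,\Lambda}=\kappa$, the boundedness of $h$ on $\overline{B_{\bar r}(\bar x)}\times\overline{B_{\bar r}(\bar x)}$, the bound $|x-y|\le 2R_0\varepsilon$ for $x\in B_1$ and $y\in\mathrm{supp}(\omega_{\varepsilon,\Lambda})$, and a uniform lower bound $|x-y|\ge\bar r-R_0\varepsilon-\mathrm{dist}(\bar x,\mathrm{supp}(\omega_{\varepsilon,\Lambda}))$ for $x\in D\setminus B_2$, one gets $\inf_{B_1}\mathcal G\omega_{\varepsilon,\Lambda}\ge-\frac{\kappa}{2\pi}\ln(2R_0\varepsilon)-C\to+\infty$ while $\sup_{D\setminus B_2}\mathcal G\omega_{\varepsilon,\Lambda}$ stays bounded as $\varepsilon\to0$. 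With (i)--(iv) verified, the remark after Proposition \ref{isolated1} shows that $\{\omega_{\varepsilon,\Lambda}\}$ is an isolated set of local maximizers of $E$ over $\mathcal R$, and Theorem \ref{thm551} then yields stability of $\omega_{\varepsilon,\Lambda}$ in the $L^p$ norm of the vorticity with respect to $L^\infty$ perturbations, for every $p\in(1,+\infty)$.

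The step I expect to be the main obstacle is the second paragraph — specifically, making rigorous that $\mathcal K_2\subset\mathcal A_{\varepsilon,\Lambda}$, that the Casimir term is genuinely constant on $\mathcal K_2$, and that one may therefore transfer maximality of $\mathcal E$ over $\mathcal A_{\varepsilon,\Lambda}$ into maximality of $E$ over $\mathcal K_2$, combined with a careful choice of $B_1$ ensuring $\mathrm{supp}(\omega_{\varepsilon,\Lambda})\subset\overline{B_1}\subset\subset B_2$ simultaneously. Everything else is either a verbatim repetition of the arguments in Sections 3--4 (compactness and the separation estimates) or a direct application of Theorems \ref{cwzthm} and \ref{loun}.
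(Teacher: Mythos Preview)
Your proposal is correct and follows essentially the same route as the paper: apply Theorem \ref{thm551} to the singleton $\{\omega_{\varepsilon,\Lambda}\}$, reduce to showing it is the unique maximizer of $E$ over $\mathcal K_2=\{w\in\mathcal R_{\omega_{\varepsilon,\Lambda}}\mid\mathrm{supp}(w)\subset\overline{B_{\bar r}(\bar x)}\}$, and then repeat the Section~4 argument (the remark after Proposition \ref{isolated1} together with the separation estimate) to obtain isolatedness. The paper's proof simply asserts ``in view of Theorem \ref{loun}, it is easy to see that $\omega_{\varepsilon,\Lambda}$ is the unique maximizer of $E$ over $\mathcal K_2$''; your Casimir observation---that the second term of $\mathcal E$ is constant on $\mathcal K_2$ because it depends only on the distribution of $w$, so that $\mathcal E=E-c$ on $\mathcal K_2\subset\mathcal A_{\varepsilon,\Lambda}$---is exactly the content hidden behind that phrase, made explicit.
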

\begin{proof}
 By Theorem \ref{thm551} in Section 3 or Theorem 5 in \cite{B5}, it suffices to show that $\omega_{\varepsilon,\Lambda}$ is an isolated maximizer of $E$ over $\mathcal R_{\omega_{\varepsilon,\Lambda}}$. In view of Theorem \ref{loun}, it is easy to see that $\omega_{\varepsilon,\Lambda}$ is the unique  maximizer of $E$ over
\[\left\{w\in L^\infty(D)\,\,\big|\,\, w\in\mathcal R_{\omega_{\varepsilon,\Lambda}},\,\,{\rm supp}(w)\subset \overline{B_{\bar r}(\bar x)}\right\}\]
if $\varepsilon$ is sufficiently small. Taking into account (iii) in Theorem \ref{cwzthm}, we can easily get  isolatedness by repeating the argument as in Section 4.
\end{proof}
\begin{remark}
Since $H(x)\to +\infty$ as $x\to\partial D$,   $H$ attains its global minimum at some point $\bar x\in D$.
If $D$ is additionally convex, then  $H$ is strictly convex by \cite{CF}, and thus  $\bar x$ must be unique and nondegenerate in this case.
\end{remark}

\begin{remark}
If local uniqueness holds for $k=2$ with $ \kappa_1\kappa<0$, then the corresponding stability result as in Theorem \ref{sres} holds as well.
\end{remark}

\bigskip
\noindent{\bf Acknowledgements.}
We are very grateful to the   anonymous referee for useful comments and suggestions.
{G. Wang was supported by National Natural Science Foundation of China (12001135, 12071098) and China Postdoctoral Science Foundation (2019M661261, 2021T140163).}

\bigskip

\noindent{\bf Compliance with ethical standards}

\bigskip
\noindent{\bf Conflict of interest}  The authors declare that they have no conflict of interest to this work.

\phantom{s}
 \thispagestyle{empty}

\end{document}